\documentclass[10pt, leqno]{amsart} 
\usepackage{amssymb,amscd,amsfonts,amsbsy}
\usepackage{latexsym}
\usepackage{exscale}
\usepackage{amsmath,amsthm,amsfonts}
\usepackage{mathrsfs}
\usepackage{xcolor}
\usepackage[colorlinks=true, linkcolor=blue, citecolor=red, urlcolor=red]{hyperref}
\usepackage{esint} 
\usepackage{stmaryrd}
\usepackage{pifont}
\usepackage{bm}

\usepackage[utf8]{inputenc}

\calclayout

\allowdisplaybreaks

\makeatletter
\@namedef{subjclassname@2020}{%
  \textup{2020} Mathematics Subject Classification}
\makeatother

\theoremstyle{plain}
\newtheorem{theorem}[equation]{Theorem}

\newtheorem{lemma}[equation]{Lemma}

\theoremstyle{definition}

\numberwithin{equation}{section}

\def\C{\mathbb{C}}
\def\N{\mathbb{N}}
\def\D{\mathcal{D}}

\def\K{\mathcal{K}}
\def\S{\mathcal{S}}

\def\R{\mathbb{R}}
\def\Rn{\mathbb{R}^n}
\def\Rnn{\R^{n_1} \times \R^{n_2}}

\def\L{\mathbf{L}}
\def\p{\mathbf{p}}
\def\q{\mathbf{q}}
\def\r{\mathbf{r}}
\def\u{\mathbf{u}}
\def\v{\mathbf{v}}
\def\w{\mathbf{w}}

\def\Re{\operatorname{Re}}
\def\Im{\operatorname{Im}}
\def\loc{\operatorname{loc}}

\def\supp{\operatorname{supp}}

\def\BMO{\operatorname{BMO}}
\def\CMO{\operatorname{CMO}}

\def\d{\operatorname{d}}
\def\rd{\operatorname{rd}}

\renewcommand{\emptyset}{\text{\textup{\O}}}

\begin{document}

\title[Multilinear extrapolation of compactness]{Multilinear extrapolation of compactness on mixed-norm spaces}

\author{Mingming Cao}
\address{Mingming Cao\\
Institute for Advanced Study in Mathematics\\ 
Harbin Institute of Technology\\ 
Harbin 150001\\
People's Republic of China}
\email{mingming.cao@hit.edu.cn}

\author{Binyu Du}
\address{Binyu Du\\
Institute for Advanced Study in Mathematics\\ 
Harbin Institute of Technology\\ 
Harbin 150001\\
People's Republic of China} 
\email{binyu$\_$du@163.com}

\author{Honghai Liu}
\address{Honghai Liu\\
School of Mathematics and Information Science\\
Henan Polytechnic University\\
Jiaozuo 454000\\
People's Republic of China} 
\email{hhliu@hpu.edu.cn}

\author{Zengyan Si}
\address{Zengyan Si\\
School of Mathematics and Information Science\\
Henan Polytechnic University\\
Jiaozuo 454000\\
People's Republic of China} 
\email{zengyan@hpu.edu.cn}

\author{K\^{o}z\^{o} Yabuta}
\address{K\^{o}z\^{o} Yabuta\\
Research Center for Mathematics and Data Science\\
Kwansei Gakuin University\\
Gakuen 2-1, Sanda 669-1337, Japan}
\email{kyabuta3@kwansei.ac.jp}

\thanks{The first four authors were supported by the National Natural Science Foundation of China (No. 12571101 and 12526417).}

\year=2026 \month=06 \day=01

\date{\today}

\subjclass[2020]{42B20, 42B25, 42B35}


\keywords{
Rubio de Francia extrapolation, 
Compactness,
Mixed-norm estimates,
Interpolation,
Bi-parameter analysis, 
Calder\'{o}n--Zygmund operators,
Paraproducts,
Pseudo-differential operators}

\begin{abstract}
In this paper, we develop the Rubio de Francia extrapolation theorem for the multilinear compactness on mixed-norm Lebesgue spaces. More precisely, if a multilinear operator is bounded on weighted product spaces, then its compactness can be extrapolated from unweighted product spaces to the full range of weighted mixed-norm spaces. This result is mainly based on a multilinear interpolation theorem of compactness on weighted mixed-norm spaces, for which we present a characterization of compactness on mixed-norm spaces and a multilinear interpolation theorem of boundedness on multi-mixed-norm spaces. As applications of the extrapolation theorem, we obtain compactness results for several kinds of bi-parameter operators on weighted mixed-norm spaces, including multilinear bi-parameter Calder\'{o}n--Zygmund operators, multilinear bi-parameter dyadic paraproducts, bilinear bi-parameter continuous paraproducts, and bilinear bi-parameter pseudo-differential operators.
\end{abstract}

\maketitle

\section{Introduction}
\subsection{Motivation and main theorems}
The Rubio de Francia extrapolation theorem stands among the most profound and far-reaching discoveries in harmonic analysis, since it provides a universal mechanism to  extend boundedness, vector-valued inequalities, and compactness from a single exponent-weight pair to the full admissible range of exponents and Muckenhoupt weights classes. Since its creation in the 1980s, this principle has redefined the methodology of weighted norm inequalities, unified the study of linear, multilinear, and multi-parameter operators, and become an indispensable tool in singular integrals theory, function spaces, and partial differential equations. By allowing properties to be “extrapolated” from convenient test cases to all admissible settings, it not only eliminates repetitive arguments and streamlines technically demanding proofs, but also reveals underlying structural unity across seemingly disparate problems. 

The purpose of this paper is to establish the Rubio de Francia extrapolation theorem for multilinear compactness on weighted mixed-norm spaces. Our study is primarily motivated by the following considerations:

(1) The relationship between the compactness and mixed-norm estimates has been captured in \cite[Theorem 2.10]{COY}, which is essential to derive multilinear extrapolation of compactness there. This result and extrapolation of compactness have demonstrated their flexibility in applications (cf. \cite{CIRXY, COY}) although reaching the quasi-Banach range $p \in(0, 1)$ usually turns out to be a major challenge. Thus, these facts naturally motivate us to investigate extrapolation of compactness on mixed-norm spaces. 

(2) We attempt to address an open problem left unresolved in our previous work \cite{CY}. While the first and last authors \cite{CY} obtained weighted compactness of multilinear bi-parameter singular integral operators via a compact variant of T1 type assumptions, it is quite difficult to verify whether the bi-parameter extension of original bilinear examples in \cite{CLSY} fulfill such compact T1 type conditions. Accordingly, we seek to justify weighted compactness of these examples and further  explore whether their compactness persists on mixed-norm spaces.

(3) Mixed-norm estimates have found widespread use in analysis and partial diﬀerential equations. Estimates on mixed-norm spaces with product-type weights can be used to study the regularity of solutions, see \cite{CDL22, DK18, DK21, DKP22, DP21}. Beyond that, they also have numerous applications in harmonic analysis,  for example, multilinear fractional integrals \cite{CS20, CS21, LMV21}, the Kato--Ponce inequalities on mixed-norm spaces \cite{HTW, OW}, bi-parameter paraproducts \cite{BM17}, and multilinear multipliers \cite{BM, BM24}.

(4) Our objective of an extensive research program is to investigate multilinear extrapolation of compactness on abstract Banach function spaces. The extrapolation theory of boundedness on Banach function spaces has been established and found a wealth of applications in \cite{CMM}. However, due to the abstract nature of general Banach function spaces, the approach to compactness developed in \cite{COY} cannot be directly extended to this framework. Therefore, as a first step, we focus on multilinear extrapolation of compactness on the mixed-norm spaces, where we would like to explore some new methods in order to extend these results to general Banach function spaces in the future work.

In order to state our main result we need some conceptions. Given quasi-normed spaces $\mathscr{X}_1, \ldots, \mathscr{X}_m, \mathscr{X}$, an $m$-linear operator $T: \mathscr{X}_1 \times \cdots \times \mathscr{X}_m \to \mathscr{X}$ is said to be {\tt compact} if $T(B_1 \times \cdots \times B_m)$ is {\tt precompact} in $\mathscr{X}$ for all bounded sets $B_j \subset \mathscr{X}_j$, $j=1, \ldots, m$, i.e., $\overline{T(B_1 \times \cdots \times B_m)}$ is a compact subset of $\mathscr{X}$. 

Given $0<p, q \le \infty$ and weights $u$ on $\R^{n_1}$ and $v$ on $\R^{n_2}$, the {\tt mixed-norm space} $L^p(u^p; L^q(v^q))$ consists of all measurable functions $f$ on $\Rnn$ satisfying 
\begin{align*}
\|f\|_{L^p(u^p; L^q(v^q))} 
:= \bigg[\int_{\R^{n_1}} \bigg(\int_{\R^{n_2}} |f(x_1, x_2)|^q v(x_2)^q \, dx_2\bigg)^{\frac{p}{q}} u(x_1)^p \, dx_1\bigg]^{\frac1p} < \infty.
\end{align*}

We develop the Rubio de Francia extrapolation of compactness on mixed-norm spaces below. 

\begin{theorem}\label{thm:RdF-cpt}
Assume that $T$ is an $m$-linear operator such that  
\begin{list}{\rm (\theenumi)}{\usecounter{enumi}\leftmargin=1.2cm \labelwidth=1cm \itemsep=0.2cm \topsep=0.2cm \renewcommand{\theenumi}{\alph{enumi}}}

\item\label{RdFcpt-1} $T$ is bounded from $L^{s_1}(w_1^{s_1}) \times \cdots \times L^{s_m}(w_m^{s_m})$ to $L^s(w^s)$ for some $\vec{s} = (s_1, \ldots, s_m) \in [1, \infty]^m$ and for all $\vec{w} = (w_1, \ldots, w_m) \in A_{\vec{s}}(\Rnn)$, where $\frac1s = \sum_{j=1}^m \frac{1}{s_j}$ and $w = \prod_{j=1}^m w_j$;

\item\label{RdFcpt-2} $T$ is compact from $L^{t_1}(\Rnn) \times \cdots \times L^{t_m}(\Rnn)$ to $L^t(\Rnn)$ for some $\vec{t} = (t_1, \ldots, t_m) \in [1, \infty]^m$, where $\frac1t = \sum_{j=1}^m \frac{1}{t_j} > 0$. 
\end{list} 
Then $T$ is compact from $L^{p_1}(u_1^{p_1}; L^{q_1}(v_1^{q_1})) \times \cdots \times L^{p_m}(u_m^{p_m}; L^{q_m}(v_m^{q_m}))$ to $L^p(u^p; L^q(v^q))$ for all $\vec{p} = (p_1, \ldots, p_m) \in (1, \infty]^m$, for all $\vec{q} = (q_1, \ldots, q_m) \in (1, \infty]^m$, for all $\vec{u} = (u_1, \ldots, u_m) \in A_{\vec{p}}(\R^{n_1})$, and for all $\vec{v} = (v_1, \ldots, v_m) \in A_{\vec{q}}(\R^{n_2})$, where $\frac1p = \sum_{j=1}^m \frac{1}{p_j} > 0$, $\frac1q = \sum_{j=1}^m \frac{1}{q_j} > 0$, $u = \prod_{j=1}^m u_j$, and $v = \prod_{j=1}^m v_j$.  
\end{theorem}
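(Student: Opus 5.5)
The plan has three steps: boundedness on a larger family of weighted mixed-norm spaces, interpolation of compactness between the unweighted compact endpoint and that family, and a choice of intermediate data that reaches every target.

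\emph{Step 1: boundedness.} From hypothesis \eqref{RdFcpt-1} I will derive boundedness of $T$ on a large family of weighted mixed-norm spaces. A two-step multilinear Rubio de Francia extrapolation gives boundedness from $\prod_j L^{p_j}(u_j^{p_j}; L^{q_j}(v_j^{q_j}))$ to $L^p(u^p;L^q(v^q))$ for all $\vec p,\vec q\in(1,\infty]^m$ with $\frac1p,\frac1q>0$ and all $\vec u\in A_{\vec p}(\R^{n_1})$, $\vec v\in A_{\vec q}(\R^{n_2})$. It proceeds as follows.
\begin{itemize}
\item First extrapolate in the product setting to all exponents and all $\vec w\in A_{\vec r}(\Rnn)$.
\item Freeze $x_1$ and regard the inner $L^{q}(v^q)$ norm as the target. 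Tensor weights $w_j(x_1,x_2)=u_j(x_1)v_j(x_2)$ belong to $A_{\vec p}(\Rnn)$ when $p_j=q_j$.
\item Apply the off-diagonal (vector-valued) extrapolation in the first variable, with $L^{q_j}(v_j^{q_j})$-valued functions. Equivalently, use the extrapolation theorem on Banach function spaces with $X=L^q(v^q)$, whose $M$-boundedness conditions hold since $\vec v\in A_{\vec q}$.
\end{itemize}
This is standard but must be written carefully in the multilinear $A_{\vec p}$ setting with $p_j=\infty$ allowed.

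\emph{Step 2: interpolation of compactness.} I will invoke the multilinear interpolation theorem of compactness on weighted mixed-norm spaces announced in the abstract. It is a Cwikel/Cobos--Fernández-Cabrera--Martínez type result: if $T$ is bounded at one endpoint and compact at the other, then it is compact on the complex interpolation space $[\,\cdot,\cdot\,]_\theta$ for $0<\theta<1$. Its proof is where I expect the real work. I would try to establish it through a characterization of precompactness in $L^p(u^p;L^q(v^q))$, namely a Kolmogorov--Riesz type criterion: uniform boundedness, uniform decay at infinity in both variables, and uniform equicontinuity under translations or averages. Compactness at the endpoint then transfers through the Stein--Weiss/Riesz--Thorin type interpolation inequality for $T$, restricted to the pieces that measure non-compactness (tails and oscillation). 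Those pieces are controlled by $\|\cdot\|_0^{1-\theta}\|\cdot\|_1^{\theta}$, with the compact endpoint factor small.

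\emph{Step 3: choice of intermediate data.} Fix the target $\vec p,\vec q,\vec u,\vec v$, and use the unweighted compact endpoint $\vec t$ from \eqref{RdFcpt-2} as the mixed-norm space $L^{t_j}(L^{t_j})=L^{t_j}(\Rnn)$. I will choose $\theta\in(0,1)$ close to $1$, exponents $\vec p_0,\vec q_0$, and weights $\vec u_0\in A_{\vec p_0}$, $\vec v_0\in A_{\vec q_0}$ satisfying three conditions:
\[
\frac1{p_j}=\frac{1-\theta}{t_j}+\frac{\theta}{p_{0,j}},\qquad
\frac1{q_j}=\frac{1-\theta}{t_j}+\frac{\theta}{q_{0,j}},\qquad
u_j=u_{0,j}^{\theta},\quad v_j=v_{0,j}^{\theta}.
\]
Stein--Weiss on mixed-norm spaces (the multilinear interpolation of boundedness on multi-mixed-norm spaces) then identifies the interpolation space with the target space.

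The key point is that the multilinear $A_{\vec p}$ classes are open in the sense needed here. If $\vec u\in A_{\vec p}$, then $\vec u^{1/\theta}\in A_{\vec p_0}$ for $\theta$ near $1$. This uses the characterization via $u^p\in A_{mp}$ and $u_j^{-p_j'}\in A_{mp_j'}$, together with reverse Hölder inequalities. Since $\vec p_0\to\vec p$ as $\theta\to1$, the limiting exponents remain in $(1,\infty]$ with $1/p_0>0$.

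The main obstacles are (i) handling $p_j=\infty$ or $t_j=\infty$, where Kolmogorov criteria fail, which I would treat by using the convention that $u_j^{-1}$ satisfies an $A_1$-type condition and by interpolating only in the finite components, and (ii) proving the compactness interpolation theorem itself in the mixed-norm, multilinear, weighted setting. For (ii) I would first try a reduction to the linear case by fixing all but one argument in bounded sets and using the multilinear Riesz--Thorin bound.
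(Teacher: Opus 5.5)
Your Steps 1 and 2 match the paper's own ingredients. Step 1 is Theorem \ref{thm:RdF-bdd}: extrapolation on $\Rnn$, then extrapolation in $x_1$ applied to the inner norms, using $\vec u\otimes\vec v\in A_{\vec{\mathfrak q}}(\Rnn)$. Step 2 is Theorem \ref{thm:ICPT}, proved through the criterion of Theorem \ref{thm:KRQB}. There one interpolates, via Theorem \ref{thm:IBDD}, the $m$-linear bounds for $T$, for $T(\vec f)\mathbf{1}_{B(0,A)^c}$, and for $\tau_yT(\vec f)-T(\vec f)$. The last is viewed in a three-fold mixed norm whose inner layer is $L^a(\mathbf{1}_{B(0,r)}(y)\,dy/|B(0,r)|)$ with $a<1$, so no reduction to the linear case is needed.

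You genuinely differ in Step 3. The paper first invokes [COY, Theorem 1.1] to get compactness on weighted product spaces. It places the compact endpoint at $L^{r_j}(\nu_j;L^{r_j}(\sigma_j))$ with $\vec\nu\otimes\vec\sigma\in A_{\vec r}(\Rnn)$. It then needs a two-weight factorization $u_j=\mu_j^{1-\theta}\nu_j^{\theta}$, $v_j=\omega_j^{1-\theta}\sigma_j^{\theta}$ with small $\theta$, adapted from [CLSY, Lemma C.1] with details omitted.

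You interpolate directly against the unweighted endpoint in (b). The factorization then becomes trivial, $u_{0,j}=u_j^{1/\theta}$, and its admissibility is pure reverse H\"older. Indeed, $(u^{1/\theta})^{p_0}=(u^p)^{1+\varepsilon}$ and $u_j^{-p_{0,j}'/\theta}=(u_j^{-p_j'})^{1+\varepsilon_j}$ with $\varepsilon,\varepsilon_j\to0$ as $\theta\to1$. Hence $[\vec u^{1/\theta}]_{A_{\vec p_0}}\lesssim[\vec u]_{A_{\vec p}}^{1/\theta}$, and the endpoint output weights lie in $A_\infty$ as Theorem \ref{thm:ICPT} requires. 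This is correct and lighter. It is the paper's argument with $\vec\nu=\vec\sigma\equiv1$ and $\vec r=\vec t$, and for finite target exponents it needs neither [COY] nor the factorization lemma. Also, $t_j=\infty$ causes no trouble, since Theorem \ref{thm:KRQB} is only applied to the finite-exponent output spaces.

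The gap is at infinite target exponents. With $\vec t$ frozen, your relation forces $\frac{1}{p_{0,j}}=\frac1\theta\big(\frac1{p_j}-\frac{1-\theta}{t_j}\big)<0$ whenever $p_j=\infty$ and $t_j<\infty$, and likewise for $q_j$. So no admissible bounded endpoint exists. Your sketched remedy (an $A_1$-type convention for $u_j^{-1}$ and interpolating only in the finite components) does not address this. The exponent identity is componentwise, and interpolation can only produce an infinite $j$-th exponent if the compact endpoint already has one in that slot. For such targets you need compactness at a different endpoint with $r_j=\infty$ in the offending slots, and that cannot be obtained by interpolating from $\vec t$.

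The paper nominally gets this freedom from [COY, Theorem 1.1], which it quotes for all $\vec r\in(1,\infty]^m$. Its proof, however, then fixes $r_j\in(1,\infty)$, so these cases are not really treated there either. Moreover, a product endpoint needs $r_j=\infty$ as soon as $p_j=\infty$ or $q_j=\infty$. A target such as $m=2$, $p_1=q_2=\infty$, $p_2,q_1<\infty$ forces $\frac1r=0$ and is out of reach of this interpolation scheme altogether.
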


The weights $\vec{u} \in A_{\vec{p}}(\R^{n_1})$ and $\vec{v} \in A_{\vec{q}}(\R^{n_2})$ appearing in Theorem \ref{thm:RdF-cpt} are natural in the context of mixed-norm estimates, since they characterize the weighted boundedness of multilinear strong maximal operators and multilinear bi-parameter Riesz transforms, see  Theorem \ref{thm:ApAq}. On the other hand, mixed-norm estimates involving general weights $w(x_1, x_2)$ are excessively intricate, rendering them inapplicable in practice. In contrast, the separated-variable form of weights $w(x_1, x_2) = u(x_1) \otimes v(x_2)$ enjoy greater practical utility. Such weighted mixed-norm estimates have also been widely applied in the regularity estimates of solutions to elliptic and parabolic equations/systems (cf. \cite{CDL22, DK18, DK21, DKP22, DP21}).

Let us define the multilinear strong maximal operator as
\begin{equation*}
\mathcal{M}_{\mathcal{R}}(\vec{f})(x)
:= \sup_{x \in R \in \mathcal{R}} \prod_{j=1}^m \fint_R |f_j(y_j)| \, dy_j, \quad x \in \Rnn, 
\end{equation*}
where $\mathcal{R}$ is the collection of all rectangles $R = I_1 \times I_2$, where $I_i \subset \R^{n_i}$ is a cube. In the case $m=1$, simply write $M_{\mathcal{R}} = \mathcal{M}_{\mathcal{R}}$. Additionally, the multilinear Riesz transform $\mathcal{R}_j^i$ on $\R^{n_i}$ is defined by 
\begin{align*}
\mathcal{R}_j^i (\vec{f})(x) 
:= \mathrm{p.v. } \int_{\R^{m n_i}}
\frac{\sum_{k=1}^m (x - y_k)_j}{\big(\sum_{k=1}^m |x - y_k|^2 \big)^{\frac{mn_i+1}{2}}} 
f_1(y_1) \cdots f_m(y_m)\, d\vec{y},  
\end{align*}
where $(x-y_k)_j$ is the $j$-th coordinate of $x-y_k \in \R^{n_i}$, $j=1, \ldots, n_i$. Let $\mathcal{R}_{j_1}^1 \otimes \mathcal{R}_{j_2}^2$ denote the multilinear bi-parameter Riesz transform on $\Rnn$.

\begin{theorem}\label{thm:ApAq}
Let $\vec{p} = (p_1, \ldots, p_m) \in (1, \infty]^m$, $\vec{q} = (q_1, \ldots, q_m) \in (1, \infty]^m$, $\frac1p = \sum_{j=1}^m \frac{1}{p_j} > 0$, and $\frac1q = \sum_{j=1}^m \frac{1}{q_j} > 0$. Then the following are equivalent:
\begin{list}{\textup{(\theenumi)}}{\usecounter{enumi}\leftmargin=1cm \labelwidth=1cm \itemsep=0.2cm \topsep=0.2cm \renewcommand{\theenumi}{\arabic{enumi}}}

\item\label{list:uv} $\vec{u} = (u_1, \ldots, u_m) \in A_{\vec{p}}(\R^{n_1})$ and $\vec{v} = (v_1, \ldots, v_m) \in A_{\vec{q}}(\R^{n_2})$;

\item\label{list:M} $\mathcal{M}_{\mathcal{R}} : L^{p_1}(u_1^{p_1}; L^{q_1}(v_1^{q_1})) \times \cdots \times L^{p_m}(u_m^{p_m}; L^{q_m}(v_m^{q_m})) \to L^p(u^p; L^q(v^q))$ boundedly;

\item\label{list:R} $\mathcal{R}_{j_1}^1 \otimes \mathcal{R}_{j_2}^2: L^{p_1}(u_1^{p_1}; L^{q_1}(v_1^{q_1})) \times \cdots \times L^{p_m}(u_m^{p_m}; L^{q_m}(v_m^{q_m})) \to L^p(u^p; L^q(v^q))$ boundedly.
\end{list}
\end{theorem}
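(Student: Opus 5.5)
We will prove the chain of implications (\ref{list:uv}) $\Rightarrow$ (\ref{list:M}) $\Rightarrow$ (\ref{list:uv}) and (\ref{list:uv}) $\Rightarrow$ (\ref{list:R}) $\Rightarrow$ (\ref{list:uv}). The guiding observation is that the weights split as $u_j(x_1) v_j(x_2)$ and the norm is iterated, so each claim should reduce to the corresponding known one-parameter multilinear result. The one-parameter facts we need are that $\vec{u} \in A_{\vec{p}}(\R^{n_1})$ is equivalent to the boundedness of the multilinear Hardy--Littlewood maximal operator $\mathcal{M}$ from $\prod_j L^{p_j}(u_j^{p_j})$ to $L^p(u^p)$, and equivalent to the boundedness of each multilinear Riesz transform $\mathcal{R}^1_{j}$ (Lerner--Ombrosi--Pérez--Torres--Trujillo-González, and Grafakos--Martell type necessity results).

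For (\ref{list:uv}) $\Rightarrow$ (\ref{list:M}), we use the pointwise bound
\[
\mathcal{M}_{\mathcal{R}}(\vec f)(x_1,x_2) \le \mathcal{M}^{(1)}\big(\mathcal{M}^{(2)}f_1(\cdot,x_2),\dots\big)(x_1).
\]
The iterated (non-multilinear) form is convenient, since $\prod_j \fint_{I_1\times I_2}|f_j| \le \prod_j \fint_{I_1} M^{(2)} f_j(y_1,x_2)\,dy_1$. This gives $\mathcal{M}_{\mathcal{R}}(\vec f) \le \mathcal{M}^{(1)}(M^{(2)}f_1,\dots,M^{(2)}f_m)$, with $M^{(2)}$ the linear maximal operator in $x_2$. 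However, $M^{(2)}$ is linear and $v_j^{q_j}$ need not be in $A_{q_j}$. So we must instead keep things multilinear in $x_2$ too, which requires a vector-valued (in $x_2$) weighted estimate for $\mathcal{M}^{(1)}$. The plan is to write
\[
\mathcal{M}_{\mathcal R}(\vec f)(x)\le \sup_{I_1\ni x_1}\fint\cdots\fint_{I_1^m} \mathcal{M}^{(2)}\big(f_1(y_1,\cdot),\dots,f_m(y_m,\cdot)\big)(x_2)\,d\vec y.
\]
Applying the one-parameter weighted bound for $\mathcal{M}^{(2)}$ in $L^q(v^q)$ inside (Minkowski, valid when $q\ge1$; otherwise use the $A_{\vec q}$ characterization via $\vec v\in A_{\vec q}$ giving $v_j^{-q_j'}$ conditions and Hölder) reduces matters to an $m$-linear averaging operator in $x_1$ applied to $\|f_j(y_j,\cdot)\|_{L^{q_j}(v_j^{q_j})}$. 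That operator is controlled by $\mathcal{M}^{(1)}$, and the weighted bound for $\mathcal{M}^{(1)}$ then finishes the implication. The main obstacle is exactly this interchange when $q<1$. There I would first try to dominate by a sparse/dyadic version: use the three-lattice trick and the $A_{\vec q}$ bound for the dyadic multilinear maximal function, established via a vector-valued extrapolation in the $x_1$ variable (Rubio de Francia extrapolation with $L^q(v^q)$-valued functions).

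For (\ref{list:uv}) $\Rightarrow$ (\ref{list:R}), the operator $\mathcal{R}^1_{j_1}\otimes\mathcal{R}^2_{j_2}$ is the composition of $\mathcal{R}^2_{j_2}$ acting in $x_2$, producing an $m$-linear form in functions of $x_2$, with $\mathcal{R}^1_{j_1}$ in $x_1$. Here we apply the multilinear Calderón--Zygmund weighted theory in $x_2$ to get boundedness into $L^q(v^q)$. We then use the $L^{q_j}(v_j^{q_j})$-valued extension of $\mathcal{R}^1_{j_1}$ with weights $\vec u\in A_{\vec p}$, which follows from multilinear extrapolation for $m$-linear CZOs to Banach-lattice (or $\ell$-valued) settings. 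Both routes amount to proving a mixed-norm version of multilinear extrapolation in one variable, which is where the real work lies.

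For the converses, we test with product functions $f_j(x_1,x_2)=g_j(x_1)h_j(x_2)$. Then $\mathcal{M}_{\mathcal R}(\vec f)=\mathcal{M}^{(1)}(\vec g)\,\mathcal{M}^{(2)}(\vec h)$ and the Riesz transform factors likewise, while the mixed norms factor as products. Fixing $\vec h$ with nonzero finite norms shows that $\mathcal{M}^{(1)}$ (respectively $\mathcal{R}^1_{j_1}$) is bounded $\prod L^{p_j}(u_j^{p_j})\to L^p(u^p)$, and symmetrically in $x_2$. The one-parameter necessity results then yield $\vec u\in A_{\vec p}$ and $\vec v\in A_{\vec q}$. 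For the Riesz transform this necessity step is the standard argument of testing on functions supported on separated cubes, using nondegeneracy of the kernel in direction $j$.
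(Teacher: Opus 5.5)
Your converse directions are fine and close to the paper's. Test with tensor products $f_j=g_j\otimes h_j$ and use the identities $\mathcal{M}_{\mathcal R}(\vec f)=\mathcal{M}^{(1)}(\vec g)\,\mathcal{M}^{(2)}(\vec h)$ (and the analogue for $\mathcal{R}^1_{j_1}\otimes\mathcal{R}^2_{j_2}$) together with $\|g\otimes h\|_{L^p(u^p;L^q(v^q))}=\|g\|_{L^p(u^p)}\|h\|_{L^q(v^q)}$. Once you fix $\vec h$ (e.g.\ truncated indicators) with finite, nonzero norms, this reduces everything to the one-parameter necessity results. The paper uses the explicit tensor test functions $\mathbf{1}_{I_1}\otimes\mathbf{1}_{I_2}\,u_j^{-p_j'}v_j^{-q_j'}$ and separates the two factors by Lebesgue differentiation.

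The gap is in (1)$\Rightarrow$(2) and (1)$\Rightarrow$(3). Your bound
\[
\mathcal{M}_{\mathcal R}(\vec f)(x)\le\sup_{I_1\ni x_1}\fint_{I_1^m}\mathcal{M}^{(2)}\big(f_1(y_1,\cdot),\dots,f_m(y_m,\cdot)\big)(x_2)\,d\vec y
\]
is correct. However, the supremum over $I_1$ is taken pointwise in $x_2$, so you cannot pass the $L^q(v^q)$-norm inside it: $\|\sup_{I_1}\Phi_{I_1}\|_{L^q(v^q)}$ is not controlled by $\sup_{I_1}\|\Phi_{I_1}\|_{L^q(v^q)}$. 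This fails for every $q$, not only for $q<1$, since Minkowski only handles the $\vec y$-integral for a fixed $I_1$. What you would actually need is an $L^q(v^q)$-valued weighted bound for a multilinear maximal operator in $x_1$ whose ``product'' is the $m$-linear map $\mathcal{M}^{(2)}$ on $x_2$-slices. That is essentially the statement you are trying to prove.

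The same obstruction appears for the Riesz transform. $\mathcal{R}^1_{j_1}\otimes\mathcal{R}^2_{j_2}$ is $\mathcal{R}^1_{j_1}$ with the pointwise product $f_1(y_1)\cdots f_m(y_m)$ replaced by $\mathcal{R}^2_{j_2}$ acting on the slices. This coupled object is not one of the diagonal extensions $\big(\sum_k|T(f_1^k,\dots,f_m^k)|^r\big)^{1/r}$ that multilinear (lattice-valued) extrapolation produces. Your fallback (three-lattice trick, ``vector-valued extrapolation in $x_1$'') never says where an initial mixed-norm estimate to extrapolate from would come from, and you explicitly leave ``the real work'' undone.

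The missing idea is the paper's Theorem~\ref{thm:RdF-bdd}, which needs no iterated or vector-valued operators.
\begin{itemize}
\item Start from the genuinely bi-parameter weighted bounds with multiple weights $A_{\vec s}(\Rnn)$: Grafakos--Liu--P\'erez--Torres for $\mathcal{M}_{\mathcal R}$, and Li--Martikainen--Vuorinen for $\mathcal{R}^1_{j_1}\otimes\mathcal{R}^2_{j_2}$.
\item Extrapolate on $\Rnn$ to the exponent $\vec q$. Observe that $\vec u\in A_{\vec q}(\R^{n_1})$ and $\vec v\in A_{\vec q}(\R^{n_2})$ give $\vec u\otimes\vec v\in A_{\vec q}(\Rnn)$, because the characteristic over $I_1\times I_2$ factors.
\item By Fubini this becomes the scalar inequality $\|Fu\|_{L^q(\R^{n_1})}\lesssim\prod_j\|F_ju_j\|_{L^{q_j}(\R^{n_1})}$, where $F(x_1)=\|T(\vec f)(x_1,\cdot)\|_{L^q(v^q)}$ and $F_j(x_1)=\|f_j(x_1,\cdot)\|_{L^{q_j}(v_j^{q_j})}$. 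It holds for all $\vec u\in A_{\vec q}(\R^{n_1})$.
\item Extrapolation is a statement about families of tuples of nonnegative functions. So the one-parameter multilinear extrapolation theorem of Li--Martell--Ombrosi, applied in $x_1$ to the tuples $(F,F_1,\dots,F_m)$, yields every $\vec p\in(1,\infty]^m$ and every $\vec u\in A_{\vec p}(\R^{n_1})$ at once.
\end{itemize}
The diagonal case $\vec p=\vec q$ obtained via Fubini is exactly the starting estimate your plan was missing.
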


\subsection{Proof strategy of Theorem \ref{thm:RdF-cpt}}
Our ultimate goal is to establish the extrapolation theorem for compactness on mixed-norm spaces; prior to this, we begin with the simplest setting to illustrate the  main idea. Suppose that $T$ is bounded on $L^{p_0}(w_0^{p_0})$ and $T$ is compact on $L^{p_1}(w_1^{p_1})$, where $1<p_0 \ne q_0 < \infty$, $w_0^{p_0} \in A_{p_0}$, and $w_1^{p_1} \in A_{p_1}$, then one can obtain the compactness of $T$ on $L^p(w^p)$ by interpolation (cf. \cite[Theorem 9]{CK}), where $\frac1p = \frac{1 - \theta}{p_0} + \frac{\theta}{p_1}$ and $w = w_0^{1 - \theta} w_1^{\theta}$. Moreover, when fixing $p_1$ and $w_1$ while varying $p_0 \in (1, \infty)$, we obtain the weighted compactness for the full range $p \in (1, \infty)$, but the weight $w$ is not arbitrary in this case. Therefore, for any target $p \in (1, \infty)$ and $w \in A_p$, one has to find suitable exponent $p_0 \in (1, \infty)$ and weight $w_0 \in A_{p_0}$ satisfying interpolation condition (cf. eq. \eqref{PST-3}). Our proof is carried out under such idea. This means that to prove Theorem \ref{thm:RdF-cpt}, we first present a multilinear interpolation of compactness on weighted mixed-norm spaces as follows.

\begin{theorem}\label{thm:ICPT}
Let $p_0, q_0, \widetilde{p}_0, \widetilde{q}_0 \in (0, \infty)$ and $p_j, q_j, \widetilde{p}_j, \widetilde{q}_j \in [1, \infty]$, $j = 1, \ldots, m$. Let $u_0^{p_0}, v_0^{q_0} \in A_{\infty}(\R^{n_1})$, $\widetilde{u}_0^{\widetilde{p}_0}, \widetilde{v}_0^{\widetilde{q}_0} \in A_{\infty}(\R^{n_2})$, $u_j$ and $v_j$ be weights on $\R^{n_1}$, $\widetilde{u}_j$ and $\widetilde{v}_j$ be weights on $\R^{n_2}$, $j=1, \ldots, m$. Let $T: \mathscr{S} \times \cdots \times  \mathscr{S} \rightarrow \mathscr{M}$ be an $m$-linear operator, where $\mathscr{S}$ is the collection of all simple functions on $\Rnn$, and $\mathscr{M}$ is the set of all measurable functions on $\Rnn$. 
Assume that 
\begin{align}
\label{eq:ICTP-1} &T \text{ is bounded from $L^{p_1}(u_1^{p_1}; L^{\widetilde{p}_1}(\widetilde{u}_1^{\widetilde{p}_1})) \times \cdots \times L^{p_m}(u_m^{p_m}; L^{\widetilde{p}_m}(\widetilde{u}_m^{\widetilde{p}_m}))$ to $L^{p_0}(u_0^{p_0}; L^{\widetilde{p}_0}(\widetilde{u}_0^{\widetilde{p}_0}))$}, 
\\
\label{eq:ICTP-2} &T \text{ is compact from $L^{q_1}(v_1^{q_1}; L^{\widetilde{q}_1}(\widetilde{v}_1^{\widetilde{q}_1})) \times \cdots \times L^{q_m}(v_m^{q_m}; L^{\widetilde{q}_m}(\widetilde{v}_m^{\widetilde{q}_m}))$ to $L^{q_0}(v_0^{q_0}; L^{\widetilde{q}_0}(\widetilde{v}_0^{\widetilde{q}_0}))$}. 
\end{align}
Then, $T$ can be extended as a compact operator from $L^{r_1}(w_1^{r_1}; L^{\widetilde{r}_1}(\widetilde{w}_1^{\widetilde{r}_1})) \times \cdots \times L^{r_m}(w_m^{r_m}; L^{\widetilde{r}_m}(\widetilde{w}_m^{\widetilde{r}_m}))$ to $L^{r_0}(w_0^{r_0}; L^{\widetilde{r}_0}(\widetilde{w}_0^{\widetilde{r}_0}))$ for all exponents satisfying 
\begin{equation}\label{eq:exp}
\begin{aligned}
\theta \in (0, 1),\quad & \frac{1}{r_j} = \frac{1 - \theta}{p_j} + \frac{\theta}{q_j},
\quad w_j = u_j^{1 - \theta} v_j^{\theta}, \quad j = 0, \ldots, m,
\\
\text{and} \quad & \frac{1}{\widetilde{r}_j} = \frac{1 - \theta}{\widetilde{p}_j} + \frac{\theta}{\widetilde{q}_j}, 
\quad \widetilde{w}_j = \widetilde{u}_j^{1 - \theta} \widetilde{v}_j^{\theta}, \quad j = 0, \ldots, m.
\end{aligned}
\end{equation}
\end{theorem}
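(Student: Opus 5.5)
We follow the scheme of Cwikel--Kalton and its multilinear refinement in \cite{COY}, adapted to mixed-norm spaces. Write $X_j^0 := L^{p_j}(u_j^{p_j}; L^{\widetilde{p}_j}(\widetilde{u}_j^{\widetilde{p}_j}))$ and $X_j^1 := L^{q_j}(v_j^{q_j}; L^{\widetilde{q}_j}(\widetilde{v}_j^{\widetilde{q}_j}))$, with the analogous spaces $Y^0, Y^1$ on the target side.

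\textbf{Step 1: boundedness at the intermediate exponents.} First we obtain boundedness of $T$ from $\prod_j [X_j^0, X_j^1]_\theta$ to $[Y^0, Y^1]_\theta$. We identify the complex interpolation space of weighted mixed-norm spaces as $L^{r_j}(w_j^{r_j}; L^{\widetilde{r}_j}(\widetilde{w}_j^{\widetilde{r}_j}))$. These are Banach or quasi-Banach lattices, so the Calder\'on product description applies, and Stein--Weiss type identities in each variable give the weights $u^{1-\theta}v^\theta$. We then invoke a multilinear interpolation theorem for boundedness on multi-mixed-norm spaces. The $A_\infty$ hypotheses on the target weights are what we need to handle the quasi-Banach range $p_0, q_0 < 1$: they make the target lattices $\sigma$-order continuous with suitable convexity, so that Calder\'on-type (lattice) interpolation of the multilinear operator is legitimate. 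The extension from simple functions to the full spaces then follows by density.

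\textbf{Step 2: a compactness criterion on mixed-norm spaces.} Next we establish a characterization of precompactness in $L^{r}(w^{r}; L^{\widetilde r}(\widetilde w^{\widetilde r}))$. The criterion is a Kolmogorov--Riesz/Fr\'echet type condition: the family must be uniformly bounded, uniformly small outside large rectangles, and uniformly approximated by averaging operators $E_k f = \sum_{R \in \mathcal{D}_k} \langle f\rangle_R \mathbf{1}_R$ over dyadic rectangles. Equivalently, $\sup_{f\in\mathcal{F}}\|f - \mathbf{1}_{Q_N} E_k(\mathbf{1}_{Q_N} f)\| \to 0$. The $A_\infty$ property of the product weights gives the uniform boundedness of the $E_k$ that such a criterion requires.

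\textbf{Step 3: interpolation of compactness.} Approximate $T$ by the finite rank operators $P_k T$, where $P_k f := \mathbf{1}_{Q_k} E_k(\mathbf{1}_{Q_k} f)$. The goal is
\[
\|T - P_k T\|_{\prod_j X_j^\theta \to Y^\theta} \to 0 .
\]
By compactness on the $1$-side together with Step 2, $\|T - P_kT\|_{X^1 \to Y^1} \to 0$. On the $0$-side, $\|T - P_k T\|_{X^0 \to Y^0} \lesssim 1$ uniformly in $k$, using the uniform boundedness of $P_k$ on $Y^0$. Applying the multilinear interpolation of Step 1 to the multilinear operator $T - P_kT$ then yields
\[
\|T - P_k T\|_{X^\theta\to Y^\theta} \lesssim \|T-P_kT\|_{X^0\to Y^0}^{1-\theta}\,\|T-P_kT\|_{X^1\to Y^1}^{\theta} \to 0 .
\]
Since each $P_kT$ maps bounded sets into a bounded subset of a finite-dimensional space, $T$ is a limit of compact operators and is therefore compact.

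\textbf{Main obstacle.} The delicate point is the claim $\|T-P_kT\|_{X^1\to Y^1}\to 0$. Compactness of $T$ only gives precompactness of the image, and deducing uniform convergence of $P_k$ on that image requires the ``only if'' direction of Step 2. That direction needs $P_k \to I$ strongly on $Y^1$ with uniform bounds, and in the mixed-norm, possibly quasi-Banach case this must be done iteratively. We first prove it in the inner variable $x_2$, using Lebesgue differentiation and the Fefferman--Stein-type boundedness of $E_k$ for $A_\infty$-weights, applied to power-raised functions when $\widetilde q_0<1$. We then extend to the outer variable via dominated convergence. If $P_k$ is not bounded for merely $A_\infty$ weights when the exponent is at most $1$, the fallback is to replace $E_k$ by a conditional expectation with respect to the weighted measure $\widetilde v_0^{\widetilde q_0} dx_2$. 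Such conditional expectations are contractive on all weighted $L^r$ with the matching weight, so the same scheme goes through.
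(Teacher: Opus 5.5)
There is a genuine gap in Step 3. It sits in the ``only if'' half of Step 2, on which Step 3 relies.

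Your scheme needs a single family of linear finite-rank operators $P_k=\mathbf{1}_{Q_k}E_k\mathbf{1}_{Q_k}$ with two properties: it must be uniformly bounded on $Y^0=L^{p_0}(u_0^{p_0};L^{\widetilde p_0}(\widetilde u_0^{\widetilde p_0}))$, and it must converge strongly to the identity on $Y^1$. The hypotheses do not provide such a family.

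The target exponents are only assumed to lie in $(0,\infty)$. The quasi-Banach case is exactly the one used later: in Theorem~\ref{thm:RdF-cpt}, $\frac1p=\sum_j\frac1{p_j}$ may exceed $1$.

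\begin{itemize}
\item If $\min\{p_0,\widetilde p_0\}<1$, the functionals $f\mapsto\langle f\rangle_R$ are not even defined on $Y^0$. In fact $Y^0$ then has trivial dual, because the measures are non-atomic. So no nonzero finite-rank operator is bounded on $Y^0$, and $\|T-P_kT\|_{X^0\to Y^0}\lesssim1$ cannot come from any such $P_k$.
\item Even when all target exponents are at least $1$, uniform boundedness of dyadic averaging on $L^{p_0}(u_0^{p_0})$ is a dyadic $A_{p_0}$-type condition (an $A_1$-type condition if $p_0=1$). The assumption $u_0^{p_0}\in A_\infty$ does not give this. The same objection applies to strong convergence on $Y^1$.
\end{itemize}

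Your fallback does not repair this. Conditional expectations with respect to $\widetilde v_0^{\widetilde q_0}\,dx_2$ are contractive only on $L^r$ with $r\ge1$, and only for their own measure. The scheme instead needs the \emph{same} $P_k$ to be bounded on $Y^0$, which carries the weights $u_0^{p_0},\widetilde u_0^{\widetilde p_0}$, and convergent on $Y^1$, which carries $v_0^{q_0},\widetilde v_0^{\widetilde q_0}$.

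The missing idea is not to approximate $T$ at all, but to interpolate the precompactness criterion itself.

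Theorem~\ref{thm:KRQB} characterizes precompactness in these quasi-Banach spaces by three conditions: a uniform bound, uniform tails, and smallness of $\bigl\|[\fint_{B(0,r)}|\tau_yf-f|^a\,dy]^{1/a}\bigr\|$ for small $a$. Passing to the power $a$ moves the problem to $L^{p_0/a}(L^{\widetilde p_0/a})$. There the $A_\infty$ weights become $A_{p_0/a}$ and $A_{\widetilde p_0/a}$ weights, so $M_{\mathcal{R}}$ is available. This is the only place the $A_\infty$ hypotheses enter.

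Each of the three quantities comes from a multilinear operator:
\begin{itemize}
\item $T(\vec f)$;
\item $T(\vec f)\mathbf{1}_{B(0,A)^c}$;
\item $\mathbb{T}(\vec f)(x,y)=\tau_yT(\vec f)(x)-T(\vec f)(x)$, viewed as taking values in the three-layer space $L^{p_0}L^{\widetilde p_0}L^a\bigl(B(0,r),\frac{dy}{|B(0,r)|}\bigr)$, with $a$ chosen small enough for both sides.
\end{itemize}
On the $0$-side these are bounded; the third uses $M_{\mathcal{R}}(|T\vec f|^a)$. On the $1$-side they are $\le\varepsilon$, by compactness and Theorem~\ref{thm:KRQB}.

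The multi-mixed-norm interpolation Theorem~\ref{thm:IBDD} then gives bounds $\lesssim\varepsilon^\theta$ at the intermediate level. That theorem holds for arbitrary weights, so your Step 1 needs no $A_\infty$ input. Applying Theorem~\ref{thm:KRQB} once more yields compactness. This is legitimate because $w_0^{r_0}$ and $\widetilde w_0^{\widetilde r_0}$ are geometric means of $A_\infty$ weights with exponents summing to $1$, and hence are again $A_\infty$.
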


To show Theorem \ref{thm:ICPT}, it is necessary to give a clear characterization of the compactness on weighted mixed-norm spaces. Significantly, the result below proves highly effective for establishing weighted compactness in quasi-Banach range.

\begin{theorem}\label{thm:KRQB}
Let $p, q \in (0, \infty)$, $p_0, q_0 \in (1, \infty)$, $u \in A_{p_0}(\R^{n_1})$, $v \in A_{q_0}(\R^{n_2})$,  and $\K \subset L^p(u; L^q(v))$. Let $0 < a < \min\{p/p_0, q/q_0, 1\}$. Then $\K$ is precompact in $L^p(u; L^q(v))$ if and only if the following are satisfied:
\begin{list}{\rm (\theenumi)}{\usecounter{enumi}\leftmargin=1cm \labelwidth=1cm \itemsep=0.2cm \topsep=0.2cm \renewcommand{\theenumi}{\alph{enumi}}}
 
\item\label{KRQB-1} ${\displaystyle \sup_{f \in \K} \|f\|_{L^p(u; L^q(v)))} < \infty}$, 

\item\label{KRQB-2} ${\displaystyle \lim_{A \to \infty} \sup_{f \in \K} 
\|f \mathbf{1}_{B(0, A)^c}\|_{L^p(u; L^q(v))}=0}$, 

\item\label{KRQB-3} ${\displaystyle \lim\limits_{r \to 0} \sup\limits_{f \in \K} 
\bigg\|\bigg[\fint_{B(0, r)} |\tau_y f - f|^a \, dy \bigg]^{\frac1a} \bigg\|_{L^p(u; L^q(v))} = 0}$.
\end{list}
\end{theorem}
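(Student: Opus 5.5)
We prove both implications of Theorem \ref{thm:KRQB}, working in $X := L^p(u; L^q(v))$. When $\min\{p,q\}<1$ this space is only quasi-Banach. Its $\min\{p,q,1\}$-power is nevertheless a metric, so precompactness is equivalent to total boundedness. We use the standard approximation $f \mapsto f_r := (f \ast \varphi_r)\mathbf{1}_{B(0,A)}$, where $\varphi_r = |B(0,r)|^{-1}\mathbf{1}_{B(0,r)}$ and $B(0,A)$ is a ball in $\Rnn$. The basic tool is a maximal inequality in the quasi-Banach range. The choice $0<a<\min\{p/p_0,q/q_0,1\}$ means exactly that $p/a>p_0$ and $q/a>q_0$, so $u \in A_{p/a}(\R^{n_1})$ and $v \in A_{q/a}(\R^{n_2})$. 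By the iterated structure of $M_{\mathcal{R}}$ (bounded by $M^{(1)}\circ M^{(2)}$) and Fefferman--Stein-type weighted mixed-norm bounds, $M_{\mathcal{R}}$ is then bounded on $L^{p/a}(u; L^{q/a}(v))$. Equivalently,
\[
\big\| (M_{\mathcal{R}}(|g|^a))^{1/a}\big\|_X \lesssim \|g\|_X .
\]
This is what allows averages to be used even though $p$ or $q$ may be below $1$.

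\emph{Sufficiency.} Assume (a)--(c). Fix $\varepsilon>0$. First use (b) to choose $A$ with tails below $\varepsilon$. For each $f\in\K$ on $B(0,A)$, set $P_r f := \big(\fint_{B(0,r)} |\tau_y f|^a\,dy\big)^{1/a}$-type averages. Concretely, replace $f$ by its $a$-average $\fint_{B(0,r)} f(\cdot+y)\,dy$ and compare the two pointwise:
\[
\Big|f(x)-\fint_{B(0,r)}\tau_y f(x)\,dy\Big|
\le \fint |\tau_y f - f|\,dy .
\]
The right side is not controlled by the $a$-average directly when $a<1$. So we instead pick a point $y$ in the set where $|\tau_y f(x)-f(x)|$ is small, via Chebyshev, as in the quasi-Banach Kolmogorov--Riesz theorem. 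The cleaner route is this. By (c) and Fubini, $\fint_{B(0,r)}\|\tau_y f - f\|_X^{\,\cdot}$ is small in an averaged sense. Condition (c) also yields that the family $\{\fint_{B(0,r)}\tau_y f\,dy\}$ restricted to $B(0,A)$ approximates $\K$. For each fixed $r$, this family is uniformly bounded and equicontinuous as functions on $\overline{B(0,A)}$. Here we use (a) together with the local integrability bound $\int_{B}|f|^a \lesssim$ norm, which follows from $u\in A_{p/a}$, $v\in A_{q/a}$ via Hölder with $u^{-a/(p-a)}$, and similarly for $v$. Arzelà--Ascoli then gives a finite $\varepsilon$-net in sup norm, and hence in $X$, since $\mathbf{1}_{B(0,A)}\in X$. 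The step of passing from the $a$-averaged difference in (c) to an honest approximation is the main obstacle. We will handle it with a ``median over $y$'' argument. For each $x$, the set of $y\in B(0,r)$ with $|\tau_yf(x)-f(x)|\le 2^{1/a}D_r f(x)$ has measure at least half, where $D_rf$ is the quantity in (c). Averaging $f$ over such $y$ and then applying Arzelà--Ascoli to the $L^a$-local means $(\fint_{B(0,r)}|\tau_y f|^a)^{1/a}$ controls the error by $D_rf$ in $X$.

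\emph{Necessity.} Assume $\K$ is precompact. Property (a) is immediate. For (b) and (c), by the $\varepsilon$-net argument and quasi-triangle inequality it suffices to verify them for a single $f$, and by density for $f\in C_c(\Rnn)$. This density holds since $u,v$ are locally integrable doubling weights and $p,q<\infty$. Property (b) for one $f$ is dominated convergence. For (c), the operator $f\mapsto D_rf$ satisfies the pointwise bound
\[
D_r f \lesssim (M_{\mathcal{R}}(|f|^a))^{1/a}+|f| ,
\]
so by the maximal inequality above it is uniformly bounded on $X$ (up to quasi-constants). For continuous compactly supported $g$, $D_r g\to0$ uniformly with supports in a fixed ball, hence in $X$. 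Combining, $\sup_{f\in\K} \|D_r f\|_X\le C\varepsilon+o(1)$, which gives (c).
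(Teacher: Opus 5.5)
Your necessity argument is fine and matches the paper's: density of $\mathscr{C}_c^\infty$, the bound $D_rf\lesssim (M_{\mathcal R}(|f|^a))^{1/a}+|f|$, and boundedness of $M_{\mathcal R}$ on $L^{p/a}(u;L^{q/a}(v))$.

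The sufficiency direction has a genuine gap, exactly at the step you flag as the main obstacle.

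First, the linear averages $\fint_{B(0,r)}\tau_yf\,dy$ that you feed into Arzel\`a--Ascoli need not exist. If $\min\{p,q\}<1$, elements of $L^p(u;L^q(v))$ need not be locally integrable: take $u=v=1$, $p<1$ and $f=|x_1|^{-n_1}\mathbf 1_{\{|x_1|<1\}}\otimes\mathbf 1_{\{|x_2|<1\}}$. The local bound you invoke controls only $\int_B|f|^a$.

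Second, the ``median over $y$'' device yields no equicontinuous family. The function $\fint_{G_x}\tau_yf(x)\,dy$, with the good set $G_x$ depending on both $x$ and $f$, has no uniform modulus of continuity. Moreover, off $G_x$ the $L^1$-average of $|\tau_yf-f|$ is not controlled by $D_rf$ when $a<1$.

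Third, the $L^a$-means $m_rf:=\big(\fint_{B(0,r)}|\tau_yf|^a\,dy\big)^{1/a}$ are the right continuous objects. However, they approximate $|f|$ rather than $f$. Pointwise, (c) gives only $\big||f|^a-(m_rf)^a\big|\le (D_rf)^a$, and your sketch never converts this into smallness of $\|f-m_rf\|$. Here and below $\|\cdot\|$ is the quasi-norm of $L^p(u;L^q(v))$.

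Your ``Fubini'' remark also runs the wrong way. Since $p/a,q/a>1$, Minkowski's inequality gives $\|D_rf\|^a\le\fint_{B(0,r)}\|\tau_yf-f\|^a\,dy$, not the converse.

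Two ingredients are missing.

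\emph{Reduction to $f\ge0$.} Write $f=f^+-f^-$. Conditions (a)--(c) and precompactness transfer to $\K^\pm$, because $|f^\pm-g^\pm|\le|f-g|$ and $(\tau_yf)^\pm=\tau_yf^\pm$.

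\emph{Passing from $f^a$ back to $f$.} The paper proves that $\K$ is precompact in $L^p(u;L^q(v))$ iff $\K^a=\{f^a\}$ is precompact in $L^{p/a}(u;L^{q/a}(v))$. It uses $|s-t|^a\le a^{-1}\big(\frac{s+t}{|s-t|}\big)^{1-a}|s^a-t^a|$ on the set where $(f+g)/|f-g|\le1/\varepsilon$, and $|f-g|\le\varepsilon(f+g)$ elsewhere. Since $|f^a-\langle f^a\rangle_{B(x,r)}|\le\fint_{B(0,r)}|\tau_yf-f|^a\,dy$, hypotheses (a)--(c) for $\K$ become the Banach-range hypotheses for $\K^a$. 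Theorem~\ref{thm:pq1infty} then applies, because $u\in A_{p/a}(\R^{n_1})$ and $v\in A_{q/a}(\R^{n_2})$.

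Alternatively, your own route can be repaired directly. For $f\ge0$, the mean value theorem for $\lambda\mapsto\lambda^{1/a}$ gives $|f-m_rf|\le a^{-1}(f+m_rf)^{1-a}(D_rf)^a$. H\"older in both layers, together with $\|m_rf\|\lesssim\|f\|$, then yields $\|f-m_rf\|\lesssim\|f\|^{1-a}\|D_rf\|^a$, which is uniformly small on $\K$. Finally, Arzel\`a--Ascoli applied to $\{m_rf\}$ on $\overline{B(0,A)}$ finishes the argument. Equicontinuity for fixed $r$ follows from the Banach-range argument applied to $f^a$.
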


Besides, in view of Theorem \ref{thm:KRQB}, to achieve Theorem \ref{thm:ICPT}, one has to work in multiple mixed-norm spaces, which are generalized in the following way.

For any integer $N \ge 1$, let $\p = (p_1, \ldots, p_N)$ be an $N$-tuple, and $\w = (w_1, \ldots, w_N)$ be an $N$-tuple of weights $w_i$ on measure spaces $(\Sigma_k, \mu_k)$, $k = 1, \ldots, N$; then on the product space $(\bm{\Sigma}, \bm{\mu}) := \big(\prod_{k=1}^N \Sigma_k, \, \prod_{k=1}^N \mu_k \big)$, the weighted mixed-norm is defined by
\begin{align*}
\|f\|_{\L^{\p}(\w)} 
:= \bigg[\int_{\Sigma_1} \cdots \bigg(\int_{\Sigma_N} |f(x_1, \ldots, x_N)|^{p_N}  w_N(x_N)\, d\mu_N(x_N) \bigg)^{\frac{p_{N-1}}{p_N}} \cdots \, w_1(x_1) \, d\mu_1(x_1)\bigg]^{\frac{1}{p_1}}.
\end{align*}

A further crucial ingredient in the proof of Theorem \ref{thm:ICPT} is a multilinear interpolation of boundedness on multiple mixed-norm spaces below.

\begin{theorem}\label{thm:IBDD}
Let $\p_0, \q_0 \in (0, \infty)^{N_0}$, $\p_j, \q_j \in [1, \infty]^{N_j}$, and $(\bm{\Sigma}_j, \bm{\mu}_j) := \big(\prod_{k=1}^{N_j} \Sigma_{j, k}, \, \prod_{k=1}^{N_j} \mu_{j, k} \big)$ be a product space, $j = 1, \ldots, m$. Let $T: \mathbf{S}_1 \times \cdots \times  \mathbf{S}_m \rightarrow \mathbf{M}_0$ be an $m$-linear operator, where $\mathbf{S}_j$ is the collection of all simple functions on $(\bm{\Sigma}_j, \bm{\mu}_j)$, and $\mathbf{M}_0$ is the set of all measurable functions on $(\bm{\Sigma}_0, \bm{\mu}_0)$. Assume that 
\begin{align}
\label{eq:IBDD-1} &T \text{ is bounded from $\L^{\p_1}(\u_1^{\p_1}) \times \cdots \times \L^{\p_m}(\u_m^{\p_m})$ to $\L^{\p_0}(\u_0^{\p_0})$ with constant $M_0$}, 
\\
\label{eq:IBDD-2} &T \text{ is bounded from $\L^{\q_1}(\v_1^{\q_1}) \times \cdots \times \L^{\q_m}(\v_m^{\q_m})$ to $\L^{\q_0}(\v_0^{\q_0})$ with constant $M_1$}. 
\end{align}
Then, $T$ can be extended as a bounded operator from $\L^{\r_1}(\w_1^{\r_1}) \times \cdots \times \L^{\r_m}(\w_m^{\r_m})$ to $\L^{\r_0}(\w_0^{\r_0})$ with constant $M_0^{1-\theta} M_1^{\theta}$, for all exponents satisfying
\begin{align}\label{eq:pqr}
\theta \in (0, 1),\quad & \frac{1}{\r_j} = \frac{1 - \theta}{\p_j} + \frac{\theta}{\q_j},
\quad \text{and} \quad \w_j = \u_j^{1 - \theta} \v_j^{\theta}, \quad j = 0, \ldots, m.
\end{align}
\end{theorem}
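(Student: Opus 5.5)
Our plan is to prove Theorem \ref{thm:IBDD} by a multilinear complex interpolation argument of Stein--Weiss type, adapted to iterated mixed norms through duality at each level. It suffices to test $T$ on simple functions. Fix simple $f_j \in \mathbf{S}_j$ with $\|f_j\|_{\L^{\r_j}(\w_j^{\r_j})} = 1$. We aim to show
\[
\|T(f_1,\dots,f_m)\|_{\L^{\r_0}(\w_0^{\r_0})} \le M_0^{1-\theta} M_1^{\theta},
\]
and then extend by density. Where density of simple functions fails (components with $\infty$ exponents), we restrict to the closure of the simple functions.

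\textbf{Step 1 (analytic families).} For each $j$ we build a family $f_{j,z}$, $z$ in the strip $0\le \Re z\le 1$. This uses the standard mixed-norm version of the power trick (Benedek--Panzone). Write the iterated norm recursively through partial norms $F_{j,k}(x_1,\dots,x_k)$, obtained by integrating out the last $N_j-k$ variables. Set
\[
f_{j,z} = |f_j|^{\alpha_N(z)} \,\mathrm{sgn} f_j \prod_{k=1}^{N_j-1} F_{j,k}^{\alpha_k(z)}\cdot \prod_k w_{j,k}^{\beta_k} u_{j,k}^{-\gamma_k(z)} v_{j,k}^{-\delta_k(z)},
\]
with exponents affine in $z$. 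They are chosen so that $\|f_{j,it}\|_{\L^{\p_j}(\u_j^{\p_j})}=1$, $\|f_{j,1+it}\|_{\L^{\q_j}(\v_j^{\q_j})}=1$, and $f_{j,\theta}=f_j$. This is exactly the linear-in-$1/p$ computation, done level by level, with the weights absorbed as $|f| w \mapsto$ unweighted functions. Indeed, $\L^{\p}(\u^{\p})$ is isometric to an unweighted $\L^{\p}$ via $f\mapsto f\prod u_k$. So we first reduce to unweighted spaces by conjugating $T$ with these multiplications, replacing $T$ by $\tilde T(g)=u_0\,T(g/u)$ at endpoint $0$ and similarly at endpoint $1$. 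Since the weights differ at the two endpoints, we keep the factor $u_j^{-(1-z)}v_j^{-z}$ inside the family, which is analytic in $z$.

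\textbf{Step 2 (the target side, quasi-Banach case).} When all components of $\r_0$ are at least $1$, we dualize with the iterated Hölder/Köthe duality of mixed norms. We test against simple $g$ with an analogous analytic family $g_z$ in the dual mixed space $\L^{\r_0'}$. The function
\[
\Phi(z)=\int T(f_{1,z},\dots,f_{m,z})\,g_z\,w\text{-factors}
\]
is entire and of admissible growth, because everything is a finite sum of exponentials times fixed integrals by multilinearity. The three lines lemma then gives the bound $M_0^{1-\theta}M_1^\theta$. For components of $\p_0,\q_0$ below $1$ duality is unavailable. Instead we use that $z\mapsto \log\|G_z\|$ is subharmonic for analytic families of simple-function combinations $G_z=T(f_{1,z},\dots)$ in iterated $L^{\p}$ quasi-norms. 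Equivalently, we apply the result of Sagher / Cwikel--Sagher, or a level-by-level iteration of the scalar fact that $\log \int |G_z|^{p}$ is subharmonic for $p>0$. We prove this subharmonicity for the mixed norm by induction on $N_0$. The inner function $\log F_{N}(z,\cdot)$ is subharmonic pointwise, and the power $p_{k-1}/p_k$ and integration preserve log-subharmonicity. This is because $\log\int e^{h}$ is subharmonic when $h$ is subharmonic in $z$, which is a Hölder-type argument. The three lines lemma for subharmonic functions then concludes.

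\textbf{Main obstacle.} The hardest step is Step 2 in the quasi-Banach range: justifying log-subharmonicity and the growth condition needed for the three lines lemma in iterated quasi-norms, with exponent $\r_0$ varying with $z$. We would first try to handle the variable target exponent via the standard device of considering $\|G_z\|$ raised to the power $1/\r_0(z)$ componentwise. We would then use that $T$ applied to our families is a finite exponential sum, which ensures boundedness on the strip. A secondary technical point is the $\infty$ exponents in $\p_j,\q_j$. There the power trick degenerates, and we treat those levels by taking the corresponding factor independent of $z$.
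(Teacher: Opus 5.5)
Your Banach-range argument (power-trick families for the $f_j$ and for a dual test function, then the three lines lemma) is the standard one. The proposal nevertheless has two genuine gaps. The first is exactly the step you flag as the main obstacle and leave open.

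\textbf{Quasi-Banach target.} Your induction correctly shows that $z\mapsto\log\|G_z\|_{\L^{\p}}$ is subharmonic for a \emph{fixed} $\p$, but that cannot prove the theorem. Your boundary information is $\|T(\vec f_{it})\|_{\L^{\p_0}(\u_0^{\p_0})}\le M_0$ on one line and $\|T(\vec f_{1+it})\|_{\L^{\q_0}(\v_0^{\q_0})}\le M_1$ on the other. These live in two different quasi-normed spaces with different weights, so a subharmonic function built from one fixed quasi-norm does not see both. Your remedy, raising $\|G_z\|$ to the power $1/\r_0(z)$ componentwise, is not made precise, and no such function is shown to be subharmonic with boundary values controlled by both hypotheses. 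Separately, your Banach case must be conditioned on $\p_0,\q_0\ge 1$ componentwise, not on $\r_0\ge 1$, because the boundary H\"older step uses the endpoint exponents.

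The paper avoids the case split entirely:
\begin{enumerate}
\item Choose $\tau$ with $\tau p_{0,k},\tau q_{0,k}>1$ for all $k$.
\item Write $\|H\|_{\L^{\r_0}}^{1/\tau}=\sup_g\int|H|^{1/\tau}g\,d\bm{\mu}_0$, where the supremum runs over nonnegative simple $g$ in the unit ball of $\L^{(\tau\r_0)'}$ (Benedek--Panzone duality).
\item Build the mixed-norm power family $G_z$ of $g$, normalized in $\L^{(\tau\p_0)'}$ on $\Re z=0$ and in $\L^{(\tau\q_0)'}$ on $\Re z=1$.
\item Study
\[
\Phi_\ell(z)=\int_{\bm{\Sigma}_0}\Big|e^{\tau(z^2-1)/\ell}M_0^{z-1}M_1^{-z}\,T(\vec F_z)\otimes_{k=1}^{N_0}\big(u_{0,k}^{1-z}v_{0,k}^{z}\big)\,G_z^{\tau}\Big|^{1/\tau}\,d\bm{\mu}_0 .
\]
This is subharmonic as an integral of $|\text{holomorphic}|^{1/\tau}$ and bounded on the strip. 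It is $\le 1$ on both lines by H\"older in $\L^{\tau\p_0}$, respectively $\L^{\tau\q_0}$, together with the hypotheses.
\item The maximum principle at $z=\theta$, then $\ell\to\infty$ and the supremum over $g$, give the bound.
\end{enumerate}
This is the missing idea.

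\textbf{General weights.} $T$ is only defined on simple functions, but your families contain the factors $u_{j,k}^{-\gamma_k(z)}v_{j,k}^{-\delta_k(z)}$, which are not simple unless the weights are. So $T(f_{1,z},\dots,f_{m,z})$ is undefined for general weights. Your claim that $\Phi$ is a finite sum of exponentials times fixed integrals (your source of analyticity and boundedness on the strip) holds only for simple weights. Conjugating $T$ by the weights has the same defect.

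The paper handles this in three steps:
\begin{enumerate}
\item It first proves the estimate for simple weights.
\item It then approximates general weights. On sets $\bm{F}_j^\ell$ where the weights lie in $[2^{-\ell},2^{\ell}]$, it replaces the input weights by simple weights lying above them and the output weights by simple weights lying below them. Both endpoint hypotheses then survive with the same $M_0,M_1$ for inputs supported in $\bm{F}_j^\ell$. It passes to the limit by dominated convergence and Fatou.
\item It extends $T$ by a Cauchy-sequence argument, using that simple functions in $\L^{\p_j}(\u_j^{\p_j})\cap\L^{\q_j}(\v_j^{\q_j})$ are dense in $\L^{\r_j}(\w_j^{\r_j})$ (Lemma \ref{lem:LpLqdense}).
\end{enumerate}
Your ``extend by density'' needs this specific density class, since the hypotheses apply only to functions lying in both endpoint spaces.
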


\subsection{Applications of Theorem \ref{thm:RdF-cpt}}
Let us present several applications of Theorem \ref{thm:RdF-cpt}, which clearly illustrate the effectiveness and flexibility of extrapolation theorem. 

Our first application is multilinear bi-parameter singular integral operators (cf. \cite{CY, LMV20}), for which $T1$ type assumptions are given in \cite{CY}.

\begin{theorem}\label{thm:CZO}
Let $T$ be an $m$-linear bi-parameter singular integral operator. Assume that $T$ satisfies the following hypotheses: 
\begin{list}{\rm (\theenumi)}{\usecounter{enumi}\leftmargin=1.3cm \labelwidth=1cm \itemsep=0.1cm \topsep=0.2cm \renewcommand{\theenumi}{H\arabic{enumi}}}

\item\label{H1} $T$ admits the compact full kernel representation,

\item\label{H2} $T$ admits the compact partial kernel representation,

\item\label{H3} $T$ satisfies the weak compactness property,

\item\label{H4} $T$ satisfies the diagonal $\CMO$ condition,

\item\label{H5} $T$ satisfies the product $\CMO$ condition.
\end{list}
Then $T$ is compact from $L^{p_1}(u_1^{p_1}; L^{q_1}(v_1^{q_1})) \times \cdots \times L^{p_m}(u_m^{p_m}; L^{q_m}(v_m^{q_m}))$ to $L^p(u^p; L^q(v^q))$ for all $\vec{p} = (p_1, \ldots, p_m) \in (1, \infty]^m$, for all $\vec{q} = (q_1, \ldots, q_m) \in (1, \infty]^m$, for all $\vec{u} = (u_1, \ldots, u_m) \in A_{\vec{p}}(\R^{n_1})$, and for all $\vec{v} = (v_1, \ldots, v_m) \in A_{\vec{q}}(\R^{n_2})$, where $\frac1p = \sum_{j=1}^m \frac{1}{p_j} > 0$, $\frac1q = \sum_{j=1}^m \frac{1}{q_j} > 0$, $u = \prod_{j=1}^m u_j$, and $v = \prod_{j=1}^m v_j$.  
\end{theorem}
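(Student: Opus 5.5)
The plan is to obtain Theorem \ref{thm:CZO} directly from Theorem \ref{thm:RdF-cpt}. That requires checking its two hypotheses: \eqref{RdFcpt-1}, weighted boundedness on product spaces for a single exponent tuple $\vec{s}$ and all $\vec{w}\in A_{\vec{s}}(\Rnn)$; and \eqref{RdFcpt-2}, unweighted compactness for one tuple $\vec{t}$. Once both hold, Theorem \ref{thm:RdF-cpt} gives compactness from $L^{p_1}(u_1^{p_1}; L^{q_1}(v_1^{q_1})) \times \cdots \times L^{p_m}(u_m^{p_m}; L^{q_m}(v_m^{q_m}))$ to $L^p(u^p; L^q(v^q))$ on the full stated range of exponents and weights, which is exactly the conclusion.

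For \eqref{RdFcpt-2}, the natural source is \cite{CY}. There, under the compact full and partial kernel representations, the weak compactness property, and the diagonal and product $\CMO$ conditions \eqref{H1}--\eqref{H5}, weighted compactness of multilinear bi-parameter singular integrals is proved. In particular, this yields compactness from $L^{t_1}(\Rnn)\times\cdots\times L^{t_m}(\Rnn)$ to $L^t(\Rnn)$ for some $\vec{t}\in(1,\infty)^m$ with $\frac1t=\sum_j \frac1{t_j}\le 1$, and one may take $w_j\equiv 1$. If only a Banach-range target is available, that is harmless, since Theorem \ref{thm:RdF-cpt} only needs one tuple.

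For \eqref{RdFcpt-1}, I would first observe that the compact versions of the hypotheses imply the usual (noncompact) bi-parameter $T1$ hypotheses. Compact kernel bounds are in particular standard Calder\'on--Zygmund bounds, the weak compactness property implies weak boundedness, and $\CMO\subset\BMO$. The representation theorem for multilinear bi-parameter Calder\'on--Zygmund operators \cite{LMV20} then writes $T$ as an average of dyadic model operators: shifts, partial paraproducts and full paraproducts. The known weighted bounds for these give boundedness from $L^{s_1}(w_1^{s_1})\times\cdots\times L^{s_m}(w_m^{s_m})$ to $L^s(w^s)$ for all $\vec{w}\in A_{\vec{s}}(\Rnn)$ with multilinear bi-parameter weights, for some $\vec{s}$ (for example $s_j = m+1$, giving $s>1$). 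This step could equally be quoted from \cite{CY}, where these bounds underlie the compactness argument.

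The main obstacle is making precise that the compact $T1$ hypotheses of \cite{CY} contain the bounded ones used in \cite{LMV20}, so that the weighted product-space bound is available for all multilinear bi-parameter $A_{\vec{s}}$ weights. The cleanest route is to note that each compact condition is defined as the bounded condition plus vanishing of certain auxiliary moduli, and then simply discard the vanishing part. Endpoint issues with $p_j=\infty$ or $q_j=\infty$ are already handled inside Theorem \ref{thm:RdF-cpt}, so they need no separate treatment here.
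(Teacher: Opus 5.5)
Your plan is correct and is essentially the paper's proof: verify hypotheses (a) and (b) of Theorem \ref{thm:RdF-cpt} using \cite{CY}, then apply that theorem. The obstacle you flag does not arise. \cite[Theorem 1.1]{CY} already gives compactness from $L^{r_1}(w_1^{r_1})\times\cdots\times L^{r_m}(w_m^{r_m})$ to $L^r(w^r)$ for all $\vec r$ and all $\vec w\in A_{\vec r}(\Rnn)$, and compact multilinear operators are bounded, so (a) and (b) follow at once and the detour through \cite{LMV20} is unnecessary.
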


The second application is multilinear bi-parameter dyadic paraproducts, which play a crucial role in dyadic analysis (cf. \cite{CY, LMV21}). Given a dyadic grid $\D=\D_1 \times \D_2$, a {\tt compact $m$-linear bi-parameter dyadic paraproduct} takes the form   
\begin{align*}
\Pi_{\D}(\vec{f}) 
&:= \sum_{I = I_1 \times I_2 \in \D} a_I \, 
\prod_{j=1}^m \langle f_j, \overline{h}_{j, I_1} \otimes \overline{h}_{j, I_2} \rangle 
\, \overline{h}_{m+1, I_1} \otimes \overline{h}_{m+1, I_2}, 
\end{align*}
where there exist $j_1^0, j_2^0 \in \{1, \ldots, m+1\}$ so that $\overline{h}_{j_1^0, I_1} = h_{I_1}$, $\overline{h}_{j_2^0, I_2} = h_{I_2}$, $\overline{h}_{j, I_1} = \frac{\mathbf{1}_{I_1}}{|I_1|}$ for every $j \ne j_1^0$, and $\overline{h}_{j, I_2} = \frac{\mathbf{1}_{I_2}}{|I_2|}$ for every $j \ne j_2^0$. Moreover, the coefficients $a_I$ satisfy 
\begin{align*}
\sup_U \frac{1}{|U|} \sum_{I \in \D: \, I \subset U} |a_I|^2 \le 1
\quad \text{and} \quad 
\lim_{N \to \infty} \sup_U \frac{1}{|U|} 
\sum_{I \notin \D(N): \, I \subset U} |a_I|^2 
= 0, 
\end{align*}
where the supremum $\sup_U$ is taken over all open sets $U \subset \Rnn$ with $|U| < \infty$.

\begin{theorem}\label{thm:para}
Let $\Pi_{\D}$ be a compact $m$-linear bi-parameter dyadic paraproduct. Then $T$ is compact from $L^{p_1}(u_1^{p_1}; L^{q_1}(v_1^{q_1})) \times \cdots \times L^{p_m}(u_m^{p_m}; L^{q_m}(v_m^{q_m}))$ to $L^p(u^p; L^q(v^q))$ for all $\vec{p} = (p_1, \ldots, p_m) \in (1, \infty]^m$, for all $\vec{q} = (q_1, \ldots, q_m) \in (1, \infty]^m$, for all $\vec{u} = (u_1, \ldots, u_m) \in A_{\vec{p}}(\R^{n_1})$, and for all $\vec{v} = (v_1, \ldots, v_m) \in A_{\vec{q}}(\R^{n_2})$, where $\frac1p = \sum_{j=1}^m \frac{1}{p_j} > 0$, $\frac1q = \sum_{j=1}^m \frac{1}{q_j} > 0$, $u = \prod_{j=1}^m u_j$, and $v = \prod_{j=1}^m v_j$.  
\end{theorem}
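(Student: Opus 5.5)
The plan is to derive Theorem \ref{thm:para} directly from the extrapolation result, Theorem \ref{thm:RdF-cpt}. This means checking its two hypotheses for $T = \Pi_{\D}$.

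For hypothesis \eqref{RdFcpt-1}, I will use the known weighted theory of multilinear bi-parameter dyadic paraproducts (cf. \cite{LMV21, CY}). The first condition on $(a_I)$ says exactly that $(a_I)$ is a normalized product BMO sequence. With this, $\Pi_{\D}$ is bounded from $L^{s_1}(w_1^{s_1}) \times \cdots \times L^{s_m}(w_m^{s_m})$ to $L^s(w^s)$ for all $\vec{w} \in A_{\vec{s}}(\Rnn)$, for some fixed $\vec{s} \in (1,\infty)^m$; one could take, for instance, $s_j = 2m$. I will recall that the standard proof runs through several steps:
\begin{itemize}
\item a duality and $H^1$–BMO type argument in the product setting;
\item bi-parameter square functions $S_{\D}$, and their mixed square–maximal variants, applied to the slot carrying $h_{I_1}$ or $h_{I_2}$;
\item the strong maximal function $M_{\mathcal{R}}$ applied to the averaged slots;
\item the multilinear weighted bounds for these objects.
\end{itemize}
I will cite this rather than reprove it.

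For hypothesis \eqref{RdFcpt-2}, I will prove that $\Pi_{\D}$ is compact on unweighted spaces with some $\vec{t}$, e.g. $t_j = 2m$ and $t = 2$. The first step is to truncate: set
\[
\Pi_{\D}^N := \sum_{I \in \D(N)} a_I \cdots .
\]
Here $\D(N)$ is the finite family of rectangles with side lengths in $[2^{-N}, 2^N]$ contained in a ball of radius $2^N$. The truncation $\Pi_{\D}^N$ has finite rank, because each term is a rank-one multilinear form times a fixed function, so it is compact. Next I will estimate
\[
\|\Pi_{\D} - \Pi_{\D}^N\|_{L^{t_1} \times \cdots \times L^{t_m} \to L^t}.
\]
The difference is a paraproduct with coefficients $a_I \mathbf{1}_{I \notin \D(N)}$. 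By the quantitative form of the unweighted bound (the constant is linear in the product-BMO norm of the coefficient sequence), this operator norm is $\lesssim \sup_U |U|^{-1}\sum_{I \notin \D(N), I \subset U} |a_I|^2$, up to a square root. By the second condition on $(a_I)$, this tends to $0$. Since compact operators form a closed set in operator norm, $\Pi_{\D}$ is compact.

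The main obstacle is confirming that the boundedness constant depends only on the product BMO norm of the coefficient sequence, in the Chang–Fefferman sense with the supremum over open sets $U$. This matters because the tail sequence is not literally of the same form, yet its norm is still controlled by the same supremum. I will argue this by inspecting the proof: the only place the coefficients enter is a product $H^1$–BMO duality
\[
\Big|\sum_I a_I b_I\Big| \lesssim \|(a_I)\|_{\BMO_{\rm prod}} \Big\|\Big(\sum_I |b_I|^2 \tfrac{\mathbf{1}_I}{|I|}\Big)^{1/2}\Big\|_{L^1}.
\]
The remaining factors are then controlled by $S_{\D}$ and $M_{\mathcal{R}}$ bounds independent of $(a_I)$.

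With both hypotheses verified, Theorem \ref{thm:RdF-cpt} immediately yields compactness on all the stated weighted mixed-norm spaces. The endpoint exponents equal to $\infty$ are already covered by the range allowed in Theorem \ref{thm:RdF-cpt}.
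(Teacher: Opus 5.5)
Your proposal is correct, and its outer structure matches the paper's. Both arguments verify the two hypotheses of Theorem \ref{thm:RdF-cpt}: the known $A_{\vec{s}}(\Rnn)$-weighted bounds for bi-parameter paraproducts give \eqref{RdFcpt-1}, and unweighted compactness $L^{2m}\times\cdots\times L^{2m}\to L^2$ gives \eqref{RdFcpt-2}.

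The difference is in how you get that unweighted compactness. The paper appeals to the compactness criterion \cite[Theorem 6.3]{CY}, the same tool it uses for Theorem \ref{thm:sss}, which reduces compactness to uniform decay at infinity and uniform continuity under translations. It then refers to minor modifications of \cite[Section 6.5]{CY}. You instead approximate $\Pi_{\D}$ in operator norm by finite-rank truncations over $\D(N)$.

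This works, and your ``main obstacle'' is lighter than you fear. $\Pi_{\D}$ is linear in $(a_I)$, and $a_I\mathbf{1}_{\{I\notin\D(N)\}}$ divided by its product-$\BMO$ norm $\varepsilon_N$ is again an admissible coefficient sequence. So the normalized bound gives $\|\Pi_{\D}-\Pi_{\D}^N\|\lesssim\varepsilon_N$ by scaling alone, with no need to inspect the proof. And $\varepsilon_N\to 0$ is exactly the second coefficient condition.

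Your route is more elementary and shows directly why the vanishing condition is the natural hypothesis. The Kolmogorov--Riesz approach of \cite{CY} is built for more general model operators, such as shifts and averages over random grids, where a clean truncation is less available. Your argument could even bypass Theorem \ref{thm:ICPT}: the same scaling combined with Theorem \ref{thm:RdF-bdd} bounds the tail on the weighted mixed-norm spaces by a constant times $\varepsilon_N$, and the truncations are finite rank there too. The paper's route is presumably chosen to showcase Theorem \ref{thm:RdF-cpt}.

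One small point: truncate over the paper's $\D(N)$ so the vanishing hypothesis applies verbatim. That family is finite; see the set $\Omega_N$ in the proof of Theorem \ref{thm:Carleson}.
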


Let us proceed to the third application, continuous paraproducts, which arise naturally in the study of $T1$ theorems, such as the linear case \cite{DJ}, the bilinear case \cite{CLSY, Har}, and the bi-parameter setting \cite{Jou}. For each $i=1, 2$, let $\varphi^{(i)}, \psi^{(i)} \in \mathscr{C}_c^{\infty}(\R^{n_i})$ be radial functions such that $\supp \varphi^{(i)} \subset B(0, 1)$, $\int_{\R^{n_i}} \varphi^{(i)} \, dx_i = 1$, $\supp \psi^{(i)} \subset B(0, 1)$, $\int_{\R^{n_i}} \psi^{(i)} \, dx_i = 0$, and $\int_0^{\infty} |\widehat{\psi^{(i)}}(t e_1^{(i)})|^2 \frac{dt}{t} =1$, where $e_1^{(i)} := (1, 0, \ldots, 0)$ is a unit vector in $\R^{n_i}$. For any $t > 0$ and $f \in L_{\loc}^1(\R^{n_i})$, define convolution operators 
\begin{align*}
P_t^{(i)} f := \varphi_t^{(i)} *f 
\quad\text{ and }\quad 
Q_t^{(i)} f := \psi_t^{(i)} * f, 
\end{align*} 
where $\varphi_t^{(i)}(x_i) := t^{-n_i} \varphi^{(i)}(t^{-1} x_i)$ and $\psi_t^{(i)}(x_i) := t^{-n_i} \psi^{(i)}(t^{-1} x_i)$. Then set 
\begin{align*}
P_{t_1, t_2} := P_{t_1}^{(1)} \otimes P_{t_2}^{(2)} 
\quad \text{and} \quad 
Q_{t_1, t_2} := Q_{t_1}^{(1)} \otimes Q_{t_2}^{(2)}.
\end{align*}
Given $b \in \BMO(\Rnn)$, the {\tt bilinear bi-parameter paraproduct} $\pi_b$ is defined by 
\begin{align*}
\pi_b (f, g) 
:= \int_0^{\infty} \int_0^{\infty} Q_{t_1, t_2} 
\big( (Q_{t_1, t_2} b) (P_{t_1, t_2} f) (P_{t_1, t_2} g) \big) \, \frac{dt_1}{t_1} \frac{dt_2}{t_2}. 
\end{align*}

\begin{theorem}\label{thm:Pib}
The following statements hold: 
\begin{list}{\rm (\theenumi)}{\usecounter{enumi}\leftmargin=1.3cm \labelwidth=1cm \itemsep=0.1cm \topsep=0.2cm \renewcommand{\theenumi}{\arabic{enumi}}}

\item\label{Pib-1} For any $b \in \BMO(\Rnn)$, $\pi_b$ is bounded from $L^{r_1}(w_1^{r_1}) \times L^{r_2}(w_2^{r_2})$ to $L^r(w^r)$ for all $r_1, r_2 \in (1, \infty]$ and $(w_1, w_2) \in A_{(r_1, r_2)}(\Rnn)$, where $\frac1r = \frac{1}{r_1} + \frac{1}{r_2} > 0$ and $w = w_1 w_2$. 

\item\label{Pib-2} For any $b \in \CMO(\Rnn)$, $\pi_b$ is compact from $L^{r_1}(w_1^{r_1}) \times L^{r_2}(w_2^{r_2})$ to $L^r(w^r)$ for all $r_1, r_2 \in (1, \infty]$ and $(w_1, w_2) \in A_{(r_1, r_2)}(\Rnn)$, where $\frac1r = \frac{1}{r_1} + \frac{1}{r_2} > 0$ and $w = w_1 w_2$.  

\item\label{Pib-3} For any $b \in \CMO(\Rnn)$, $\pi_b$ is compact from $L^{p_1}(u_1^{p_1}; L^{q_1}(v_1^{q_1})) \times L^{p_2}(u_2^{p_2}; L^{q_2}(v_2^{q_2}))$ to $L^p(u^p; L^q(v^q))$ for all $p_1, p_2, q_1, q_2 \in (1, \infty]$, for all $(u_1, u_2) \in A_{(p_1, p_2)}(\R^{n_1})$, and for all $(v_1, v_2) \in A_{(q_1, q_2)}(\R^{n_2})$, where $\frac1p = \frac{1}{p_1} + \frac{1}{p_2} > 0$, $\frac1q = \frac{1}{q_1} + \frac{1}{q_2} > 0$, $u = u_1 u_2$, and $v = v_1 v_2$.  
\end{list}
\end{theorem}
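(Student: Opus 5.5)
We prove the three parts in order, with part (3) obtained from the first two via Theorem \ref{thm:RdF-cpt}.

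For part \eqref{Pib-1} we use a sparse domination or square-function argument. We pair $\pi_b(f,g)$ with $h$ and write
\begin{align*}
\langle \pi_b(f,g), h\rangle = \int_0^\infty\int_0^\infty \int_{\Rnn} (Q_{t_1,t_2} b)(P_{t_1,t_2}f)(P_{t_1,t_2}g)(Q_{t_1,t_2}h)\,dx\,\frac{dt_1}{t_1}\frac{dt_2}{t_2}.
\end{align*}
The function $b$ enters only through the bi-parameter Carleson measure $|Q_{t_1,t_2}b(x)|^2\,dx\,\frac{dt_1}{t_1}\frac{dt_2}{t_2}$. This measure is controlled by $\|b\|_{\BMO(\Rnn)}^2$ in the Chang--Fefferman product sense; we take $\BMO(\Rnn)$ to mean product $\BMO$. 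We bound the pointwise products $|P_{t_1,t_2}f|\,|P_{t_1,t_2}g|$ by $M_{\mathcal{R}}f\,M_{\mathcal{R}}g$. Journé's product Carleson embedding, combined with the bi-parameter square function $S h$, then gives
\begin{align*}
|\langle \pi_b(f,g), h\rangle| \lesssim \|b\|_{\BMO}\, \|\mathcal{M}_{\mathcal{R}}(f,g)\, S h\|_{L^1}.
\end{align*}
Weighted bounds for $\mathcal{M}_{\mathcal{R}}$ with multilinear bi-parameter $A_{\vec r}$ weights, together with bi-parameter square-function bounds with $A_\infty$-type weights, give the estimate first for one choice of exponents, say $r_1=r_2=4$. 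Multilinear Rubio de Francia extrapolation of boundedness then covers all $r_1,r_2\in(1,\infty]$.

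Alternatively, one could decompose $\pi_b$ into dyadic model paraproducts $\Pi_{\D}$, averaged over random grids with a smooth tail. The boundedness estimates of \cite{LMV21} would then apply directly.

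For part \eqref{Pib-2} we use the linearity of $b\mapsto\pi_b$ and the bound from part (1): $\|\pi_b-\pi_{b_k}\|\lesssim\|b-b_k\|_{\BMO}$. Since $\CMO$ is the $\BMO$-closure of $\mathscr{C}_c^\infty$ (the bi-parameter version), it suffices to prove compactness for $b\in\mathscr{C}_c^\infty$. Compactness is preserved under operator-norm limits, so it is enough to prove compactness on a single unweighted space, say $L^4\times L^4\to L^2$. Extrapolation of compactness for one-parameter product weights (\cite[Theorem 2.10]{COY}, or part (1) combined with interpolation via Theorem \ref{thm:ICPT} in the degenerate non-mixed case) then yields the full weighted range.

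For smooth compactly supported $b$ we truncate $\pi_b$ to $t_i\in[\epsilon,1/\epsilon]$ and to a spatial cutoff. Each truncated operator has a smooth compactly supported kernel, so it is compact, and the precompactness criterion (Theorem \ref{thm:KRQB} with $u=v\equiv1$) can be checked directly. The tails are handled as follows. For small $t_i$, we use $\|Q_{t_1,t_2}b\|_\infty\lesssim t_1t_2\|\nabla_1\nabla_2 b\|_\infty$. For large $t_i$, we use $\|Q_{t_1,t_2}b\|_\infty\lesssim t_1^{-n_1}t_2^{-n_2}\|b\|_{L^1}$. Mixed regimes, where one parameter is small and the other large, are treated by combining the two bounds. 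Each tail is small in the Carleson norm, and the Carleson estimate from part (1) turns this into smallness of the operator norm.

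\textbf{The main obstacle is this tail control:} showing that the Carleson norm of the restricted measure $|Q_{t_1,t_2}b|^2\mathbf{1}_{\text{tail}}$ tends to $0$ in the product setting, where the estimate must hold uniformly over arbitrary open sets $U$ rather than just rectangles. We would try to bound the tail Carleson norm by an $L^\infty$-weighted square function, using $|U|\ge$ the measure of rectangles inside $U$. The spatial tail, far from $\supp b$, is handled by the decay of the kernel in $x$.

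Part \eqref{Pib-3} then follows immediately from Theorem \ref{thm:RdF-cpt} with $m=2$. Hypothesis (a) holds by part (1) with $\vec s=(4,4)$. Hypothesis (b) holds by part (2) with unit weights and $\vec t=(4,4)$.
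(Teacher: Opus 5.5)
\textbf{Part (2): gap.} Your part (3) is exactly the paper's argument. Part (2), however, has a genuine gap at its first step. You use that $\CMO(\Rnn)$ is the $\BMO$-closure of $\mathscr{C}_c^\infty$. In this paper, though, $\CMO(\Rnn)$ is defined by $\lim_{N}\sup_{\D}\|P^{\perp}_{\D(N)}b\|_{\BMO(\D)}=0$. Passing from this to a single smooth $g$ with $\sup_{\D}\|b-g\|_{\BMO(\D)}$ small is not formal. The natural approximants $P_{\D(N)}b$ depend on the grid and have jump discontinuities. Their smallness in $\BMO(\D)$ therefore gives no control of $\|b-P_{\D(N)}b\|_{\BMO(\D')}$ for other grids $\D'$.

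Producing a common approximant essentially requires knowing that $\mu_b$ is a vanishing Carleson measure. That is precisely the hard step in the paper, Theorem \ref{thm:Carleson}\eqref{Car-2}. It is proved through the adjacent-grid localization $Q_{t_1,t_2}b=Q_{t_1,t_2}(P^{\Omega^{\alpha}}_{\D^{\alpha}}b)$ and the off-diagonal bound \eqref{eq:AIJ}.

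By contrast, the obstacle you single out is easy. For a fixed grid the boxes $W_{R_1}\times W_{R_2}$ are pairwise disjoint. Hence the Carleson norm of $|F|^2\,dx\,\frac{dt_1\,dt_2}{t_1t_2}$ is at most $\iint\|F(\cdot,t)\|_{L^\infty}^2\frac{dt_1\,dt_2}{t_1t_2}$, and your $L^\infty$ bounds on $Q_{t_1,t_2}b$ make all tails small at once. So your scheme proves compactness only for $b$ in the $\BMO$-closure of $\mathscr{C}_c^\infty$.

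To repair it, you must prove Theorem \ref{thm:Carleson}\eqref{Car-2}. You can then either truncate to $\bigcup_{R\in\D(N)}W_{R_1}\times W_{R_2}$ in your scheme, or do as the paper does. The paper uses compactness of the embedding $L^4\to L^4(\mu_b)$ for vanishing Carleson $\mu_b$ (Theorem \ref{thm:Carleson}\eqref{Car-4}) and tests $\pi_b$ on weakly null sequences.

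\textbf{Part (1): weighted step.} Your unweighted bound by $\int \mathcal{M}_{\mathcal{R}}(f,g)\,Sh$ is fine. The weighted step as written fails, however. H\"older's inequality then needs $S$ bounded on $L^{r'}(w^{-r'})$, i.e.\ $w^{r}\in A_r(\Rnn)$. But $(w_1,w_2)\in A_{(r_1,r_2)}$ only gives $w^r\in A_{2r}$. For instance, with $\vec r=(4,4)$, the pair $(1,w_2)$ is admissible for any $w_2$ with $w_2^2\in A_{5/2}(\Rnn)$, while $w_2^{2}\notin A_2$ in general. The $A_\infty$ square-function estimates are lower bounds for the output function, so they do not help with $Sh$. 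You therefore do not get the bound for all weights in $A_{(4,4)}$, which is what the multilinear extrapolation requires.

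Your fallback (dyadic representation plus \cite{LMV21}) is in substance the paper's route, but it must be justified. The paper verifies the hypotheses of \cite[Theorem 1.2]{LMV21}:
\begin{itemize}
\item full and partial kernel representations, the latter via the fiberwise Carleson measures $\nu^{t_1}_{u_1}$;
\item weak boundedness and the diagonal $\BMO$ condition, from the $L^4\times L^4\to L^2$ bound;
\item $\mathbb{T}(1,1)\in\BMO$ for all partial adjoints.
\end{itemize}

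\textbf{Part (2): weighted extension.} The weighted extension in part (2) concerns general bi-parameter weights in $A_{(r_1,r_2)}(\Rnn)$. Theorem \ref{thm:ICPT} only reaches separated weights $u\otimes\widetilde{u}$. You therefore need a bi-parameter extrapolation of compactness, which is what \cite[Theorem 1.6]{CY} supplies in the paper.
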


Our last application is pseudo-differential operators, which have attracted considerable attention in harmonic analysis (cf. \cite{BMNT, DJ, DDZ, DGZ, Jou}). The {\tt bilinear bi-parameter pseudo-differential operator} $T_{\sigma}$ is defined by
\begin{align*}
T_{\sigma}(f, g)(x) := \int_{\Rnn} \int_{\Rnn} \sigma(x, \xi, \eta)
e^{2\pi i x \cdot (\xi + \eta)} \widehat{f}(\xi) \widehat{g}(\eta) \, d\xi \, d\eta,
\end{align*}
for all $x = (x_1, x_2) \in \Rnn$ and $f, g \in \S(\Rnn)$, where $\widehat{f}(\xi) := \int_{\Rnn} e^{-2 \pi i x \cdot \xi} f(x) \,  dx$.

Given $m = (m_1, m_2) \in \R^2$, $\rho = (\rho_1, \rho_2) \in [0, 1]^2$, and $\delta = (\delta_1, \delta_2) \in [0, 1]^2$, we say that $\sigma \in \mathcal{K}_{\rho,\delta}^m(\Rnn)$ if it satisfies 
\begin{align*}
\big|\partial_{x_1}^{\alpha_1} \partial_{x_2}^{\alpha_2} 
\partial_{\xi_1}^{\beta_1} \partial_{\xi_2}^{\beta_2}
\partial_{\eta_1}^{\gamma_1} \partial_{\eta_2}^{\gamma_2} 
\sigma(x, \xi, \eta) \big|
\leq C_{\alpha, \beta, \gamma}(x, \xi, \eta) 
\prod_{i=1}^2 (1+ |\xi_i| + |\eta_i|)^{m_i + \delta_i |\alpha_i| - \rho_i (|\beta_i| + |\gamma_i|)}, 
\end{align*}
for all multi-indices $\alpha$, $\beta$ and $\gamma$, where $C_{\alpha, \beta, \gamma}$ is a bounded function satisfying 
\begin{align*}
\lim_{|x| + |\xi| + |\eta| \to \infty} C_{\alpha,\beta, \gamma}(x, \xi, \eta) = 0.
\end{align*}
We say that $\sigma \in \mathcal{S}_{\rho,\delta}^m(\Rnn)$ if the above function $C_{\alpha,\beta, \gamma}$  depends only on $\alpha$, $\beta$, and $\gamma$.

\begin{theorem}\label{thm:Tsig}
Let $m = (0, 0)$, $\rho = (1, 1)$, and $\delta = (\delta_1, \delta_2) \in [0, 1)^2$. Then the following hold: 
\begin{list}{\rm (\theenumi)}{\usecounter{enumi}\leftmargin=1.3cm \labelwidth=1cm \itemsep=0.1cm \topsep=0.2cm \renewcommand{\theenumi}{\arabic{enumi}}}

\item\label{Tsig-1} For any $\sigma \in \mathcal{S}_{\rho, \delta}^m(\Rnn)$, $T_{\sigma}$ is bounded from $L^{r_1}(w_1^{r_1}) \times L^{r_2}(w_2^{r_2})$ to $L^r(w^r)$ for all $r_1, r_2 \in (1, \infty]$ and $(w_1, w_2) \in A_{(r_1, r_2)}(\Rnn)$, where $\frac1r = \frac{1}{r_1} + \frac{1}{r_2} > 0$ and $w = w_1 w_2$. 

\item\label{Tsig-2} For any $\sigma \in \mathcal{K}_{\rho, \delta}^m(\Rnn)$, $T_{\sigma}$ is compact from $L^{r_1}(w_1^{r_1}) \times L^{r_2}(w_2^{r_2})$ to $L^r(w^r)$ for all $r_1, r_2 \in (1, \infty]$ and $(w_1, w_2) \in A_{(r_1, r_2)}(\Rnn)$, where $\frac1r = \frac{1}{r_1} + \frac{1}{r_2} > 0$ and $w = w_1 w_2$.  

\item\label{Tsig-3} For any $\sigma \in \mathcal{K}_{\rho, \delta}^m(\Rnn)$, $T_{\sigma}$ is compact from $L^{p_1}(u_1^{p_1}; L^{q_1}(v_1^{q_1})) \times L^{p_2}(u_2^{p_2}; L^{q_2}(v_2^{q_2}))$ to $L^p(u^p; L^q(v^q))$ for all $p_1, p_2, q_1, q_2 \in (1, \infty]$, for all $(u_1, u_2) \in A_{(p_1, p_2)}(\R^{n_1})$, and for all $(v_1, v_2) \in A_{(q_1, q_2)}(\R^{n_2})$, where $\frac1p = \frac{1}{p_1} + \frac{1}{p_2} > 0$, $\frac1q = \frac{1}{q_1} + \frac{1}{q_2} > 0$, $u = u_1 u_2$, and $v = v_1 v_2$.  
\end{list}
\end{theorem}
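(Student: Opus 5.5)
The plan is to derive all three parts from the bilinear bi-parameter Calder\'on--Zygmund theory and from Theorem \ref{thm:RdF-cpt}. Part \eqref{Tsig-3} is then immediate once \eqref{Tsig-1} and \eqref{Tsig-2} are available.

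\emph{Part \eqref{Tsig-1}.} We will show that for $\sigma \in \mathcal{S}_{\rho,\delta}^{(0,0)}$ with $\rho=(1,1)$ and $\delta_i<1$, the operator $T_\sigma$ is a bilinear bi-parameter Calder\'on--Zygmund operator in the sense of \cite{LMV20}.

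\begin{itemize}
\item Kernel estimates: we decompose $\sigma$ dyadically in $(\xi_i,\eta_i)$ separately for $i=1,2$, using tensor Littlewood--Paley pieces. Each piece has a kernel obeying the standard size and H\"older bounds in each parameter separately. Summing the pieces gives the full and partial kernel representations, as in the one-parameter bilinear argument of B\'enyi--Torres and the bi-parameter linear argument of \cite{Jou}.
\item $L^2$-type boundedness: this holds for $\delta_i<1$ by the bi-parameter Calder\'on--Vaillancourt/Coifman--Meyer argument.
\item $T1$ conditions: $T_\sigma(1,1)$ and the partial adjoints lie in product $\BMO$.
\end{itemize}

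The weighted $A_{(r_1,r_2)}$ bounds then follow from the multilinear bi-parameter weighted theory \cite{LMV20}. The range $r_j=\infty$ follows by the usual limiting arguments, or directly by multilinear extrapolation.

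\emph{Part \eqref{Tsig-2}.} By Theorem \ref{thm:RdF-cpt}, specialised to non-mixed exponents $p_j=q_j$, $u_j=v_j$, it suffices to prove two things. First, weighted boundedness, which part \eqref{Tsig-1} gives, since $\mathcal{K}_{\rho,\delta}^m\subset \mathcal{S}_{\rho,\delta}^m$. Second, unweighted compactness at a single triple, say $L^2\times L^2\to L^1$ or $L^4\times L^4\to L^2$.

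For the unweighted compactness we approximate $\sigma$ by truncations $\sigma_N(x,\xi,\eta)=\sigma\,\chi(x/N)\chi(\xi/N)\chi(\eta/N)$.
\begin{itemize}
\item Because $C_{\alpha,\beta,\gamma}\to 0$ at infinity, $\sigma-\sigma_N$ lies in $\mathcal{S}^m_{\rho,\delta}$ with seminorms (finitely many derivatives) tending to $0$. The boundedness proof of \eqref{Tsig-1} is quantitative in finitely many seminorms, so $\|T_{\sigma-\sigma_N}\|\to0$ in operator norm.
\item Each $T_{\sigma_N}$ has a Schwartz-type kernel localized in $x$, and has frequencies restricted to compact sets. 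It is therefore compact, for example by the Fr\'echet--Kolmogorov criterion (Theorem \ref{thm:KRQB} with $u=v=1$).
\end{itemize}
Since norm limits of compact bilinear operators are compact, $T_\sigma$ is compact at that triple. Theorem \ref{thm:RdF-cpt} then yields compactness for all $(r_1,r_2)$ and $(w_1,w_2)\in A_{(r_1,r_2)}(\Rnn)$, viewing weights on $\Rnn$ and the case $n_2$ trivial. Alternatively, the unmixed weighted result follows from \cite{COY}.

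\emph{Part \eqref{Tsig-3}.} Apply Theorem \ref{thm:RdF-cpt} directly, with hypothesis \eqref{RdFcpt-1} from \eqref{Tsig-1} and hypothesis \eqref{RdFcpt-2} from the unweighted compactness established in \eqref{Tsig-2}.

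\emph{Main obstacle.} The hardest step is making the bi-parameter kernel and $T1$ verification quantitative in finitely many symbol seminorms. The difficulty is acute for $\delta_i>0$, where the $x$-derivatives grow, and we must keep $\delta_i<1$ in each parameter separately. This quantitative control is what makes the norm approximation $T_{\sigma_N}\to T_\sigma$ work. As a first attempt, we would reduce to elementary symbols via tensor Littlewood--Paley decomposition and Fourier series expansion in each dyadic annulus pair (a Coifman--Meyer reduction). This gives bi-parameter paraproduct-type sums whose coefficients are controlled by the seminorms, and whose coefficients decay when the seminorms of $\sigma-\sigma_N$ are small.
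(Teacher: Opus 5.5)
Your overall architecture matches the paper's. For \eqref{Tsig-1} you verify Calder\'on--Zygmund structure and apply the bi-parameter weighted theory. For compactness at one unweighted point you truncate to $\sigma_N$, show the seminorms of $\sigma-\sigma_N$ tend to zero (the paper's Lemma \ref{lem:Pab}), use an unweighted bound controlled by finitely many seminorms (the paper's \eqref{L4L4}), and check Kolmogorov--Riesz for $T_{\sigma_N}$. Then you extrapolate compactness for \eqref{Tsig-2} and apply Theorem \ref{thm:RdF-cpt} for \eqref{Tsig-3}.

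However, your primary justification of \eqref{Tsig-2} does not work. Theorem \ref{thm:RdF-cpt} with $p_j=q_j$ only gives the spaces $L^{r_j}((u_j\otimes v_j)^{r_j})$ with $\vec u\in A_{\vec r}(\R^{n_1})$ and $\vec v\in A_{\vec r}(\R^{n_2})$. These are tensor-product weights, whereas $A_{(r_1,r_2)}(\Rnn)$ contains weights that do not split. Making one factor ``trivial'' does not help either. You would then need hypothesis \eqref{RdFcpt-1} for one-parameter multilinear weights on $\R^{n_1+n_2}$, defined by averages over cubes. That class is larger than the rectangle class, and bi-parameter operators are in general not bounded for it. So \eqref{Tsig-2} has to come from your fallback: extrapolation of compactness with bi-parameter weights on $\Rnn$. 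This is how the paper argues, via \cite[Theorem 1.6]{CY}; the same step appears as \eqref{PST-1} inside the proof of Theorem \ref{thm:RdF-cpt}, via \cite{COY}.

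On \eqref{Tsig-1}, you list the $T1$ conditions but give no mechanism, and for the full and partial adjoints this is where the real work lies. The paper's key ingredient is Theorem \ref{thm:sss}\eqref{sss-4}. It shows that the symbols of $T_\sigma^{1*}$, $T_\sigma^{2*}$ and of all six partial adjoints again belong to $\mathcal{S}^{(0,0)}_{(1,1),\delta}(\Rnn)$; this is a bi-parameter form of the transposition argument of \cite{BMNT}, and it is where $\delta_i<1$ is used. Each adjoint therefore inherits the kernel representations and the $L^4\times L^4\to L^2$ bound, and \cite[Proposition 3.6]{LMV20} then gives $\mathbb{T}(1,1)\in\BMO(\Rnn)$.

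Conversely, what you call the main obstacle is lighter than you fear. Extrapolation of compactness uses the weighted bounds only qualitatively. So to get $\|T_\sigma-T_{\sigma_N}\|\to0$ you only need one \emph{unweighted} operator norm controlled by finitely many seminorms, not quantitative $T1$ or Calder\'on--Zygmund constants. The paper proves this estimate directly:
\begin{enumerate}
\item It splits $\sigma$ into low- and high-frequency parts in each parameter.
\item On a high-frequency annulus of size $2^j$, it mollifies the symbol in $x$ at scale $2^{-j}$. The remainder gains $2^{-j(1-\delta_i)}$ and is summable in $j$.
\item The mollified pieces have output frequency of order $2^j$ and are almost orthogonal. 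This gives $L^2\times L^\infty\to L^2$ and $L^\infty\times L^2\to L^2$ bounds, and interpolation yields $L^4\times L^4\to L^2$.
\end{enumerate}
Your proposed Coifman--Meyer Fourier-series reduction would need the same $x$-mollification to control the $x$-dependent coefficients when $\delta_i>0$.
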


\subsection{Historical background}
The classical Rubio de Francia extrapolation theorem states that if an operator is bounded on $L^{p_0}(w)$ for all weights $w \in A_{p_0}$ at some fixed exponent $p_0 \in [1, \infty)$, then it is bounded on $L^p(w)$ for every $p \in (1, \infty)$ and every $w \in A_p$. This result not only perfectly complemented the weighted theory and the Calder\'{o}n--Zygmund theory, but also immediately simplified and refined weighted estimates for Calder\'{o}n--Zygmund operators \cite{CMP12, DGPP}, and Littlewood--Paley square functions \cite{CMP12, Ler}, vector-valued inequalities \cite{CMP}, the endpoint weak-type inequality \cite{CMP05}, and the weak Muckenhoupt--Wheeden conjecture \cite{LOP08, LOP09}. Further extensions included off-diagonal $L^p$--$L^q$ extrapolation for weights $A_{p, q} \, (p \ne q)$ \cite{HMS}, limited range extrapolation for weights $w \in A_{p/p_-}\cap RH_{p_+/p}$ with $p \in (p_-, p_+)$ \cite{AM}, and two-weight extrapolation \cite{CMP, CP}. These foundations established extrapolation as a core technique, relying on maximal operator boundedness, duality, and the reverse factorization property of $A_p$ weights as key ingredients. 

The extrapolation theory entered a new era with its extension to the multilinear setting. Early multilinear versions treated products weights $A _{p_1} \times \cdots \times A _{p_m}$ (cf. \cite{GM}), but a genuine breakthrough came with the introduction of the multilinear Muckenhoupt classes $A_{\vec{p}}$
\begin{align*}
[\vec{w}]_{A_{\vec{p}}} 
:= \sup_{\text{cube } Q \subset \Rn} 
\langle w^p \rangle_Q^{\frac1p} \prod^m_{j=1} \langle w_j^{-p_j'} \rangle_Q^{\frac{1}{p_j'}}
< \infty,
\end{align*} 
where $w=\prod^m_{j=1}w_j$ and $\frac1p = \sum_{j=1}^m \frac{1}{p_j}$ with $p_1, \ldots, p_m \in [1, \infty]$, which encode the inherent multi-variable coupling of multilinear Calder\'{o}n--Zygmund operators and are strictly larger than product weights (cf. \cite{LOPTT}). Li, Martell, and Ombrosi \cite{LMO} resolved a decade-long open problem by establishing extrapolation directly for the classes $A _{\vec{p}, \vec{r}}$, which accommodate component weights that may be non-locally integrable individually but well-behaved collectively. Beyond the classes $A_{\vec{p}}$, the general classes $A_{\vec{p}, \vec{r}}$ naturally appear in weighted estimates for bilinear Hilbert transforms and operators with restricted boundedness ranges. These advances unified scalar and vector-valued inequalities, extended results into the quasi-Banach range $(0<p<1)$, and provided a unified framework for bilinear Hilbert transforms, bilinear rough singular integrals, and commutators.

Parallel to multilinear developments, extrapolation has been vastly generalized to abstract function spaces, including variable Lebesgue spaces \cite{CW}, Orlicz spaces \cite{CH}, and Banach function spaces \cite{CMM}. Instead of relying on explicit weight conditions, Cao, Mar\'{i}n, and Martell \cite{CMM} used modern formulations to link extrapolation to weighted boundedness of the Hardy--Littlewood maximal operator and its dual, enabling applications to non-doubling measures, spaces of homogeneous type, and metric measure spaces beyond Euclidean spaces. Remarkably, extrapolation built in \cite{CMM} can be used to established the well-posedness of the Dirichlet problem for elliptic equations on the upper-half spaces. Moreover, based on the general framework in \cite{CMM}, endpoint extrapolation for $A_1$ and $A_{\infty}$ weights has been obtained in \cite{CO} and further yielded sharp local exponential decay estimates and mixed weak-type inequalities, which refined classical good-$\lambda$ inequalities and endpoint estimates.

A substantial advance is the emergence of compactness extrapolation, which elevates the theory from boundedness to compactness. Hyt\"{o}nen and Lappas \cite{HL} first established a compact version of extrapolation by showing that compactness at one weighted $L^{p_0}(w_0)$ propagates to all $p \in (1, \infty)$ and $w \in A_p$. It was swiftly extended to the multilinear setting by Cao, Olivo, and Yabuta \cite{COY}, who developed compact extrapolation for $A_{\vec{p}, \vec{r}}$ weights and limited range scales by means of weighted Kolmogorov--Riesz theorems to characterize precompactness in quasi-Banach spaces. Soon after, this methodology was extended by Cao et al. \cite{CIRXY} to spaces of homogeneous type in order to show weighted compactness of Banach-valued multilinear bounded oscillation operators and/or their commutators.

A prominent application of the compact extrapolation theory lies in the study of compact versions of $T1$ theorems. The classical $T1$ theorem \cite{DJ} provides necessary and sufficient conditions for the $L^2$-boundedness of singular integral operators with a standard Calder\'{o}n–Zygmund kernel. Subsequently, it was extended to the bi-parameter setting \cite{Jou} and the multilinear case \cite{CJ, GT, Har}. A compact version of the $T1$ theorem for Calder\'{o}n--Zygmund operators was first formulated by \cite{Vil}. However, the $T1$ theorem to deduce compactness of multilinear Calder\'{o}n--Zygmund operators has been a long-standing open problem. Inspired by the approach to compactness in the bi-parameter scenario \cite{CYY} and the dyadic representation \cite{LMOV}, by means of extrapolation of compactness, Cao, Liu, Si, and Yabuta \cite{CLSY} revealed the equivalence among compact $T1$ type assumptions, (weighted) compactness of bilinear Calder\'{o}n--Zygmund operators, and a compact dyadic representation of a bilinear Calder\'{o}n--Zygmund operator as an average of some compact bilinear dyadic shifts and paraproducts. This work not only exposed the precise dyadic structure behind compactness, but also established extrapolation of endpoint compactness in order to achieve endpoint compactness covering $L^1 \times L^1 \to L^{1/2, \infty}$ and $L^{\infty} \times L^{\infty} \to \CMO$. Building upon these works, Cao and Yabuta \cite{CY} further investigated the weighted compactness of multilinear Calder\'{o}n--Zygmund operators on product spaces, and then Cao et al. \cite{CCLLYZ} extended the relevant compactness theory to singular integrals associated with Zygmund dilations. These studies fully demonstrate the central role of the extrapolation theory of compactness, and the work presented in this paper is a natural extension of the aforementioned research.

The rest of the paper is organized as follows. In Section \ref{sec:KR}, we present the proof of Theorem \ref{thm:KRQB}, which is based on  a weighted Kolmogorov--Riesz theorem in the Banach range (cf. Theorem \ref{thm:pq1infty}). Section \ref{sec:IBMS} is devoted to showing the interpolation of boundedness on mixed-norm spaces (cf. Theorem \ref{thm:IBDD}). Then in Section \ref{sec:INEX}, by means of Theorems \ref{thm:KRQB} and \ref{thm:IBDD}, we first establish the interpolation and extrapolation of compactness on mixed-norm spaces (cf. Theorems \ref{thm:ICPT} and \ref{thm:RdF-cpt}), then characterize the weights class $A_{\vec{p}}(\R^{n_1}) \times A_{\vec{q}}(\R^{n_2})$ via the boundedness of multilinear strong maximal operators and multilinear bi-parameter Riesz transforms on weighted mixed-norm spaces (cf. Theorem \ref{thm:ApAq}). Finally, Section \ref{sec:app} includes some applications of extrapolation (cf. Theorems \ref{thm:CZO}--\ref{thm:Tsig}).

\section{Kolmogorov--Riesz theorems on mixed-norm spaces}\label{sec:KR}
This section focuses on compactness criterions on mixed-norm spaces. 

\subsection{Kolmogorov--Riesz theorems in the Banach range}
\begin{theorem}\label{thm:pq1infty}
Let $p, q \in (1, \infty)$,  $u \in A_{p}(\R^{n_1})$,   $v \in A_{q}(\R^{n_2})$, and $\mathcal{K} \subset L^p(u; L^q(v))$. Then $\mathcal{K}$ is precompact in $L^p(u; L^q(v))$ if and only if the following are satisfied: 
\begin{list}{\textup{(\theenumi)}}{\usecounter{enumi}\leftmargin=1cm \labelwidth=1cm \itemsep=0.2cm 
			\topsep=0.2cm \renewcommand{\theenumi}{\arabic{enumi}}}
			
\item\label{list:pq1infty-1} $\sup\limits_{f \in \mathcal{K}} \|f\|_{L^p(u; L^q(v))} < \infty$,

\item\label{list:pq1infty-2} $\lim\limits_{A \to \infty} \sup\limits_{f \in \mathcal{K}}\|f \mathbf{1}_{{B(0, A)}^c}\|_{L^p(u; L^q(v))} = 0$,

\item\label{list:pq1infty-3} $\lim\limits_{r \to 0}  \sup\limits_{f \in \mathcal{K}}\|f-{\langle f \rangle}_{B(\cdot, r)}\|_{L^p(u; L^q(v))} = 0$.
\end{list}

\end{theorem}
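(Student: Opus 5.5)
The plan is to follow the classical Kolmogorov--Riesz scheme: reduce precompactness to total boundedness and use the averaging operator $A_r f := \langle f \rangle_{B(\cdot, r)}$, with $B(x,r)$ a ball in $\Rnn$, as the approximating device. Throughout I will use that $X := L^p(u; L^q(v))$ is a Banach space whose norm is lattice-monotone and absolutely continuous, since $p, q<\infty$. The key analytic input is a uniform bound
\begin{align*}
\|A_r f\|_X \lesssim \|M_{\mathcal{R}} f\|_X \lesssim \|f\|_X, \qquad \text{uniformly in } r>0 .
\end{align*}
The first inequality holds because a ball is contained in a rectangle of comparable measure. The second should come from the mixed-norm boundedness of the strong maximal function $M_{\mathcal{R}} \le M^{(1)} \circ M^{(2)}$. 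Here $M^{(2)}$ is bounded on $L^q(v)$ for a.e. fixed $x_1$ because $v\in A_q$. For $M^{(1)}$ acting on $L^q(v)$-valued functions, I expect to need the Fefferman--Stein type inequality in weighted $L^p(u)$ with values in $L^q(v)$. This can be obtained by Rubio de Francia extrapolation from $p=q$ (Fubini with the product weight) using $u\in A_p$, or via the $m=1$ case of Theorem \ref{thm:ApAq}, if that may be assumed.

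For necessity, condition (\ref{list:pq1infty-1}) is immediate from boundedness of precompact sets. For (\ref{list:pq1infty-2}) and (\ref{list:pq1infty-3}), I would take a finite $\varepsilon$-net $\{g_1,\dots,g_N\}$ of $\mathcal K$ and reduce to single functions, using the uniform operator bounds $\|f\mathbf 1_{B(0,A)^c}\|_X\le \|f\|_X$ and $\|f-A_rf\|_X\lesssim\|f\|_X$. For a single $g$, the estimate $\|g\mathbf 1_{B(0,A)^c}\|_X\to0$ follows from absolute continuity of the norm via dominated convergence applied twice. For $\|g-A_rg\|_X\to0$, I would approximate $g$ by $h\in\mathscr C_c(\Rnn)$; density holds since $u,v$ are locally integrable and $X$ has absolutely continuous norm. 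Then $A_rh\to h$ uniformly with supports in a fixed compact set, so the convergence holds in $X$, and the uniform bound on $A_r$ handles $g-h$.

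For sufficiency, given $\varepsilon>0$, I would choose $A$ by (\ref{list:pq1infty-2}) and $r$ by (\ref{list:pq1infty-3}). Then every $f\in\mathcal K$ is within $2\varepsilon$ of $\mathbf 1_{B(0,A)}A_rf$, so it suffices to show that $\mathcal F:=\{\mathbf 1_{B(0,A)}A_rf: f\in\mathcal K\}$ is precompact in $X$. For $x,x'\in B(0,A)$ we have
\begin{align*}
|A_rf(x)|\le |B(0,r)|^{-1}\int_{B(0,A+r)}|f|, \qquad |A_rf(x)-A_rf(x')|\le |B(0,r)|^{-1}\int_{B(x,r)\triangle B(x',r)}|f| .
\end{align*}
These are controlled by Hölder's inequality in mixed norms with $\|f\|_X$ times $\|\mathbf 1_{E}\|_{L^{p'}(u^{1-p'};L^{q'}(v^{1-q'}))}$. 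That dual norm is finite because $u^{1-p'}, v^{1-q'}$ are locally integrable ($A_p$, $A_q$), and it tends to $0$ as $|x-x'|\to0$ by absolute continuity. Hence $\mathcal F$ is uniformly bounded and equicontinuous on $\overline{B(0,A)}$, so Arzelà--Ascoli makes it precompact in $C(\overline{B(0,A)})$. Finally, $\mathbf 1_{B(0,A)}\in X$ (local integrability of $u,v$) and the uniform norm dominates the $X$-norm on functions supported in $B(0,A)$, which gives precompactness in $X$.

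The main obstacle is the uniform boundedness of $A_r$ (or $M_{\mathcal R}$) on the weighted mixed-norm space, i.e. the vector-valued maximal estimate with weights $u\in A_p$ on the outer variable and $v\in A_q$ on the inner variable. If extrapolation is cumbersome there, I would try to avoid maximal functions: dominate $A_r$ by iterated averages $A^{(1)}_{r}A^{(2)}_{r}$ over cubes. Each $A^{(i)}_r$ is a positive operator with a uniformly bounded norm on $L^p(u)$, resp. $L^q(v)$, by the $A_p$ condition directly. The outer operator extends to $L^q(v)$-valued functions by Minkowski's integral inequality because it is positive and an average.
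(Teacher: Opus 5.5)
Your proposal is correct and takes essentially the same route as the paper. For necessity, the paper also uses a finite $\varepsilon$-net, density of smooth compactly supported functions, and the uniform bound of the averages by $M_{\mathcal{R}}$ on the weighted mixed-norm space (it simply cites Kurtz for that bound); for sufficiency, it also applies Arzel\`a--Ascoli to $\{\langle f\rangle_{B(\cdot,r)}\}_{f\in\mathcal{K}}$ on $\overline{B(0,A)}$ with H\"older estimates. The differences are minor. For equicontinuity, the paper uses the openness of $A_p$ to reduce to an unweighted norm of $\mathbf{1}_{B(x,r)\triangle B(y,r)}$, whereas you use absolute continuity of the dual weighted norm, which is also valid. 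Your Minkowski-inequality argument for the fixed-scale averages is a correct elementary substitute for the maximal-function bound.
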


\begin{proof}
We begin with proving the necessity. Assume that $\mathcal{K}$ is precompact in $ L^{p}(u;  L^{q}(v))$, which implies that $\mathcal{K}$ is totally bounded. Given $\varepsilon > 0$, denote 
\begin{align*}
B_\varepsilon(f) 
:= \{g \in L^{p}(u;  L^{q}(v)) : \|g - f\|_{L^{p}(u;  L^{q}(v))} < \varepsilon \}, 
\end{align*}
then there exists a finite number of functions $ \{f_k\}_{k=1}^N \subset  \mathcal{K}$ such that $\mathcal{K} \subseteq \bigcup_{k=1}^{N} B_{\varepsilon}(f_k)$. Therefore, for any $f \in \mathcal{K}$, there exists some $k \in \{1, \dots, N\}$ such that 
\begin{align}\label{eq:ffk}
 \|f - f_k\|_{L^{p}(u;  L^{q}(v))} < \varepsilon.
\end{align}
Hence, 
\begin{align*}
\|f\|_{L^{p}(u;  L^{q}(v))} 
\le \|f-f_k\|_{L^{p}(u;  L^{q}(v))} + \|f_k\|_{L^{p}(u;  L^{q}(v))}
\le \varepsilon + \max\limits_{1 \le k \le N} \|f_k\|_{L^{p}(u;  L^{q}(v))},
\end{align*}
which justifies the condition \eqref{list:pq1infty-1}. Note that 
\begin{align}\label{eq:uAp}
u \in A_{p}(\R^{n_1}) \Longrightarrow u, u^{1-p'} \in L^1_{\loc}(\R^{n_1}),
\end{align}
and
\begin{align}\label{eq:vAq}
v \in A_{q}(\R^{n_2}) \Longrightarrow v, v^{1-q'} \in L^1_{\loc}(\R^{n_2}).
\end{align}
By Lemma \ref{lem:dense}, $\mathscr{C}^\infty_c(\R^{n_1} \times \R^{n_2})$ is dense in $L^{p}(u;  L^{q}(v))$. Thus, there exists $g_k \in \mathscr{C}^\infty_c(\R^{n_1} \times \R^{n_2})$ such that 
\begin{align*}
\|f_k-g_k\|_{L^{p}(u;  L^{q}(v))}  \le \varepsilon, \quad 1 \le k \le N,
\end{align*}
which along with \eqref{eq:ffk} gives
 \begin{align}\label{eq:fgk}
\|f-g_k\|_{L^{p}(u;  L^{q}(v))} 
\le \|f-f_k\|_{L^{p}(u;  L^{q}(v))} + \|f_k-g_k\|_{L^{p}(u;  L^{q}(v))} 
\lesssim \varepsilon. 
\end{align}
Suppose $\supp{g_k} \subset B(0, A_k) $ for each $k=1,\dots ,N$, and $A_0:=\max{\{A_1, \dots ,A_N\}}$. Then for any $A \ge A_0$, the estimates \eqref{eq:ffk} and \eqref{eq:fgk} lead to
\begin{align*}
\|f \mathbf{1}_{{B(0, A)}^c}\|_{L^{p}(u;  L^{q}(v))} 
\le \|f-g_k\|_{L^{p}(u;  L^{q}(v))} + \|g_k \mathbf{1}_{{B(0, A)}^c}\|_{L^{p}(u;  L^{q}(v))} 
\lesssim \varepsilon.  
\end{align*}
This shows the condition \eqref{list:pq1infty-2}. To proceed, we split 
 \begin{align}\label{eq:ffB}
\|f-{\langle f \rangle}_{B(\cdot, r)}\|_{L^{p}(u;  L^{q}(v))} 
&\le \|f-g_k\|_{L^{p}(u;  L^{q}(v))} 
+ \|g_k-{\langle g_k \rangle}_{B(\cdot, r)}\|_{L^{p}(u;  L^{q}(v))} 
\\ \nonumber 
&\quad+ \|{\langle g_k \rangle}_{B(\cdot, r)} - {\langle f \rangle}_{B(\cdot, r)}\|_{L^{p}(u;  L^{q}(v))}. 
\end{align}
Denote $B_i(r) := \{x_i \in \R^{n_i} : |x_i| < r \}$, $i=1, 2$. Since $g_k \in \mathscr{C}^\infty_c(\R^{n_1} \times \R^{n_2})$, there exists some $r_0 > 0 $ so that
 \begin{align*}
\sup\limits_{|x-y| < r_0}|g_k(x)-g_k(y)| 
\le \varepsilon \, u(B_1(2A_0))^{-\frac1p} \, v(B_2(2A_0))^{-\frac1q},
\end{align*}
which yields that for any $0 < r < \min\{r_0, A_0\}$,
 \begin{align}\label{eq:gkgkB}
&\|g_k - \langle g_k \rangle_{B(\cdot, r)}\|_{L^p(u;  L^q(v))} 
\\ \nonumber 
&\le \bigg\| \mathbf{1}_{B_1(2A_0)} \otimes \mathbf{1}_{B_2(2A_0)}
\fint_{B(0, r)} |g_k(\cdot) - g_k(\cdot +y)| \, dy \bigg\|_{L^{p}(u;  L^{q}(v))}  
\le \varepsilon. 
\end{align}
In addition,
 \begin{align}\label{gkbx}
|{\langle g_k \rangle}_{B(x, r)}-{\langle f \rangle}_{B(x, r)}| 
\le \fint_{B(x, r)} |f-g_k| \,dy \le M_{\mathcal{R}}(f - g_k)(x), 
\end{align}
and 
\begin{align}\label{eq:Mh}
\|M_{\mathcal{R}}h\|_{L^s(\mu; L^t(\nu))} \lesssim \|h\|_{L^s(\mu; L^t(\nu))}, 
\end{align}
for all $s, t \in (1, \infty)$ and $(\mu, \nu) \in A_s(\R^{n_1}) \times A_t(\R^{n_2})$, where the latter was shown in  \cite[Theorem 1 and Lemma 3]{Kur}. Collecting \eqref{eq:fgk}, \eqref{gkbx}, and \eqref{eq:Mh}, we obtain 
\begin{align}\label{eq:gkBfB}
\|{\langle g_k \rangle}_{B(\cdot, r)}-{\langle f \rangle}_{B(\cdot, r)}\|_{L^{p}(u;  L^{q}(v))} 
\lesssim \|f - g_k \|_{L^{p}(u;  L^{q}(v))}
\lesssim \varepsilon.
\end{align}
As a consequence of \eqref{eq:ffk}, \eqref{eq:ffB}, \eqref{eq:gkgkB}, and \eqref{eq:gkBfB}, there holds for any $r \in (0, r_0)$, 
 \begin{align*}
\|f-{\langle f \rangle}_{B(\cdot, r)}\|_{L^{p}(u;  L^{q}(v))} \lesssim \varepsilon, 
\quad \text{ uniformly in  } f \in \mathcal{K}. 
\end{align*}
This verifies the condition \eqref{list:pq1infty-3}.

To show the sufficiency, assume the conditions  \eqref{list:pq1infty-1}, \eqref{list:pq1infty-2}, and \eqref{list:pq1infty-3} hold. Given $\varepsilon >0$, the conditions \eqref{list:pq1infty-2} and \eqref{list:pq1infty-3} imply that there exist $A >0$ and $r>0$ such that 
\begin{align}\label{eq:bc}
\|f \mathbf{1}_{{B(0, A)}^c}\|_{L^{p}(u;  L^{q}(v))} \le \varepsilon 
\quad\text{and}\quad 
\|f - {\langle f \rangle}_{B(\cdot, r)}\|_{L^{p}(u;  L^{q}(v))} \le \varepsilon, 
\quad \forall f \in \mathcal{K}. 
\end{align}
For any $x \in \overline{B(0, A)}$, note that 
\begin{align}\label{eq:BB0}
B := B(x, r) \subset B(0, 2A+r) =: B_0 \subset B_0^1 \times B_0^2, 
\end{align}
where $B_0^i = \{y_i \in \R^{n_i}: |y_i| < 2A+r\}$. By H\"older's inequality, 
\begin{align}\label{eq:fB}
|{\langle f \rangle}_{B(x, r)}| 
&\le \frac{1}{|B(0, r)|}  \int_{{\R^{n_1}} \times {\R^{n_2}} } |f \, (u^{\frac1p} \otimes v^{\frac1q})| 
| \mathbf{1}_{B}(u^{-\frac{1}{p}} \otimes v^{-\frac{1}{q}})| \,dy
\\ \nonumber 
&\lesssim r^{-n_1-n_2}\|f(u^{\frac{1}{p}} \otimes v^{\frac{1}{q}})\|_{L^{p}( L^{q})} 
\|(\mathbf{1}_{B_0^1} \otimes \mathbf{1}_{B_0^2})(u^{-\frac{1}{p}} \otimes v^{-\frac{1}{q}})\|_{L^{p'}( L^{q'})}
\\ \nonumber 
&=r^{-n_1-n_2} \|f\|_{L^{p}(u;  L^{q}(v))} 
\big[u^{1-p'}(B_0^1) \big]^{\frac{1}{p'}} \big[v^{1-q'}(B_0^2) \big]^{\frac{1}{q'}}. 
\end{align}
By the fact $u \in A_p(\R^{n_1})$ and $v \in A_q(\R^{n_2})$, there exist some $\kappa_1 \in (1, p)$ and $\kappa_2 \in (1, q)$ so that $u \in  A_{\kappa_1}(\R^{n_1})$ and $v \in A_{\kappa_2}(\R^{n_2})$. Thus, by \eqref{eq:uAp} and \eqref{eq:vAq},
\begin{align}\label{eq:uvkappa}
u^{1-\kappa'_1} \in L^1_{\loc}(\R^{n_1}) \quad \text{and} \quad v^{1-\kappa'_2} \in L^1_{\loc}(\R^{n_2}). 
\end{align}
Set $p_0 = \frac{p}{\kappa_1} > 1$ and $q_0 = \frac{q}{\kappa_2} > 1$, and then choose $p_1$ and $q_1$ such that $ \frac{1}{p} + \frac{1}{p_1} = \frac{1}{p_0}$ and $ \frac{1}{q} + \frac{1}{q_1} = \frac{1}{q_0}$. For any $x,y \in \overline{B(0, A)}$, by H\"{o}lder's inequality and \eqref{eq:BB0}, 
\begin{align}\label{eq:fBxfBy}
& |\langle f \rangle_{B(x, r)} - \langle f \rangle_{B(y, r)}| 
\\ \nonumber 
&\le \frac{1}{|B(0, r)|}  \int_{{\R^{n_1}} \times {\R^{n_2}} } |f(z)| | \mathbf{1}_{B(x, r)}-\mathbf{1}_{B(y, r)}| \,dz
\\ \nonumber 
&\lesssim r^{-n_1-n_2}\|f \mathbf{1}_{B(x, r) \cup B(y, r)}\|_{L^{p_0}( L^{q_0})} 
\|\mathbf{1}_{B(x, r)}-\mathbf{1}_{B(y, r)}\|_{L^{p'_0}( L^{q'_0})}
\\ \nonumber 
&\le r^{-n_1-n_2} \|f \mathbf{1}_{B_0}\|_{L^{p_0}( L^{q_0})} 
\|\mathbf{1}_{B(x, r) \triangle B(y, r)} \|_{L^{p'_0}( L^{q'_0})}
\\ \nonumber 
&\le r^{-n_1-n_2} \|f (u^{\frac{1}{p}} \otimes v^{\frac{1}{q}}) \|_{L^p( L^q)}  
\| (u^{-\frac{1}{p}} \mathbf{1}_{B_0^1}) \otimes (v^{-\frac{1}{q}} \mathbf{1}_{B_0^2}) \|_{L^{p_1}( L^{q_1})} 
\|\mathbf{1}_{B(x, r) \triangle B(y, r)} \|_{L^{p'_0}( L^{q'_0})}
\\ \nonumber 
&= r^{-n_1-n_2} \|f\|_{L^{p}(u;  L^{q}(v))} 
[u^{1-\kappa'_1}(B_0^1)]^{\frac{1}{p_1}}  
[v^{1-\kappa'_2}(B_0^2)]^{\frac{1}{q_1}}  
\|\mathbf{1}_{B(x, r) \triangle B(y, r)} \|_{L^{p'_0}( L^{q'_0})}. 
\end{align}
It follows from the condition \eqref{list:pq1infty-1}, \eqref{eq:uAp}, \eqref{eq:vAq}, and \eqref{eq:fB}--\eqref{eq:fBxfBy} that $\{{\langle f \rangle}_{B(\cdot, r)}\}_{f \in \mathcal{K}}$ is equi-bounded and equi-continous on the closed ball $\overline{B(0, A)}$. By the classical Ascoil-Arzel\`{a} theorem (cf. \cite[p. 85]{Yos}), it is precompact in $\mathscr{C}(\overline{B(0, A)})$. Hence, it is totally bounded in $\mathscr{C}(\overline{B(0, A)})$. Then there exists a finite number of functions $\{f_j\}_{j=1}^N \subset  \mathcal{K} $ such that 
\begin{align*}
\inf\limits_{1 \le j \le N} \sup\limits_{|x| \le A} |{\langle f \rangle}_{B(x, r)} - {\langle f_j \rangle}_{B(x, r)}| 
\le \varepsilon \, u(B_0^1)^{-\frac1p} v(B_0^2)^{-\frac1q}, 
\quad \forall f \in \mathcal{K},  
\end{align*}
which indicates that for each $f \in \mathcal{K}$ there exists $j \in \{1, \dots, N\}$ such that 
\begin{align}\label{eq:fBxfjBx}
\sup\limits_{|x| \le A} |{\langle f \rangle}_{B(x, r)} - {\langle f_j \rangle}_{B(x, r)}| 
\le \varepsilon \, u(B_0^1)^{-\frac1p} v(B_0^2)^{-\frac1q}.  
\end{align}
Therefore, we invoke \eqref{eq:bc} and \eqref{eq:fBxfjBx} to conclude
\begin{align*}
& \|f - f_j \|_{L^p(u;  L^q(v))} 
\\
&\le \|(f-f_j) \mathbf{1}_{B(0, A)}\|_{L^{p}(u;  L^{q}(v))} 
+ \|(f-f_j)\mathbf{1}_{B(0, A)^c}\|_{L^{p}(u;  L^{q}(v))}
\\
&\le \|(f-{\langle f \rangle}_{B(\cdot, r)}) \mathbf{1}_{B(0, A)}\|_{L^{p}(u;  L^{q}(v))} 
+ \|({\langle f \rangle}_{B(\cdot, r)}-{\langle f_j \rangle}_{B(\cdot, r)}) \mathbf{1}_{B(0, A)}\|_{L^{p}(u;  L^{q}(v))} 
\\
&\quad+ \|({\langle f_j \rangle}_{B(\cdot, r)} - f_j) \mathbf{1}_{B(0, A)}\|_{L^{p}(u;  L^{q}(v))} 
+ \|f \mathbf{1}_{B(0, A)^c}\|_{L^{p}(u;  L^{q}(v))} 
+ \|f_j \mathbf{1}_{B(0, A)^c}\|_{L^{p}(u;  L^{q}(v))}
\\
& \le \|f - \langle f \rangle_{B(\cdot, r)}\|_{L^{p}(u;  L^{q}(v))} 
+ \varepsilon \, [u(B_0^1)]^{-\frac{1}{p}} [v(B_0^2)]^{-\frac{1}{q}} 
\|\mathbf{1}_{B_0^1} \otimes \mathbf{1}_{B_0^2}\|_{L^{p}(u;  L^{q}(v))} 
\\
&\quad+ \|{\langle f_j \rangle}_{B(\cdot, r)}-f_j \|_{L^{p}(u;  L^{q}(v))} 
+ \|f \mathbf{1}_{B(0, A)^c}\|_{L^{p}(u;  L^{q}(v))} 
+ \|f_j \mathbf{1}_{B(0, A)^c}\|_{L^{p}(u;  L^{q}(v))}
\\
& \le 5\varepsilon.
\end{align*}
This shows that $\mathcal{K}$ is totally bounded. Hence, $\mathcal{K}$ is precompact in $L^{p}(u;  L^{q}(v))$. 
\end{proof}

\subsection{Kolmogorov--Riesz theorems in the quasi-Banach range}
We are going to present the proof of Theorem \ref{thm:KRQB}. To prove the necessity, assume that $\mathcal{K}$ is precompact in $ L^{p}(u;  L^{q}(v))$. In particular, $\mathcal{K}$ is totally bounded. Given $\varepsilon > 0$, there exists a finite number of functions $ \{f_j\}^N_{j=1}\subset  \mathcal{K}$ such that $\inf\limits_{1 \le j \le N}\|f-f_j\|_{L^{p}(u;  L^{q}(v))} \le \varepsilon$. Let $f \in \mathcal{K}$ be an arbitrary function, then there exists some $j \in \{1,\dots,N\}$ such that 
\begin{align}\label{eq:ffj}
\|f-f_j\|_{L^{p}(u;  L^{q}(v))} \le \varepsilon.  
\end{align}
This gives 
\begin{align*}
\|f\|_{L^p(u; L^{q}(v))} 
\lesssim \|f - f_j\|_{L^{p}(u; L^{q}(v))} + \|f_j\|_{L^{p}(u; L^{q}(v))} 
\le \varepsilon + \max\limits_{1 \le j \le N} \|f_j\|_{L^{p}(u; L^{q}(v))},
\end{align*}
which shows the condition \eqref{KRQB-1}. Since $\mathscr{C}^\infty_c(\R^{n_1} \times \R^{n_2})$ is dense in $L^{p}(u;  L^{q}(v))$ (cf. Lemma \ref{lem:dense}), for any $j \in \{1,\dots,N\}$, there exists $g_j \in \mathscr{C}^\infty_c(\R^{n_1} \times \R^{n_2})$ such that 
\begin{align*}
\|f_j - g_j\|_{L^{p}(u;  L^{q}(v))}  \le \varepsilon, 
\end{align*}
which together with \eqref{eq:ffj} implies 
 \begin{align}\label{eq:fgj}
\|f-g_j\|_{L^{p}(u;  L^{q}(v))} 
\lesssim \|f-f_j\|_{L^{p}(u;  L^{q}(v))} + \|f_j-g_j\|_{L^{p}(u;  L^{q}(v))} 
\lesssim \varepsilon. 
\end{align}
Set $\supp{g_j} \subset B(0, A_j) $ for each $j=1,\dots ,N$, and $A_0:=\max{\{A_1, \dots ,A_N\}}$. Then for any $A \ge A_0$, by \eqref{eq:ffj} and \eqref{eq:fgj}, 
\begin{align*}
\|f \mathbf{1}_{{B(0, A)}^c}\|_{L^{p}(u;  L^{q}(v))} 
\lesssim \|f-g_j\|_{L^{p}(u;  L^{q}(v))} + \|g_j \mathbf{1}_{{B(0, A)}^c}\|_{L^{p}(u;  L^{q}(v))} 
\lesssim \varepsilon,  
\end{align*}
which justifies the condition \eqref{KRQB-2}. Denote $B_i(2A_0) := \{x_i \in \R^{n_i}: |x_i|<2A_0 \}$, $i=1, 2$. By the condition $g_j \in \mathscr{C}^\infty_c(\R^{n_1} \times \R^{n_2})$, there exists some $r_0 > 0 $ so that
 \begin{align}\label{eq:gjxgjy}
 \sup\limits_{|x-y| < r_0}|g_j(x) - g_j(y)| 
 \le \varepsilon \, u(B_1(2A_0))^{-\frac{1}{p}} v(B_2(2A_0))^{-\frac{1}{q}}. 
\end{align}
To proceed, we split
 \begin{align}\label{eq:i1i2i3}
\mathcal{I}& 
:= \bigg\| \bigg( \fint_{B(0, r)} |\tau_y f-f |^a \,dy \bigg)^{\frac{1}{a}} \bigg\|_{L^{p}(u;  L^{q}(v))} 
\\ \nonumber 
&\lesssim \bigg\| \bigg( \fint_{B(0, r)} |\tau_y f-\tau_y g_j |^a \,dy \bigg)^{\frac{1}{a}} \bigg\|_{L^{p}(u;  L^{q}(v))} 
\\ \nonumber 
& \quad +  \bigg\| \bigg(\fint_{B(0, r)} |\tau_y g_j-g_j  |^a \,dy \bigg)^{\frac{1}{a}} \bigg\|_{L^{p}(u;  L^{q}(v))} 
\\ \nonumber 
& \quad + \bigg\| \bigg( \fint_{B(0, r)} | g_j-f |^a \,dy \bigg)^{\frac{1}{a}} \bigg\|_{L^{p}(u;  L^{q}(v))}
\\ \nonumber 
& =: \mathcal{I}_1 + \mathcal{I}_2 + \mathcal{I}_3. 
\end{align}
Since $u \in A_{p_0}(\R^{n_1}) \subset A_{\frac{p}{a}}(\R^{n_1})$ and $v \in A_{q_0}(\R^{n_2}) \subset A_{\frac{q}{a}}(\R^{n_2})$, the inequalities \eqref{eq:Mh} and  \eqref{eq:fgj} imply
 \begin{align}\label{eq:i1}
\mathcal{I}_1 
& \le \|M_{\mathcal{R}}(|f - g_j|^a)^{\frac{1}{a}}\|_{L^{p}(u;  L^{q}(v))} 
= \|M_{\mathcal{R}}(|f - g_j|^a)\|^{\frac{1}{a}}_{L^{\frac{p}{a}}(u;  L^{\frac{q}{a}}(v))} 
\\ \nonumber 
&\lesssim  \||f - g_j|^a\|^{\frac1a}_{L^{\frac{p}{a}}(u; L^{\frac{q}{a}}(v))} 
= \|f - g_j\|_{L^p(u; L^q(v))} 
\lesssim \varepsilon. 
\end{align}
For all $0<r<\min \{r_0, A_0\}$, using \eqref{eq:gjxgjy} and that $\supp ( \tau_y g_j-g_j) \subset B(0, 2A_0)$ for any $|y| < r$, we have 
 \begin{align}\label{eq:i2}
\mathcal{I}_2 \le \|\mathbf{1}_{B_1(0, 2A_0)} \otimes \mathbf{1}_{B_2(0, 2A_0)}\|_{L^{p}(u;  L^{q}(v))} \sup\limits_{|x-y|<r_0}|g_j(x)-g_j(y)| \le \varepsilon. 
\end{align}
Additionally, \eqref{eq:fgj} implies 
 \begin{align}\label{eq:i3}
\mathcal{I}_3 = \|g_j-f \|_{L^{p}(u;  L^{q}(v))}  \lesssim 2\varepsilon. 
\end{align}
Now gathering \eqref{eq:i1i2i3}--\eqref{eq:i3}, we conclude that 
 \begin{align*}
 \mathcal{I} \lesssim 5\varepsilon, \quad \text{ for all } 0<r<\min\{r_0, A_0\}. 
\end{align*}
This shows the condition \eqref{KRQB-3}.

To show the sufficiency, assume that the conditions  \eqref{KRQB-1}, \eqref{KRQB-2}, and \eqref{KRQB-3} hold. Before the proof, two assertions are given first. We claim that it suffices to treat the case that $\mathcal{K}$ is a family of non-negative functions. Denote
\begin{align*}
\mathcal{K}^+ :=\{f^+: f \in \mathcal{K} \} 
\quad\text{ and } \quad 
\mathcal{K}^- :=\{f^-: f \in \mathcal{K} \},  
\end{align*}
where $f^+ := (|f| + f)/2$ and $f^- := (|f| - f)/2$. Then for all $f, g \in \mathcal{K}$ and $x \in \Rnn$, 
\begin{align*}
0 \le f^+(x) \le |f(x)|, \quad |f^+(x) - g^+(x)| \le |f(x)-g(x)|,  
\\
0 \le f^-(x) \le |f(x)|, \quad |f^-(x)-g^-(x)| \le |f(x)-g(x)|, 
\\
\text{ and } |f(x)-g(x)| \le |f^+(x)-g^+(x)| + |f^-(x)-g^-(x)|,   
\end{align*}
which together with $(\tau_h f)^\pm = \tau_h f^\pm$ indicates that 
\begin{align*}
\text{\eqref{KRQB-1}--\eqref{KRQB-3} hold for $\mathcal{K}$
$\iff$ \eqref{KRQB-1}--\eqref{KRQB-3} hold for $\mathcal{K}^+$ and $\mathcal{K}^-$}, 
\end{align*}
and 
\begin{align*}
\mathcal{K} \text{ is precompact in } L^{p}(u;  L^{q}(v)) 
\iff \mathcal{K}^+ \text{ and } \mathcal{K}^- \text{ are precompact in } L^{p}(u;  L^{q}(v)). 
\end{align*}
Thus, we may assume that $\mathcal{K}$ is a family of non-negative functions. 

We also claim that 
\begin{align}\label{eq:KKa}
\mathcal{K} \text{ is precompact in } L^{p}(u;  L^{q}(v)) 
\iff \mathcal{K}^a \text{ is precompact in } L^{\frac{p}{a}}(u;  L^{\frac{q}{a}}(v)),
\end{align}
where $\mathcal{K}^a := \{f^a: f \in \mathcal{K}\}$. To show \eqref{eq:KKa}, we utilize an elementary calculation (cf. \cite{Tsu}): for any $\alpha \in(0, 1)$, 
\begin{align}\label{eq:alphast}
|s^\alpha - t^\alpha| \le |s - t|^\alpha 
\le \frac{1}{\alpha} \bigg(\frac{s+t}{|s - t|} \bigg)^{1 - \alpha}|s^\alpha - t^\alpha|, \quad \forall s, t>0.  
\end{align}
Therefore for all $f, g \in \mathcal{K}$, 
\begin{align}\label{eq:alphafg}
\|f^a - g^a\|_ {L^{\frac{p}{a}}(u;  L^{\frac{q}{a}}(v))} \le \| |f - g|^a \|_{L^{\frac{p}{a}}(u;  L^{\frac{q}{a}}(v))} = \|f - g\|^a_{L^p(u;  L^q(v))},  
\end{align}
which gives the left-to-right implication in \eqref{eq:KKa}. To prove the reverse, assume that $\mathcal{K}^a$ is precompact in $L^{\frac{p}{a}}(u;  L^{\frac{q}{a}}(v))$. Fix $\varepsilon >0$ and $f, g \in \mathcal{K}$, denote 
 \begin{align*}
E_\varepsilon := \bigg\{ x\in \Rnn: \frac{f(x)+g(x)}{|f(x) - g(x)|} \le \frac{1}{\varepsilon} \bigg\},  
\end{align*}
and
\begin{align}\label{eq:supf}
K_0 
:= \sup_{f\in \mathcal{K}}\|f\|_ {L^p(u;  L^q(v))}
= \sup_{f\in \mathcal{K}}\|f^a\|^{\frac{1}{a}}_{L^{\frac{p}{a}}(u;  L^{\frac{q}{a}}(v))}.
\end{align}
Invoking \eqref{eq:alphast}, we obtain 
\begin{align}\label{eq:fgalpha}
\|f - g\|_ {L^p(u;  L^q(v))} 
&\lesssim \| |f - g|^a \mathbf{1}_{E_\varepsilon}\|^{\frac{1}{a}}_{L^{\frac{p}{a}}(u; L^{\frac{q}{a}}(v))}
+\|(f - g) \mathbf{1}_{E_\varepsilon^c}\|_ {L^p(u; L^q(v))}
\\ \nonumber
&\lesssim \varepsilon^{a-1}\| |f^a - g^a| \mathbf{1}_{E_\varepsilon}\|^{\frac{1}{a}}_{L^{\frac{p}{a}}(u; L^{\frac{q}{a}}(v))}
+ \varepsilon \|(f+g) \mathbf{1}_{E_\varepsilon^c}\|_{L^p(u; L^q(v))}
\\ \nonumber
&\lesssim \varepsilon^{a-1}\|f^a - g^a\|^{\frac{1}{a}}_{L^{\frac{p}{a}}(u; L^{\frac{q}{a}}(v))}
+ \varepsilon \|f^a\|^{\frac{1}{a}}_{L^{\frac{p}{a}}(u; L^{\frac{q}{a}}(v))}
+ \varepsilon \|g^a\|^{\frac{1}{a}}_{L^{\frac{p}{a}}(u; L^{\frac{q}{a}}(v))}
\\ \nonumber
&\le \varepsilon^{a-1}\|f^a - g^a\|^{\frac{1}{a}}_{L^{\frac{p}{a}}(u; L^{\frac{q}{a}}(v))}
+ 2\varepsilon K_0.
\end{align}
Let $\{f_j\} \subset \mathcal{K}$ be an arbitrary sequence of functions. Then by the precompactness of $\mathcal{K}^a$, there exists a relabeled Cauchy subsequence of $\{f^a_j\}$. Therefore, given $\varepsilon >0$, there exists an integer $N=N(\varepsilon)$ such that for all $i, j \ge N$, 
\begin{align}\label{eq:fiafja}
\|f_i^a - f_j^a\|_{L^{\frac{p}{a}}(u;  L^{\frac{q}{a}}(v))} \le \varepsilon^a.
\end{align}
Fix $i, j\ge N$. The inequalities \eqref{eq:fgalpha} and \eqref{eq:fiafja} imply
 \begin{align*}
\|f_i - f_j\|_{L^p(u;  L^q(v))} \lesssim \varepsilon^{a - 1}\|f_i^a - f_j^a\|^{\frac{1}{a}}_{L^{\frac{p}{a}}(u;  L^{\frac{q}{a}}(v))}+ 2\varepsilon K_0
\le \varepsilon^a+2\varepsilon K_0, 
\end{align*}
which asserts that $\{f_j\}$ is a Cauchy sequence in $\mathcal{K} \subset L^p(u;  L^q(v))$. Hence, $\eqref{eq:KKa}$ holds.

Now we assume that the conditions \eqref{KRQB-1}--\eqref{KRQB-3} hold and $\mathcal{K}$ is a family of non-negative functions. It follows from \eqref{eq:alphafg} that
\begin{align*}
|f^a(x) - \langle f^a \rangle_{B(x, r)}| 
\le \fint_{B(0, r)} | \tau_y f(x)-f(x)|^a \, dy.
\end{align*}
Thus, the conditions \eqref{KRQB-1}--\eqref{KRQB-3} imply  
\begin{align}\label{eq:paqa1}
\sup_{f \in \mathcal{K}}\|f^a\|_{L^{\frac{p}{a}}(u;  L^{\frac{q}{a}}(v))}<\infty, 
\end{align}
\begin{align}\label{eq:paqa2}
\lim_{A \to \infty}\sup_{f \in \mathcal{K}}\|f^a \mathbf{1}_{B(0, A)^c}\|_{L^{\frac{p}{a}}(u;  L^{\frac{q}{a}}(v))} = 0, 
\end{align}
\begin{align}\label{eq:paqa3}
\lim_{r \to 0}\sup_{f \in \mathcal{K}}\|f^a - \langle f^a \rangle_{{B(x, r)}}\|_{L^{\frac{p}{a}}(u;  L^{\frac{q}{a}}(v))} = 0.
\end{align}
Note that $1 < p_0 < \frac{p}{a}$, $1 < q_0 < \frac{q}{a}$, $u \in A_{p_0}(\R^{n_1}) \subset A_{\frac{p}{a}}(\R^{n_1})$, and $v \in A_{q_0}(\R^{n_2}) \subset A_{\frac{q}{a}}(\R^{n_2})$. Then by \eqref{eq:paqa1}--\eqref{eq:paqa3} and Theorem \ref{thm:pq1infty}, $\mathcal{K}^a$ is precompact in $L^{\frac{p}{a}}(u;  L^{\frac{q}{a}}(v))$. In view of \eqref{eq:KKa}, $\mathcal{K}$ is precompact in $L^p(u; L^q(v))$. 
\qed

\begin{lemma}\label{lem:dense} 
Let  $u \in L^1_{\loc}(\R^{n_1})$ and $v \in L^1_{\loc}(\R^{n_2})$. Then $\mathscr{C}^{\infty}_c(\R^{n_1} \times \R^{n_2})$ is dense in $L^{p}(u;  L^{q}(v))$ for all $p, q \in (0, \infty)$.
\end{lemma}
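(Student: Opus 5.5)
We prove Lemma \ref{lem:dense} by reducing to simple functions of product type and then smoothing those. Fix $f \in L^p(u; L^q(v))$ and $\varepsilon>0$, and write $\rho(g) := \|g\|_{L^p(u; L^q(v))}$. For $p,q\in(0,\infty)$, $\rho^{\min\{p,q,1\}}$ is subadditive, so it suffices to approximate in several successive steps, each step costing $\varepsilon$.

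\emph{Step 1: truncation.} Set $f_N := f\,\mathbf{1}_{\{|f|\le N\}}\mathbf{1}_{B(0,N)}$. Then $f_N \to f$ pointwise with $|f-f_N|\le |f|$. Apply dominated convergence first in $x_2$: for a.e.\ $x_1$ the inner integral $\int|f|^q v\,dx_2$ is finite. Then apply it in $x_1$ with dominant $\|f(x_1,\cdot)\|_{L^q(v)}^p u(x_1)$. This gives $\rho(f-f_N)\to 0$. So we may assume $f$ is bounded and supported in a ball $B_0^1\times B_0^2$.

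\emph{Step 2: reduction to indicators of rectangles.} On a bounded set $E=B_0^1\times B_0^2$, the measure $\mu := u\otimes v$ is finite because $u,v\in L^1_{\loc}$. A bounded $f$ supported in $E$ is a uniform limit of simple functions $\sum c_i\mathbf{1}_{E_i}$ with $E_i\subset E$ measurable. Uniform convergence on $E$ gives convergence in $\rho$, since
\[
\rho(g)\le \|g\|_\infty\, u(B_0^1)^{1/p}v(B_0^2)^{1/q}.
\]
It therefore suffices to approximate $\mathbf{1}_{F}$ for a measurable $F\subset E$. Here the key observation is that
\[
\rho(\mathbf{1}_F-\mathbf{1}_G)=\rho(\mathbf{1}_{F\triangle G}) \le C\,\mu(F\triangle G)^{\min\{1/p,1/q\}}\cdot(\text{const}).
\]
This follows because for fixed $x_1$ the inner norm is $v(\text{slice})^{1/q}\le v(B_0^2)^{1/q-s}\,v(\text{slice})^{s}$ with $s=\min\{1/q,1/p\}$ (when $s\le 1/q$), and then Hölder/Jensen in $x_1$ against the finite measure $u\,dx_1$ on $B_0^1$. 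So it is enough to approximate $F$ in $\mu$-measure by finite unions of rectangles. This holds by outer regularity of the finite product measure $\mu$: rectangles generate the product $\sigma$-algebra, and the algebra of finite disjoint unions of rectangles is dense in measure. This step is the main technical point. The estimate of $\rho$ by a power of $\mu$ has to be done carefully in both the cases $p\ge q$ and $p<q$. I would handle it by bounding the inner $L^q(v)$-norm of an indicator by $v(B_0^2)^{1/q}$ times an indicator in $x_1$, which gives the cruder but sufficient estimate
\[
\rho(\mathbf{1}_H)\le v(B_0^2)^{1/q}\, u(\{x_1: |H_{x_1}|_v>\delta\})^{1/p}+\delta^{1/q}u(B_0^1)^{1/p}.
\]
Combining this with Chebyshev in $x_1$, $u(\{|H_{x_1}|_v>\delta\})\le \mu(H)/\delta$, shows the bound tends to $0$ as $\mu(H)\to0$ after optimizing $\delta$.

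\emph{Step 3: smoothing.} For a rectangle $R=I_1\times I_2$, $\mathbf{1}_R=\mathbf{1}_{I_1}\otimes\mathbf{1}_{I_2}$ and $\rho(\mathbf{1}_{I_1}\otimes\mathbf{1}_{I_2}) = \|\mathbf{1}_{I_1}\|_{L^p(u)}\|\mathbf{1}_{I_2}\|_{L^q(v)}$. Choose $\phi_1\in \mathscr{C}^\infty_c(\R^{n_1})$ and $\phi_2\in \mathscr{C}^\infty_c(\R^{n_2})$ with $0\le\phi_i\le 1$, equal to $1$ on $I_i$, and supported in a small neighborhood $I_i^\delta$. Then
\[
\mathbf{1}_R-\phi_1\otimes\phi_2=(\mathbf{1}_{I_1}-\phi_1)\otimes\mathbf{1}_{I_2}+\phi_1\otimes(\mathbf{1}_{I_2}-\phi_2),
\]
and each term has $\rho$ bounded by products such as $u(I_1^\delta\setminus I_1)^{1/p}v(I_2)^{1/q}$. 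These tend to $0$ as $\delta\to0$ by local integrability of $u,v$ and continuity of measure, since the boundary of a cube is null. Summing finitely many such approximations with the quasi-triangle inequality completes the density argument.
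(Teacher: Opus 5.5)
Your proof is correct, but it is organized differently from the paper's. The paper asserts without proof that $f$ can be approximated by simple functions $\sum_j c_j \mathbf{1}_{A_j^1\times A_j^2}$ with $A_j^i$ bounded measurable sets. It then smooths each such indicator by mollifying the two factors separately, $h_t=(\varphi_t^{(1)}*\mathbf{1}_{A_j^1})\otimes(\varphi_t^{(2)}*\mathbf{1}_{A_j^2})$. For this it uses the same two-term tensor splitting as your Step 3, together with $L^1$-convergence of mollifications, an a.e.\ convergent subsequence, and dominated convergence (this is where $u,v\in L^1_{\loc}$ enter).

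Your Steps 1--2 prove the density statement the paper takes for granted, which is the one place the mixed-norm structure requires an argument. The bound $\rho(\mathbf{1}_H)\lesssim \mu(H)^{\min\{1/p,1/q\}}$, for $H$ inside a fixed bounded box and $\mu=u\otimes v$, turns the problem into approximating sets in a finite measure, where regularity or the algebra-approximation theorem applies. That bound is correct as you first state it. With $s=\min\{1/p,1/q\}$, use $v(H_{x_1})^{1/q}\le v(B_0^2)^{1/q-s}v(H_{x_1})^s$ and then Jensen with exponent $ps\le 1$ against $u\,dx_1$ on $B_0^1$. This handles $p\ge q$ and $p<q$ at once, so the Chebyshev fallback is unnecessary, though it also works.

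Approximating by unions of boxes rather than arbitrary measurable rectangles lets you smooth with bump functions and null boundaries instead of mollifiers. Conversely, you could stop at measurable rectangles, since finite disjoint unions of those already form an algebra generating the product $\sigma$-algebra; the paper's mollifier argument would then finish the proof.

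Two bookkeeping points. In Step 2, ``rectangles'' must mean products of boxes for Step 3 to apply. Also take $E$ itself to be a box, or enlarge it to one, so that the approximating union of boxes $A$ can be intersected with $E$ and $F\triangle A$ stays where your $\mu$-estimate holds.
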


\begin{proof}
Let $\varepsilon > 0$ and $f \in L^p(u;  L^q(v))$. Then there exists a simple function $g$ of the form $g = \sum_{j=1}^{j_0} c_j \mathbf{1}_{A_j}$ such that $\|f - g\|_{L^p(u;  L^q(v))} < \varepsilon$, where $j_0 \in \N$, $c_j$ is a constant, $A_j = A_j^1 \times A_j^2$, $A_j^i$ is a measurable set in $\R^{n_i}$, and further $A_j^i \subset Q_i$ with $Q_i$ being a cube in $\R^{n_i}$, $i=1, 2$. Hence, in order to show $\mathscr{C}^\infty_c(\R^{n_1} \times \R^{n_2})$ is dense in $L^{p}(u;  L^{q}(v))$, it suffices to check that for each $\mathbf{1}_{A_j}$, there exists $h \in \mathscr{C}^\infty_c(\R^{n_1} \times \R^{n_2})$ such that 
\begin{align}\label{eq:1Ajh}
\|\mathbf{1}_{A_j} - h\|_{L^p(u;  L^q(v))} < \varepsilon.  
\end{align}
For each $i=1, 2$, let $\varphi^{(i)} \in \mathscr{C}^{\infty}_c(\R^{n_i})$ be a non-negative function satisfying $ \int_{\R^{n_i}} \varphi^{(i)} \, dx_i = 1$ and $\supp \varphi^{(i)} \subset B(0, 1)\subset \R^{n_i}$. Set $\varphi_t^{(i)}(x_i) := t^{-n_i} \varphi^{(i)}(x_i/t)$, then for sufficiently small $t>0$, 
\begin{align*}
h_t := (\varphi_t^{(1)} * \mathbf{1}_{A_j^1}) \otimes (\varphi_t^{(2)} * \mathbf{1}_{A_j^2}) \in \mathscr{C}^{\infty}_c(\R^{n_1} \times \R^{n_2}) 
\quad \text{and} \quad 
\supp h_t \subset 2Q_1 \times 2Q_2.
\end{align*}
Observe that
\begin{align}\label{eq:1Aj}
\| \mathbf{1}_{A_j} - h_t \|_{L^p(u; L^q(v))}
& \lesssim \big\| \big(\mathbf{1}_{A_j^1} - \varphi_t^{(1)} * \mathbf{1}_{A_j^1}\big) \mathbf{1}_{A_j^2}\big\|_{L^p(u; L^q(v))}
\\ \nonumber
& \qquad + \big\|\big(\mathbf{1}_{A_j^2} - \varphi_t^{(2)} * \mathbf{1}_{A_j^2}\big)\varphi_t^{(1)} * \mathbf{1}_{A_j^1}\big\|_{L^p(u; L^q(v))} 
\\ \nonumber
&= \|\mathbf{1}_{A_j^1} - \varphi_t^{(1)} * \mathbf{1}_{A_j^1}\|_{L^p(u)} \|\mathbf{1}_{A_j^2}\|_{L^q(v)}
\\ \nonumber
& \qquad + \|\mathbf{1}_{A_j^2} - \varphi_t^{(2)} * \mathbf{1}_{A_j^2}\|_{L^q(v)} \|\varphi_t^{(1)} * \mathbf{1}_{A_j^1}\|_{L^p(u)}.
\end{align}
The fact $\mathbf{1}_{A_j^1} \in L^1(\R^{n_1})$ implies $\lim\limits_{t \to 0} \|\mathbf{1}_{A_j^1} - \varphi_t^{(1)} \ast \mathbf{1}_{A_j^1}\|_{L^1(\R^{n_1})}=0$. Note that 
\begin{align*}
\|\mathbf{1}_{A_j^1} - \varphi_t^{(1)} * \mathbf{1}_{A_j^1}\|_{L^{\infty}(\R^{n_1})} \le 2 
\quad\text{and}\quad 
\supp(\mathbf{1}_{A_j^1} - \varphi_t^{(1)} * \mathbf{1}_{A_j^1}) \subset 2Q_1.
\end{align*} 
Then there exists a sequence $\{t_k\}$ tending to $0$ as $k \to \infty$ such that 
\begin{align*}
\lim_{k \to \infty} \big[\mathbf{1}_{A_j^1} - \varphi_{t_k}^{(1)} \ast \mathbf{1}_{A_j^1}(x_1) \big] = 0, \quad \text{a.e. } x_1 \in 2Q_1.
\end{align*}
Thus, by the fact $u \in L_{\loc}^1(\R^{n_1})$ and Lebesgue's domination convergence theorem,
\begin{align}\label{eq:1Aj1}
\lim_{k \to \infty} \|\mathbf{1}_{A_j^1} - \varphi_{t_k}^{(1)} * \mathbf{1}_{A_j^1}\|_{L^p(u)} = 0.
\end{align}  
Similarly, taking a subsequence if necessary, we have 
\begin{align}\label{eq:1Aj2}
\lim_{k \to \infty} \|\mathbf{1}_{A_j^2} - \varphi_{t_k}^{(2)} * \mathbf{1}_{A_j^2}\|_{L^q(v)}=0.
\end{align}  
In view of \eqref{eq:1Aj}--\eqref{eq:1Aj2}, when $k$ is large enough, the function $h = h_{t_k}$ satisfies \eqref{eq:1Ajh}. This completes the proof.
\end{proof}

\section{Interpolation of boundedness on mixed-norm spaces}\label{sec:IBMS}
This section is devoted to showing Theorem \ref{thm:IBDD}. To this end, we combine the ideas in \cite{BP} and \cite{Stein}.

Our aim is to prove the following inequality
\begin{align}\label{eq:TMM}
\|T(f_1, \ldots, f_m)\|_{\L^{\r_0}(\w_0^{\r_0})}
\le M_0^{1-\theta} M_1^{\theta} \prod_{j=1}^m \|f_j\|_{\L^{\r_j}(\w_j^{\r_j})},
\end{align}
for all functions $f_j \in \L^{\r_j}(\w_j^{\r_j})$, $j=1, \ldots, m$. Due to technical reasons, we will complete the proof of \eqref{eq:TMM} step by step by considering the following three scenarios: 
\begin{enumerate}
\item all $f_j$, $\u_j$, and $\v_j$ are simple functions;

\item all $f_j$ are simple functions, and all $\u_j, \v_j$ are general weights;

\item all $f_j$, $\u_j$, and $\v_j$ are general functions.
\end{enumerate}

\medskip
\noindent
{\bf Step 1: simple functions and weights.} 
Suppose that all $f_j$, $\u_j$, and $\v_j$ are simple functions. 
We may assume $\|f_j\|_{L^{\r_j}(\w_j)} = 1$, $j=1, \ldots, m$. If we set 
\begin{equation*}
\widetilde{f}_j := f_j \otimes_{k=1}^{N_j} w_{j, k} 
\quad \text{and} \quad 
\widetilde{f}_j = |\widetilde{f}_j| e^{is_j}, \quad j=1, \ldots, m, 
\end{equation*} 
it is enough to prove
\begin{equation}\label{eq:tfww}
\big\| T\big(\widetilde{f}_1 \otimes_{k=1}^{N_1} w_{j, k}^{-1}, \ldots, \widetilde{f}_m \otimes_{k=1}^{N_m} w_{j, k}^{-1}\big) 
\otimes_{k=1}^{N_0} w_{j, k} \big\|_{\L^{\r_0}(\bm{\mu}_0)} 
\le M_0^{1-\theta} M_1^{\theta}.
\end{equation}

Pick a large integer $\tau \in \mathbb{N}$ so that $\tau > \max\big\{ \frac{1}{p_{0, k}}, \frac{1}{q_{0, k}}: k = 1, \ldots, N_0 \big\}$. By the definition of $\r_0$, we see that $\tau > \max\big\{\frac{1}{r_{0, k}}: k=1, \ldots, N_0 \big\}$. By \cite[Theorem~1]{BP}, there holds
\begin{align}\label{eq:Tf-dual}
\|T(\vec{\mathfrak{f}})\|_{\L^{\r_0}(\bm{\mu}_0)}^{\frac{1}{\tau}}
= \big\| |T(\vec{\mathfrak{f}})|^{\frac{1}{\tau}} \big\|_{\L^{\tau \r_0}(\bm{\mu}_0)}
= \sup_{g} \int_{\bm{\Sigma}_0} |T(\vec{\mathfrak{f}})(x)|^{\frac{1}{\tau}} \, g(x) \, d\bm{\mu}_0(x),
\end{align}
where the supremum is taken over all nonnegative simple functions $g \in \L^{(\tau \r_0)'}(\bm{\mu}_0)$ with $\|g\|_{\L^{(\tau \r_0)'}(\bm{\mu}_0)} = 1$. Fix such a function $g$. Then define  
\begin{equation*}
\Phi_\ell(z) := \int_{\bm{\Sigma}_0} |\phi_\ell(z)|^{\frac{1}{\tau}}\, d\bm{\mu}_0, \quad z \in \C, \, \ell \in \N,
\end{equation*}
where 
\begin{align*}
& \phi_{\ell}(z) := e^{\tau (z^2-1)/\ell} M_0^{z-1} M_1^{-z} \, T(\vec{F}_z) \otimes_{k=1}^{N_0} (u_{0, k}^{1-z} v_{0, k}^{z}) \, G_z^{\tau}, 
\\ 
& F_{z, j} := |\widetilde{f}_j|^{\frac{r_{j, N_j}}{r_{j, N_j}(z)}} e^{is_j} \otimes_{k=1}^{N_j} (u_{j, k}^{z-1} v_{j, k}^{-z}) \, 
\prod_{\nu=1}^{N_j-1} \| \widetilde{f}_j \|_{L^{r_{j, \nu+1}} \cdots L^{r_{j, N_j}}}^{\frac{r_{j, \nu}}{r_{j, \nu}(z)} - \frac{r_{j, \nu+1}}{r_{j, \nu+1}(z)}}, 
\\
& \frac{1}{\r_j(z)} := \frac{1-z}{\p_j} + \frac{z}{\q_j}, \quad j = 0, 1, \dots, m, 
\\ 
& G_z := g^{\frac{(\tau r_{0, N_0})'}{(\tau r_{0, N_0}(z))'}} \prod_{\nu=1}^{N_0-1} 
\|g\|_{L^{(\tau r_{0, \nu+1})'} \cdots L^{(\tau r_{0, N_0})'}}^{\frac{(\tau r_{0, \nu})'}{(\tau r_{0, \nu}(z))'} - \frac{(\tau r_{0, \nu+1})'}{(\tau r_{0, \nu+1}(z))'}}. 
\end{align*}
We would like to demonstrate 
\begin{align}\label{eq:Phi-ell}
\sup_{z \in S} \Phi_{\ell}(z) \le 1, \quad\forall \ell \in \N,
\end{align} 
where $S := \{z \in \C: 0 < \Re(z) < 1\}$. Thus, $\lim\limits_{\ell \to \infty} \Phi_{\ell}(\theta) \le 1$, which together with \eqref{eq:Tf-dual} gives \eqref{eq:tfww} as desired.

In order to verify \eqref{eq:Phi-ell}, by Hadamard's three lines lemma, it suffices to show that for each $\ell \in \N$,
\begin{align}
\label{Pell-1} & \text{$\Phi_{\ell}(z)$ is subharmonic in $S$}, 
\\
\label{Pell-2} & \text{$\Phi_{\ell}(z)$ is continuous and bounded on $\overline{S}$}, 
\\
\label{Pell-3} & \text{$\Phi_{\ell}(z) \le 1$, if $\Re(z) = 0$ or $\Re(z) = 1$}.
\end{align}
Obviously, $\phi_{\ell}(z)$ is holomorphic in $S$, which indicates $|\phi_{\ell}(z)|^{\frac{1}{\tau}}$ is subharmonic in $S$. Then for any circle $\{z \in \C: |z - z_0| < r\}$ in $S$, there holds
\begin{align}\label{eq:sub}
\frac{1}{2\pi r}\int_{0}^{2\pi}\Phi_{\ell}(re^{i t}-z_0)dt
&=\int_{\bm{\Sigma}_0}\frac{1}{2\pi r}
\int_{0}^{2\pi} |\phi_{\ell}(re^{i t}-z_0)|^{\frac{1}{\tau}}\, dt\, d\bm{\mu}_0
\\ \nonumber
&\ge \int_{\bm{\Sigma}_0} |\phi_{\ell}(z_0)|^{\frac{1}{\tau}}\, d\bm{\mu}_0 
= \Phi_\ell(z_0),
\end{align}
which means $\Phi_{\ell}(z)$ is subharmonic in $S$. By the continuity of $\phi_{\ell}(z)$ on $\overline{S}$, it is easy to see that $\Phi_{\ell}(z)$ is continuous on $\overline{S}$. To show the boundedness of $\Phi_{\ell}(z)$ on $\overline{S}$, fixing $z \in S$ and setting $h_j := e^{is_j} \otimes_{k=1}^{N_j} (u_{j, k}^{z-1} v_{j, k}^{-z})$, we have 
\begin{align*}
|\phi_{\ell}(z)|^{\frac{1}{\tau}} 
\lesssim e^{-|\Im(z)|^2/\ell} \sum_{l_0, l_1, \ldots, l_m} 
\big|T(h_1 {\bf 1}_{I_{1, l_1}},\dots, h_m {\bf 1}_{I_{m, l_m}}) \otimes_{k=1}^{N_0} u_{0, k} \big|^{\frac{1}{\tau}} 
{\bf 1}_{I_{0, l_0}},
\end{align*}
which along with with H\"older's inequality and \eqref{eq:IBDD-1} gives
\begin{align*}
\Phi_\ell(z) 
& \lesssim e^{-|\Im z|^2/\ell} \sum_{l_0, l_1, \ldots, l_m}
\|T(h_1 {\bf 1}_{I_{1, l_1}},\dots, h_m 
{\bf 1}_{I_{m, l_m}})\|_{\L^{\p_0}(\u_0)}^{\frac{1}{\tau}}\,  
\|\mathbf{1}_{I_{0, l_0}}\|_{\L^{(\tau \p_0)'}}
\\
&\lesssim e^{-|\Im z|^2/\ell} \sum_{l_0, l_1, \ldots, l_m}  \prod_{j=1}^{m} 
\|h_j {\bf 1}_{I_{j, l_j}}\|_{\L^{\p_j}(\u_j)}^{\frac{1}{\tau}}
\|\mathbf{1}_{I_{0, l_0}}\|_{\L^{(\tau \p_0)'}}
\\
&\lesssim e^{-|\Im z|^2/\ell}\sum_{l_0, l_1, \ldots, l_m}
\prod_{j=1}^{m} \|\mathbf{1}_{I_{j, l_j}}\|_{\L^{\p_j}}^{\frac{1}{\tau}}
\|\mathbf{1}_{I_{0, l_0}}\|_{\L^{(\tau \p_0)'}}
\\
& \lesssim e^{-|\Im(z)|^2/\ell} 
< \infty.
\end{align*}
This implies the boundedness of $\Phi_\ell(z)$ on $\overline{S}$ and $\lim\limits_{|\Im(z)| \to \infty} \sup\limits_{z \in \overline{S}} \Phi_\ell(z) = 0$.

It remains to prove \eqref{Pell-3}. We claim that for any $y \in \R$,
\begin{align}
\label{GGG}
\|G_{iy}\|_{\L^{(\tau \p_0)'}(\bm{\mu}_0)} = 1,  \quad
& \|G_{1+iy}\|_{\L^{(\tau \q_0)'}(\bm{\mu}_0)} = 1,
\\
\label{FFF}
\|F_{iy, \, j}\|_{\L^{\p_j}(\u_j^{\p_j})} = 1, \quad
& \|F_{1+iy, \, j}\|_{\L^{\q_j}(\v_j^{\q_j})} = 1.
\end{align}
Note that $\Re(\r_j(z)) = \p_j$ if $\Re(z) = 0$, and $\Re(\r_j(z)) = \q_j$ if $\Re(z) = 1$. In the case $N_0 = N_j = 1$, there holds
\begin{align*}
\|G_{iy, \, j}\|_{\L^{(\tau \p_0)'}(\bm{\mu}_0)} 
& = \Big\| g^{\frac{(\tau r_{0, 1})'}{(\tau p_{0, 1})'}} \Big\|_{L^{(\tau p_{0, 1})'}} 
= \|g\|_{L^{(\tau r_{0, 1})'}}^{\frac{(\tau r_{0, 1})'}{(\tau p_{0, 1})'}} 
= 1,
\\
\|G_{1+iy, \, j}\|_{\L^{(\tau \q_0)'}(\bm{\mu}_0)} 
& = \Big\| g^{\frac{(\tau r_{0, 1})'}{(\tau q_{0, 1})'}} \Big\|_{L^{(\tau q_{0, 1})'}} 
= \|g\|_{L^{(\tau r_{0, 1})'}}^{\frac{(\tau r_{0, 1})'}{(\tau q_{0, 1})'}} 
= 1,
\\
\|F_{iy, \, j}\|_{\L^{\p_j}(\u_j^{\p_j})} 
& = \big\| |\widetilde{f}_j|^{\frac{r_{j, 1}}{p_{j, 1}}} u_{j, 1}^{-1}\big\|_{L^{p_{j, 1}}(u_{j, 1}^{p_{j, 1}})}
= \|\widetilde{f}_j\|_{L^{r_{j, 1}}}^{\frac{r_{j, 1}}{p_{j, 1}}} 
= 1,
\\
\|F_{1+iy, \, j}\|_{\L^{\q_j}(\v_j^{\q_j})} 
& = \big\| |\widetilde{f}_j|^{\frac{r_{j, 1}}{q_{j, 1}}} v_{j, 1}^{-1}\big\|_{L^{q_{j, 1}}(v_{j, 1}^{q_{j, 1}})}
= \|\widetilde{f}_j\|_{L^{r_{j, 1}}}^{\frac{r_{j, 1}}{q_{j, 1}}} 
= 1.
\end{align*}
To achieve \eqref{GGG}, we use induction and assume \eqref{GGG} holds for all $N_0$-tuple functions $g$. Let us see what happens to the $(N_0+1)$-tuple function $g$. By definition, 
\begin{align*}
& \|G_{iy, \, j}\|_{\L^{(\tau \p_0)'}(\bm{\mu}_0)} 
\\
& = \bigg\| \Big\| g^{\frac{(\tau r_{0, N_0+1})'}{(\tau p_{0, N_0+1})'}} \Big\|_{L^{(\tau p_{0, N_0+1})'}}  
\prod_{\nu=1}^{N_0} \|g\|_{L^{(\tau r_{0, \nu+1})'} \cdots L^{(\tau r_{0, N_0+1})'}}^{\frac{(\tau r_{0, \nu})'}{(\tau r_{0, \nu}(iy))'} - \frac{(\tau r_{0, \nu+1})'}{(\tau r_{0, \nu+1}(iy))'}} \bigg\|_{L^{(\tau p_{0, 1})'} \cdots L^{(\tau p_{0, N_0})'}}
\\
& = \bigg\| \|g\|_{L^{(\tau r_{0, N_0+1})'}}^{\frac{(\tau r_{0, N_0+1})'}{(\tau p_{0, N_0+1})'}}  
\|g\|_{L^{(\tau r_{0, N_0+1})'}}^{\frac{(\tau r_{0, N_0})'}{(\tau p_{0, N_0})'} - \frac{(\tau r_{0, N_0+1})'}{(\tau p_{0, N_0+1})'}}
\prod_{\nu=1}^{N_0-1} \|g\|_{L^{(\tau r_{0, \nu+1})'} \cdots L^{(\tau r_{0, N_0+1})'}}^{\frac{(\tau r_{0, \nu})'}{(\tau r_{0, \nu}(iy))'} - \frac{(\tau r_{0, \nu+1})'}{(\tau r_{0, \nu+1}(iy))'}} \bigg\|_{L^{(\tau p_{0, 1})'} \cdots L^{(\tau p_{0, N_0})'}}
\\
& = \bigg\| \|g\|_{L^{(\tau r_{0, N_0+1})'}}^{\frac{(\tau r_{0, N_0})'}{(\tau p_{0, N_0})'}}
\prod_{\nu=1}^{N_0-1} \big\| \|g\|_{L^{(\tau r_{0, N_0+1})'}} \big\|_{L^{(\tau r_{0, \nu+1})'} \cdots L^{(\tau r_{0, N_0})'}}^{\frac{(\tau r_{0, \nu})'}{(\tau r_{0, \nu}(iy))'} - \frac{(\tau r_{0, \nu+1})'}{(\tau r_{0, \nu+1}(iy))'}} \bigg\|_{L^{(\tau p_{0, 1})'} \cdots L^{(\tau p_{0, N_0})'}}.
\end{align*}
Then, the induction hypothesis applied to the $N_0$-tuple function $\|g\|_{L^{(\tau r_{0, N_0+1})'}}$ yields
\begin{align*}
\|G_{iy, \, j}\|_{\L^{(\tau \p_0)'}(\bm{\mu}_0)} = 1.
\end{align*}
Analogously, one can show both the second equality in \eqref{GGG} and \eqref{FFF} hold.

Now let us bound $\Phi_\ell(iy)$ and $\Phi_\ell(1+iy)$. By H\"{o}lder's inequality, \eqref{eq:IBDD-1}, \eqref{GGG}, and \eqref{FFF}, we deduce that 
\begin{align}\label{eq:iy}
\Phi_\ell(iy) 
& \leq e^{-y^2/\ell} M_0^{-\frac{1}{\tau}} \int_{\bm{\Sigma}_0}
\big|T(\vec{F}_{iy}) \otimes_{k=1}^{N_0} u_{0, k} \big|^{\frac{1}{\tau}} \, |G_{iy}| \, d\bm{\mu}_0
\\  \nonumber
&\leq M_0^{-\frac{1}{\tau}} \big\| |T(\vec{F}_{iy}) \otimes_{k=1}^{N_0} u_{0, k}|^{\frac{1}{\tau}} \big\|_{\L^{\tau \p_0}(\bm{\mu}_0)} 
\, \|G_{iy}\|_{\L^{(\tau \p_0)'}(\bm{\mu}_0)}
\\  \nonumber
&= M_0^{-\frac{1}{\tau}} \|T(\vec{F}_{iy})\|_{\L^{\p_0}(\u_0^{\p_0})}^{\frac{1}{\tau}} 
\leq \prod_{j=1}^m \|F_{iy, \, j}\|_{\L^{\p_j}(\u_j^{\p_j})}^{\frac{1}{\tau}} 
= 1. 
\end{align}
By the same token, we obtain
\begin{align}\label{eq:iiy}
\Phi_\ell(1+iy) \le 1. 
\end{align}
Consequently, \eqref{Pell-3} follows from \eqref{eq:iy} and \eqref{eq:iiy}. This completes the  proof. 

\medskip
\noindent
{\bf Step 2: simple functions and general weights.}
Suppose that $f_j$ is a simple function on $(\bm{\Sigma}_j, \bm{\mu}_j)$ satisfying $f_j \in \L^{\p_j}(\u_j^{\p_j}) \cap \L^{\q_j}(\v_j^{\p_j})$, $j=1, \ldots, m$. Then there exists measurable sets $\bm{F}_j := F_{j, 1} \times \cdots \times F_{j, N_j} \subset \bm{\Sigma}_j$ such that $\bm{\mu}_j(\bm{F}_j) < \infty$ and $f_j = 0$ on $\bm{\Sigma}_j \setminus \bm{F}_j$, $j=1, \ldots, m$. For $\ell \in \mathbb{N}$, denote 
\begin{align*}
\bm{F}_j^{\ell} 
:= \{x \in \bm{F}_j: 2^{-\ell} \le u_{j, k}(x), v_{j, k}(x) \le 2^{\ell}, k=1, \ldots, N_j \}. 
\end{align*}
Then $f_j^{\ell} := f_j \mathbf{1}_{\bm{F}_j^{\ell}}$ is a simple function on $(\bm{\Sigma}_j, \bm{\mu}_j)$ and $f_j^{\ell} = 0$ on $\bm{\Sigma}_j \setminus \bm{F}_j^{\ell}$. For each $\ell \in \mathbb{N}$, $j = 1, \ldots, m$, and $k=1, \ldots, N_j$, set
\begin{align*}
u_{j, k}^{\ell}(x) := 
\begin{cases}
2^{-\ell} \beta, & \text{if } x \in \bm{F}_j^{\ell} \text{ and } 2^{-\ell} (\beta-1) < u_{j, k}(x) \le 2^{-\ell} \beta, \,  1 \le \beta < 2^{2\ell},
\\
0, & \text{if } x \in \bm{F}_j^{\ell} \text{ and } u_{j, k}(x) \ge 2^{\ell} \text{ or } u_{j, k}(x) < 2^{-\ell},  
\text{ or } x \in \bm{\Sigma}_j \setminus \bm{F}_j^{\ell}, 
\end{cases}
\end{align*}
and
\begin{align*}
u_{0, k}^{\ell}(x) := 
\begin{cases}
(\beta-1) 2^{-\ell}, & \text{if } x \in \bm{F}_j \text{ and } 2^{-\ell} (\beta-1) \le u_{0, k}(x) < 2^{-\ell} \beta, \,  1 \le \beta < 2^{2\ell},
\\
0, & \text{if } x \in \bm{F}_j \text{ and } u_{0, k}(x) \ge 2^{\ell}, \text{ or } x \in \bm{\Sigma}_j \setminus \bm{F}_j.
\end{cases}
\end{align*}
Similarly, we define $v_{j, k}^{\ell}$, $j=0, 1, \ldots, m$. Then it is easy to check that 
\begin{equation}\label{uvFF}
\begin{aligned}
& u_{j, k}(x) \le u_{j, k}^{\ell}(x), \, \,
v_{j, k}(x) \le v_{j, k}^{\ell}(x), \quad x \in \bm{F}_j^{\ell}, \, j=1, \ldots, m,
\\
& u_{j, k}^{\ell}(x) = v_{j, k}^{\ell}(x) = 0, \quad x \in \bm{\Sigma}_j \setminus \bm{F}_j^{\ell}, \, j=1, \ldots, m,
\\
& u_{0, k}^{\ell}(x) \le u_{0, k}(x), \quad v_{0, k}^{\ell}(x) \le v_{0, k}(x), \quad x \in \bm{\Sigma}_0,
\\
& \lim_{\ell \to \infty} u_{j, k}^{\ell} = u_{j, k}, \, 
\lim_{\ell \to \infty} v_{j, k}^{\ell} = v_{j, k}, \quad j=0, 1, \ldots, m.
\end{aligned}
\end{equation}
By Lebesgue's dominated convergence theorem, we see that for each $j=1, \ldots, m$,
\begin{align}\label{ffpq}
\lim_{\ell \to \infty} \|f_j^{\ell} - f_j\|_{\L^{\p_j}(\u_j^{\p_j})} = 0
\quad \text{and} \quad 
\lim_{\ell \to \infty} \|f_j^{\ell} - f_j\|_{\L^{\q_j}(\v_j^{\q_j})} = 0,
\end{align}
which along with \eqref{eq:IBDD-1} and \eqref{eq:IBDD-2} imply that 
\begin{align}
\label{TLF-1}
& \lim_{\ell \to \infty} \|T(f_1^{\ell}, \ldots, f_m^{\ell}) - T(f_1, \ldots, f_m)\|_{\L^{\p_0}(\u_0^{\p_0})} = 0,
\\
\label{TLF-2}
& \lim_{\ell \to \infty} \|T(f_1^{\ell}, \ldots, f_m^{\ell}) - T(f_1, \ldots, f_m)\|_{\L^{\q_0}(\v_0^{\q_0})} = 0.
\end{align}

Let $\u_j^{\ell} := (u_{j, 1}^{\ell}, \ldots, u_{j, N_j}^{\ell})$, $j=0, 1, \ldots, m$. Now, for any simple functions $g_j$ on $(\bm{\Sigma}_j, \bm{\mu}_j)$ with $g_j = 0$ on $\bm{\Sigma}_j \setminus \bm{F}_j^{\ell}$, $j=1, \ldots, m$, it follows from \eqref{eq:IBDD-1}, \eqref{eq:IBDD-2} and \eqref{uvFF} that
\begin{equation*}
\begin{aligned}
& \|T(g_1, \ldots, g_m)\|_{\L^{\p_0}(\u_0^{\ell, \p_0})}
\le \|T(g_1, \ldots, g_m)\|_{\L^{\p_0}(\u_0^{\p_0})}
\\ 
& \le M_0 \prod_{j=1}^m \|g_j\|_{\L^{\p_j}(\u_j^{\p_j})}
\le M_0 \prod_{j=1}^m \|g_j\|_{\L^{\p_j}(\u_j^{\ell, \p_j})},
\end{aligned}
\end{equation*}
and 
\begin{equation*}
\begin{aligned}
& \|T(g_1, \ldots, g_m)\|_{\L^{\q_0}(\v_0^{\ell, \q_0})}
\le \|T(g_1, \ldots, g_m)\|_{\L^{\q_0}(\v_0^{\q_0})}
\\
& \le M_1 \prod_{j=1}^m \|g_j\|_{\L^{\q_j}(\v_j^{\q_j})}
\le M_1 \prod_{j=1}^m \|g_j\|_{\L^{\q_j}(\v_j^{\ell, \q_j})}.
\end{aligned}
\end{equation*}
These mean that the assumptions in Theorem \ref{thm:IBDD} hold for simple weights $\u_j^{\ell}$ and $\v_j^{\ell}$, $j=0, 1, \ldots, m$. Thus, we use the conclusion in {\bf Step 1} to arrive at
\begin{align*}
\|T(g_1, \ldots, g_m)\|_{\L^{\r_0}(\w_0^{\ell, \r_0})}
\le M_0^{1-\theta} M_1^{\theta} \prod_{j=1}^m \|g_j\|_{\L^{\r_j}(\w_j^{\ell, \r_j})}.
\end{align*}
By Lebesgue's dominated convergence theorem and Fatou's lemma, we obtain 
\begin{align}\label{Tgsp}
\|T(g_1, \ldots, g_m)\|_{\L^{\r_0}(\w_0^{\r_0})}
\le M_0^{1-\theta} M_1^{\theta} \prod_{j=1}^m \|g_j\|_{\L^{\r_j}(\w_j^{\r_j})},
\end{align}
for all simple functions $g_j$ on $(\bm{\Sigma}_j, \bm{\mu}_j)$ with $g_j = 0$ on $\bm{\Sigma}_j \setminus \bm{F}_j^{\ell}$, $j=1, \ldots, m$.

We claim that
\begin{align}\label{TfCau}
\{T(f_1^{\ell}, \ldots, f_m^{\ell})\}_{\ell \in \mathbb{N}} \text{ is a Cauchy sequence in } \L^{\r_0}(\w_0^{\r_0}). 
\end{align}
In fact, for any $\ell_2 > \ell_1$, the inequality \eqref{Tgsp} gives
\begin{align*}
& \|T(f_1^{\ell_2}, \ldots, f_m^{\ell_2}) - T(f_1^{\ell_1}, \ldots, f_m^{\ell_1})\|_{\L^{\r_0}(\w_0^{\r_0})}
\\
& \le \sum_{j=1}^m \|T(f_1^{\ell_1}, \ldots, f_{j-1}^{\ell_1}, f_j^{\ell_2} - f_j^{\ell_1}, 
f_{j+1}^{\ell_2}, \ldots, f_m^{\ell_2})\|_{\L^{\r_0}(\w_0^{\r_0})}
\\
& \le M_0^{1-\theta} M_1^{\theta} \sum_{j=1}^m  
\prod_{i=1}^{j-1} \|f_i^{\ell_1}\|_{\L^{\r_i}(\w_i^{\r_i})} 
\|f_j^{\ell_2} - f_j^{\ell_1}\|_{\L^{\r_j}(\w_j^{\r_j})} 
\prod_{i=j+1}^m \|f_i^{\ell_1}\|_{\L^{\r_i}(\w_i^{\r_i})}.
\end{align*}
By H\"{o}lder's inequality, 
\begin{align*}
\|f_j^{\ell_2} - f_j^{\ell_1}\|_{\L^{\r_j}(\w_j^{\r_j})}
\le \|f_j^{\ell_2} - f_j^{\ell_1}\|_{\L^{\p_j}(\u_j^{\p_j})}^{1-\theta}
\|f_j^{\ell_2} - f_j^{\ell_1}\|_{\L^{\q_j}(\v_j^{\q_j})}^{\theta},
\end{align*}
which together with \eqref{ffpq} indicates $\{f_j^{\ell}\}_{\ell \in \mathbb{N}}$ is a Cauchy sequence in $\L^{\r_j}(\w_j^{\r_j})$. Hence, the above has shown \eqref{TfCau}.

Now we combine \eqref{TLF-1}, \eqref{TLF-2}, and \eqref{TfCau} to conclude  
\begin{align*}
\lim_{\ell \to \infty} \|T(f_1^{\ell}, \ldots, f_m^{\ell}) - T(f_1, \ldots, f_m)\|_{\L^{\r_0}(\w_0^{\r_0})} = 0,
\end{align*}
which along with \eqref{Tgsp} yields
\begin{align*}
\|T(f_1, \ldots, f_m)\|_{\L^{\r_0}(\w_0^{\r_0})}
\le M_0^{1-\theta} M_1^{\theta} \prod_{j=1}^m \|f_j\|_{\L^{\r_j}(\w_j^{\r_j})}.
\end{align*}

\medskip
\noindent
{\bf Step 3: general functions and weights.}
To show the general situation $f_j \in \L^{\r_j}(\w_j^{\r_j})$, $j=1, \ldots, m$, we present a density lemma as follows.

\begin{lemma}\label{lem:LpLqdense}
Let $\p, \q, \r \in (0, \infty)^N$, and let $\u$, $\v$, and $\w$ be weights on the product space $(\bm{\Sigma}, \bm{\mu})$. Then 
\begin{align*}
\mathbf{S}_{\p, \q} 
:=\big\{\text{simple functions in } \L^{\p}(\u^{\p}) \cap \L^{\q}(\v^{\q}) \big\} \text{ is dense in } \L^{\r}(\w^{\r}).
\end{align*}
\end{lemma}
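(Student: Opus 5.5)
We construct explicit simple approximants supported on product sets where all the weights are bounded above and below. Then we prove convergence in $\L^{\r}(\w^{\r})$ by an iterated dominated convergence argument, working from the innermost variable outward.

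\emph{Assumptions and reductions.} We assume, as is implicit in the paper, that the weights are factorized: $\u=(u_1,\ldots,u_N)$ with $u_k$ a weight on $\Sigma_k$, and similarly for $\v$ and $\w$. All weights are taken to be positive and finite $\mu_k$-a.e., and each $(\Sigma_k,\mu_k)$ is $\sigma$-finite. Since $|\Re f|,|\Im f|\le |f|$ and $f=(\Re f)^+-(\Re f)^-+i(\Im f)^+-i(\Im f)^-$, and $\L^{\r}(\w^{\r})$ is a quasi-Banach lattice, it suffices to approximate a nonnegative $f\in\L^{\r}(\w^{\r})$.

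\emph{Construction.} Fix increasing sets $\Sigma_k^{\ell}\uparrow\Sigma_k$ with $\mu_k(\Sigma_k^{\ell})<\infty$, and put
\begin{align*}
E_k^{\ell}:=\big\{x_k\in\Sigma_k^{\ell}:\ 2^{-\ell}\le u_k(x_k),\,v_k(x_k),\,w_k(x_k)\le 2^{\ell}\big\},\qquad \bm{E}^{\ell}:=E_1^{\ell}\times\cdots\times E_N^{\ell}.
\end{align*}
These are increasing in $\ell$, and $\bigcup_\ell \bm{E}^{\ell}$ has full $\bm{\mu}$-measure because the weights are positive and finite a.e. Let $0\le s_\ell\uparrow f$ be the standard dyadic simple functions, with $s_\ell\le \ell$, and set $g_\ell:=s_\ell\mathbf{1}_{\bm{E}^{\ell}}$. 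Each $g_\ell$ is simple, bounded by $\ell$, and supported in $\bm{E}^{\ell}$. Computing the iterated norm variable by variable gives
\begin{align*}
\|g_\ell\|_{\L^{\p}(\u^{\p})}\le \ell\prod_{k=1}^N 2^{\ell}\mu_k(E_k^{\ell})^{1/p_k}<\infty,
\end{align*}
and the same bound holds for $\L^{\q}(\v^{\q})$. Hence $g_\ell\in\mathbf{S}_{\p,\q}$.

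\emph{Convergence.} We have $0\le f-g_\ell\le f$ and $f-g_\ell\to0$ pointwise $\bm{\mu}$-a.e. We show $\|f-g_\ell\|_{\L^{\r}(\w^{\r})}\to0$ by induction on the depth of the iterated norm.
\begin{enumerate}
\item Since $f\in\L^{\r}(\w^{\r})$, for a.e. $(x_1,\ldots,x_{N-1})$ the innermost integral $\int_{\Sigma_N}|f|^{r_N}w_N^{r_N}\,d\mu_N$ is finite.
\item For such $(x_1,\ldots,x_{N-1})$, Fubini gives $f-g_\ell\to0$ for $\mu_N$-a.e. $x_N$. Dominated convergence with dominant $f^{r_N}w_N^{r_N}$ then sends the innermost norm of $f-g_\ell$ to $0$.
\item The innermost norm of $f-g_\ell$ is dominated by the innermost norm of $f$, which is an $\L^{(r_1,\ldots,r_{N-1})}$ function of the remaining variables. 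So we repeat the argument at the next level, and iterate down to $x_1$.
\end{enumerate}
This yields $g_\ell\to f$ in $\L^{\r}(\w^{\r})$.

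\emph{Main obstacle.} The only delicate point is the bookkeeping of the iterated dominated convergence. One must check that at each level the a.e.\ convergence and the domination hold on a set of full measure in the outer variables, which follows from Fubini--Tonelli applied to the null set where $g_\ell\not\to f$. Finiteness of the weights on the truncation sets, which is what guarantees membership in $\mathbf{S}_{\p,\q}$, is built into the choice of $E_k^{\ell}$.
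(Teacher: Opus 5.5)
Your proof is correct, and it takes a genuinely different route from the paper.

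\textbf{The paper's route.} The paper works in two stages. First it shows that simple functions are dense in the weighted mixed-norm space. This reduces matters to a single indicator $\mathbf{1}_{\bm{E}}$ with $\bm{\mu}(\bm{E})<\infty$ and $\mathbf{1}_{\bm{E}}\in\L^{\r}(\w^{\r})$. It then takes $h=\mathbf{1}_{\bm{E}\setminus\bm{F}}$, where $\bm{F}\subset\bm{E}$ is the (non-product) set on which $\otimes_k u_k$ or $\otimes_k v_k$ exceeds a level $A$. Absolute continuity of the $\L^{\r}(\w^{\r})$-quasinorm on $\bm{E}$ then makes $\|\mathbf{1}_{\bm{F}}\|_{\L^{\r}(\w^{\r})}$ small.

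\textbf{Your route.} You write down one explicit sequence $g_\ell=s_\ell\mathbf{1}_{\bm{E}^{\ell}}$ and get convergence from an iterated dominated convergence theorem. This also replaces the absolute-continuity step, which for mixed norms itself needs an iterated convergence argument.

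\textbf{What each buys.} The paper's argument is the standard one-parameter template. Yours gains the product structure of $\bm{E}^{\ell}=E_1^{\ell}\times\cdots\times E_N^{\ell}$: the mixed norms of $\mathbf{1}_{\bm{E}^{\ell}}$ factor exactly, so membership in $\L^{\p}(\u^{\p})\cap\L^{\q}(\v^{\q})$ holds for any exponents.

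This is not cosmetic. The paper bounds $\|\mathbf{1}_{\bm{E}\setminus\bm{F}}\|_{\L^{\p}(\u^{\p})}$ in terms of $A$ and $\bm{\mu}(\bm{E})$. That presumes a finite product measure controls the mixed norm of an indicator. This holds when all exponents coincide (in particular for $N=1$) but fails in general. Take $N=2$, $\Sigma_1=\Sigma_2=\R$, all weights $\equiv1$, $\p=(1,2)$, $\r=(1,1)$, and $\bm{E}=\{(x_1,x_2):0\le x_2\le(1+|x_1|)^{-2}\}$. Then:
\begin{itemize}
\item $\bm{\mu}(\bm{E})=2$ and $\mathbf{1}_{\bm{E}}\in\L^{\r}(\w^{\r})$;
\item $\bm{F}$ is empty for $A\ge 1$, so the paper's $h$ is $\mathbf{1}_{\bm{E}}$;
\item yet $\|\mathbf{1}_{\bm{E}}\|_{\L^{\p}(\u^{\p})}=\int_{\R}(1+|x_1|)^{-1}\,dx_1=\infty$.
\end{itemize}
Your coordinatewise exhaustion by the finite-measure sets $\Sigma_k^{\ell}$ is exactly what avoids this. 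The $\sigma$-finiteness and the a.e.\ positivity and finiteness of the weights that you state explicitly are used tacitly by the paper as well.
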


\begin{proof}
We claim that 
\begin{align}\label{eq:SSr}
\mathbf{S}_{\p} := \big\{\text{simple functions in } \L^{\p}(\u^{\p}) \big\} \text{ is dense in } \L^{\p}(\u^{\p}),
\end{align}
Indeed, for $f \in \L^{\p}(\u^{\p})$, we may assume that $f \ge 0$ $\bm{\mu}$-a.e.. Let $\varepsilon>0$. Then there exists a simple function $a = \sum_{i=1}^{\ell_0} a_i {\bf 1}_{\bm{E}_i}$ such that $a\le f \otimes_{k=1}^N u_k$ and $\|f \otimes_{k=1}^N u_k - a\|_{\L^{\p}(\bm{\mu})}<\varepsilon/2$, where $a_i > 0$, $\{\bm{E}_i\}_{i=1}^{\ell_0}$ is a disjoint family and $0 < \bm{\mu}(\bm{E}_i) < \infty$. Set $\bm{E} := \bigcup_{i=1}^{\ell_0} \bm{E}_i$. Note that 
\begin{equation}\label{eq:fuE}
\|f \otimes_{k=1}^N u_k\|_{\L^{\p}(\bm{\Sigma} \setminus \bm{E}, \bm{\mu})}
\le \|f \otimes_{k=1}^N u_k - a\|_{\L^{\p}(\bm{\mu})}
< \varepsilon/2.
\end{equation}
In addition, there exist simple functions $\{b_j\}$ such that $\supp(b_j) \subset \bm{E}$ and $\lim\limits_{j \to \infty}b_j(x) = f(x)$ for all $x \in \bm{E}$. Then 
\begin{align*}
\lim_{j \to \infty} \|(f - b_j) \otimes_{k=1}^N u_k\|_{\L^{\p}(\bm{E}, \bm{\mu})}=0, 
\end{align*}
which gives
\begin{align}\label{eq:fbu}
\|(f - b_{j_0}) \otimes_{k=1}^N u_k\|_{\L^{\p}(\bm{E}, \bm{\mu})} < \varepsilon/2, \quad 
\text{for some } j_0 \in \N.  
\end{align}
As a consequence of \eqref{eq:fuE} and \eqref{eq:fbu}, we have 
\begin{equation*}
\|f - b_{j_0}\|_{\L^{\p}(\u^{\p})}
\le \|(f - b_{j_0}) \otimes_{k=1}^N u_k\|_{\L^{\p}(\bm{E}, \bm{\mu})} 
+ \|f \otimes_{k=1}^N u_k\|_{\L^{\p}(\bm{\Sigma} \setminus \bm{E}, \bm{\mu})} 
< \varepsilon,
\end{equation*}
which verifies \eqref{eq:SSr}. 

In light of \eqref{eq:SSr}, to conclude the proof, it suffices to prove that for any $\varepsilon>0$ and for any measurable set $\bm{E}  \subset \bm{\Sigma}$ with $\bm{\mu}(\bm{E}) < \infty$ and $\mathbf{1}_{\bm{E}} \in \L^{\r}(\w^{\r})$, there exists a simple function $h$ 
such that 
\begin{equation}\label{eq:Eaa}
h \in \L^{\p}(\u^{\p}) \cap \L^{\q}(\v^{\q}) 
\quad \text{and} \quad 
\|h - \mathbf{1}_{\bm{E}}\|_{\L^{\r}(\w^{\r})} < \varepsilon. 
\end{equation}
Let $\varepsilon > 0$. The fact $\mathbf{1}_{\bm{E}} \in \L^{\r}(\w^{\r})$ is equivalent to $\otimes_{k=1}^N w_k \in \L^{\r}(\bm{E}, \bm{\mu})$. Then there exists $\delta = \delta(\varepsilon) > 0$ so that 
\begin{equation}\label{eq:EF}
\forall \bm{F} \subset \bm{E}: \bm{\mu}(\bm{F}) < \delta \quad\Longrightarrow\quad 
\|\otimes_{k=1}^N w_k\|_{\L^{\r}(\bm{F}, \, \bm{\mu})} < \varepsilon. 
\end{equation}
Recall that $\bm{\mu}(\bm{E}) < \infty$ and $0 < u_k, v_k < \infty$ $\mu_k$-a.e., $k=1, \ldots, N$. Then there exists $A>0$ such that $\mu(\bm{F}) < \delta$, where
\begin{align*}
\bm{F} := \{x \in \bm{E}: \otimes_{k=1}^N u_k(x) > A\} \cup \{x \in \bm{E}: \otimes_{k=1}^N v_k(x) > A\}.
\end{align*}
Then \eqref{eq:EF} gives $\|\otimes_{k=1}^N w_k\|_{\L^{\r}(\bm{F}, \bm{\mu})} < \varepsilon$. Observe that 
\begin{align*}
\|\mathbf{1}_{\bm{E} \setminus \bm{F}}\|_{\L^{\p}(\u^{\p})} 
+ \|\mathbf{1}_{\bm{E} \setminus \bm{F}}\|_{\L^{\q}(\v^{\q})} 
\le 2A \bm{\mu}(\bm{E})
< \infty.
\end{align*}
Taking $h = \mathbf{1}_{\bm{E} \setminus \bm{F}}$, we conclude that $h \in \L^{\p}(\u^{\p}) \cap \L^{\q}(\v^{\q})$ and 
\begin{align*}
\|h - {\bf 1}_{\bm{E}}\|_{\L^{\r}(\w^{\r})} 
= \|{\bf 1}_{\bm{F}}\|_{\L^{\r}(\w^{\r})}
= \|\otimes_{k=1}^N w_k\|_{\L^{\r}(\bm{F}, \, \bm{\mu})} 
< \varepsilon,
\end{align*}
which coincides with \eqref{eq:Eaa}.  
\end{proof}

Now let $f_j \in \L^{\r_j}(\w_j^{\r_j})$, $j=1, \ldots, m$. By Lemma \ref{lem:LpLqdense}, there exists a sequence of simple functions $\{f_j^{\ell}\}_{\ell \in \mathbb{N}}$ such that 
\begin{align}\label{Tfell-1}
f_j^{\ell} \in \L^{\p_j}(\u_j^{\p_j}) \cap \L^{\q_j}(\v_j^{\q_j}) 
\quad \text{and} \quad 
\lim_{\ell \to \infty} \|f_j^{\ell} - f_j\|_{\L^{\r_j}(\w_j^{\r_j})} = 0.
\end{align}
Invoking the conclusion in {\bf Step 2}, we obtain
\begin{align}\label{Tfell-2}
\|T(f_1^{\ell}, \ldots, f_m^{\ell})\|_{\L^{\r_0}(\w_0^{\r_0})}
\le M_0^{1-\theta} M_1^{\theta} \prod_{j=1}^m \|f_j^{\ell}\|_{\L^{\r_j}(\w_j^{\r_j})},
\end{align}
for all $j=1, \ldots, m$ and $\ell \in \mathbb{N}$. In addition, as shown in \eqref{TfCau}, 
\begin{align*}
\{T(f_1^{\ell}, \ldots, f_m^{\ell})\}_{\ell \in \mathbb{N}} \text{ is a Cauchy sequence in } \L^{\r_0}(\w_0^{\r_0}). 
\end{align*}
Define 
\begin{align}\label{Tfell-3}
T(f_1, \ldots, f_m) 
:= \lim_{\ell \to \infty} T(f_1^{\ell}, \ldots, f_m^{\ell}) \quad \text{in } \L^{\r_0}(\w_0^{\r_0}).
\end{align}
This definition does not depend on the choice of $\{f_j^{\ell}\}_{\ell \in \mathbb{N}}$. Indeed, for another sequence of simple functions $\{g_j^{\ell}\}_{\ell \in \mathbb{N}}$ such that $g_j^{\ell} \in \L^{\p_j}(\u_j^{\p_j}) \cap \L^{\q_j}(\v_j^{\q_j})$ and $\lim\limits_{\ell \to \infty} \|g_j^{\ell} - f_j\|_{\L^{\r_j}(\w_j^{\r_j})} = 0$, there holds 
\begin{align*}
& \|T(f_1^{\ell}, \ldots, f_m^{\ell}) - T(g_1^{\ell}, \ldots, g_m^{\ell})\|_{\L^{\r_0}(\w_0^{\r_0})}
\\
& \le M_0^{1-\theta} M_1^{\theta} \sum_{j=1}^m  
\prod_{i=1}^{j-1} \|f_i^{\ell}\|_{\L^{\r_i}(\w_i^{\r_i})} 
\|f_j^{\ell} - g_j^{\ell}\|_{\L^{\r_j}(\w_j^{\r_j})} 
\prod_{i=j+1}^m \|g_i^{\ell}\|_{\L^{\r_i}(\w_i^{\r_i})},
\end{align*}
and
\begin{align*}
\|f_j^{\ell} - g_j^{\ell}\|_{\L^{\r_j}(\w_j^{\r_j})}
\le \|f_j^{\ell} - f_j\|_{\L^{\r_j}(\w_j^{\r_j})} + \|g_j^{\ell} - f_j\|_{\L^{\r_j}(\w_j^{\r_j})},
\end{align*}
which asserts
\begin{align*}
\lim_{\ell \to \infty} T(f_1^{\ell}, \ldots, f_m^{\ell})
= \lim_{\ell \to \infty} T(g_1^{\ell}, \ldots, g_m^{\ell}) \quad \text{in } \L^{\r_0}(\w_0^{\r_0}).
\end{align*}
Consequently, gathering \eqref{Tfell-1}--\eqref{Tfell-3}, we conclude
\begin{align*}
\|T(f_1, \ldots, f_m)\|_{\L^{\r_0}(\w_0^{\r_0})}
\le M_0^{1-\theta} M_1^{\theta} \prod_{j=1}^m \|f_j\|_{\L^{\r_j}(\w_j^{\r_j})}.
\end{align*}
This completes the proof. 
\qed

\section{Interpolation and extrapolation of compactness on mixed-norm spaces}\label{sec:INEX}
The goal of this section is to demonstrate Theorems \ref{thm:RdF-cpt}, \ref{thm:ApAq}, and \ref{thm:ICPT}.

\subsection{Interpolation of compactness on mixed-norm spaces}
Having established Theorems \ref{thm:KRQB} and \ref{thm:IBDD}, let us verify Theorem \ref{thm:ICPT}.

For convenience, set $\mathbb{T}(\vec{f})(x, y) := \tau_y T(\vec{f})(x)- T(\vec{f})(x)$. Note that $\mathbb{T}$ is an $m$-linear operator. The assumption \eqref{eq:ICTP-1} gives 
\begin{align}\label{eq:WMIP-1} 
\|T(\vec{f})\|_{L^{p_0}(u_0^{p_0}; \, L^{\widetilde{p}_0}(\widetilde{u}_0^{\widetilde{p}_0}))} 
& \leq M_0 \prod_{j=1}^m \|f_j\|_{L^{p_j}(u_j^{p_j}; \, L^{\widetilde{p}_j}(\widetilde{u}_j^{\widetilde{p}_j}))}, 
\end{align} 
which yields 
\begin{align}\label{eq:WMIP-2} 
\|T(\vec{f}) \mathbf{1}_{B(0, A)^c}\|_{L^{p_0}(u_0^{p_0}; \, L^{\widetilde{p}_0}(\widetilde{u}_0^{\widetilde{p}_0}))} 
& \leq M_0 \prod_{j=1}^m \|f_j\|_{L^{p_j}(u_j^{p_j}; \, L^{\widetilde{p}_j}(\widetilde{u}_j^{\widetilde{p}_j}))}, 
\quad \forall A > 0.
\end{align}
Recall that $u_0^{p_0} \in A_{\infty}(\R^{n_1})$ and $\widetilde{u}_0^{\widetilde{p}_0} \in A_{\infty}(\R^{n_2})$. Then there exists $s \in (1, \infty)$ so that $u_0^{p_0} \in A_s(\R^{n_1})$ and $\widetilde{u}_0^{\widetilde{p}_0} \in A_s(\R^{n_2})$. Take $0 < a < \min\{p_0/s, \widetilde{p}_0/s, 1\}$, which indicates $u_0^{p_0} \in A_{p_0/a}(\R^{n_1})$ and $\widetilde{u}_0^{\widetilde{p}_0} \in A_{\widetilde{p}_0/a}(\R^{n_2})$. Thus, 
\begin{align}\label{eq:WMIP-3} 
& \bigg\|\bigg[\fint_{B(0, r)} |\mathbb{T}(\vec{f})|^a \, dy \bigg]^{\frac1a}\bigg\|_{L^{p_0}(u_0^{p_0}; \, L^{\widetilde{p}_0}(\widetilde{u}_0^{\widetilde{p}_0}))} 
\\ \nonumber
& \lesssim \|M_{\mathcal{R}}(|T(\vec{f})|^a)^{\frac1a}\|_{L^{p_0}(u_0^{p_0}; \, L^{\widetilde{p}_0}(\widetilde{u}_0^{\widetilde{p}_0}))} 
+ \|T(\vec{f})\|_{L^{p_0}(u_0^{p_0}; \, L^{\widetilde{p}_0}(\widetilde{u}_0^{\widetilde{p}_0}))} 
\\ \nonumber
& = \|M_{\mathcal{R}}(|T(\vec{f})|^a)\|_{L^{\frac{p_0}{a}}(u_0^{p_0}; \, L^{\frac{\widetilde{p}_0}{a}}(\widetilde{u}_0^{\widetilde{p}_0}))}^{\frac1a}
+ \|T(\vec{f})\|_{L^{p_0}(u_0^{p_0}; \, L^{\widetilde{p}_0}(\widetilde{u}_0^{\widetilde{p}_0}))} 
\\ \nonumber
& \lesssim \|T(\vec{f})\|_{L^{p_0}(u_0^{p_0}; \, L^{\widetilde{p}_0}(\widetilde{u}_0^{\widetilde{p}_0}))} 
\le M_1 \prod_{j=1}^m \|f_j\|_{L^{p_j}(u_j^{p_j}; \, L^{\widetilde{p}_j}(\widetilde{u}_j^{\widetilde{p}_j}))},
\end{align} 
where we have used \eqref{eq:WMIP-1} and the weighted boundedness of the strong maximal operator $M_{\mathcal{R}}$ on mixed-norm spaces (cf. \cite[Theorem 1]{Kur}.

Let $\varepsilon > 0$. By \eqref{eq:ICTP-2} and Theorem \ref{thm:KRQB}, there exists $A_0 = A_0(\varepsilon) > 1$ such that 
\begin{align}
\label{eq:WMIP-4} 
\|T(\vec{f})\|_{L^{q_0}(v_0^{q_0}; \, L^{\widetilde{q}_0}(\widetilde{v}_0^{\widetilde{q}_0}))} 
& \leq M_2 \prod_{j=1}^m \|f_j\|_{L^{q_j}(v_j^{q_j}; \, L^{\widetilde{q}_j}(\widetilde{v}_j^{\widetilde{q}_j}))}, 
\\
\label{eq:WMIP-5}
\|T(\vec{f}) \mathbf{1}_{B(0, A)^c}\|_{L^{q_0}(v_0^{q_0}; \, L^{\widetilde{q}_0}(\widetilde{v}_0^{\widetilde{q}_0}))} 
& \leq \varepsilon \prod_{j=1}^m \|f_j\|_{L^{q_j}(v_j^{q_j}; \, L^{\widetilde{q}_j}(\widetilde{v}_j^{\widetilde{q}_j}))}, 
\quad \forall A > A_0,
\\ 
\label{eq:WMIP-6} \bigg\|\bigg[\fint_{B(0,r)} |\mathbb{T}(\vec{f})|^a \, dy \bigg]^{\frac1a}\bigg\|_{L^{q_0}(v_0^{q_0}; \, L^{\widetilde{q}_0}(\widetilde{v}_0^{\widetilde{q}_0}))} 
& \leq \varepsilon \prod_{j=1}^m \|f_j\|_{L^{q_j}(v_j^{q_j}; \, L^{\widetilde{q}_j}(\widetilde{v}_j^{\widetilde{q}_j}))}, 
\quad \forall 0 < r < A_0^{-1}.
\end{align} 

Now let $r_j$, $\widetilde{r}_j$, $w_j$, and $\widetilde{w}_j$ be in \eqref{eq:exp}, $j=0, \ldots, m$. In view of Theorem \ref{thm:IBDD}, interpolating between \eqref{eq:WMIP-1} and \eqref{eq:WMIP-4} gives
\begin{align}
\label{eq:WMIP-7} 
\|T(\vec{f})\|_{L^{r_0}(w_0^{r_0}; \, L^{\widetilde{r}_0}(\widetilde{w}_0^{\widetilde{r}_0}))} 
& \leq M_0^{1-\theta} M_2^{\theta} \prod_{j=1}^m \|f_j\|_{L^{r_j}(w_j^{r_j}; \, L^{\widetilde{r}_j}(\widetilde{w}_j^{\widetilde{r}_j}))}.
\end{align} 
Likewise, by \eqref{eq:WMIP-2}, \eqref{eq:WMIP-5}, and Theorem \ref{thm:IBDD} applied to $T(\vec{f}) \mathbf{1}_{B(0, A)^c}$, 
\begin{align}\label{eq:WMIP-8} 
\|T(\vec{f}) \mathbf{1}_{B(0, A)^c}\|_{L^{r_0}(w_0^{r_0}; \, L^{\widetilde{r}_0}(\widetilde{w}_0^{\widetilde{r}_0}))} 
& \leq M_0^{1-\theta} \varepsilon^{\theta} \prod_{j=1}^m \|f_j\|_{L^{r_j}(w_j^{r_j}; \, L^{\widetilde{r}_j}(\widetilde{w}_j^{\widetilde{r}_j}))}, 
\quad \forall A > A_0.
\end{align} 
To proceed, we choose
\begin{align*}
\Sigma_{j, k} & = \R^{n_k}, \quad d\mu_{j, k}(x_k) = dx_k, \quad j = 0, \ldots, m, \, k = 1, 2,
\\
\Sigma_{0, 3} & = \Rnn, \quad \text{and} \quad d\mu_{0, 3}(x) = \frac{\mathbf{1}_{B(0, r)}(x)}{|B(0, r)|} dx.
\end{align*}
Hence, invoking \eqref{eq:WMIP-3}, \eqref{eq:WMIP-6}, and Theorem \ref{thm:IBDD} applied to product spaces $(\bm{\Sigma}_j, \bm{\mu}_j)$, $j=0, \ldots, m$, 
we deduce 
\begin{align}
\label{eq:WMIP-9} 
\bigg\|\bigg[\fint_{B(0,r)} |\mathbb{T}(\vec{f})|^a \, dy \bigg]^{\frac1a}\bigg\|_{L^{r_0}(w_0^{r_0}; \, L^{\widetilde{r}_0}(\widetilde{w}_0^{\widetilde{r}_0}))} 
\leq M_1^{1-\theta} \varepsilon^{\theta} \prod_{j=1}^m \|f_j\|_{L^{r_j}(w_j^{r_j}; \, L^{\widetilde{r}_j}(\widetilde{w}_j^{\widetilde{r}_j}))}, 
\end{align} 
for all $0 < r < A_0^{-1}$. 
Therefore, \eqref{eq:WMIP-7}--\eqref{eq:WMIP-9} and Theorem \ref{thm:KRQB} imply the weighted compactness on mixed-norm spaces as desired.
\qed

\subsection{Extrapolation of compactness on mixed-norm spaces}
To prove Theorem \ref{thm:RdF-cpt}, we first present an extrapolation of boundedness on mixed-norm spaces as follows. 

\begin{theorem}\label{thm:RdF-bdd}
Let $\mathcal{F}$ be a collection of $(m+1)$-tuples of non-negative functions. Assume that there exists some $\vec{s} = (s_1, \ldots, s_m) \in [1, \infty]^m$ such that for all $\vec{w} = (w_1, \ldots, w_m) \in A_{\vec{s}}(\Rnn)$,
\begin{align}\label{Rbd-1}
\|f\|_{L^s(w^s)} 
\le C_0 \prod_{j=1}^m \|f_j\|_{L^{s_j}(w_j^{s_j})}, \quad (f, f_1, \ldots, f_m) \in \mathcal{F}, 
\end{align}
where $\frac1s = \sum_{j=1}^m \frac{1}{s_j}$ and $w = \prod_{j=1}^m w_j$. Then for all $\vec{\mathfrak{p}} = (\mathfrak{p}_1, \ldots, \mathfrak{p}_m) \in (1, \infty]^m$, for all $\vec{\mathfrak{q}} = (\mathfrak{q}_1, \ldots, \mathfrak{q}_m) \in (1, \infty]^m$, for all $\vec{u} = (u_1, \ldots, u_m) \in A_{\vec{\mathfrak{p}}}(\R^{n_1})$, and for all $\vec{v} = (v_1, \ldots, v_m) \in A_{\vec{\mathfrak{q}}}(\R^{n_2})$, 
\begin{align}\label{Rbd-2}
\|f\|_{L^{\mathfrak{p}}(u^{\mathfrak{p}}; L^{\mathfrak{q}}(v^{\mathfrak{q}})} 
\le C \prod_{j=1}^m \|f_j\|_{L^{\mathfrak{p}_j}(u_j^{\mathfrak{p}_j}; L^{\mathfrak{q}_j}(v_j^{\mathfrak{q}_j})} , \quad (f, f_1, \ldots, f_m) \in \mathcal{F}, 
\end{align}
where $\frac{1}{\mathfrak{p}} = \sum_{j=1}^m \frac{1}{\mathfrak{p}_j} > 0$, $\frac{1}{\mathfrak{q}} = \sum_{j=1}^m \frac{1}{\mathfrak{q}_j} > 0$, $u = \prod_{j=1}^m u_j$, and $v = \prod_{j=1}^m v_j$. 
\end{theorem}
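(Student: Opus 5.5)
The plan is to reduce the mixed-norm statement to known multilinear extrapolation for $A_{\vec{p}}$ weights on the full product space, applied twice: once in each parameter.

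\textbf{Step 1: extrapolate to all weights on $\Rnn$.} From \eqref{Rbd-1} and the multilinear Rubio de Francia extrapolation theorem of Li--Martell--Ombrosi \cite{LMO}, transplanted to the bi-parameter (rectangle) setting, we get
\[
\|f\|_{L^r(w^r)} \lesssim \prod_{j} \|f_j\|_{L^{r_j}(w_j^{r_j})}
\]
for all $\vec r\in(1,\infty]^m$ and all $\vec w\in A_{\vec r}(\Rnn)$. Here $A_{\vec r}(\Rnn)$ is defined with rectangles, and the ingredient needed is boundedness of $\mathcal{M}_{\mathcal R}$. The transplant should go through because $A_{\vec r}$ over rectangles is characterized by $w^r\in A_{mr}$ and $w_j^{-r_j'}\in A_{mr_j'}$ over rectangles. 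The strong maximal function is bounded on $L^s(\mu)$ for $\mu\in A_s(\mathcal R)$, which is the only property the Rubio de Francia algorithm uses.

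\textbf{Step 2: pass to tensor weights with equal exponents.} For tensor weights $w_j=u_j\otimes v_j$ with $\vec u\in A_{\vec{\mathfrak p}}(\R^{n_1})$ and $\vec v\in A_{\vec{\mathfrak p}}(\R^{n_2})$, we have $\vec w\in A_{\vec{\mathfrak p}}(\Rnn)$. This gives the case $\vec{\mathfrak q}=\vec{\mathfrak p}$ by Fubini, since $L^{\mathfrak p}(u^{\mathfrak p};L^{\mathfrak p}(v^{\mathfrak p}))=L^{\mathfrak p}((u\otimes v)^{\mathfrak p})$.

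\textbf{Step 3: decouple the exponents.} This is the main step. Fix $x_1$ and $\vec u$, and consider the family of $x_2$-slices $(f(x_1,\cdot),f_1(x_1,\cdot),\dots)$. One cannot extrapolate slice-wise directly, since the inequality is not known for individual slices. Instead, I would use a vector-valued (mixed-norm) extrapolation in the spirit of \cite{CMM}. Define the family
\[
\mathcal F' := \big\{(F,F_1,\dots,F_m)\big\}, \qquad F(x_1)=\|f(x_1,\cdot)\|_{L^{\mathfrak q}(v^{\mathfrak q})}, \quad F_j(x_1)=\|f_j(x_1,\cdot)\|_{L^{\mathfrak q_j}(v_j^{\mathfrak q_j})},
\]
on $\R^{n_1}$. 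The goal is to prove, for some fixed $\vec{\mathfrak q}$ and all $\vec u\in A_{\vec{\mathfrak q}}(\R^{n_1})$,
\[
\|F\|_{L^{\mathfrak q}(u^{\mathfrak q})}\lesssim \prod_j\|F_j\|_{L^{\mathfrak q_j}(u_j^{\mathfrak q_j})}.
\]
This is exactly Step 2 with $\vec{\mathfrak p}=\vec{\mathfrak q}$, read via Fubini as an inequality for $\mathcal F'$. Then one-parameter multilinear extrapolation \cite{LMO} on $\R^{n_1}$, applied to $\mathcal F'$, yields the inequality for all $\vec{\mathfrak p}$ and $\vec u\in A_{\vec{\mathfrak p}}(\R^{n_1})$, with $\vec{\mathfrak q}$ and $\vec v$ fixed. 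Since Step 2 holds for every $\vec{\mathfrak q}\in(1,\infty]^m$ and $\vec v\in A_{\vec{\mathfrak q}}(\R^{n_2})$, this covers all cases.

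\textbf{Expected difficulties.} The delicate points are the endpoint exponents $=\infty$ and the case $\frac1{\mathfrak p}>1$ (quasi-Banach range). Both are handled within \cite{LMO}'s framework, which allows $p_j=\infty$ and $p<1$, provided the constants depend only on the $A_{\vec p}$ characteristics. The first step, verifying that the \cite{LMO} argument works with rectangles, is where care is needed. I expect it to follow by replacing $M$ with $M_{\mathcal R}$ throughout, using \cite{Kur}-type bounds and the one-parameter characterization of rectangle $A_p$ weights.
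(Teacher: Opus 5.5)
Your proposal is correct and matches the paper's proof essentially step for step. The paper likewise first extrapolates on $\Rnn$ to all $\vec\tau\in(1,\infty]^m$ and all $\vec w\in A_{\vec\tau}(\Rnn)$, citing the bi-parameter multilinear extrapolation theorem \cite[Theorem 3.12]{LMV21} rather than transplanting \cite{LMO} by hand. It then fixes $\vec{\mathfrak q}$ and $\vec v$, forms the slice norms $F, F_j$, uses $\vec u\otimes\vec v\in A_{\vec{\mathfrak q}}(\Rnn)$ with Fubini, and applies \cite[Theorem 1.1]{LMO} on $\R^{n_1}$; your separate ``Step 2'' only makes this tensor-weight observation explicit, and the appeal to \cite{CMM} in Step 3 is not actually used.
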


\begin{proof}
By \eqref{Rbd-1} and extrapolation for multiple weights $A_{\vec{\tau}}(\Rnn)$ (cf. \cite[Theorem 3.12]{LMV21}), we obtain
\begin{align}\label{Rbd-3}
\|f\|_{L^{\tau}(w^{\tau})}
\le C \prod_{j=1}^m \|f_j\|_{L^{\tau_j}(w_j^{\tau_j})}, \quad (f, f_1, \ldots, f_m) \in \mathcal{F}, 
\end{align}
for all $\vec{\tau} = (\tau_1, \ldots, \tau_m) \in (1, \infty]^m$ and for all $\vec{w} = (w_1, \ldots, w_m) \in A_{\vec{\tau}}(\Rnn)$, where $\frac{1}{\tau} = \sum_{j=1}^m \frac{1}{\tau_j} > 0$ and $w = \prod_{j=1}^m w_j$. Now fix $\vec{\mathfrak{q}} = (\mathfrak{q}_1, \ldots, \mathfrak{q}_m) \in (1, \infty]^m$ and $\vec{v} = (v_1, \ldots, v_m) \in A_{\vec{\mathfrak{q}}}(\R^{n_2})$, where $\frac{1}{\mathfrak{q}} = \sum_{j=1}^m \frac{1}{\mathfrak{q}_j} > 0$ and $v = \prod_{j=1}^m v_j$. Given $(f, f_1, \ldots, f_m) \in \mathcal{F}$, define
\begin{align*}
F(x_1) & := \bigg(\int_{\R^{n_2}} |f(x_1, x_2)|^{\mathfrak{q}} \, v(x_2)^q \, dx_2\bigg)^{\frac{1}{\mathfrak{q}}}, 
\\
F_j(x_1) & := \bigg(\int_{\R^{n_2}} |f_j(x_1, x_2)|^{\mathfrak{q}_j} \, v(x_2)^{q_j} \, dx_2\bigg)^{\frac{1}{\mathfrak{q}_j}}, \quad j =1, \ldots, m.
\end{align*}
Set $\mathcal{F}_q := \{(F, F_1, \ldots, F_m): (f, f_1, \ldots, f_m) \in \mathcal{F}\}$. Then for all $\vec{u} \in A_{\vec{\mathfrak{q}}}(\R^{n_1})$, using the fact $\vec{u} \otimes \vec{v} \in A_{\vec{\mathfrak{q}}}(\Rnn)$ and \eqref{Rbd-3}, we obtain for any $(F, F_1, \ldots, F_m) \in \mathcal{F}_q$,
\begin{align*}
\|F u\|_{L^{\mathfrak{q}}(\R^{n_1})} 
= \|f u \otimes v\|_{L^{\mathfrak{q}}(\Rnn)} 
\lesssim \prod_{j=1}^m \|f_j u_j \otimes v_j\|_{L^{\mathfrak{q}_j}(\Rnn)}
= \prod_{j=1}^m \|F_j u_j\|_{L^{\mathfrak{q}_j}(\R^{n_1})}.
\end{align*}
This together with extrapolation for multiple weights $A_{\vec{\mathfrak{p}}}(\Rn)$ (cf. \cite[Theorem 1.1]{LMO}) gives 
\begin{align*}
\|f\|_{L^{\mathfrak{p}}(u^p; L^{\mathfrak{q}}(v^q))} 
= \|F u\|_{L^{\mathfrak{p}}(\R^{n_1})} 
\lesssim \prod_{j=1}^m \|F_j u_j\|_{L^{\mathfrak{p}_j}(\R^{n_1})}
= \prod_{j=1}^m \|f_j\|_{L^{\mathfrak{p}_j}(u_j; L^{\mathfrak{q}_j}(v_j))}
\end{align*}
for all $\vec{\mathfrak{p}} = (\mathfrak{p}_1, \ldots, \mathfrak{p}_m) \in (1, \infty]^m$ and for all $\vec{u} = (u_1, \ldots, u_m) \in A_{\vec{\mathfrak{p}}}(\R^{n_1})$, where $\frac{1}{\mathfrak{p}} = \sum_{j=1}^m \frac{1}{\mathfrak{p}_j} > 0$ and $u = \prod_{j=1}^m u_j$. This justifies \eqref{Rbd-2}.
\end{proof}

Now let us show Theorem \ref{thm:RdF-cpt}. By extrapolation of compactness \cite[Theorem 1.1]{COY}, the assumptions \eqref{RdFcpt-1} and \eqref{RdFcpt-2} imply
\begin{align}\label{PST-1}
T: L^{r_1}(w_1^{r_1}) \times \cdots \times L^{r_m}(w_m^{r_m}) \to L^r(w^r) \text{ compactly}
\end{align} 
for all $\vec{r} = (r_1, \ldots, r_m) \in (1, \infty]^m$ and for all $\vec{w} = (w_1, \ldots, w_m) \in A_{\vec{r}}(\Rnn)$, where $\frac1r = \sum_{j=1}^m \frac{1}{r_j} > 0$ and $w = \prod_{j=1}^m w_j$.

Let $\vec{p} = (p_1, \ldots, p_m) \in (1, \infty]^m$, $\vec{q} = (q_1, \ldots, q_m) \in (1, \infty]^m$, $\vec{u} = (u_1, \ldots, u_m) \in A_{\vec{p}}(\R^{n_1})$, $\vec{v} = (v_1, \ldots, v_m) \in A_{\vec{q}}(\R^{n_2})$, $\frac1p = \sum_{j=1}^m \frac{1}{p_j} > 0$, $\frac1q = \sum_{j=1}^m \frac{1}{q_j} > 0$, $u = \prod_{j=1}^m u_j$, and $v = \prod_{j=1}^m v_j$. Pick $\vec{\nu} = (\nu_1, \ldots, \nu_m) \in A_{\vec{r}}(\R^{n_1})$ and $\vec{\sigma} = (\sigma_1, \ldots, \sigma_m) \in A_{\vec{r}}(\R^{n_2})$, where $r_j \in (1, \infty)$ is a fixed number satisfying $r_j \ne p_j$ and $r_j \ne q_j$, $j=1, \ldots, m$. In view of \eqref{PST-1}, the fact $\vec{\nu} \otimes \vec{\sigma} \in A_{\vec{r}}(\Rnn)$ gives 
\begin{align}\label{PST-2}
T: L^{r_1}(\nu_1; L^{r_1}(\sigma_1)) \times \cdots \times L^{r_m}(\nu_m; L^{r_m}(\sigma_m)) \to L^r(\nu; L^r(\sigma))\text{ compactly}.
\end{align}
We claim that there exist $\theta \in (0, 1)$, $\vec{\mathfrak{s}} = \vec{\mathfrak{s}}(\theta) \in (1, \infty)^m$, $\vec{\mathfrak{t}} = \vec{\mathfrak{t}}(\theta) \in (1, \infty)^m$, $\vec{\mu} \in A_{\vec{\mathfrak{s}}}(\R^{n_1})$, and $\vec{\omega} \in A_{\vec{\mathfrak{t}}}(\R^{n_2})$ such that  
\begin{equation}\label{PST-3}
\begin{aligned}
u_j & = \mu_j^{1 - \theta} \nu_j^{\theta},\quad 
\frac{1}{p_j} = \frac{1 - \theta}{\mathfrak{s}_j} + \frac{\theta}{r_j}, \quad j=1, \ldots, m,
\\
v_j & = \omega_j^{1 - \theta} \sigma_j^{\theta}, \quad
\frac{1}{q_j} = \frac{1 - \theta}{\mathfrak{t}_j} + \frac{\theta}{r_j}, 
\quad j=1, \ldots, m. 
\end{aligned}
\end{equation}
Indeed, the proof is a routine application of the method in the proof of \cite[Lemma C.1]{CLSY}. Although two couples of weights are currently under consideration, the reverse H\"{o}lder inequalities of $A_{\lambda_1}(\R^{n_1})$ and $A_{\lambda_2}(\R^{n_2})$ weights allow us to choose a common and small parameter $\theta \in (0, 1)$ such that \eqref{PST-3} hold. We omit the details. 

Let $\frac{1}{\mathfrak{s}} = \sum_{j=1}^m \frac{1}{\mathfrak{s}_j}$, $\frac{1}{\mathfrak{t}} = \sum_{j=1}^m \frac{1}{\mathfrak{t}_j}$, $\mu = \prod_{j=1}^m \mu_j$, and $\omega = \prod_{j=1}^m \omega_j$. Then it is easy to check that
\begin{equation}\label{PST-4}
u = \mu^{1 - \theta} \nu^{\theta}, \quad 
v = \omega^{1 - \theta} \sigma^{\theta}, \quad
\frac{1}{p} = \frac{1 - \theta}{\mathfrak{s}} + \frac{\theta}{r}, \quad \text{and} \quad
\frac{1}{q} = \frac{1 - \theta}{\mathfrak{t}} + \frac{\theta}{r}. 
\end{equation}
By Theorem \ref{thm:RdF-bdd} applied to $\vec{\mu} \in A_{\vec{\mathfrak{s}}}(\R^{n_1})$ and $\vec{\omega} \in A_{\vec{\mathfrak{t}}}(\R^{n_2})$, the assumption \eqref{RdFcpt-1} implies
\begin{align}\label{PST-5}
T: L^{\mathfrak{s}_1}(\mu_1^{\mathfrak{s}_1}; L^{\mathfrak{t}_1}(\omega_1^{\mathfrak{t}_1})) \times \cdots \times L^{\mathfrak{s}_m}(\mu_m^{\mathfrak{s}_m}; L^{\mathfrak{t}_m}(\omega_m^{\mathfrak{t}_m})) \to L^{\mathfrak{s}}(\mu^{\mathfrak{s}}; L^{\mathfrak{t}}(\omega^{\mathfrak{t}})) 
\text{ boundedly}.
\end{align} 
Thus, it follows from \eqref{PST-2}--\eqref{PST-5} and Theorem \ref{thm:ICPT} that $T$ is compact from $L^{p_1}(u_1^{p_1}; L^{q_1}(v_1^{q_1})) \times \cdots \times L^{p_m}(u_m^{p_m}; L^{q_m}(v_m^{q_m}))$ to $L^p(u^p; L^q(v^q))$. This completes the proof of Theorem \ref{thm:RdF-cpt}.
\qed

\subsection{A characterization of $A_{\vec{p}}(\R^{n_1}) \times A_{\vec{q}}(\R^{n_2})$}
Let us prove Theorem \ref{thm:ApAq}. 

The implication $\eqref{list:uv} \Longrightarrow \eqref{list:M}$ is a consequence of Theorem \ref{thm:RdF-bdd} and \cite [Theorem 2.5]{GLPT}, where the later showed $\mathcal{M}_{\mathcal{R}}: L^{s_1}(w_1^{s_1}) \times \dots \times L^{s_m}(w_m^{s_m}) \to L^{s}(w^s)$ boundedly for all $\vec{s} = (s_1, \ldots, s_m) \in (1, \infty)^m$ and all $\vec{w} = (w_1, \ldots, w_m) \in A_{\vec{s}}(\Rnn)$, where $\frac1s = \sum_{j=1}^m \frac{1}{s_j} $ and $w = \prod_{j=1}^m w_j$. 

Let us show $\eqref{list:M} \Longrightarrow \eqref{list:uv}$. Fix cubes $I_1 \subset \R^{n_1}$ and $I_2 \subset \R^{n_2}$. By definition, for any $x \in I_1 \times I_2$,
\begin{align*}
\mathcal{M}_{\mathcal{R}}(\vec{f})(x) \ge \prod_{j=1}^m \langle |f_j| \rangle_{I_1 \times I_2},
\end{align*}
which together with \eqref{list:M} gives
\begin{align*}
& \prod_{j=1}^m \|f_j\|_{L^{p_j}(u_j^{p_j};  L^{q_j}(v_j^{q_j}))}
\gtrsim \|\mathcal{M}_{\mathcal{R}}(\vec{f})\|_{L^p(u^p;  L^q(v^q))}
\\
& \ge \bigg[ \int_{I_1} \bigg(\int_{I_2} |\mathcal{M}_{\mathcal{R}}(\vec{f})|^q 
v(x_2)^q dx_2\bigg)^{\frac pq} u(x_1)^p \, dx_1 \bigg]^{\frac 1p}
\\ \nonumber 
&\ge \bigg[ \int_{I_1} \bigg(\int_{I_2} \bigg|\prod_{j=1}^m \langle |f_j| \rangle_{I_1 \times I_2} \bigg|^q 
v(x_2)^q dx_2\bigg)^{\frac pq} u(x_1)^p \, dx_1\bigg]^{\frac 1p}
\\ \nonumber 
&= u^p(I_1)^{\frac1p} v^q(I_2)^{\frac1q} \prod_{j=1}^m \langle |f_j| \rangle_{I_1 \times I_2}. 
\end{align*}
Taking 
\begin{align*}
f_j(x) = \mathbf{1}_{I_1}(x_1) \mathbf{1}_{I_2}(x_2) 
 u_j^{-p'_j}(x_1) v_j^{-q'_j}(x_2), 
\end{align*}
we obtain
\begin{align*}
u^p(I_1)^{\frac1p} \, v^q(I_2)^{\frac1q} 
\prod_{j=1}^m \langle u_j^{-p'_j} \rangle_{I_1}  \langle v_j^{-q'_j} \rangle_{I_2} 
\lesssim \prod_{j=1}^m u_j^{-p'_j}(I_1)^{\frac{1}{p_j}} v_j^{-q'_j}(I_2)^{\frac{1}{q_j}},
\end{align*}
which is equivalent to
\begin{align*}
\bigg[ \langle u^p \rangle_{I_1}^{\frac1p} \prod_{j=1}^m \langle u_j^{-p'_j} \rangle_{I_1} \bigg]
\bigg[ \langle v^q \rangle_{I_2}^{\frac1q} \prod_{j=1}^m \langle v_j^{-q'_j} \rangle_{I_2} \bigg]
\lesssim 1. 
\end{align*}
By the fact that $v=\prod_{j=1}^mv_i$ and the Lebesgue differentiation theorem, we have 
\begin{align*}
\langle u^p \rangle_{I_1}^{\frac1p} \prod_{j=1}^m \langle u_j^{-p'_j} \rangle_{I_1}
\lesssim 1, \quad \text{for any cube } I_1 \subset \R^{n_1}. 
\end{align*}
Hence, $\vec{u} \in A_{\vec{p}}(\R^{n_1})$. Similarly, $\vec{v} \in A_{\vec{q}}(\R^{n_2})$.

The implication $\eqref{list:uv} \Longrightarrow \eqref{list:R}$ follows from Theorem \ref{thm:RdF-bdd} and \cite[Theorem 1.2]{LMV21}, since the latter proved $\mathcal{R}_{j_1}^1 \otimes \mathcal{R}_{j_2}^2: L^{s_1}(w_1^{s_1}) \times \cdots \times L^{s_m}(w_m^{s_m}) \to L^{s}(w^s)$ boundedly for all $\vec{s} = (s_1, \ldots, s_m) \in (1, \infty)^m$ and all $\vec{w} = (w_1, \ldots, w_m) \in A_{\vec{s}}(\Rnn)$, where $\frac1s = \sum_{j=1}^m \frac{1}{s_j} $ and $w = \prod_{j=1}^m w_j$.

It remains to verify $\eqref{list:R} \Longrightarrow \eqref{list:uv}$. We follow the strategy in \cite[p. 1249]{LOPTT} and \cite[p. 13]{LMV21}. By definition, 
\begin{align*}
\mathcal{R}_{j_1}^1 \otimes \mathcal{R}_{j_2}^2(\vec{f})(x)
= \mathrm{p.v.} \, \int_{(\R^{n_1} \times \R^{n_2})^m} 
\prod_{i=1}^2 \frac{\sum_{s=1}^m(x_i - y_{s, i})_{j_1}}{(\sum_{s=1}^m |x_i - y_{s, i}|^2)^{\frac{mn_i+1}{2}}} 
\prod_{j=1}^m f_j(y_j) \,d\vec{y},
\end{align*}
where $x = (x_1, x_2), y_s = (y_{s, 1}, y_{s, 2}) \in \R^{n_1} \times \R^{n_2}$. Fix cubes $I_1 \subset \R^{n_1}$ and $I_2 \subset \R^{n_2}$, and set 
\begin{align*}
I'_1 = I_1 + 2 \ell(I_1) e_{j_1}^{(1)} 
\quad \text{and} \quad 
I'_2 = I_2 + 2 \ell(I_2) e_{j_2}^{(2)},
\end{align*} 
where $e_{j_i}^{(i)}$ is the basis of $\R^{n_i}$ with the $j_i$-th component being 1 and other components being 0. Let $x = (x_1, x_2) \in I'_1 \times I'_2$ and $y_s = (y_{s, 1}, y_{s, 2}) \in I_1 \times I_2$. Pick 
\begin{align*}
f_j(x) = \mathbf{1}_{I_1}(x_1) \mathbf{1}_{I_2}(x_2) u_j^{-p'_j}(x_1) v_j^{-q'_j}(x_2).
\end{align*}
Then we have
\begin{align*}
(x_i - y_{s, i})_{j_i} \ge \ell(I_i), \quad |x_i - y_{s, i}| \simeq \ell(I_i), \quad i=1, 2,
\end{align*}
and
\begin{align*}
\mathcal{R}_{j_1}^1 \otimes \mathcal{R}_{j_2}^2 (\vec{f})(x)
\gtrsim \prod_{j=1}^m  \langle u_j^{-p'_j} \rangle_{I_1} \langle v_j^{-q'_j} \rangle_{I_2}.
\end{align*}
Therefore, by \eqref{list:R},
\begin{align*}
& u^p(I'_1)^{\frac1p} \, v^q(I'_2)^{\frac1q}
\prod_{j=1}^m \langle u_j^{-p'_j} \rangle_{I_1} \langle v_j^{-q'_j} \rangle_{I_2}
\lesssim \|\mathcal{R}_{j_1}^1 \otimes \mathcal{R}_{j_2}^2(\vec{f})\|_{ L^{p}(u^p;  L^{q}(v^q))}
\\ 
& \lesssim \prod_{j=1}^m \|f_j\|_{L^{p_j}(u_j^{p_j};  L^{q_j}(v_j^{q_j}))}
= \prod_{j=1}^m u_j^{-p'_j} (I_1)^{\frac{1}{p_j}} \, v_j^{-q'_j} (I_2)^{\frac{1}{q_j}}.
\end{align*}
That is, 
\begin{align*}
\bigg[ \langle u^p\rangle_{I'_1}^{\frac1p} \prod_{j=1}^m \langle u_j^{-p'_j} \rangle_{I_1}^{\frac{1}{p'_j}} \bigg]
\bigg[ \langle v^q\rangle_{I'_2}^{\frac1q} \prod_{j=1}^m \langle v_j^{-q'_j} \rangle_{I_2}^{\frac{1}{q'_j}} \bigg]
\lesssim 1.
\end{align*}
Analogously, 
\begin{align*}
\bigg[ \langle u^p\rangle_{I_1}^{\frac1p} \prod_{j=1}^m \langle u_j^{-p'_j} \rangle_{I'_1}^{\frac{1}{p'_j}} \bigg]
\bigg[ \langle v^q\rangle_{I_2}^{\frac1q} \prod_{j=1}^m \langle v_j^{-q'_j} \rangle_{I'_2}^{\frac{1}{q'_j}} \bigg]
\lesssim 1.
\end{align*}
By the fact that $v=\prod_{j=1}^m v_j$ and the Lebesgue differentiation theorem, we deduce 
\begin{align}\label{eq:I1I'1}
\langle u^p\rangle_{I_1}^{\frac1p} 
\prod_{j=1}^m  \langle u_j^{-p'_j} \rangle_{I'_1}^{\frac{1}{p'_j}}
\lesssim 1, \quad \text{for any cube } I_1,
\end{align}
and 
\begin{align}\label{eq:I'1I1}
\langle u^p\rangle_{I'_1}^{\frac1p} 
\prod_{j=1}^m  \langle u_j^{-p'_j} \rangle_{I_1}^{\frac{1}{p'_j}}
\lesssim 1, \quad \text{for any cube } I_1.
\end{align}
By by H\"older's inequality, 
\begin{align*}
\langle u^{- \frac{p}{mp-1}}\rangle_{I'_1}^{\frac{mp-1}{p}} 
\le \prod_{j=1}^m \langle u_j^{-p'_j}\rangle_{I'_1}^{\frac{1}{p'_j}},
\end{align*}
which along with \eqref{eq:I1I'1} gives
\begin{align}\label{eq:mpx}
\langle u^p\rangle_{I_1}^{\frac1p} 
\langle u^{- \frac{p}{mp-1}}\rangle_{I'_1}^{\frac{mp-1}{p}} 
\lesssim 1.
\end{align}
Since 
\begin{align*}
1 = \langle u^{\frac1m} \, u^{-\frac1m} \rangle_{I'_1}^m
\le \langle u^p\rangle_{I'_1}^{\frac1p}
\langle u^{- \frac{p}{mp-1}}\rangle_{I'_1}^{\frac{mp-1}{p}},
\end{align*}
the inequality \eqref{eq:mpx} leads to
\begin{align}\label{eq:I1x}
\frac{\langle u^p\rangle_{I_1}^{\frac1p}}{\langle u^p\rangle_{I'_1}^{\frac1p}}
= \frac{\langle u^p \rangle_{I_1}^{\frac1p} \langle u^{- \frac{p}{mp-1}} \rangle_{I'_1}^{\frac{mp-1}{p}}}
{\langle u^p \rangle_{I'_1}^{\frac1p} \langle u^{- \frac{p}{mp-1}} \rangle_{I'_1}^{\frac{mp-1}{p}}}
\lesssim 1.
\end{align}
Gathering \eqref{eq:I'1I1} and \eqref{eq:I1x}, we conclude
\begin{align*}
\langle u^p\rangle_{I_1}^{\frac1p} 
\prod_{j=1}^m  \langle u_j^{-p'_j} \rangle_{I_1}^{\frac{1}{p'_j}}
\lesssim \langle u^p\rangle_{I'_1}^{\frac1p} 
\prod_{j=1}^m  \langle u_j^{-p'_j} \rangle_{I_1}^{\frac{1}{p'_j}}
\lesssim 1, \quad \text{for any cube } I_1,
\end{align*}
which implies $\vec{u} \in A_{\vec{p}}(\R^{n_1})$. Similarly, $\vec{v} \in A_{\vec{q}}(\R^{n_2})$.

\section{Applications}\label{sec:app}
In this section, we present several applications of the extrapolation Theorem \ref{thm:RdF-cpt}. It will become clear that proving the compactness of bi-parameter operators is an extremely challenging task, even in the unweighted setting. 

\subsection{Bi-parameter Calder\'{o}n--Zygmund operators}
We begin with showing Theorem \ref{thm:CZO}. In light of \cite[Theorem 1.1]{CY}, $T$ is compact from $L^{r_1}(w_1^{r_1}) \times \cdots \times L^{r_m}(w_m^{r_m})$ to $L^r(w^r)$ for all $\vec{r} = (r_1, \ldots, r_m) \in (1, \infty]^m$ and $\vec{w} = (w_1, \ldots, w_m) \in A_{\vec{r}}(\Rnn)$, where $\frac1r = \sum_{j=1}^m \frac{1}{r_j} > 0$ and $w = \prod_{j=1}^m w_j$. This immediately implies that the assumptions \eqref{RdFcpt-1} and \eqref{RdFcpt-2} in Theorem \ref{thm:RdF-cpt} hold. Thus, Theorem \ref{thm:RdF-cpt} gives the compactness on weighted mixed-norm spaces as desired. \qed

\subsection{Bi-parameter dyadic paraproducts} 
As above, to demonstrate Theorems \ref{thm:para}, it suffices to prove 
\begin{align}\label{eq:paraLr}
\text{$T$ is compact from $L^{r_1}(w_1^{r_1}) \times \cdots \times L^{r_m}(w_m^{r_m})$ to $L^r(w^r)$}
\end{align}
for all $\vec{r} = (r_1, \ldots, r_m) \in (1, \infty]^m$ and $\vec{w} = (w_1, \ldots, w_m) \in A_{\vec{r}}(\Rnn)$, where $\frac1r = \sum_{j=1}^m \frac{1}{r_j} > 0$ and $w = \prod_{j=1}^m w_j$. Indeed, the proof of \eqref{eq:paraLr} is similar to that of $\mathbb{E}_{\omega} \mathbf{F}_{\mathbf{a}_{\omega}}$ in \cite[Theorem 1.5]{CY}, where extrapolation of compactness on product spaces allows us to reduce the matter to the compactness from $L^{2m}(\Rnn) \times \cdots \times L^{2m}(\Rnn)$ to $L^2(\Rnn)$. By means of \cite[Theorem 6.3]{CY}, the later follows from minor modifications of \cite[Section 6.5]{CY}. The details are left to the reader. 
\qed

\subsection{Bi-parameter continuous paraproducts}
Next, we turn to the proof of Theorem \ref{thm:Pib}. For a bilinear operator $T$ acting on functions defined on the product space $\Rnn$, we define the following duals
\begin{align*}
& \langle T(f_1 \otimes f_2, g_1 \otimes g_2), h_1 \otimes h_2 \rangle
\\
& = \langle T^{1*}(h_1 \otimes h_2, g_1 \otimes g_2), f_1 \otimes f_2 \rangle
= \langle T^{2*}(f_1 \otimes f_2, h_1 \otimes h_2), g_1 \otimes g_2 \rangle
\\
& = \langle T^{1*}_1(h_1 \otimes f_2, g_1 \otimes g_2), f_1 \otimes h_2 \rangle
= \langle T^{1*}_2(f_1 \otimes h_2, g_1 \otimes g_2), h_1 \otimes f_2 \rangle
\\
& = \langle T^{2*}_1(f_1 \otimes f_2, h_1 \otimes g_2), g_1 \otimes h_2 \rangle
= \langle T^{2*}_2(f_1 \otimes f_2, g_1 \otimes h_2), h_1 \otimes g_2 \rangle
\\
& = \langle T^{1*, 2*}_{1, 2}(h_1 \otimes f_2, g_1 \otimes h_2), f_1 \otimes g_2 \rangle
= \langle T^{1*, 2*}_{2, 1}(f_1 \otimes h_2, h_1 \otimes g_2), g_1 \otimes f_2 \rangle.
\end{align*}

The following result contains the main ingredients of the proof of Theorem \ref{thm:Pib}. 

\begin{theorem}\label{thm:ppp}
Let $m = (0, 0)$, $\rho = (1, 1)$, and $\delta = (\delta_1, \delta_2) \in [0, 1)^2$. Let $b \in \BMO(\Rnn)$. Then the following hold:
\begin{list}{\rm (\theenumi)}{\usecounter{enumi}\leftmargin=1.3cm \labelwidth=1cm \itemsep=0.1cm \topsep=0.2cm \renewcommand{\theenumi}{\alph{enumi}}}
\item\label{ppp-1} $\pi_b$ admits a full kernel representation;

\item\label{ppp-2} $\pi_b$ admits a partial kernel representation;

\item\label{ppp-3} $\pi_b$ is bounded from $L^4(\Rnn) \times L^4(\Rnn)$ to $L^2(\Rnn)$;

\item\label{ppp-4} $\mathbb{T}(1, 1) \in \BMO(\Rnn)$, where 
\begin{align*}
\mathbb{T} \in \{T, T^{1*}, T^{2*}, T_1^{1*}, T_2^{1*}, T_1^{2*}, T_2^{2*}, T_{1, 2}^{1*, 2*}, T_{2, 1}^{1*, 2*}: T = \pi_b\};
\end{align*}

\item\label{ppp-5} If $b \in \CMO(\Rnn)$, then $\pi_b$ is compact from $L^4(\Rnn) \times L^4(\Rnn)$ to $L^2(\Rnn)$.
\end{list}
\end{theorem}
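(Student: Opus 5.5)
I will verify the five items of Theorem \ref{thm:ppp} directly from the structure of $\pi_b$, treating it as a bilinear bi-parameter singular integral in the sense of \cite{CY, LMV20}. Write $\pi_b(f,g)(x)=\int\!\!\int K(x,y,z) f(y) g(z)\,dy\,dz$ with
\begin{align*}
K(x,y,z) = \int_0^\infty\!\!\int_0^\infty \int_{\Rnn} \psi_{t}(x-w)\,(Q_{t}b)(w)\,\varphi_{t}(w-y)\,\varphi_{t}(w-z)\,dw\,\frac{dt_1}{t_1}\frac{dt_2}{t_2},
\end{align*}
where $t=(t_1,t_2)$ and $\psi_t, \varphi_t$ denote the tensor products of the one-parameter dilates. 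Since $b\in\BMO(\Rnn)$ (product BMO), $|Q_t b|$ is not pointwise bounded by $\|b\|_{\BMO}$ in the bi-parameter setting. I will therefore use the Carleson-measure characterization $|Q_{t_1,t_2}b(w)|^2\,dw\,\frac{dt_1}{t_1}\frac{dt_2}{t_2}$ for the boundedness statements, and for the kernel bounds only the local estimate $|Q_t b(w)|\lesssim \|b\|_{\BMO}$, which does hold because $Q_t b$ is a rectangle-localized average of a mean-zero bump against $b$, each variable being controlled by the one-parameter BMO norm of the slices.

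For \eqref{ppp-1} and \eqref{ppp-2}, the support conditions force $|x_i-y_i|,|x_i-z_i|\lesssim t_i$. Integrating $t_i$ from $\max(|x_i-y_i|,|x_i-z_i|)$ to $\infty$ gives the size bound $\prod_{i}(|x_i-y_i|+|x_i-z_i|)^{-2n_i}$. Each derivative in $x_i,y_i,z_i$ costs a factor $t_i^{-1}$, which yields the mixed H\"older/smoothness conditions. The partial kernel representation follows by freezing one parameter: for $f=f_1\otimes f_2$, $g=g_1\otimes g_2$, $h=h_1\otimes h_2$ with $f_1,g_1,h_1$ supported apart, the $t_1$-integral produces a one-parameter kernel whose constant is controlled by a bilinear one-parameter paraproduct in the second variable. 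That paraproduct is bounded by \cite{CLSY, Har}.

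For \eqref{ppp-3}, I will dualize against $h\in L^2$. Then $\langle \pi_b(f,g),h\rangle = \int\!\!\int\!\!\int Q_tb\,P_tf\,P_tg\,Q_th$. By Cauchy--Schwarz and the bi-parameter Carleson embedding \cite{Jou}, $|Q_tb|^2$ acting on $|P_tf P_tg|^2$ is bounded by $\|b\|_{\BMO}^2 \|M_{\mathcal R}f\,M_{\mathcal R}g\|_{L^2}^2$. This is at most $\|f\|_{L^4}^2\|g\|_{L^4}^2$ by the strong maximal theorem, and the square function $\|(\int\!\!\int|Q_th|^2\frac{dt}{t})^{1/2}\|_2\lesssim\|h\|_2$ finishes the estimate.

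For \eqref{ppp-4}, we have $\pi_b(1,1)=\int\!\!\int Q_tQ_tb\,\frac{dt}{t}=b$ by the Calder\'on reproducing formula (normalization $\int_0^\infty|\widehat{\psi}|^2\frac{dt}t=1$), so it lies in $\BMO$. The full adjoints $T^{1*}(1,1)$ and $T^{2*}(1,1)$ vanish, since $Q_t1=0$ inside. For the partial adjoints, one parameter again contains $Q_{t_i}1=0$, giving $0$ as well.

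For \eqref{ppp-5}, the plan is to approximate $b\in\CMO$ in BMO norm by $b_k\in\mathscr C_c^\infty$. By \eqref{ppp-3} and linearity in $b$, $\|\pi_b-\pi_{b_k}\|_{L^4\times L^4\to L^2}\lesssim\|b-b_k\|_{\BMO}\to0$. Limits of compact operators in operator norm are compact, so it suffices to treat smooth compactly supported $b$. I then truncate $t_i\in[\delta,\delta^{-1}]$. The tails are small: for $t_i\to0$ this uses the smoothness of $b$ via $|Q_tb|\lesssim t_i$, and for $t_i\to\infty$ it uses the $L^1$-integrability of $b$ via $|Q_tb|\lesssim t^{-n}$, with the Carleson constants tending to $0$. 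The truncated operator has a bounded, smooth, compactly supported-in-$w$ kernel structure. Its compactness then follows from Theorem \ref{thm:KRQB} (unweighted, $a=1$): the uniform decay at infinity comes from the $Q_t$ supports, and equicontinuity from the smoothness at scales $\ge\delta$. I expect the main obstacle to be making the tail estimates quantitative in the bi-parameter Carleson norm, and verifying the partial kernel conditions of \eqref{ppp-2} uniformly.
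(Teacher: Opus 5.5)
Your proof of (e) has a genuine gap at its first step. You assume every $b\in\CMO(\Rnn)$ is a limit, in the product $\BMO$ norm, of functions $b_k\in\mathscr C_c^\infty(\Rnn)$. In this paper $\CMO(\Rnn)$ is not defined as that closure. It is defined by $\lim_{N\to\infty}\sup_{\D}\|P^{\perp}_{\D(N)}b\|_{\BMO(\D)}=0$, and the density does not follow formally from this. $P_{\D(N)}b$ is a non-smooth Haar sum adapted to one grid, while $\|\cdot\|_{\BMO(\Rnn)}$ is a supremum over all grids. The natural smooth approximant is $b_\delta=\int_{U_\delta}\psi^{(1)}_{t_1}\otimes\psi^{(2)}_{t_2}(\cdot-x)\,Q_{t_1,t_2}b(x)\,dx\,\frac{dt_1}{t_1}\frac{dt_2}{t_2}$ with $U_\delta=\{|x|\le\delta^{-1},\ t\in[\delta,\delta^{-1}]^2\}$. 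Showing $b_\delta\to b$ in $\BMO(\Rnn)$ via product $H^1$--$\BMO$ duality requires the Carleson norm of $\mu_b\mathbf 1_{U_\delta^c}$ to tend to $0$. That is exactly the vanishing Carleson property, Theorem~\ref{thm:Carleson}(b), whose proof is where the paper does all the $\CMO$-specific work: the covering by finitely many adjacent dyadic systems and the split $\mathscr F_N\le\mathscr F_N^1+\mathscr F_N^2+\mathscr F_N^3$.

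So your route hides the whole difficulty in an unproved claim. The obstacle you anticipated is actually harmless: for $b\in\mathscr C_c^\infty$ one has $|Q_{t_1,t_2}b(x)|\lesssim\prod_{i=1}^2\min\{t_i,t_i^{-n_i}\}$ uniformly in $x$, so the $t$-tails outside $[\delta,\delta^{-1}]^2$ have Carleson norm $\lesssim\delta^2$. To repair the argument, drop density and use Theorem~\ref{thm:Carleson}(b) directly. Restrict $\mu_b$ to the finitely many Whitney boxes $W_{I_1}\times W_{I_2}$ with $I_1\times I_2\in\D(N)$, and to their complement. By your argument in (c), the complement gives an operator of norm at most a constant times the square root of its Carleson norm, which tends to $0$. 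The remaining piece has bounded $t$-range and bounded spatial support, so your translation/decay argument shows it is compact. The paper packages this differently: Theorem~\ref{thm:Carleson}(d) gives compactness of $P_{t_1,t_2}:L^4\to L^4(\mu_b)$, and then $\|\pi_b(f_j,g_j)\|_{L^2}\lesssim\|P_{t_1,t_2}f_j\|_{L^4(\mu_b)}\|P_{t_1,t_2}g_j\|_{L^4(\mu_b)}\to0$ whenever $f_j$ or $g_j$ is weakly null. If $\CMO$ were defined as the $\BMO$-closure of $\mathscr C_c^\infty$, your argument would go through as written, but that is a different hypothesis.

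Parts (a)--(c) otherwise follow the paper. Using $\mathcal N(P_tf\,P_tg)\le\mathcal Nf\,\mathcal Ng\lesssim M_{\mathcal R}f\,M_{\mathcal R}g$ in (c), instead of two $L^4(\mu_b)$ embeddings, is an equivalent variant. Two justifications need repair:
\begin{itemize}
\item In (a), your opening claim that $|Q_{t_1,t_2}b|$ is not pointwise bounded contradicts the bound you then use. The slice explanation is also wrong: product $\BMO$ does not see functions of one variable alone, so slices of $b$ need not lie in $\BMO$. The bound $\sup_{t}\|Q_{t_1,t_2}b\|_{L^\infty}\lesssim\|b\|_{\BMO(\Rnn)}$ holds globally, by product $H^1$--$\BMO$ duality, because $\psi^{(1)}_{t_1}\otimes\psi^{(2)}_{t_2}(x-\cdot)$ has uniformly bounded product $H^1$ norm. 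This is what the paper uses.
\item In (b), you need the frozen symbol $Q^{(1)}_{t_1}b(u_1,\cdot)$ to lie in $\BMO(\R^{n_2})$ uniformly in $(u_1,t_1)$. The same duality gives this, by testing $b$ against $\psi^{(1)}_{t_1}(u_1-\cdot)\otimes a_2$ with $a_2$ an $H^1(\R^{n_2})$ atom; the paper encodes it as $\nu_{u_1}^{t_1}$ being a one-parameter Carleson measure.
\end{itemize}
For (d) your route differs from the paper's and is sound. You compute $\pi_b(1,1)=b$ by the Calder\'on reproducing formula, and note that every full or partial adjoint contains some $Q^{(i)}_{t_i}$ acting on $1$, hence vanishes. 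The paper instead proves $L^4\times L^4\to L^2$ bounds (or their $L^2\times L^4\to L^{4/3}$ duals) for all eight adjoint forms and invokes \cite[Proposition 3.6]{LMV20}. Your computation is more elementary and gives exact values. The cost is checking these formal identities against the pairing-based definition of $T(1,1)$ used there.
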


Let us conclude Theorem \ref{thm:Pib} from Theorem \ref{thm:ppp} as follows. Property \eqref{Pib-1} is a consequence of parts \eqref{ppp-1}--\eqref{ppp-4} and \cite[Theorem 1.2]{LMV21}, where we have used that part \eqref{ppp-3} implies the weak boundedness property and the diagonal $\BMO$ condition. Property \eqref{Pib-2} follows from property \eqref{Pib-1}, part \eqref{ppp-5}, and \cite[Theorem 1.6]{CY}, while property \eqref{Pib-3} follows from property \eqref{Pib-1}, part \eqref{ppp-5}, and Theorem \ref{thm:RdF-cpt}. This shows Theorem \ref{thm:Pib}.

\medskip 
\noindent{\bf Proof of Theorem \ref{thm:ppp}.} Denote
\begin{align*}
K(x, y, z) 
:= \int_0^{\infty} \int_0^{\infty}  K_{t_1, t_2}(x, y, z)  \, \frac{dt_1}{t_1} \frac{dt_2}{t_2},
\end{align*}
where
\begin{align*}
K_{t_1, t_2}(x, y, z) 
:= \int Q_{t_1, t_2} b(u) \, \psi_{t_1}^{(1)} \otimes \psi_{t_2}^{(2)} (x-u) \, 
\varphi_{t_1}^{(1)} \otimes \varphi_{t_2}^{(2)} (u-y) \, 
\varphi_{t_1}^{(1)} \otimes \varphi_{t_2}^{(2)} (u-z) \, du. 
\end{align*}
By the cancellation and compact support of $\psi^{(1)}$ and $\psi^{(2)}$, there holds
\begin{align*}
& \sup_{t_1, t_2 > 0} \|Q_{t_1, t_2} b\|_{L^{\infty}(\Rnn)} 
\le \sup_{\substack{x \in \Rnn \\ t_1, t_2 > 0}} 
|\langle \psi_{t_1}^{(1)} \otimes \psi_{t_2}^{(2)}(x - \cdot), b(\cdot) \rangle|
\\
& \lesssim \sup_{\substack{x \in \Rnn \\ t_1, t_2 > 0}} 
\| \psi_{t_1}^{(1)} \otimes \psi_{t_2}^{(2)}(x-\cdot)\|_{\mathrm{H}^1(\Rnn)} \|b\|_{\BMO(\Rnn)}
\lesssim 1,
\end{align*}
provided that $\psi_{t_1}^{(1)} \otimes \psi_{t_2}^{(2)}$ is a $\mathrm{H}^1(\Rnn)$-atom. Thus, for all multi-indices $\alpha$, $\beta$ and $\gamma$,
\begin{align*}
|\partial_x^{\alpha} \partial_y^{\beta} \partial_z^{\gamma} K_{t_1, t_2}(x, y, z)|
& \lesssim \prod_{i=1}^2 \int_{\R^{n_i}} 
\Big|\partial_{x_i}^{\alpha_i} \psi_{t_i}^{(i)} (x_i - u_i) \, 
\partial_{y_i}^{\beta_i} \varphi_{t_i}^{(i)} (u_i - y_i) \, 
\partial_{z_i}^{\gamma_i} \varphi_{t_i}^{(i)} (u_i - z_i) \Big|\, du_i
\\
& \lesssim \prod_{i=1}^2 \int_{\R^{n_i}} 
\frac{t_i^{-n_i - |\alpha_i|}}{(1 + |x_i - u_i|/t_i)^{4(n_i + 1)}}
\frac{t_i^{-n_i - |\beta_i|}}{(1 + |u_i - y_i|/t_i)^{4(n_i + 1)}}
\\
& \qquad\qquad \times \frac{t_i^{-n_i - |\gamma_i|}}{(1 + |u_i - z_i|/t_i)^{4(n_i + 1)}} \, du_i
\\
& \lesssim \prod_{i=1}^2 \frac{t_i^{-2n_i - |\alpha_i| - |\beta_i| - |\gamma_i|}}{(1 + |x_i - y_i|/t_i)^{2(n_i + 1)} (1 + |x_i - z_i|/t_i)^{2(n_i + 1)}}.
\end{align*}
If we set $A_i := |x_i - y_i| + |x_i - z_i|$, then whenever $|\alpha_i| + |\beta_i| + |\gamma_i| \le 1$, 
\begin{align}\label{eq:pK}
|\partial_x^{\alpha} \partial_y^{\beta} \partial_z^{\gamma} K(x, y, z)|
& \lesssim \prod_{i=1}^2 \bigg[\int_0^{A_i} \frac{t_i^{2 - |\alpha_i| - |\beta_i| - |\gamma_i|}}{A^{2n_i+2}} \, \frac{dt_i}{t_i} 
+ \int_{A_i}^{\infty} t_i^{-2n_i - |\alpha_i| - |\beta_i| - |\gamma_i|} \, \frac{dt_i}{t_i} \bigg]
\\ \nonumber
& \lesssim  \prod_{i=1}^2 A_i^{-2n_i - |\alpha_i| - |\beta_i| - |\gamma_i|},
\end{align}
which implies the full kernel representation.

To justify the partial kernel representation, define
\begin{align*}
K_{f_2, g_2, h_2}(x_1, y_1, z_1) 
& := \int_{\R^{n_2}} \int_{\R^{n_2}} \int_{\R^{n_2}} K(x, y, z) f_2(y_2) \, g_2(z_2) \, h_2(x_2) \, dx_2 \, dy_2 \, dz_2.
\end{align*}
Then for any multi-indices $\alpha_1, \beta_1, \gamma_1$,
\begin{align*}
& \partial_{x_1}^{\alpha_1} \partial_{y_1}^{\beta_1} \partial_{z_1}^{\gamma_1} K_{f_2, g_2, h_2}(x_1, y_1, z_1) 
\\
& = \int_{\R^{n_1+1}_+} \partial_{x_1}^{\alpha_1} \psi_{t_1}^{(1)}(x_1 - u_1) \, 
\partial_{y_1}^{\beta_1} \varphi_{t_1}^{(1)}(u_1 - y_1) \, 
\partial_{z_1}^{\gamma_1} \varphi_{t_1}^{(1)}(u_1 - z_1) 
H(u_1, t_1) \, \frac{du_1 \, dt_1}{t_1},
\end{align*}
where
\begin{align*}
H(u_1, t_1)
:= \int_{\R^{n_2+1}_+} Q_{t_1, t_2} b(u) \, 
P_{t_2}^{(2)} f_2(u_2) \, P_{t_2}^{(2)} g_2(u_2) \, Q_{t_2}^{(2)} h_2(u_2) \, \frac{du_2 \, dt_2}{t_2}.
\end{align*}
To proceed, given $(u_1, t_1) \in \R^{n_1+1}_+$, define a measure
\begin{align*}
d\nu_{u_1}^{t_1}(u_2, t_2) := |Q_{t_1, t_2} b(u)|^2 \, \frac{du_2 \, dt_2}{t_2}.
\end{align*}
If we set $T_{I_i} := I_i \times (0, \ell(I_i)) = \bigcup_{I'_i \in \D_i, I'_i \subset I_i} W_{I'_i}$, then 
\begin{align*}
\nu_{u_1}^{t_1}(T_{I_2}) 
= \sum_{I'_2 \subset I_2} \iint_{W_{I'_2}} |Q_{t_1, t_2} b(u)|^2 \, \frac{du_2 \, dt_2}{t_2},
\end{align*}
and 
\begin{align*}
\iint_{T_{I_1}} \nu_{u_1}^{t_1}(T_{I_2}) \, \frac{du_1 \, dt_1}{t_1}
= \sum_{I'_1 \times I'_2 \subset I_1 \times I_2} 
\iint_{W_{I'_2}} \iint_{W_{I'_1}} |Q_{t_1, t_2} b(u)|^2 \, \frac{du_1 \, dt_1}{t_1} \, \frac{du_2 \, dt_2}{t_2}
\lesssim |I_1| |I_2|,
\end{align*}
where Theorem \ref{thm:Carleson} part \eqref{Car-1} was used in the last step. By Lebesgue's differentiation theorem, there holds
\begin{align*}
\nu_{u_1}^{t_1}(T_{I_2}) 
\lesssim |I_2|, \quad \text{a.e. } (u_1, t_1) \in \R^{n_1+1}_+.
\end{align*}
This means that 
\begin{align}\label{nuCar}
\nu_{u_1}^{t_1} \text{ is a Carleson measure on $\R^{n_2+1}_+$}, \quad \text{a.e. } (u_1, t_1) \in \R^{n_1+1}_+.
\end{align}
If we define the one-parameter square functions $S_i$ and $S'_i$ on $\R^{n_i}$ as
\begin{align*}
S_i h_i(x_i) 
& := \bigg(\int_0^{\infty} |Q_{t_i}^{(i)} h_i(x_i)|^2 \, \frac{dt_i}{t_i} \bigg)^{\frac12}, \quad x_i \in \R^{n_i},
\\
S'_i h_i(x_i) 
& := \bigg(\int_0^{\infty} |P_{t_i}^{(i)} h_i(x_i)|^2 \, \frac{dt_i}{t_i} \bigg)^{\frac12}, \quad x_i \in \R^{n_i},
\end{align*}
then by \cite[p. 389]{DJ},
\begin{align}\label{SS22}
\|S_i h_i\|_{L^2(\R^{n_i})} + \|S'_i h_i\|_{L^2(\R^{n_i})}
\lesssim \|h_i\|_{L^2(\R^{n_i})}, \quad i=1, 2.
\end{align}
We invoke H\"{o}lder's inequality, \eqref{nuCar}, \cite[Theorem 3.3.7]{Gra-2}, and \eqref{SS22} to deduce
\begin{align}\label{Hut}
|H(u_1, t_1)|
&\le \bigg(\int_{\R^{n_2+1}_+}  \, 
|P_{t_2}^{(2)} f_2(u_2) \, P_{t_2}^{(2)} g_2(u_2)|^2 \, 
|Q_{t_1, t_2} b(u)|^2 \, \frac{du_2 \, dt_2}{t_2} \bigg)^{\frac12}
\\ \nonumber
&\quad \times \bigg(\int_{\R^{n_2+1}_+} |Q_{t_2}^{(2)} h_2(u_2)|^2 \, \frac{du_2 \, dt_2}{t_2} \bigg)^{\frac12}
\\ \nonumber
& = \|(P_{t_2}^{(2)} f_2) \, (P_{t_2}^{(2)} g_2)\|_{L^2(\R^{n_2+1}_+, \nu_{u_1}^{t_1})} \|S_2 h_2\|_{L^2(\R^{n_2})}
\\ \nonumber
& \le \|P_{t_2}^{(2)} f_2\|_{L^4(\R^{n_2+1}_+, \nu_{u_1}^{t_1})}
\|P_{t_2}^{(2)} g_2\|_{L^4(\R^{n_2+1}_+, \nu_{u_1}^{t_1})} \|S_2 h_2\|_{L^2(\R^{n_2})}
\\ \nonumber
& \lesssim \|f_2\|_{L^4(\R^{n_2})} \|g_2\|_{L^4(\R^{n_2})} \|h_2\|_{L^2(\R^{n_2})},
\end{align}
where the implicit constants are independent of $u_1$ and $t_1$. Then using \eqref{Hut} and the same estimates as in \eqref{eq:pK}, we obtain
\begin{align*}
\big|\partial_{x_1}^{\alpha_1} \partial_{y_1}^{\beta_1} \partial_{z_1}^{\gamma_1} K_{f_2, g_2, h_2}(x_1, y_1, z_1) \big|
\lesssim \frac{\|f_2\|_{L^4(\R^{n_2})} \|g_2\|_{L^4(\R^{n_2})} \|h_2\|_{L^2(\R^{n_2})}}{(|x_1 - y_1| + |x_1 - z_1|)^{2n_1 + |\alpha_1| + |\beta_1| + |\gamma_1|}},
\end{align*}
which gives the partial kernel representation on the first parameter. Symmetrically, one can show the partial kernel representation on the second parameter. 

 Define bi-parameter square functions as
\begin{align*}
S f(x) & := \bigg(\int_0^{\infty} \int_0^{\infty} |Q_{t_1, t_2} f(x)|^2 \, \frac{dt_1}{t_1} \, \frac{dt_2}{t_2} \bigg)^{\frac12}, \quad x \in \Rnn,
\\
S' f(x) & := \bigg(\int_0^{\infty} \int_0^{\infty} |P_{t_1} \otimes Q_{t_2} f(x)|^2 \, \frac{dt_1}{t_1} \, \frac{dt_2}{t_2} \bigg)^{\frac12}, \quad x \in \Rnn,
\\
S'' f(x) & := \bigg(\int_0^{\infty} \int_0^{\infty} |Q_{t_1} \otimes P_{t_2} f(x)|^2 \, \frac{dt_1}{t_1} \, \frac{dt_2}{t_2} \bigg)^{\frac12}, \quad x \in \Rnn.
\end{align*}
We claim that
\begin{align}\label{eq:square}
\|\mathbb{S}f\|_{L^2(\Rnn)} & \lesssim \|f\|_{L^2(\Rnn)}, \quad \forall \mathbb{S} \in \{S, S', S''\}.
\end{align}
Indeed, \eqref{eq:square} is a direct consequence of \cite[Theorem 2.5]{Mar14}. Let us focus on $S'$. The kernel of $P_{t_1} \otimes Q_{t_2}$ is given by $s_{t_1, t_2}(x, y) := \varphi_{t_1}^{(1)}(x_1 - y_1) \psi_{t_2}^{(2)}(x_2 - y_2)$, which along with the smoothness of $\varphi^{(1)}$ and $\psi^{(2)}$ satisfies \cite[Assumption 2.1]{Mar14}. The inequality \eqref{SS22} immediately implies \cite[Assumption 2.2]{Mar14}. Additionally, 
\begin{align*}
P_{t_1} \otimes Q_{t_2} 1(x) = \int_{\R^{n_2}} \psi^{(2)} \, dy_2 = 0, \quad x \in \Rnn,
\end{align*}
which verifies \cite[Assumption 2.3]{Mar14}. Thus, \eqref{eq:square} is true for $S'$. Similarly, \eqref{eq:square} hold for both $S$ and $S''$.

To continue, let $h \in L^2(\Rnn)$. Then by \eqref{eq:square}, \eqref{eq:BMO-Car}, and \eqref{eq:PCar}, we have
\begin{align*}
|\langle \pi_b(f, g), h \rangle|
& = \bigg|\int_{\R^{n_1+1}_+} \int_{\R^{n_2+1}_+} Q_{t_1, t_2}b \cdot P_{t_1, t_2}f \cdot  P_{t_1, t_2}g \cdot
Q_{t_1, t_2}h \, \frac{dx_1 \, dt_1}{t_1} \, \frac{dx_2 \, dt_2}{t_2} \bigg|
\\
& \le \bigg(\int_{\R^{n_1+1}_+} \int_{\R^{n_2+1}_+} |P_{t_1, t_2}f \cdot  P_{t_1, t_2} g|^2 \,
|Q_{t_1, t_2}b|^2 \, \frac{dx_1 \, dt_1}{t_1} \, \frac{dx_2 \, dt_2}{t_2} \bigg)^{\frac12} 
\\
& \quad \times \bigg(\int_{\R^{n_1+1}_+} \int_{\R^{n_2+1}_+} 
|Q_{t_1, t_2}h(x)|^2 \, \frac{dx_1 \, dt_1}{t_1} \, \frac{dx_2 \, dt_2}{t_2} \bigg)^{\frac12} 
\\
& = \|(P_{t_1, t_2}f) (P_{t_1, t_2}g)\|_{L^2(\R^{n_1+1}_+ \times \R^{n_2+1}_+, \, \mu_b)} \|Sh\|_{L^2(\Rnn)}
\\
& \le \|P_{t_1, t_2} f\|_{L^4(\R^{n_1+1}_+ \times \R^{n_2+1}_+, \, \mu_b)} 
\|P_{t_1, t_2} g\|_{L^4(\R^{n_1+1}_+ \times \R^{n_2+1}_+, \, \mu_b)} \|Sh\|_{L^2(\Rnn)}
\\
& \lesssim \|b\|_{\BMO(\Rnn)} \|f\|_{L^4(\Rnn)} \|g\|_{L^4(\Rnn)} \|h\|_{L^2(\Rnn)},
\end{align*}
which shows 
\begin{align}\label{pbdd-1}
\pi_b : L^4(\Rnn) \times L^4(\Rnn) \to L^2(\Rnn) \quad \text{boundedly}.
\end{align}
By duality, there holds
\begin{align}
\label{pbdd-2}
& \pi_b^{1*} : L^2(\Rnn) \times L^4(\Rnn) \to L^{\frac43}(\Rnn) \quad \text{boundedly},
\\
\label{pbdd-3}
& \pi_b^{2*} : L^4(\Rnn) \times L^2(\Rnn) \to L^{\frac43}(\Rnn) \quad \text{boundedly}.
\end{align}
Analogously to \eqref{pbdd-1}, in view of \eqref{eq:square} and Theorem \ref{thm:Carleson}, we obtain
\begin{align}
\label{pbdd-4}
& (\pi_b)^{1*}_1 : L^4(\Rnn) \times L^4(\Rnn) \to L^2(\Rnn) \quad \text{boundedly},
\\
\label{pbdd-5}
& (\pi_b)^{1*}_2 : L^4(\Rnn) \times L^4(\Rnn) \to L^2(\Rnn) \quad \text{boundedly},
\\
\label{pbdd-6}
& (\pi_b)^{2*}_1 : L^4(\Rnn) \times L^4(\Rnn) \to L^2(\Rnn) \quad \text{boundedly},
\\
\label{pbdd-7}
& (\pi_b)^{2*}_2 : L^4(\Rnn) \times L^4(\Rnn) \to L^2(\Rnn) \quad \text{boundedly}.
\end{align}
Then by duality and the fact that $(\pi_b)^{1*, 2*}_{1, 2} = \big((\pi_b)^{1*}_1\big)^{2*}$ and $(\pi_b)^{1*, 2*}_{2, 1} = \big((\pi_b)^{1*}_2\big)^{2*}$, one has
\begin{align}
\label{pbdd-8}
& (\pi_b)^{1*, 2*}_{1, 2} : L^4(\Rnn) \times L^2(\Rnn) \to L^{\frac43}(\Rnn) \quad \text{boundedly},
\\
\label{pbdd-9}
& (\pi_b)^{1*, 2*}_{2, 1} : L^4(\Rnn) \times L^2(\Rnn) \to L^{\frac43}(\Rnn) \quad \text{boundedly}.
\end{align}
Thus, it follows from \eqref{pbdd-1}--\eqref{pbdd-9} and \cite[Proposition 3.6]{LMV20} that $\mathbb{T}(1, 1) \in \BMO(\Rnn)$, where 
\begin{align*}
\mathbb{T} \in \{T, T^{1*}, T^{2*}, T_1^{1*}, T_2^{1*}, T_1^{2*}, T_2^{2*}, T_{1, 2}^{1*, 2*}, T_{2, 1}^{1*, 2*}: T = \pi_b\}.
\end{align*}

Finally, to demonstrate the compactness of $\pi_b$ from $L^4(\Rnn) \times L^4(\Rnn)$ to $L^2(\Rnn)$, it suffices to prove
\begin{align}\label{def:pbzero}
\lim_{j \to \infty} \|\pi_b(f_j, g_j)\|_{L^2(\Rnn)} = 0,
\end{align}
for any sequence $\{(f_j, g_j)\}_j \subset L^4 \times L^4$ such that either $f_j$ converges weakly to 0 or $g_j$ converges weakly to 0. As done above, there holds
\begin{align*}
\|\pi_b(f_j, g_j)\|
\lesssim \|P_{t_1, t_2} f_j\|_{L^4(\R^{n_1+1}_+ \times \R^{n_2+1}_+, \, \mu_b)} 
\|P_{t_1, t_2} g_j\|_{L^4(\R^{n_1+1}_+ \times \R^{n_2+1}_+, \, \mu_b)},
\end{align*}
which along with Theorem \ref{thm:Carleson} gives \eqref{def:pbzero} as desired. 
\qed

\medskip 
In order to complete the proof of Theorem \ref{thm:ppp}, we need some additional estimates. To this end, let us introduce some conceptions. Let $\D = \D^1 \times \D^2$, where $\D^i$ is a dyadic grid on $\R^{n_i}$, $i=1, 2$. We say that a locally integrable function $b: \Rnn \to \C$ belongs to the {\tt dyadic product $\BMO$ space} $\BMO(\D)$ if
\begin{align*}
\|b\|_{\BMO(\D)}
:= \sup_{\Omega} \bigg(\frac{1}{|\Omega|} 
\sum_{\substack{I_1 \times I_2 \subset \Omega \\ I_1 \times I_2 \in \D}}
|\langle b, h_{I_1} \otimes h_{I_2} \rangle|^2\bigg)^{\frac12}
< \infty,
\end{align*}
where the supremum is taken over all open sets $\Omega \subset \Rnn$ with $|\Omega| < \infty$. Then the {\tt product $\BMO$ space} $\BMO(\Rnn)$ is defined as the collection of all locally integrable functions $b$ satisfying 
\begin{align*}
\|b\|_{\BMO(\Rnn)} 
:= \sup_{\D = \D_1 \times \D_2}  \|b\|_{\BMO(\D)} < \infty.
\end{align*}

We say that $b$ belongs to the {\tt dyadic product $\CMO$ space $\CMO(\D)$} if $b \in \BMO(\D)$ and satisfies 
\begin{align*}
\lim_{N \to \infty} \|P_{\D(N)}^{\perp} b\|_{\BMO(\D)} = 0,  
\end{align*}
where $\D(N) := \D_1(N) \times \D_2(N)$,  $P_{\D(N)}^{\perp} b := b - P_{\D(N)} b$, and 
\begin{align*}
P_{\D(N)} b
:= \sum_{J_1 \times J_2 \in \D(N)} 
\langle b, h_{J_1} \otimes h_{J_2} \rangle \, h_{J_1} \otimes h_{J_2}.
\end{align*}
The {\tt product $\CMO$ space $\CMO(\Rnn)$} is defined as the set of all functions $b \in \BMO(\Rnn)$ satisfying 
\begin{align*}
\lim_{N \to \infty} \sup_{\D} \|P_{\D(N)}^{\perp} b\|_{\BMO(\D)} = 0. 
\end{align*}

Given a measure $\mu$ on $\R^{n_1+1}_+ \times \R^{n_2+1}_+$, we say that $\mu$ is a {\tt Carleson measure} if 
\begin{equation*}
\|\mu\|_{\mathrm{Car}(\R^{n_1+1}_+ \times \R^{n_2+1}_+)}
:= \sup_{\D} \sup_{\Omega} \frac{1}{|\Omega|} 
\sum_{\substack{I_1 \times I_2 \subset \Omega \\ I_1 \times I_2 \in \D}} 
\iint_{W_{I_2}} \iint_{W_{I_1}} \, d\mu(x, t) 
< \infty,
\end{equation*}
where the first supremum is taken over all dyadic grids $\D = \D_1 \times \D_2$ with $\D_i$ being a dyadic grid in $\R^{n_i}$, and the second supremum is taken over all open sets $\Omega \subset \Rnn$ such that $|\Omega| < \infty$ and such that for every $x \in \Omega$ there exists $I_1 \times I_2 \in \D$ so that $x \in I_1 \times I_2 \subset \Omega$, and $W_I = I \times [\ell(I)/2, \ell(I))$ is the Whitney region associated with a cube $I$. Similarly, we say that $\mu$ is a {\tt vanishing Carleson measure} if 
\begin{equation*}
\lim_{N \to \infty} \sup_{\D} \sup_{\Omega} \frac{1}{|\Omega|}
\sum_{\substack{I_1 \times I_2 \subset \Omega \\ I_1 \times I_2 \notin \D(N)}} 
\iint_{W_{I_2}} \iint_{W_{I_1}}  d\mu(x, t) 
= 0.
\end{equation*}

In what follows, given $b \in L^1_{\loc}(\Rnn)$, define a measure $\mu_b$ on $\R^{n_1+1}_+ \times \R^{n_2+1}_+$ as
\begin{align*}
d\mu_b(x, t) := |Q_{t_1, t_2} b(x)|^2 \, \frac{dx_1 \, dt_1}{t_1} \, \frac{dx_2 \, dt_2}{t_2}.
\end{align*}
For each $i=1, 2$, let $\theta^{(i)}$ be a function on $\R^{n_i}$ satisfying 
\begin{align*}
|\theta^{(i)}(x_i)| 
\lesssim (1 + |x_i|)^{-n_i - \alpha_i}, \quad \text{for some } \alpha_i > 0.
\end{align*}
Denote 
\begin{align*}
\theta_{t_i}^{(i)}(x_i) := t_i^{-n_i} \theta_{t_i}^{(i)}(t_i^{-1} x_i) 
\quad\text{ and } \quad 
\Theta_{t_1, t_2} f := (\theta_{t_1}^{(1)} \otimes \theta_{t_2}^{(2)})*f.
\end{align*}

\begin{theorem}\label{thm:Carleson}
The following statements hold:
\begin{list}{\rm (\theenumi)}{\usecounter{enumi}\leftmargin=1.3cm \labelwidth=1cm \itemsep=0.1cm \topsep=0.2cm \renewcommand{\theenumi}{\alph{enumi}}}
\item\label{Car-1} For any $b \in \BMO(\Rnn)$, $\mu_b$ is a Carleson measure on $\R^{n_1+1}_+ \times \R^{n_2+1}_+$.
 
\item\label{Car-2} For any $b \in \CMO(\Rnn)$, $\mu_b$ is a vanishing Carleson measure on $\R^{n_1+1}_+ \times \R^{n_2+1}_+$.

\item\label{Car-3} For any Carleson measure $\mu$ on $\R^{n_1+1}_+ \times \R^{n_2+1}_+$, $\Theta_{t_1, t_2}$ is bounded from $L^p(\Rnn)$ to $L^p(\R^{n_1+1}_+ \times \R^{n_2+1}_+, \mu)$ for all $p \in (1, \infty)$.

\item\label{Car-4} For any vanishing Carleson measure $\mu$ on $\R^{n_1+1}_+ \times \R^{n_2+1}_+$, $\Theta_{t_1, t_2}$ is compact from $L^p(\Rnn)$ to $L^p(\R^{n_1+1}_+ \times \R^{n_2+1}_+, \mu)$ for all $p \in (1, \infty)$.
\end{list}
\end{theorem}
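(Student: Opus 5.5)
The plan is to prove the four parts in the order \eqref{Car-3}, \eqref{Car-4}, \eqref{Car-1}, \eqref{Car-2}: the embedding parts are soft once the right pointwise bound is in place, while the $\BMO$ parts carry the real difficulty.

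\textbf{Part \eqref{Car-3}.} For a dyadic rectangle $R = I_1 \times I_2$ and $(x,t) \in W_{I_1} \times W_{I_2}$, split $\R^{n_1} \times \R^{n_2}$ into the annular pieces $(2^{k_1+1} I_1 \setminus 2^{k_1} I_1) \times (2^{k_2+1} I_2 \setminus 2^{k_2} I_2)$, $k_1, k_2 \ge 0$. On each piece, use the decay $|\theta^{(i)}(x_i)| \lesssim (1+|x_i|)^{-n_i-\alpha_i}$ to get
\[
\sup_{(x,t) \in W_{I_1} \times W_{I_2}} |\Theta_{t_1,t_2} f(x)|
\lesssim \sum_{k_1, k_2 \ge 0} 2^{-k_1 \alpha_1 - k_2 \alpha_2} \langle |f| \rangle_{2^{k_1} I_1 \times 2^{k_2} I_2}
\lesssim \inf_{R} M_{\mathcal{R}} f,
\]
since $R \subset 2^{k_1} I_1 \times 2^{k_2} I_2$. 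Set $a_R := \mu(W_{I_1} \times W_{I_2})$. Then
\[
\int |\Theta f|^p \, d\mu
\lesssim \sum_{R \in \D} a_R \big(\inf_R M_{\mathcal{R}} f\big)^p
= \int_0^\infty p \lambda^{p-1} \sum_{R: \, \inf_R M_{\mathcal{R}} f > \lambda} a_R \, d\lambda .
\]
Every such $R$ lies in $\Omega_\lambda := \{M_{\mathcal{R}} f > \lambda\}$, which is open and has finite measure. Each point of an open set lies in arbitrarily small dyadic rectangles contained in it, so $\Omega_\lambda$ is admissible in the definition of Carleson measure. Hence the inner sum is at most $\|\mu\|_{\mathrm{Car}} |\Omega_\lambda|$, and the $L^p$ bound for $M_{\mathcal{R}}$ finishes the argument. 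This gives the bound $\|\mu\|_{\mathrm{Car}}^{1/p}$ for the operator norm.

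\textbf{Part \eqref{Car-4}.} Fix a grid and split $\mu = \mu \mathbf{1}_{E_N} + \mu \mathbf{1}_{E_N^c}$, where $E_N$ is the union of the Whitney boxes $W_{I_1} \times W_{I_2}$ over $I_1 \times I_2 \in \D(N)$. The measure $\mu \mathbf{1}_{E_N^c}$ has Carleson constant tending to $0$ by the vanishing hypothesis. By \eqref{Car-3}, the corresponding piece of $\Theta$ therefore has operator norm tending to $0$ as $N \to \infty$. The set $E_N$ is relatively compact with $t$ bounded away from $0$ and $\infty$, so $\mu(E_N) < \infty$. On $\overline{E_N}$, the family $\{\Theta_{t} f(x) : \|f\|_{L^p} \le 1\}$ is equibounded, and it is equicontinuous in $(x,t)$ by $L^{p'}$-continuity of translations and dilations of $\theta^{(1)} \otimes \theta^{(2)} \in L^{p'}$. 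Arzel\`a--Ascoli then makes $f \mapsto \Theta f|_{E_N}$ compact into $\mathscr{C}(\overline{E_N}) \hookrightarrow L^p(E_N, \mu)$. Thus $\Theta$ is a norm limit of compact operators, hence compact.

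\textbf{Part \eqref{Car-1}.} Fix a grid $\D$ and expand $b = \sum_J \langle b, h_J \rangle h_J$ in bi-parameter Haar functions. Since $\psi^{(i)}$ is smooth with mean zero, $|Q_{t_1,t_2} h_J(x)|$ on a Whitney box $W_R$ obeys a product almost-orthogonality bound, with geometric decay in the scale ratio $\ell(I_i)/\ell(J_i)$ and in the separation $\operatorname{dist}(I_i, J_i)/\ell(J_i)$. Combining this with Cauchy--Schwarz, for each open $\Omega$ the Carleson sum $\sum_{R \subset \Omega} \mu_b(W_R)$ is bounded by $\sum_J |\langle b, h_J \rangle|^2$ times weights that decay once $J$ leaves the enlargements
\[
\Omega_k := \{ M_{\mathcal{R}} \mathbf{1}_\Omega > 2^{-k} \}.
\]
Journ\'e-type bookkeeping together with $|\Omega_k| \lesssim k 2^k |\Omega|$ (the $L \log L$ weak type of $M_{\mathcal{R}}$) lets the decay absorb this growth. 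Alternatively, one can quote the Chang--Fefferman characterization of product $\BMO$ by continuous square-function Carleson measures. This is the main obstacle: the interaction between non-dyadic $Q_t$ and $\Omega$ only works in product form via Journ\'e's lemma, and I would try the enlargement argument first.

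\textbf{Part \eqref{Car-2}.} Write $b = P_{\D(N)} b + P^{\perp}_{\D(N)} b$. By \eqref{Car-1} applied to $P^{\perp}_{\D(N)} b$, its measure has Carleson norm $\lesssim \|P^{\perp}_{\D(N)} b\|_{\BMO(\D)}^2 \to 0$ uniformly in $\D$. The term $P_{\D(N)} b$ is a finite Haar sum, so it is smooth-ish and compactly supported. For $M \gg N$, the almost-orthogonality decay shows that its Carleson mass on rectangles outside $\D(M)$ is small. Using $|Q(b_1 + b_2)|^2 \le 2|Qb_1|^2 + 2|Qb_2|^2$, we obtain the vanishing Carleson property.
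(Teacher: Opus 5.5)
Your parts (c) and (d) are fine. Part (c) is the paper's argument: domination of $\Theta_{t_1,t_2}f$ on Whitney boxes by $M_{\mathcal{R}}f$, the layer-cake formula, and Carleson packing on $\{M_{\mathcal{R}}f>\lambda\}$. In (d), Arzel\`a--Ascoli on the finitely many Whitney boxes of $\D(N)$, plus the small Carleson norm of the rest, does the same job as the paper's weakly-null-sequence argument.

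The gaps are in (a) and (b), and they have a common source. You never use that $\psi^{(i)}$ is compactly supported, and you work with the Haar coefficients of $b$ in a single grid.

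\textbf{Part (a).} The decay you claim for $Q_{t_1,t_2}h_J$ on $W_R$ fails when $J$ is much larger than $R$ in some parameter and the jump set of $h_J$ meets $R$. There $|Q_{t_1,t_2}h_J|\gtrsim|J|^{-1/2}$ on a fixed proportion of $W_R$ at every scale. No Journ\'e-type bookkeeping can repair this, because an argument using only one grid would show that $\BMO(\D)$ for a single $\D$ suffices. That is false already in one parameter: $b(x)=\mathbf{1}_{(0,\infty)}(x)\log x$ is in dyadic $\BMO$ of the standard grid, yet $|Q_tb|\gtrsim k$ on a fixed proportion of the Whitney box of $[-2^{-k},0)$. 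So the Carleson ratio of $(-r,r)$ grows like $\log^2(1/r)$. Quoting Chang--Fefferman instead would require identifying $\sup_{\D}\BMO(\D)$ with their product $\BMO$, which is not a formality.

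The missing idea is the paper's localization. Use the $3^{n_i}$ shifted grids $\D_i^{\alpha_i}$. For $(x_i,t_i)\in W_{J_i}$, the support $B(x_i,t_i)\subset 3J_i$ lies in the single cube $(J_i^{\alpha_i})^{(2)}\in\D_i^{\alpha_i}$ of comparable size. Hence $Q^{(i)}_{t_i}h_{I_i}(x_i)=0$ unless $I_i\subset(J_i^{\alpha_i})^{(2)}$: a larger $h_{I_i}$ is constant there and is killed by $\int\psi^{(i)}=0$. Thus $Q_{t_1,t_2}b=Q_{t_1,t_2}(P^{\Omega^{\alpha}}_{\D^{\alpha}}b)$ on the Whitney boxes of rectangles in $\Omega$, with $|\Omega^{\alpha}|\lesssim|\Omega|$. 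Part (a) then follows from the $L^2$ bound for the square function $S$, with no Journ\'e lemma and no tail terms.

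\textbf{Part (b).} The splitting $b=P_{\D(N)}b+P^{\perp}_{\D(N)}b$ breaks down for both pieces.

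For $P^{\perp}_{\D(N)}b$: part (a) bounds the Carleson norm of $\mu_g$ by $\sup_{\D'}\|g\|^2_{\BMO(\D')}$. The $\CMO$ hypothesis only makes $\|P^{\perp}_{\D(N)}b\|_{\BMO(\D)}$ small in the same grid $\D$, and by the example above no one-grid version of (a) is available.

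For $P_{\D(N)}b$: it is not ``smooth-ish''. The function $Q_{t_1,t_2}(h_{J_1}\otimes h_{J_2})=Q^{(1)}_{t_1}h_{J_1}\otimes Q^{(2)}_{t_2}h_{J_2}$ has size $\gtrsim|J_1\times J_2|^{-1/2}$ on a fixed proportion of the set where each $x_i$ is within $t_i$ of the jump set of $h_{J_i}$, for all small $t_i$. Hence for every $M$ there are tiny rectangles $\Omega$, centred at a point whose coordinates lie on these jump sets, on which the mass from rectangles outside $\D(M)$ is at least $c|\Omega|$, with $c>0$ independent of $M$. So $\mu_{P_{\D(N)}b}$ is in general not a vanishing Carleson measure. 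These jumps are artificial and cancel in $b$ itself. The bound $|Q(b_1+b_2)|^2\le 2|Qb_1|^2+2|Qb_2|^2$ discards exactly that cancellation.

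The paper instead keeps the localization. On each Whitney box it expands $b$ in the adapted grid $\D^{\alpha}$ and splits the coefficients into two groups:
\begin{itemize}
\item $I\notin\D^{\alpha}(N)$, controlled by $\|P^{\perp}_{\D^{\alpha}(N)}b\|_{\BMO(\D^{\alpha})}$. This is the same grid, so the $\CMO$ hypothesis applies.
\item $I\in\D^{\alpha}(N)$. These interact with $W_{J_1}\times W_{J_2}$, $J\notin\D(\lambda N)$, only when $\ell(J_1)>2^{\lambda N}$ or $\ell(J_2)>2^{\lambda N}$, because small or distant $J$ only see $I$ with $I_i\subset(J_i^{\alpha_i})^{(2)}$. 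In that regime the smoothness of $\psi^{(i)}$ and the cancellation of $h_{I_i}$ give the factor $2^{-N}$.
\end{itemize}
Large Haar functions therefore never meet small Whitney boxes, which is exactly what your decomposition cannot guarantee.
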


\begin{proof}
Before starting the proof, we present some useful estimates. A combination of \cite[Lemma 2.7]{Hyt14} and \cite[Lemma 2.5]{HLP} yields the following:
\begin{equation}\label{eq:DD}
\begin{aligned}
& \text{there are dyadic grids $\{\D_i^{\alpha_i}\}_{\alpha_i = 1}^{3^{n_i}}$, with the following properties:} 
\\
& \text{for any cube $J_i \subset \R^{n_i}$ and $k \in \mathbb{N}$ one can find a dyadic cube}
\\
& \text{$J_i^{\alpha_i} \in \D_i^{\alpha_i}$ so that $J_i \subset J_i^{\alpha_i}$, $2^k J_i \subset (J_i^{\alpha_i})^{(k)}$, 
and $3 \ell(J_i) < \ell(J_i^{\alpha_i}) \le 6 \ell(J_i)$}.
\end{aligned}
\end{equation}
This gives
\begin{align}\label{eq:WIWI}
W_{I_i} = I_i \times [\ell(I_i)/2, \ell(I_i))
\subset I_i^{\alpha_i} \times [\ell(I_i^{\alpha_i})/12, \ell(I_i^{\alpha_i})/3) =: \widetilde{W}_{I_i^{\alpha_i}}.
\end{align}

Given $J_i \in \D_i$ and $I_i \in \D_i^{\alpha_i}$, observe that 
\begin{align}\label{eq:WQ}
(x_i, t_i) \in W_{J_i} \, \text{ and  }\,
Q_{t_i}^{(i)} h_{I_i}(x_i) \ne 0 
\quad \Longrightarrow \quad I_i \subset (J_i^{\alpha_i})^{(2)}.
\end{align}
Indeed, it is easy to see that $\supp h_{I_i} \subset I_i \in \D_i^{\alpha_i}$ and $\supp \psi_{t_i}^{(i)}(x_i - \cdot) \subset B(x_i, t_i) \subset 3J_i \subset (J_i^{\alpha_i})^{(2)} \in \D_i^{\alpha_i}$ by \eqref{eq:DD} with $k=2$. Thus, if $I_i \cap (J_i^{\alpha_i})^{(2)} = \emptyset$, there holds $Q_{t_i}^{(i)} h_{I_i}(x_i) = 0$. If $(J_i^{\alpha_i})^{(2)} \subsetneq I_i$, then $h_{I_i}$ is constant on $B(x_i, t_i)$, which along with the cancellation property of $\psi^{(i)}$ gives $Q_{t_i}^{(i)} h_{I_i}(x_i) = 0$.

Setting
\begin{align*}
\D^{\alpha} := \D_1^{\alpha_1} \times \D_2^{\alpha_2} 
\quad \text{and} \quad
\Omega^{\alpha}
:= \bigcup_{\substack{I_1 \times I_2 \subset \Omega \\ I_1^{\alpha_1} \times I_2^{\alpha_2} \in \D^{\alpha}}} 
(I_1^{\alpha_1})^{(2)} \times (I_2^{\alpha_2})^{(2)},
\end{align*}
we have $\Omega^{\alpha} \subset \{M_{\mathcal{R}} \mathbf{1}_{\Omega} \ge 4^{-n_1-n_2}\}$, which together with the boundedness of $M_{\mathcal{R}}$ yields 
\begin{align}\label{eq:OmOm}
|\Omega^{\alpha}|
\lesssim \|M_{\mathcal{R}} \mathbf{1}_{\Omega}\|_{L^2(\Rnn)}^2
\lesssim |\Omega|.
\end{align}

To show item \eqref{Car-1}, assume that $b \in \BMO(\Rnn)$. Define
\begin{align*}
P_{\D}^{\Omega} b
:= \sum_{\substack{I_1 \times I_2 \subset \Omega \\ I_1 \times I_2 \in \D}}
\langle b, h_{I_1} \otimes h_{I_2} \rangle h_{I_1} \otimes h_{I_2}.
\end{align*}
It follows from \eqref{eq:WQ} that
\begin{align*}
Q_{t_1, t_2} b(x)
& = \sum_{I_1 \times I_2 \in \D^{\alpha}}
\langle b, h_{I_1} \otimes h_{I_2}  \rangle \, Q_{t_1}^{(1)} h_{I_1} \otimes Q_{t_2}^{(2)} h_{I_2}(x)
\\
& = \sum_{\substack{I_1 \times I_2 \subset \Omega^{\alpha} \\ I_1 \times I_2 \in \D^{\alpha}}}
\langle b, h_{I_1} \otimes h_{I_2}  \rangle \, Q_{t_1}^{(1)} h_{I_1} \otimes Q_{t_2}^{(2)} h_{I_2}(x)
= Q_{t_1, t_2} (P^{\Omega^{\alpha}}_{\D^{\alpha}} b)(x),
\end{align*}
which along with \eqref{eq:square}, \eqref{eq:WIWI}, and \eqref{eq:OmOm} leads to
\begin{align*}
& \sum_{\substack{J_1 \times J_2 \subset \Omega \\ J_1 \times J_2 \in \D}} 
\iint_{W_{J_2}} \iint_{W_{J_1}}  d\mu_b(x, t) 
\le \sum_{\alpha_1 = 1}^{3n_1} \sum_{\alpha_2 = 1}^{3n_2} 
\sum_{\substack{J_1 \times J_2 \subset \Omega \\ J_1 \times J_2 \in \D}} 
\iint_{W_{J_2} \cap \widetilde{W}_{J_2^{\alpha_2}}} \iint_{W_{J_1} \cap \widetilde{W}_{J_1^{\alpha_1}}}  d\mu_b(x, t) 
\\
& \le \sum_{\alpha_1 = 1}^{3n_1} \sum_{\alpha_2 = 1}^{3n_2} 
\sum_{J_1 \times J_2 \in \D} 
\iint_{W_{J_2}} \iint_{W_{J_1}} |Q_{t_1, t_2} (P_{\D^{\alpha}}^{\Omega^{\alpha}} b)(x)|^2 \, 
\frac{dx_1 \, dt_1}{t_1} \, \frac{dx_2 \, dt_2}{t_2} 
\\
&\le \sum_{\alpha_1 = 1}^{3n_1} \sum_{\alpha_2 = 1}^{3n_2}  \|S(P_{\D^{\alpha}}^{\Omega^{\alpha}} b)\|_{L^2(\Rnn)}^2 
\lesssim \sum_{\alpha_1 = 1}^{3n_1} \sum_{\alpha_2 = 1}^{3n_2}  \|P_{\D^{\alpha}}^{\Omega^{\alpha}} b\|_{L^2(\Rnn)}^2 
\\
& = \sum_{\alpha_1 = 1}^{3n_1} \sum_{\alpha_2 = 1}^{3n_2}  
\sum_{\substack{I_1 \times I_2 \subset \Omega^{\alpha} \\ I_1 \times I_2 \in \D^{\alpha}}} 
|\langle b, h_{I_1} \otimes h_{I_2}  \rangle|^2
\lesssim \sum_{\alpha_1 = 1}^{3n_1} \sum_{\alpha_2 = 1}^{3n_2}  |\Omega^{\alpha}| \|b\|_{\BMO(\Rnn)}^2
\\
& \lesssim |\Omega| \|b\|_{\BMO(\Rnn)}^2.
\end{align*}
This shows
\begin{align}\label{eq:BMO-Car}
\|\mu_b\|_{\mathrm{Car}(\R^{n_1+1}_+ \times \R^{n_2+1}_+)} \lesssim \|b\|_{\BMO(\Rnn)}^2.
\end{align}

To prove item \eqref{Car-2}, let $N \ge 4$, $b \in \CMO(\Rnn)$, and $\Omega \subset \Rnn$ be an open set. Let $\lambda \ge 4$ be an absolute constant chosen later. As above, we have
\begin{align*}
\mathscr{F}_N
& := \sum_{\substack{J_1 \times J_2 \notin \D(\lambda N) \\ J_1 \times J_2 \subset \Omega}} 
\iint_{W_{J_2}} \iint_{W_{J_1}} |Q_{t_1, t_2} b(x)|^2 \, \frac{dx_1 \, dt_1}{t_1} \, \frac{dx_2 \, dt_2}{t_2}
\\
& \le \sum_{\alpha_1 = 1}^{3n_1} \sum_{\alpha_2 = 1}^{3n_2} 
\sum_{\substack{J_1 \times J_2 \notin \D(\lambda N) \\ J_1 \times J_2 \subset \Omega}} 
\iint_{W_{J_2}} \iint_{W_{J_1}} \bigg| \sum_{I_1 \times I_2 \in \D^{\alpha}} b_{I_1, I_2}
Q_{t_1, t_2} h_{I_1} \otimes h_{I_2}(x)|^2 \, \frac{dx_1 \, dt_1}{t_1} \, \frac{dx_2 \, dt_2}{t_2},
\end{align*}
where $b_{I_1, I_2} := \langle b, h_{I_1} \otimes h_{I_2} \rangle \mathbf{1}_{\{I_1 \times I_2 \subset \Omega^{\alpha}\}}$. We split 
\begin{align}\label{FNNN}
\mathscr{F}_N
\le \mathscr{F}_N^1 + \mathscr{F}_N^2 + \mathscr{F}_N^3,
\end{align}
where
\begin{align*}
\mathscr{F}_N^1
& := \sum_{\alpha_1 = 1}^{3n_1} \sum_{\alpha_2 = 1}^{3n_2} \sum_{J_1 \times J_2 \in \D} 
\iint_{W_{J_2}} \iint_{W_{J_1}} \bigg| \sum_{I_1 \times I_2 \notin \D^{\alpha}(N)} b_{I_1, I_2}
Q_{t_1, t_2} h_{I_1} \otimes h_{I_2}(x) \bigg|^2 \, \frac{dx_1 \, dt_1}{t_1} \, \frac{dx_2 \, dt_2}{t_2},
\\
\mathscr{F}_N^2
& := \sum_{\alpha_1 = 1}^{3n_1} \sum_{\alpha_2 = 1}^{3n_2} 
\sum_{\substack{J_1 \notin \D_1(\lambda N) \\ J_1 \times J_2 \subset \Omega}} 
\iint_{W_{J_2}} \iint_{W_{J_1}} \bigg| \sum_{I_1 \times I_2 \in \D^{\alpha}(N)} b_{I_1, I_2}
Q_{t_1, t_2} h_{I_1} \otimes h_{I_2}(x) \bigg|^2 \, \frac{dx_1 \, dt_1}{t_1} \, \frac{dx_2 \, dt_2}{t_2},
\\
\mathscr{F}_N^3
& := \sum_{\alpha_1 = 1}^{3n_1} \sum_{\alpha_2 = 1}^{3n_2} 
\sum_{\substack{J_2 \notin \D_2(\lambda N) \\ J_1 \times J_2 \subset \Omega}} 
\iint_{W_{J_2}} \iint_{W_{J_1}} \bigg| \sum_{I_1 \times I_2 \in \D^{\alpha}(N)} b_{I_1, I_2}
Q_{t_1, t_2} h_{I_1} \otimes h_{I_2}(x) \bigg|^2 \, \frac{dx_1 \, dt_1}{t_1} \, \frac{dx_2 \, dt_2}{t_2}.
\end{align*}
The inequality \eqref{eq:square} implies
\begin{align}\label{FNN-1}
\mathscr{F}_N^1
& \le \sum_{\alpha_1 = 1}^{3n_1} \sum_{\alpha_2 = 1}^{3n_2} \|S(P_{{\D^{\alpha}(N)}^c}^{\Omega^{\alpha}} b)\|_{L^2(\Rnn)}^2
\\ \nonumber
& \lesssim \sum_{\alpha_1 = 1}^{3n_1} \sum_{\alpha_2 = 1}^{3n_2} \|P_{{\D^{\alpha}(N)}^c}^{\Omega^{\alpha}} b\|_{L^2(\Rnn)}^2
\\ \nonumber
& = \sum_{\alpha_1 = 1}^{3n_1} \sum_{\alpha_2 = 1}^{3n_2} 
 \sum_{\substack{I_1 \times I_2 \subset \Omega^{\alpha} \\ I_1 \times I_2 \notin \D^{\alpha}(N)}}
|\langle b, h_{I_1} \otimes h_{I_2}  \rangle|^2.
\end{align}
To estimate $\mathscr{F}_N^2$, observe that
\begin{align}\label{eq:JD8N}
J_1 \not\in \D_1(\lambda N) 
\quad \Longrightarrow \quad 
\ell(J_1) > 2^{\lambda N}.
\end{align}
Indeed, if $\ell(J_1) < 2^{-\lambda N}$, then by \eqref{eq:DD} and \eqref{eq:WQ}, 
\begin{align*}
\ell(I_1) \le 4 \ell(J_1^{\alpha_1}) \le 24 \ell(J_1) \le 24 \cdot 2^{- \lambda N} < 2^{-N},
\end{align*} 
which contradicts the fact $I_1 \in \D_1^{\alpha_1}(N)$. If $2^{-\lambda N} \le \ell(J_1) \le 2^{\lambda N}$ and $\rd(J_1, 2^{\lambda N} \mathbb{I}_1) > \lambda N$, then $\d(J_1, 2^{\lambda N} \mathbb{I}_1) > \lambda N \cdot 2^{\lambda N}$ and 
\begin{align*}
\rd(I_1, 2^N \mathbb{I}_1) 
& = \frac{\d(I_1, 2^N \mathbb{I}_1)}{\max\{\ell(I_1), 2^N\}}
\ge \frac{\d((J_1^{\alpha_1})^{(2)}, 2^N \mathbb{I}_1)}{\max\{\ell((J_1^{\alpha_1})^{(2)}), 2^N\}} 
\\
& \ge \frac{\d(J_1, 2^{\lambda N} \mathbb{I}_1) - 4 \ell(J_1^{\alpha_1})}{\max\{4 \ell(J_1^{\alpha_1}), 2^N\}} 
\ge \frac{\lambda N 2^{\lambda N} - 24 \cdot 2^{\lambda N}}{24 \cdot 2^{\lambda N}}
> N,
\end{align*}
provided $\lambda \ge 48$. This also contradicts the fact $I_1 \in \D_1^{\alpha_1}(N)$. Thus, there must hold $\ell(J_1) > 2^{\lambda N}$.

By the cancellation of $h_{I_i}$ and the decay property of $\psi_{t_i}^{(i)}$, we obtain
\begin{align}\label{eq:Qtth}
|Q_{t_1, t_2} h_{I_1} \otimes h_{I_2}(x)|
& = \bigg|\prod_{i=1}^2 \int_{\R^{n_i}} \big[\psi_{t_i}^{(i)}(x_i - y_i) - \psi_{t_i}^{(i)}(x_i - c_{I_i}) \big] h_{I_i}(y_i) \, dy_i \bigg|
\\ \nonumber
& \lesssim \prod_{i=1}^2 \int_{I_i} \frac{1}{t_i^{n_i+1}} \frac{|y_i - c_{I_i}|}{(1+|x_i - y_i|/t_i)^{n_i+1}} |I_i|^{-\frac12} \, dy_i
\\ \nonumber
& \lesssim \prod_{i=1}^2 \frac{\ell(I_i)}{(t_i + |x_i - y_i|)^{n_i+1}} |I_i|^{\frac12}
\\ \nonumber
& \lesssim \prod_{i=1}^2 \frac{\ell(I_i)^{\frac12} \ell(J_i)^{\frac12}}{\big[\ell(I_i) + \ell(J_i) + \d(I_i, J_i) \big]^{n_i+1}} |I_i|^{\frac12}.
\end{align}
Denote 
\begin{align*}
A_{I_i, J_i}
:= \frac{\ell(I_i)^{\frac12} \ell(J_i)^{\frac12}}{\big[\ell(I_i) + \ell(J_i) + \d(I_i, J_i) \big]^{n_i+1}} |I_i|^{\frac12} |J_i|^{\frac12}.
\end{align*}
Examining the proof of \cite[Proposition 6.3]{HM}, one can show
\begin{align*}
\sum_{\substack{I_i \in \D_i, J_i \in \D'_i \\ \ell(J_i) \ge 2^{\kappa} \ell(I_i)}} A_{I_i, J_i} \, x_{I_i} \, y_{J_i} 
\lesssim 2^{-\frac{\kappa}{2}} \bigg(\sum_{I_i \in \D_i} x_{I_i}^2 \bigg)^{\frac12} 
\bigg(\sum_{J_i \in \D'_i} y_{J_i}^2 \bigg)^{\frac12}, \quad \forall \kappa \ge 0,
\end{align*}
where the implicit constant is independent of $\kappa$. The above immediately gives
\begin{align}\label{eq:AIJ}
\sum_{J_i \in \D'_i} \bigg[\sum_{\substack{I_i \in \D_i \\ 2^{\kappa} \ell(I_i) \le \ell(J_i)}} A_{I_i, J_i} \, x_{I_i} \bigg]^2
\lesssim 2^{-\kappa} \sum_{I_i \in \D_i} x_{I_i}^2, \quad \forall \kappa \ge 0.
\end{align}
Now invoking \eqref{eq:JD8N}--\eqref{eq:AIJ}, we conclude
\begin{align*}
\mathscr{F}_N^2
& \lesssim \sum_{\alpha_1, \alpha_2} \sum_{J_2 \in \D_2} \sum_{J_1 \in \D_1}
\bigg[ \sum_{\substack{I_1 \in \D_1^{\alpha_1} \\ 2^N \ell(I_1) \le \ell(J_1)}} 
A_{I_1, J_1} \sum_{I_2 \in \D_2^{\alpha_2}} A_{I_2, J_2} b_{I_1, I_2} \bigg]^2 
\\
& \lesssim 2^{-N} \sum_{\alpha_1, \alpha_2}
\sum_{I_1 \in \D_1^{\alpha_1}} \sum_{J_2 \in \D_2} 
\bigg[ \sum_{I_2 \in \D_2^{\alpha_2}} A_{I_2, J_2} b_{I_1, I_2} \bigg]^2 
\\
& \lesssim 2^{-N} \sum_{\alpha_1, \alpha_2}
\sum_{I_1 \times I_2 \in \D^{\alpha}} |b_{I_1, I_2}|^2 
\\
& = 2^{-N} \sum_{\alpha_1, \alpha_2}
\sum_{\substack{I_1 \times I_2 \subset \Omega^{\alpha} \\ I_1 \times I_2 \in \D^{\alpha}}} 
|\langle b, h_{I_1} \otimes h_{I_2} \rangle|^2
\\
& \le 2^{-N} \sum_{\alpha_1, \alpha_2} |\Omega^{\alpha}| \|b\|_{\BMO(\Rnn)}^2,
\end{align*}
which along with \eqref{eq:OmOm} gives
\begin{align}\label{FNN-2}
\mathscr{F}_N^2
\lesssim 2^{-N} |\Omega| \|b\|_{\BMO(\Rnn)}^2.
\end{align}
Symmetrically, there holds
\begin{align}\label{FNN-3}
\mathscr{F}_N^3
& \lesssim 2^{-N} |\Omega| \|b\|_{\BMO(\Rnn)}^2.
\end{align}
Consequently, it follows from \eqref{eq:OmOm}--\eqref{FNN-1}, \eqref{FNN-2} and \eqref{FNN-3} that 
\begin{align*}
\text{$\mu_b$ is a vanishing Carleson measure on $\R^{n_1+1}_+ \times \R^{n_2+1}_+$}.
\end{align*}

Next, let us demonstrate item \eqref{Car-3}. It is easy to check that 
\begin{align}\label{def:Nf}
\mathcal{N} f(x) := \sup_{\substack{|y_1 - x_1| < 2t_1 \\ |y_2 - x_2| < 2t_2}} |\Theta_{t_1, t_2} f(y)| 
\lesssim M_{\mathcal{R}} f(x), \quad x \in \Rnn. 
\end{align}
Denote 
\begin{align*}
E_s &:= \{((x_1, t_1), (x_2, t_2)) \in \R^{n_1+1}_+ \times \R^{n_2+1}_+: |\Theta_{t_1, t_2} f(x)| > s\}, 
\\
\Omega_s &:= \{x \in \Rnn: \mathcal{N} f(x) > s\}.
\end{align*}
We claim 
\begin{align}\label{EaOa}
E_s \subset \bigcup_{I_1 \times I_2 \subset \Omega_s} W_{I_1} \times W_{I_2}.
\end{align}
Indeed, for any $((x_1, t_1), (x_2, t_2)) \in E_s$, there holds $\sup_{\substack{|z_1 - x_1| < t_1 \\ |z_2 - x_2| < t_2}} |\Theta_{t_1, t_2} f(z)| \ge |\Theta_{t_1, t_2} f(x)| > s$. For each $i= 1, 2$, let $I_i = I_i(x_i, t_i)$ be the cube centered at $x_i$ with sidelength $\ell(I_i) = 2t_i$. Then $((x_1, t_1), (x_2, t_2)) \in W_{I_1} \times W_{I_2}$. Note that for any $y \in I_1 \times I_2$, we have $|y_1 - x_1| < t_1$, $|y_2 - x_2| < t_2$, and
\begin{align*}
\mathcal{N} f(y) 
= \sup_{\substack{|z_1 - y_1| < 2s_1 \\ |z_2 - y_2| < 2s_2}} |\Theta_{s_1, s_2} f(z)|
\ge \sup_{\substack{|z_1 - y_1| < 2t_1 \\ |z_2 - y_2| < 2t_2}} |\Theta_{t_1, t_2} f(z)|
\ge \sup_{\substack{|z_1 - x_1| < t_1 \\ |z_2 - x_2| < t_2}} |\Theta_{t_1, t_2} f(z)|
> s,
\end{align*}
which means $I_1 \times I_2 \subset \Omega_s$. Thus, \eqref{EaOa} holds.

It follows from \eqref{def:Nf}, and \eqref{EaOa} that
\begin{align}\label{eq:PCar}
\|\Theta_{t_1, t_2} f\|_{L^p(\mu)}^p 
& = p \int_0^{\infty} s^{p-1} \mu(E_s) \, ds
\\ \nonumber
& \le p \int_0^{\infty} s^{p-1} \mu \Big(\bigcup_{I_1 \times I_2 \subset \Omega_s} W_{I_1} \times W_{I_2} \Big) \, ds
\\ \nonumber
& \lesssim p \|\mu\|_{\mathrm{Car}(\R^{n_1+1}_+ \times \R^{n_2+1}_+)} \int_0^{\infty} s^{p-1} |\Omega_s| \, ds
\\ \nonumber
& = \|\mu\|_{\mathrm{Car}(\R^{n_1+1}_+ \times \R^{n_2+1}_+)} \|\mathcal{N} f\|_{L^p}^p 
\\ \nonumber
& \lesssim \|\mu\|_{\mathrm{Car}(\R^{n_1+1}_+ \times \R^{n_2+1}_+)} \|M_{\mathcal{R}} f\|_{L^p}^p 
\\ \nonumber
& \lesssim \|\mu\|_{\mathrm{Car}(\R^{n_1+1}_+ \times \R^{n_2+1}_+)} \|f\|_{L^p}^p.
\end{align}
This verifies item \eqref{Car-3}.

Finally, let us turn to proving item \eqref{Car-4}. Given topological vector spaces $X$ and $Y$, let $\mathscr{B}(X, Y)$ denote the collection of all bounded linear operators from $X$ to $Y$. It was proved in \cite[p. 113]{Rudin} that if $X$ is reflexive, then $T \in \mathscr{B}(X, Y)$ is compact if and only if $\lim\limits_{k \to \infty} \|Tx_k\|_Y = 0$ whenever $x_k$ weakly converges to zero in $X$.

Let $\{f_j\}$ be a sequence weakly converges to zero in $L^p(\Rnn)$. To obtain the compactness of $\Theta_{t_1, t_2}$, it suffices to show 
\begin{align}\label{Pttcpt}
\lim_{j \to \infty} \|\Theta_{t_1, t_2} f_j\|_{L^p(\R^{n_1+1}_+ \times \R^{n_2+1}_+, \, \mu)} = 0.
\end{align}
By the uniform boundedness principle, the weak convergence of $\{f_j\}$ in $L^p(\Rnn)$ implies 
\begin{align}\label{fkLp}
\sup_j \|f_j\|_{L^p(\Rnn)} \le C_0 < \infty,
\end{align}
which yileds
\begin{align}\label{kptt}
\sup_j \|\Theta_{t_1, t_2} f_j\|_{L^{\infty}(\Rnn)} 
\le \big\|\theta_{t_1}^{(1)} \otimes \theta_{t_2}^{(2)} \big\|_{L^{p'}} \sup_j \|f_j\|_{L^p}
\lesssim t_1^{-\frac{n_1}{p}} t_2^{-\frac{n_2}{p}}.
\end{align}
Let $\varepsilon > 0$ and $\D$ be a dyadic grid. Since $\mu$ is a vanishing Carleson measure on $\R^{n_1+1}_+ \times \R^{n_2+1}_+$, there exists an increasing sequence $\{N_k\}_{k \ge 1}$ such that
\begin{align}\label{VCE}
\sup_{\Omega} \frac{1}{|\Omega|} \sum_{\substack{J_1 \times J_2 \subset \Omega \\ J_1 \times J_2 \notin \D(N_k)}} 
\iint_{W_{J_2}} \iint_{W_{J_1}}  \, d\mu(x, t)
\le 2^{-k}.
\end{align}
In addition, there exists some $k_0 \ge 1$ so that $\sum_{k>k_0} 2^{-k} \le \varepsilon$. Then we perform the decomposition
\begin{align}\label{PPP}
\|\Theta_{t_1, t_2} f_j\|_{L^p(\mu)}^p
\le \sum_{k \ge k_0} \mathscr{P}_j^k,
\end{align}
where
\begin{align*}
\mathscr{P}_j^{k_0}
& := \sum_{J_1 \times J_2 \in \D(N_{k_0})} 
\iint_{W_{J_2}} \iint_{W_{J_1}} |\Theta_{t_1, t_2} f_j(x)|^p \, d\mu(x, t),
\\
\mathscr{P}_j^k
& := \sum_{J_1 \times J_2 \in \D(N_{k+1}) \setminus \D(N_k)} 
\iint_{W_{J_2}} \iint_{W_{J_1}} |\Theta_{t_1, t_2} f_j(x)|^p \, d\mu(x, t),
\end{align*}
for each $k > k_0$. Set 
\begin{align*}
\Omega_N := \bigcup_{I_1 \times I_2 \in \D(N)} I_1 \times I_2 
\subset \big\{|x_1|, |x_2| \le (N+2) 2^N \big\}.
\end{align*}
It light of \eqref{kptt}, there holds
\begin{align*}
\mathscr{P}_j^{k_0}
\lesssim 2^{N_{k_0}(n_1+n_2)} |\Omega_{N_{k_0}}| \|\mu\|_{\mathrm{Car}(\R^{n_1+1}_+ \times \R^{n_2+1}_+)}
\lesssim 2^{3N_{k_0}(n_1+n_2)} \|\mu\|_{\mathrm{Car}(\R^{n_1+1}_+ \times \R^{n_2+1}_+)}.
\end{align*}
Thus, by the Lebesgue dominated convergence theorem, 
\begin{align}\label{Pjk0}
\lim_{j \to \infty} \mathscr{P}_j^{k_0} = 0.
\end{align}
For each $k > k_0$, if we define
\begin{align*}
\mu_k 
:= \mu \big|_{\bigcup_{J_1 \times J_2 \in \D(N_{k+1}) \setminus \D(N_k)} W_{J_1} \times W_{J_2}},
\end{align*}
then \eqref{VCE} gives
\begin{align*}
\|\mu_k\|_{\mathrm{Car}(\R^{n_1+1}_+ \times \R^{n_2+1}_+)}
\le 2^{-k},
\end{align*}
which together with \eqref{eq:PCar} and \eqref{fkLp} implies that
\begin{align}\label{Pjk}
\mathscr{P}_j^k
= \|\Theta_{t_1, t_2} f_j\|_{L^p(\R^{n_1+1}_+ \times \R^{n_2+1}_+, \mu_k)}^p
\lesssim \|\mu_k\|_{\mathrm{Car}(\R^{n_1+1}_+ \times \R^{n_2+1}_+)}
\le 2^{-k}.
\end{align}
Hence, \eqref{Pttcpt} is a consequence of \eqref{PPP}--\eqref{Pjk}.
\end{proof}

\subsection{Bi-parameter pseudo-differential operators}
Finally, in order to obtain Theorem \ref{thm:Tsig}, we establish the result below. 

\begin{theorem}\label{thm:sss}
Let $\sigma \in \mathcal{S}_{\rho, \delta}^m(\Rnn)$, where $m = (0, 0)$, $\rho = (1, 1)$, and $\delta = (\delta_1, \delta_2) \in [0, 1)^2$. Then the following hold:
\begin{list}{\rm (\theenumi)}{\usecounter{enumi}\leftmargin=1.3cm \labelwidth=1cm \itemsep=0.1cm \topsep=0.2cm \renewcommand{\theenumi}{\alph{enumi}}}
\item\label{sss-1} $T_{\sigma}$ admits a full kernel representation;

\item\label{sss-2} $T_{\sigma}$ admits a partial kernel representation;

\item\label{sss-3} $T_{\sigma}$ is bounded from $L^4(\Rnn) \times L^4(\Rnn)$ to $L^2(\Rnn)$;

\item\label{sss-4} The symbol of $\mathbb{T}$ belongs to $\mathcal{S}_{\rho, \delta}^m(\Rnn)$, where 
\begin{align*}
\mathbb{T} \in \{T_{\sigma}^{1*}, T_{\sigma}^{2*}, T_{\sigma, 1}^{1*}, T_{\sigma, 2}^{1*}, T_{\sigma, 1}^{2*}, T_{\sigma, 2}^{2*}, T_{\sigma, 1, 2}^{1*, 2*}, T_{\sigma, 2, 1}^{1*, 2*}\};
\end{align*}

\item\label{sss-5} If $\sigma \in \mathcal{K}_{\rho, \delta}^m(\Rnn)$, then $T_{\sigma}$ is compact from $L^4(\Rnn) \times L^4(\Rnn)$ to $L^2(\Rnn)$.
\end{list}
\end{theorem}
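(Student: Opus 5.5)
Each of the five items will be obtained by adapting the classical one-parameter bilinear pseudo-differential machinery (Coifman--Meyer, B\'enyi--Torres) to the tensor-product setting. The plan for each item follows.

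For \eqref{sss-1} and \eqref{sss-2}, I would use a Littlewood--Paley decomposition in each frequency variable separately. Fix smooth partitions of unity $1=\sum_{k_i\ge 0}\phi_{k_i}^{(i)}(\xi_i,\eta_i)$ on $\R^{2n_i}$, with $\phi_{k_i}^{(i)}$ supported where $|\xi_i|+|\eta_i|\simeq 2^{k_i}$ for $k_i\ge1$. Then split
\[
\sigma=\sum_{k_1,k_2}\sigma_{k_1,k_2}, \qquad \sigma_{k_1,k_2}:=\sigma\,\phi^{(1)}_{k_1}\otimes\phi^{(2)}_{k_2}.
\]
The kernel of $T_{\sigma_{k_1,k_2}}$ is
\[
K_{k_1,k_2}(x,y,z)=\int\!\!\int \sigma_{k_1,k_2}(x,\xi,\eta)\,e^{2\pi i[(x-y)\cdot\xi+(x-z)\cdot\eta]}\,d\xi\,d\eta .
\]
Integrating by parts separately in $(\xi_i,\eta_i)$, $i=1,2$, and using $\rho=(1,1)$, $m=(0,0)$, gives
\[
|\partial^{\alpha}_x\partial^\beta_y\partial^\gamma_z K_{k_1,k_2}|\lesssim\prod_{i=1}^2 2^{k_i(2n_i+\delta_i|\alpha_i|+|\beta_i|+|\gamma_i|)}\,\bigl(1+2^{k_i}A_i\bigr)^{-M},
\]
where $A_i=|x_i-y_i|+|x_i-z_i|$. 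Since $\delta_i<1$, the $x$-derivatives cost strictly less than one power of $2^{k_i}$. Summing over $k_i$ (splitting at $2^{k_i}\simeq A_i^{-1}$) yields the same product bound as \eqref{eq:pK}, which is the full kernel representation. For the partial kernel on the first parameter, I would freeze $(x_1,\xi_1,\eta_1)$ and view $\sigma$ as a bilinear symbol in the second variables of class $S^0_{1,\delta_2}$, with seminorms uniform in the frozen variables. The one-parameter bilinear bound $L^4\times L^4\to L^2$ on $\R^{n_2}$ then controls $\int\!\!\int\!\!\int K(x,y,z)f_2(y_2)g_2(z_2)h_2(x_2)$ by $\|f_2\|_{L^4}\|g_2\|_{L^4}\|h_2\|_{L^2}$ times the one-parameter kernel bound, exactly as in \eqref{Hut}.

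For \eqref{sss-3}, I would reduce to the product structure by the standard Coifman--Meyer expansion. Write each $\sigma_{k_1,k_2}$ as a rapidly convergent Fourier series in $(\xi,\eta)$ on the dyadic annuli, giving sums of elementary operators of the form
\[
a_{k_1,k_2}(x)\,\Delta^{(1)}_{k_1}\otimes\Delta^{(2)}_{k_2}f\cdot S^{(1)}_{k_1}\otimes S^{(2)}_{k_2}g
\]
plus symmetric variants. The coefficients satisfy $|\partial_{x_i}^{\alpha_i}a|\lesssim 2^{k_i\delta_i|\alpha_i|}$. The off-diagonal pieces are handled with bi-parameter square functions (as in \eqref{eq:square}) and $M_{\mathcal R}$. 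The main obstacle is the $x$-dependence with $\delta_i>0$, since the output of each piece is no longer frequency-localized. Here I would first try the almost-orthogonality argument of B\'enyi--Torres in each parameter. The factor $(1+|\xi_i|+|\eta_i|)^{\delta_i|\alpha_i|}$ means the output frequency lives in a ball of radius $\lesssim 2^{k_i}$ rather than an annulus. This is handled by Cotlar--Stein-type summation in each parameter separately, combined with a Carleson estimate as in Theorem \ref{thm:Carleson} for the ``paraproduct'' pieces.

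Item \eqref{sss-4} is symbolic calculus. Computing the adjoint in the $i$-th variable only gives the oscillatory-integral formula
\[
\sigma^{*}(x,\xi,\eta)=\int\!\!\int e^{-2\pi i y\cdot\zeta}\,\sigma(x-y,\,\cdot\,)\,dy\,d\zeta ,
\]
acting only in the coordinates of that parameter. The one-parameter proof that $S^0_{1,\delta}$ is closed under the transposes (valid for $\delta<1$) applies with the other parameter as a passive variable with uniform estimates. The mixed duals $T^{1*}_{1}$, etc.\ follow by composing these partial transposes.

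For \eqref{sss-5}, I would approximate $\sigma\in\mathcal K^m_{\rho,\delta}$ by truncations $\sigma_R=\sigma\,\chi(x/R)\chi(\xi/R)\chi(\eta/R)$. The remainder $\sigma-\sigma_R$ has $\mathcal S^m_{\rho,\delta}$ seminorms (finitely many suffice) tending to $0$ because $C_{\alpha,\beta,\gamma}\to0$ at infinity. By the quantitative form of \eqref{sss-3}, $\|T_{\sigma-\sigma_R}\|_{L^4\times L^4\to L^2}\to0$. Each $T_{\sigma_R}$ has a smooth kernel that is compactly supported in $x$ and Schwartz in $(x-y,x-z)$, so it is compact by Theorem \ref{thm:KRQB}, or directly by Hilbert--Schmidt-type estimates. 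Norm limits of compact bilinear operators are compact, which finishes the argument.
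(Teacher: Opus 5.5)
The gap is in item (c). Items (a), (b), (d) and (e) follow the paper's own proof:

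\begin{itemize}
\item (a) is the Littlewood--Paley and integration-by-parts argument of Lemma \ref{lem:KP}.
\item In (b), freezing $(x_1,\xi_1,\eta_1)$ and combining the one-parameter bound on $\R^{n_2}$ with integration by parts in $(\xi_1,\eta_1)$ is, by linearity in the symbol, the same as the paper's passage through $\widetilde{\sigma}_{x_1,y_1,z_1}$ in Lemma \ref{lem:sxyz}.
\item (d) is the partial-transpose formula with the other parameter passive, followed by composition.
\item (e) is truncation, Lemma \ref{lem:Pab}, the quantitative bound \eqref{L4L4}, and a Kolmogorov--Riesz criterion for the truncated operators.
\end{itemize}

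There is one slip in (a). For $K(x,y,z)$ the $x$-derivatives also fall on $e^{2\pi i x\cdot(\xi+\eta)}$, so each costs a full factor $2^{k_i}$, not $2^{k_i\delta_i}$; the weaker cost holds only for $\widetilde{K}_{\sigma}$. This is harmless, since the sum over $k_i$ still gives $A_i^{-2n_i-|\alpha_i|-|\beta_i|-|\gamma_i|}$. But it means $\delta_i<1$ plays no role in (a); it is needed in (c) and (d).

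In (c), a Coifman--Meyer expansion with square functions directly at $L^4\times L^4\to L^2$ is a reasonable alternative to the paper's route. The paper instead splits low/high frequencies per parameter (Lemmas \ref{lem:suppxieta}--\ref{lem:suppxieta2}), uses Plancherel for $L^2\times L^\infty\to L^2$ and $L^\infty\times L^2\to L^2$, and interpolates. However, at the decisive step you only name tools, and the diagnosis is off.

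\begin{itemize}
\item \textbf{Why the stated mechanism fails.} Suppose the output of $a_{k_1,k_2}(x)\,\Delta^{(1)}_{k_1}\otimes\Delta^{(2)}_{k_2}f\cdot S^{(1)}_{k_1}\otimes S^{(2)}_{k_2}g$ genuinely filled a ball of radius $2^{k_i}$. Then the pieces would not be almost orthogonal, and you would be left with the $\ell^1$ sum $\sum_k|\Delta_k f|\,|S_k g|$. That sum is unbounded in general, and neither Cotlar--Stein nor a Carleson estimate controls it.
\item \textbf{The missing idea.} Split the $x$-dependence of the coefficients: write $a_{k_1,k_2}=S^{(1)}_{k_1-c}\otimes S^{(2)}_{k_2-c}\,a_{k_1,k_2}+r_{k_1,k_2}$. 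This is the paper's $p_j/q_j$ decomposition, i.e.\ mollifying the symbol in $x$ at scale $2^{-j}$.
\item \textbf{The remainder.} Since $|\partial_{x_i}^{\alpha_i}a_{k_1,k_2}|\lesssim 2^{k_i\delta_i|\alpha_i|}$, $r_{k_1,k_2}$ splits into pieces gaining $2^{-k_1(1-\delta_1)}$ or $2^{-k_2(1-\delta_2)}$ (cf.\ \eqref{suppp-10}, \eqref{TpTq-2}). These are summed absolutely in that parameter.
\item \textbf{The truncated part.} It has output frequency in an annulus of size $2^{k_i}$ in every parameter where one input strictly dominates, so orthogonality applies. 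This is exactly where $\delta_i<1$ is used.
\item \textbf{Comparable and mixed regions.} Where $|\xi_i|\simeq|\eta_i|$, both inputs are annular and Cauchy--Schwarz in $k_i$ suffices. Mixed cases need a square function in one parameter against a Fefferman--Stein maximal-square function in the other, not just $M_{\mathcal{R}}$. No Carleson estimate is needed at $L^4\times L^4\to L^2$.
\end{itemize}

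The three-region split built into your approach is also needed in the paper's proof. The claim $\supp\widehat{T_{p_j}(f,g)}(x_1,\cdot)\subset\{2^{j-3}\le|\xi_2'|<2^{j+1}\}$ in Lemma \ref{lem:suppxieta1} fails near $\eta_2=-\xi_2$, and that region must be handled like your diagonal pieces.
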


Assuming Theorem \ref{thm:sss} holds momentarily, we justify Theorem \ref{thm:Tsig} as follows. By parts \eqref{sss-1}--\eqref{sss-3} and \cite[Proposition 3.6]{LMV20}, we have 
\begin{align}\label{TsigBMO-1}
T_{\sigma}(1, 1) \in \BMO(\Rnn).
\end{align}
In the same way, we use part \eqref{sss-4} to deduce 
\begin{align}\label{TsigBMO-2}
\mathbb{T}(1, 1) \in \BMO(\Rnn),
\end{align}
where $\mathbb{T} \in \{T_{\sigma}^{1*}, T_{\sigma}^{2*}, T_{\sigma, 1}^{1*}, T_{\sigma, 2}^{1*}, T_{\sigma, 1}^{2*}, T_{\sigma, 2}^{2*}, T_{\sigma, 1, 2}^{1*, 2*}, T_{\sigma, 2, 1}^{1*, 2*}\}$. Thus, item \eqref{Tsig-1} follows from parts \eqref{sss-1}--\eqref{sss-3}, \eqref{TsigBMO-1}--\eqref{TsigBMO-2}, and \cite[Theorem 1.2]{LMV21}. In addition, item \eqref{Tsig-2} is a consequence of item \eqref{Tsig-1}, part \eqref{sss-5}, and \cite[Theorem 1.6]{CY}, while item \eqref{Tsig-3} follows from item \eqref{Tsig-1}, part \eqref{sss-5}, and Theorem \ref{thm:RdF-cpt}. This completes the proof of Theorem \ref{thm:Tsig}.

\medskip
\noindent{\bf Proof of Theorem \ref{thm:sss}.} For any $\sigma \in \mathcal{S}_{\rho,\delta}^m(\Rnn)$, we define the seminorm 
\begin{align*}
P_{\alpha, \beta, \gamma}^{m, \rho, \delta}(\sigma) 
:= \sup_{x, \, \xi, \, \eta \in \Rnn}  
\frac{\big|\partial_{x_1}^{\alpha_1} \partial_{x_2}^{\alpha_2} \partial_{\xi_1}^{\beta_1} \partial_{\xi_2}^{\beta_2} \partial_{\eta_1}^{\gamma_1} \partial_{\eta_2}^{\gamma_2} \sigma(x, \xi, \eta) \big|}{\prod_{i=1}^2 (1+ |\xi_i| + |\eta_i|)^{m_i + \delta_i |\alpha_i| - \rho_i (|\beta_i| + |\gamma_i|)}}, 
\end{align*}
for all multi-indices $\alpha$, $\beta$ and $\gamma$. If we denote 
\begin{align*}
K_{\sigma}(x, y, z) 
:= \int_{\Rnn} \int_{\Rnn} \sigma(x, \xi, \eta) e^{2\pi i [(x-y) \cdot \xi + (x-z) \cdot \eta]} \, d\xi \, d\eta,
\end{align*}
then the full kernel representation is a direct consequence of Lemma \ref{lem:KP} below.

To justify the partial kernel representation, set
\begin{align*}
K_{f_2, g_2, h_2}(x_1, y_1, z_1) 
& := \int_{\R^{n_2}} \int_{\R^{n_2}} \int_{\R^{n_2}} K_{\sigma}(x, y, z) f_2(y_2) \, g_2(z_2) \, h_2(x_2) \, dx_2 \, dy_2 \, dz_2.
\end{align*}
Given a symbol $\sigma_2$ defined on $\R^{n_2} \times \R^{n_2} \times \R^{n_2}$, let $T_{\sigma_2}^{(2)}$ be the associated one-parameter bilinear pseudo-differential operator acting on functions on $\R^{n_2}$. Then we rewrite 
\begin{align*}
K_{f_2, g_2, h_2}(x_1, y_1, z_1) 
= \big\langle T_{\sigma_{x_1, y_1, z_1}}^{(2)} (f_2, g_2), h_2 \big\rangle,
\end{align*}
where
\begin{align*}
\sigma_{x_1, y_1, z_1}(x_2, \xi_2, \eta_2)
:= \int_{\R^{n_1}} \int_{\R^{n_1}} \sigma(x, \xi, \eta) e^{2\pi i [(x_1 - y_1) \cdot \xi_1 + (x_1 - z_1) \cdot \eta_1]} \, d\xi_1 \, d\eta_1.
\end{align*}
In view of Lemma \ref{lem:sxyz} and \cite[eq. (10.18)]{CLSY}, we see that $T_{\sigma_{x_1, y_1, z_1}}^{(2)}$ is a bilinear Calder\'{o}n--Zygmund operator with constant $(|x_1 - y_1| + |x_1 - z_1|)^{-2n_1}$, and that 
\[
T_{\sigma_{x_1, y_1, z_1}}^{(2)} - T_{\sigma_{x'_1, y_1, z_1}}^{(2)} = T_{\sigma_{x_1, y_1, z_1} - \sigma_{x'_1, y_1, z_1}}^{(2)}
\] 
is also a bilinear Calder\'{o}n--Zygmund operator with constant $\frac{|x_1 - x'_1|}{(|x_1 - y_1| + |x_1 - z_1|)^{2n_1+1}}$, whenever $|x_1 - x'_1| \le \max\{|x_1 - y_1|, |x_1 - z_1|\}/2$. These imply the size condition and the H\"{o}lder condition with respect to the variable $x_1$ in the partial kernel representation on the first parameter with
\begin{align*}
C(f_2, g_2, h_2) := C_0 \|f_2\|_{L^4(\R^{n_2})} \|g_2\|_{L^4(\R^{n_2})} \|h_2\|_{L^2(\R^{n_2})}.
\end{align*}
Similarly, we obtain the H\"{o}lder conditions with respect to the variables $y_1$ and $z_1$ in the partial kernel representation on the first parameter. Symmetrically, one can show the partial kernel representation on the second parameter. 

Part \eqref{sss-3} is a consequence of the estimate
\begin{align}\label{L4L4}
\|T_{\sigma}\|_{L^4 \times L^4 \to L^2} 
\lesssim \sum_{|\alpha|, |\beta|, |\gamma| \le n_1 + n_2 +1} P_{\alpha, \beta, \gamma}^{m, \rho, \delta}(\sigma),
\end{align}
where $N_0 \ge n_1 + n_2 + 1$. To prove \eqref{L4L4}, take a function $a_i \in \mathscr{C}_c^{\infty}(\R^{2n_i})$ such that $\mathbf{1}_{B(0, 1/2)} \le a_i \le \mathbf{1}_{B(0, 1)}$, $i=1, 2$. Then we write
\begin{align}\label{def:sgsg}
\sigma = \sum_{j=1}^4 \sigma_j,
\end{align} 
where
\begin{align*}
\sigma_1(x, \xi, \eta) & := \sigma(x, \xi, \eta) a_1(\xi_1, \eta_1) a_2(\xi_2, \eta_2),
\\
\sigma_2(x, \xi, \eta) & := \sigma(x, \xi, \eta) a_1(\xi_1, \eta_1) (1-a_2(\xi_2, \eta_2)),
\\
\sigma_3(x, \xi, \eta) & := \sigma(x, \xi, \eta) (1-a_1(\xi_1, \eta_1)) a_2(\xi_2, \eta_2),
\\
\sigma_4(x, \xi, \eta) & := \sigma(x, \xi, \eta) (1-a_1(\xi_1, \eta_1)) (1-a_2(\xi_2, \eta_2)).
\end{align*}
It is easy to check that $\sigma_j \in \mathcal{S}_{\rho, \delta}^m(\Rnn)$ for each $j=1, 2, 3, 4$. By symmetry, the estimate \eqref{L4L4} follows from Lemmas \ref{lem:suppxieta}, \ref{lem:suppxieta1}, and \ref{lem:suppxieta2}.

For part \eqref{sss-4}, it suffices to treat $\mathbb{T} \in \{T_{\sigma, 1}^{1*}, T_{\sigma, 2}^{1*}, T_{\sigma, 1}^{2*}, T_{\sigma, 2}^{2*}\}$ because 
\begin{align*}
T_{\sigma}^{1*} = (T_{\sigma, 1}^{1*})_2^{1*}, \quad 
T_{\sigma}^{2*} = (T_{\sigma, 1}^{2*})_2^{2*}, \quad
T_{\sigma, 1, 2}^{1*, 2*} = (T_{\sigma, 1}^{1*})_2^{2*}, \quad \text{and} \quad
T_{\sigma, 2, 1}^{1*, 2*} = (T_{\sigma, 2}^{1*})_1^{2*}. 
\end{align*}
By symmetry, we only focus on $T_{\sigma, 1}^{1*}$. Let $b$ denote the symbol of $T_{\sigma, 1}^{1*}$. Then $K_b(x, y, z) = K_{\sigma}((y_1, x_2), (x_1, y_2), z))$ and 
\begin{align*}
b(x, \xi, \eta) 
& = \iiiint \sigma((y_1, x_2), \xi', \eta') e^{2\pi i [(y-x) \cdot \xi + (z-x) \cdot \eta]} 
\\
& \quad \times e^{2\pi i [(y_1- x_1) \cdot \xi'_1 + (x_2 - y_2) \cdot \xi'_2 + (y_1 - z_1) \cdot \eta'_1 + (x_2 - z_2) \cdot \eta'_2]} \, d\xi' \, d\eta' \, dy \, dz.
\end{align*}
By changing variables, there holds
\begin{align*}
b(x, \xi, \eta) 
& = \iiiint \sigma((x_1 + y_1, x_2), \xi', \eta') 
e^{2\pi i [y_1 \cdot (\xi_1 + \xi'_1 + \eta'_1) + y_2 \cdot (\xi_2 - \xi'_2) + z \cdot (\eta - \eta')]} \, d\xi' \, d\eta' \, dy \, dz
\\
& = \int_{\R^{n_1}} \int_{\R^{n_1}} \sigma((x_1 + y_1, x_2), (\xi'_1, \xi_2), \eta) 
e^{2\pi i y_1 \cdot (\xi_1 + \xi'_1 + \eta_1)} \, d\xi'_1 \, dy_1
\\
& = \int_{\R^{n_1}} \int_{\R^{n_1}} \sigma((x_1 + y_1, x_2), (-z_1 - \xi_1 - \eta_1, \xi_2), \eta) 
e^{-2\pi i y_1 \cdot z_1} \, dy_1 \, dz_1.
\end{align*}
Given multi-indices $\alpha = (\alpha_1, \alpha_2)$, $\beta = (\beta_1, \beta_2)$ and $\gamma = (\gamma_1, \gamma_2)$, writing 
\begin{align*}
\sigma_{x_2, \xi_2, \eta_2}(x_1, \xi_1, \eta_1) 
:= \partial_{x_2}^{\alpha_2} \partial_{\xi_2}^{\beta_2} \partial_{\eta_2}^{\gamma_2} \sigma(x, \xi, \eta),
\end{align*} 
we have
\begin{align}\label{dpab-1}
\big| \partial_{x_1}^{\alpha_1} \partial_{\xi_1}^{\beta_1} \partial_{\eta_1}^{\gamma_1}
\sigma_{x_2, \xi_2, \eta_2}(x_1, \xi_1, \eta_1) \big| 
\lesssim \prod_{i=1}^2 (1 + |\xi| + |\eta_i|)^{\delta_i |\alpha_i| - (|\beta_i| + |\gamma_i|)}
\end{align}
and
\begin{align}\label{dpab-2}
& \partial_{x_1}^{\alpha_1} \partial_{x_2}^{\alpha_2} \partial_{\xi_1}^{\beta_1} \partial_{\xi_2}^{\beta_2} 
\partial_{\eta_1}^{\gamma_1} \partial_{\eta_2}^{\gamma_2} b(x, \xi, \eta) 
\\ \nonumber
& = \int_{\R^{n_1}} \int_{\R^{n_1}}  \partial_{x_1}^{\alpha_1} \partial_{\xi_1}^{\beta_1} \partial_{\eta_1}^{\gamma_1}
\sigma_{x_2, \xi_2, \eta_2}(x_1 + y_1, -z_1 - \xi_1 - \eta_1, \eta_1) 
e^{-2\pi i y_1 \cdot z_1} \, dy_1 \, dz_1.
\end{align}
Then it follows from \eqref{dpab-1}--\eqref{dpab-2} and the proof of \cite[Theorem 2.1]{BMNT} that 
\begin{align*}
|\partial_x^{\alpha} \partial_{\xi}^{\beta} \partial_{\eta}^{\gamma} b(x, \xi, \eta)|
\lesssim \prod_{i=1}^2 (1 + |\xi| + |\eta_i|)^{\delta_i |\alpha_i| - (|\beta_i| + |\gamma_i|)},
\end{align*}
which asserts $b \in \mathcal{S}_{\rho, \delta}^m(\Rnn)$. 

It remains to show part \eqref{sss-5}. Let $\psi^i \in \mathscr{C}_c^{\infty}(\R^{n_i})$ be a smooth cut-off function such that $\mathbf{1}_{B(0, 1)} \le \psi^i \le \mathbf{1}_{B(0, 2)}$. For each $j \in \N$, set 
\begin{align}\label{def:psi}
\phi_j(x, \xi, \eta) 
= & \psi_j^1(x_1) \psi_j^2(x_2) \psi_j^1(\xi_1) \psi_j^2(\xi_2)  \psi_j^1(\eta_1) \psi_j^2(\eta_2)  
\\ \nonumber
:= & \psi^1(2^{-j} x_1) \psi^2(2^{-j} x_2) \psi^1(2^{-j} \xi_1) \psi^2(2^{-j} \xi_2) \psi^1(2^{-j} \eta_1) \psi^2(2^{-j} \eta_2).
\end{align} 
Then one has
\begin{align}\label{def:sig}
\sigma_j := \sigma \phi_j \in \mathcal{S}_{\rho, \delta}^{m}(\Rnn) 
\quad\text{ uniformly in $j \in \N$}.
\end{align}
It is enough to show  
\begin{align}\label{TT-2}
\lim_{j \to \infty} \big\|T_{\sigma_j} - T_{\sigma}\big\|_{L^4 \times L^4 \to L^2} = 0
\end{align}
and 
\begin{align}\label{TT-3}
T_{\sigma_j} : L^4(\Rnn) \times L^4(\Rnn) \to L^2(\Rnn) \text{ compactly}.
\end{align}
Observe that \eqref{TT-2} is a consequence of \eqref{def:sig}, \eqref{L4L4}, and Lemma \ref{lem:Pab}. Additionally, in view of \cite[Theorem 6.3]{CY}, \eqref{def:sig}, and \eqref{L4L4}, the estimate \eqref{TT-3} is reduced to proving 
\begin{align}\label{TFK-2}
\lim_{A \to \infty} & \sup_{\substack{\|f_1\|_{L^4} \le 1 \\ \|f_2\|_{L^4} \le 1}} 
\big\|T_{\sigma_j}(f_1, f_2) \mathbf{1}_{B(0, A)^c}\big\|_{L^2} = 0,
\end{align}
and
\begin{align}\label{TFK-3}
\lim_{|v| \to 0} & \sup_{\substack{\|f_1\|_{L^4} \le 1 \\ \|f_2\|_{L^4} \le 1}} 
\big\|\tau_v T_{\sigma_j}(f_1, f_2) - T_{\sigma_j}(f_1, f_2)\big\|_{L^2} = 0.
\end{align}
Note that \eqref{TFK-2} follows from $T_{\sigma_j}(f_1, f_2)(x) = 0$ for all $|x| > 2^{j+2}$, since 
\begin{align}\label{sgsupp}
\supp \sigma_j \subset \{|x_i|, |\xi_i|, |\eta_i| \le 2^{j+1}, i=1, 2\}.
\end{align}
In order to obtain \eqref{TFK-2}, fix an even number $N > n_1 + n_2$, and note that 
\begin{align}\label{SNN}
\sigma_j \in \mathcal{S}_{(1, 1), (0, 0)}^{(-2N, -2N)}(\Rnn) 
\end{align}
and 
\begin{align}\label{KKSS}
\|K_{\sigma_j}\|_{L^{\infty}} 
+ \|K_{\Delta_{\xi}^N \sigma_j}\|_{L^{\infty}} 
+ \|K_{\Delta_{\eta}^N \sigma_j}\|_{L^{\infty}} 
\lesssim \prod_{i=1}^2 \iint \frac{d\xi_i \, d\eta_i}{(1 + |\xi_i| + |\eta_i|)^{2N}} 
\lesssim 1. 
\end{align}
Recall that $e^{i \xi \cdot x} = (-1)^k |x|^{-2k} \Delta_{\xi}^k e^{i \xi \cdot x}$ for all $k \in \N$ and $x \neq 0$. We use integration by parts and \eqref{sgsupp} to obtain 
\begin{align*}
K_{\Delta_{\xi}^N \sigma_j}(x, y, z) 
&= \iint \Delta_{\xi}^N \sigma_j(x, \xi, \eta) 
e^{2\pi i [(x-y) \cdot \xi + (x-z) \cdot \eta]} \, d\xi \, d\eta 
\\ \nonumber 
&= \iint \Delta_{\xi}^N \big(e^{2\pi i (x-y) \cdot \xi} \big) 
e^{2\pi i (x-z) \cdot \eta} \sigma_j(x, \xi, \eta) \, d\xi \, d\eta 
\\ \nonumber
&= (2\pi |x-y|)^{2N} K_{\sigma_j}(x, y, z),
\end{align*}
and
\begin{align*}
K_{\Delta_{\eta}^N \sigma_j}(x, y, z) 
= (2\pi |x-z|)^{2N} K_{\sigma_j}(x, y, z),
\end{align*}
which yield
\begin{align}\label{KSIJ}
K_{\sigma_j}(x, y, z) 
= \frac{K_{\sigma_j}(x, y, z) + K_{\Delta_{\xi}^N \sigma_j}(x, y, z) + K_{\Delta_{\eta}^N \sigma_j}(x, y, z)}{1 + (2\pi |x-y|)^{2N} + (2\pi |x-z|)^{2N}}, 
\end{align}
Then it follows from \eqref{KKSS} and \eqref{KSIJ} that for any $v \in \Rnn$, 
\begin{align*}
|\tau_v K_{\sigma_j}(x, y, z) - K_{\sigma_j}(x, y, z)|
\lesssim \sum_{i=1}^4 K_v^i(x, y, z), 
\end{align*}
where 
\begin{align*}
K_v^1(x, y, z) 
&:= \bigg|\frac{1}{1 + (2\pi |x - v - y|)^{2N} + (2\pi |x - v - z|)^{2N}} 
\\
&\qquad\qquad- \frac{1}{1 + (2\pi |x - y|)^{2N} + (2\pi |x - z|)^{2N}}\bigg|, 
\\
K_v^2(x, y, z) 
&:= \frac{|K_{\sigma_j}(x-v, y, z) - K_{\sigma_j}(x, y, z)|}{1 + (2\pi |x-y|)^{2N} + (2\pi |x-z|)^{2N}}, 
\\
K_v^3(x, y, z) 
&:= \frac{|K_{\Delta_{\xi}^N \sigma_j}(x-v, y, z) 
- K_{\Delta_{\xi}^N \sigma_j}(x, y, z)|}{1 + (2\pi |x-y|)^{2N} + (2\pi |x-z|)^{2N}}, 
\\
K_v^4(x, y, z) 
&:= \frac{|K_{\Delta_{\eta}^N \sigma_j}(x-v, y, z) 
- K_{\Delta_{\eta}^N \sigma_j}(x, y, z)|}{1 + (2\pi |x-y|)^{2N} + (2\pi |x-z|)^{2N}}. 
\end{align*}
Thus, 
\begin{align}\label{VTTS}
\|\tau_v T_{\sigma_j}(f_1, f_2) - T_{\sigma_j}(f_1, f_2)\|_{L^2}  
\lesssim \sum_{i=1}^4 \|G_v^i\|_{L^2},
\end{align}
where
\begin{align*}
G_v^i(x) 
:= \int_{\Rnn} \int_{\Rnn} K_v^i(x, y, z) |f_1(y)| \, |f_2(z)| \, dy \, dz.
\end{align*}
To estimate $G_v^1$, define 
\begin{align*}
h_k(x) := \frac{(1+|x-v|)^{-k/2}}{(1+|x|)^{(2N-k-1)/2}},
\end{align*}
and observe that 
\begin{align*}
K_v^1(x, y, z) 
\lesssim |v| \sum_{k=0}^{2N-1} 
\frac{(1 + |x - v - y| + |x - v - z|)^{-k}}{(1 + |x - y| + |x - z|)^{2N - k - 1}}
= |v| \sum_{k=0}^{2N-1} h_k(x - y) h_k(x - z),
\end{align*}
which gives
\begin{align}\label{GJJ-1}
\|G_v^1\|_{L^2}
&\lesssim |v| \sup_k \|(h_k*|f_1|) (h_k*|f_2|)\|_{L^2}
\\ \nonumber
& \le |v| \sup_k \|h_k*|f_1|\|_{L^4} \|h_k*|f_2|\|_{L^4}
\\ \nonumber 
&\le |v| \|f_1\|_{L^4} \|f_2\|_{L^4} \sup_k \|h_k\|_{L^1}^2
\lesssim |v| \|f_1\|_{L^4} \|f_2\|_{L^4}. 
\end{align}
Moreover, by \eqref{SNN} and the mean value theorem,
\begin{align*}
&|\tau_v K_{\sigma_j}(x, y, z) - K_{\sigma_j}(x, y, z)|
\\ 
&\le \int_{\Rnn} \int_{\Rnn} |\sigma_j(x-v, \xi, \eta) - \sigma_j(x, \xi, \eta)| \, d\xi \, d\eta 
\\
& \quad + \int_{\Rnn} \int_{\Rnn} |\sigma_j(x, \xi, \eta)| \big|e^{2\pi i v \cdot (\xi + \eta)} -1 \big| \, d\xi \, d\eta
\\
&\lesssim |v| \iint |\nabla_x \sigma_j(x + \theta v, \xi, \eta)| \, d\xi \, d\eta 
+ |v| \iint |\sigma_j(x, \xi, \eta)| |\xi + \eta| \, d\xi \, d\eta 
\\
&\lesssim |v| \prod_{i=1}^2 \iint \frac{1 + |\xi_i| + |\eta_i|}{(1 + |\xi_i| + |\eta_i|)^{2N}} \, d\xi_i \, d\eta_i
\lesssim |v|, 
\end{align*}
which leads to  
\begin{align*}
G_v^2(x) 
\lesssim \iint \frac{|v| |f_1(y)| \, |f_2(z)|}{(1 + |x - y| + |x - z|)^{2N}} \, dy \, dz
\lesssim |v| \mathcal{M}_{\mathcal{R}}(f_1, f_2)(x), 
\end{align*}
and 
\begin{align}\label{GJJ-2}
\|G_v^2\|_{L^2}
\lesssim |v| \|\mathcal{M}_{\mathcal{R}}(f_1, f_2)\|_{L^2}
\lesssim |v| \|f_1\|_{L^4} \|f_2|\|_{L^4}. 
\end{align}
Likewise, there holds
\begin{align}\label{GJJ-34}
\|G_v^i\|_{L^2}
\lesssim |v| \|f_1\|_{L^4} \|f_2\|_{L^4}, \quad i=3, 4. 
\end{align}
Consequently, by \eqref{VTTS}--\eqref{GJJ-34},
\begin{align*}
\|\tau_v T_{\sigma_j}(f_1, f_2) - T_{\sigma_j}(f_1, f_2)\|_{L^2}  
\lesssim \sum_{i=1}^4 \|G_v^i\|_{L^2} 
\lesssim |v| \|f_1\|_{L^4} \|f_2\|_{L^4}, 
\end{align*}
which implies \eqref{TFK-3} as desired. 
\qed

\subsection{Estimates involving bi-parameter pseudo-differential operators}
\begin{lemma}\label{lem:KP}
Let $\sigma \in \S_{\rho, \delta}^m(\Rnn)$ with $m = (0, 0)$, $\rho = (1, 1)$, and $\delta = (\delta_1, \delta_2) \in [0, 1)^2$. If we define
\begin{align*}
\widetilde{K}_{\sigma}(x, y, z) 
:= \int_{\Rnn} \int_{\Rnn} \sigma(x, \xi, \eta) e^{2\pi i (y \cdot \xi + z \cdot \eta)} \, d\xi \, d\eta,
\end{align*}
then for all multi-indices $\alpha$, $\beta$ and $\gamma$, 
\begin{align*}
|\partial_{x_1}^{\alpha_1} \partial_{x_2}^{\alpha_2} \partial_{y_1}^{\beta_1} \partial_{y_2}^{\beta_2}
\partial_{z_1}^{\gamma_1} \partial_{z_2}^{\gamma_2} \widetilde{K}_{\sigma}(x, y, z)|
\lesssim \frac{\sum_{|\beta'|, |\gamma'| \le N_0} P_{\alpha, \beta', \gamma'}^{m, \rho, \delta}(\sigma)}{\prod_{i=1}^2 (|y_i| + |z_i|)^{2n_i + |\alpha_i| + |\beta_i| + |\gamma_i|}}, 
\end{align*}
whenever $y_1 \neq 0$ or $z_1 \neq 0$, and $y_2 \neq 0$ or $z_2 \neq 0$, where $N_i$ is an integer such that $N_0 \ge \sum_{i=1}^2 (2n_i + |\alpha_i| + |\beta_i| + |\gamma_i| + 1)$. 
\end{lemma}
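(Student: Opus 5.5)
We will prove Lemma \ref{lem:KP} by the standard bi-parameter Littlewood--Paley argument, applied separately in each parameter and combined with integration by parts in the frequency variables. By symmetry in $(y,z)$ we may assume in each parameter that, say, $|y_i|+|z_i|\simeq \max\{|y_i|,|z_i|\}$. First we differentiate under the integral. The $x$-derivatives fall on $\sigma$ and cost $\prod_i(1+|\xi_i|+|\eta_i|)^{\delta_i|\alpha_i|}$. The $y,z$-derivatives produce the factors $(2\pi i\xi)^{\beta}(2\pi i\eta)^{\gamma}$. So it suffices to bound
\begin{align*}
\int\!\!\int \partial_x^{\alpha}\sigma(x,\xi,\eta)\,\xi^{\beta}\eta^{\gamma}\,e^{2\pi i(y\cdot\xi+z\cdot\eta)}\,d\xi\,d\eta ,
\end{align*}
which is an oscillatory integral with a symbol of product order $m_i'=\delta_i|\alpha_i|+|\beta_i|+|\gamma_i|$.

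Next we take smooth partitions of unity $1=\sum_{k_i\ge0}\varphi_{k_i}(\xi_i,\eta_i)$ on $\R^{2n_i}$, adapted to the annuli $|\xi_i|+|\eta_i|\simeq 2^{k_i}$, and split the integral into pieces $I_{k_1,k_2}$. For each piece we use two bounds.

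The first is the trivial bound, coming from the size of the support:
\begin{align*}
|I_{k_1,k_2}|\lesssim \prod_i 2^{k_i(2n_i+m_i')}.
\end{align*}

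The second comes from integrating by parts $M_i$ times in each parameter separately. We use the operator $L_i=\frac{(y_i\cdot\nabla_{\xi_i}+z_i\cdot\nabla_{\eta_i})}{2\pi i(|y_i|^2+|z_i|^2)}$, which reproduces the exponential. Each application gains $(|y_i|+|z_i|)^{-1}2^{-k_i}$, because $\rho_i=1$ and $\nabla\varphi_{k_i}\lesssim 2^{-k_i}$. This gives
\begin{align*}
|I_{k_1,k_2}|\lesssim \prod_i 2^{k_i(2n_i+m_i'-M_i)}(|y_i|+|z_i|)^{-M_i}.
\end{align*}
The derivatives hitting $\sigma$ are of orders $\beta',\gamma'$ with $|\beta'|+|\gamma'|\le M_1+M_2$. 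This is why only the seminorms $P^{m,\rho,\delta}_{\alpha,\beta',\gamma'}$ with $|\beta'|,|\gamma'|\le N_0$ appear.

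In each parameter we then sum over $k_i$. We use the trivial bound when $2^{k_i}\le(|y_i|+|z_i|)^{-1}$ and the integrated-by-parts bound, with $M_i>2n_i+m_i'$, otherwise. Both geometric sums are dominated by $(|y_i|+|z_i|)^{-(2n_i+m_i')}$. Taking the product over $i=1,2$ gives the claim with exponent $2n_i+\delta_i|\alpha_i|+|\beta_i|+|\gamma_i|$. When $|y_i|+|z_i|$ is large the low-frequency piece $k_i=0$ only has the trivial bound $O(1)$. We handle it by integrating by parts there as well, since $\varphi_0$ is compactly supported and smooth, so arbitrary decay holds. Finally, since $\delta_i<1$ and we may assume $|y_i|+|z_i|\lesssim 1$ in the critical regime, the exponent $\delta_i|\alpha_i|$ can be replaced by $|\alpha_i|$ when $|y_i|+|z_i|\le1$. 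In the regime $|y_i|+|z_i|>1$ we use the rapid decay obtained with extra integrations by parts, yielding the stated denominator.

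The main obstacle is the bookkeeping of mixed derivatives. The operator $L_i$ acting on the product $\partial^{\alpha}_x\sigma\,\varphi_{k_1}\varphi_{k_2}\,\xi^{\beta}\eta^{\gamma}$ must gain exactly $2^{-k_i}$ per step in parameter $i$ without disturbing the other parameter. This holds because $L_i$ only involves the variables $(\xi_i,\eta_i)$ and the symbol estimates are of product type. The total number of integrations by parts is at most $N_0$, which fixes the choice $N_0\ge\sum_i(2n_i+|\alpha_i|+|\beta_i|+|\gamma_i|+1)$.
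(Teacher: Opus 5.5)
Your proposal is correct and follows essentially the same route as the paper: a Littlewood--Paley decomposition of the symbol in each parameter $(\xi_i,\eta_i)$, the trivial support bound when $2^{k_i}\lesssim(|y_i|+|z_i|)^{-1}$ and repeated integration by parts otherwise (including on the $k_i=0$ piece when $|y_i|+|z_i|\ge 1$), and $\delta_i|\alpha_i|\le|\alpha_i|$ in the small regime. The only difference is cosmetic: you integrate by parts with the vector field $L_i$, whereas the paper multiplies by monomials $y^{\beta_0}z^{\gamma_0}$ and uses $\prod_i(|y_i|+|z_i|)^{N_i}\lesssim\sum|y^{\beta_0}z^{\gamma_0}|$.
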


\begin{proof}
Our strategy is to use a Littlewood-Paley decomposition of the symbol. For each $i=1, 2$, pick $\phi^{i} \in \mathscr{C}_c^{\infty}(\R^{2n_i})$ such that $\mathbf{1}_{B(0, 1)} \le \phi^i \le \mathbf{1}_{B(0, 2)}$. Then define $\psi_0^i(\xi_i, \eta_i) := \phi^i(\xi_i, \eta_i)$ and 
\begin{align}\label{psiij}
\psi_j^i(\xi_i, \eta_i) := \phi(2^{-j} \xi_i, 2^{-j} \eta_i) - \phi(2^{-j+1} \xi_i, 2^{-j+1} \eta_i), \quad \forall j \ge 1,
\end{align} 
which verify the following properties
\begin{align}
\label{psij-1}
& \supp(\psi_0^i)  \subset \big\{(\xi_i, \eta_i) \in \R^{2n_i}: |(\xi_i, \eta_i)| \leq 2\big\},
\\
\label{psij-2}
& \supp(\psi_j^i) \subset
\big\{(\xi_i, \eta_i) \in \R^{2n_i}: 2^{j-1} \leq |(\xi_i, \eta_i)| \leq 2^{j+1}\big\},
\\ 
\label{psij-3}
& \sum_{j=0}^{\infty} \psi_j^i(\xi_i, \eta_i) = 1, \ \ \forall \xi_i, \eta_i \in \R^{n_i}.
\end{align}
In addition, for all multi-indices $\beta$ and $\gamma$,
\begin{align}\label{e:bgpsi-0}
|\partial_{\xi_i}^{\beta} \partial_{\eta_i}^{\gamma} \psi_0^i(\xi_i, \eta_i)|
\leq C_{\beta, \gamma, N} (1 + |\xi_i| + |\eta_i|)^{-N}, \ \ \forall N \geq 0,
\end{align}
and
\begin{align}\label{e:bgpsi-j}
|\partial_{\xi_i}^{\beta} \partial_{\eta_i}^{\gamma} \psi_j^i(\xi_i, \eta_i)|
\leq C_{\beta, \gamma} \, 2^{-j(|\beta| + |\gamma|)}, \ \ \forall j \geq 1.
\end{align}
Setting
\begin{align}\label{}
\sigma_{j_1, j_2}(x, \xi, \eta) := \sigma(x, \xi, \eta) \psi_{j_1}^1(\xi_1, \eta_1) \psi_{j_2}^2(\xi_2, \eta_2),
\end{align} 
we have
\begin{align*}
& \big| \partial_x^{\alpha} \partial_{\xi}^{\beta} \partial_{\eta}^{\gamma} \sigma_{j_1, j_2}(x, \xi, \eta) \big|
= \big| \partial_{x_1}^{\alpha_1} \partial_{x_2}^{\alpha_2} 
\partial_{\xi_1}^{\beta_1} \partial_{\xi_2}^{\beta_2} 
\partial_{\eta_1}^{\gamma_1} \partial_{\eta_2}^{\gamma_2} \sigma_{j_1, j_2}(x, \xi, \eta) \big|
\\
& \lesssim \sum_{\substack{\beta'_1 + \beta''_1 = \beta_1 \\ \beta'_2 + \beta''_2 = \beta_2 \\ \gamma'_1 + \gamma''_1 = \gamma_1 \\ \gamma'_2 + \gamma''_2 = \gamma_2}} \big| \partial_{x_1}^{\alpha_1} \partial_{x_2}^{\alpha_2} \partial_{\xi_1}^{\beta'_1} \partial_{\xi_2}^{\beta'_2} \partial_{\eta_1}^{\gamma'_1} \partial_{\eta_2}^{\gamma'_2} \sigma(x, \xi, \eta) \big|
\big| \partial_{\xi_1}^{\beta''_1} \partial_{\eta_1}^{\gamma''_1} \psi_{j_1}^1(\xi_1, \eta_1) \big| 
\big| \partial_{\xi_2}^{\beta''_2} \partial_{\eta_2}^{\gamma''_2} \psi_{j_2}^2(\xi_2, \eta_2) \big| 
\\
& \lesssim \sum_{\substack{\beta'_1 + \beta''_1 = \beta_1 \\ \beta'_2 + \beta''_2 = \beta_2 \\ \gamma'_1 + \gamma''_1 = \gamma_1 \\ \gamma'_2 + \gamma''_2 = \gamma_2}} P_{\alpha, \beta', \gamma'}^{m, \rho, \delta}(\sigma) 
\prod_{i=1}^2 (1 + |\xi_i| + |\eta_i|)^{\delta_i |\alpha_i| - (|\beta'_i| + |\gamma'_i|)} 
2^{-j_i(|\beta''_i| + |\gamma''_i|)} 
\\
& \lesssim \sum_{\substack{\beta' \le \beta \\ \gamma' \le \gamma}} 
P_{\alpha, \beta', \gamma'}^{m, \rho, \delta}(\sigma) 
\prod_{i=1}^2 2^{j_i[\delta_i |\alpha_i| - (|\beta_i| + |\gamma_i|)]},
\end{align*}
which implies
\begin{align}\label{eq:abgk}
& \big|\partial_{\xi}^{\beta_0} \partial_{\eta}^{\gamma_0} 
\big[\partial_x^{\alpha} \sigma_{j_1, j_2}(x, \xi, \eta) (2\pi i \xi)^{\beta} (2\pi i \eta)^{\gamma} \big] \big|
\\ \nonumber
& \lesssim \sum_{\substack{\beta_1 + \beta_2 = \beta_0 \\ \gamma_1 + \gamma_2 = \gamma_0}} 
\big| \partial_x^{\alpha} \partial_{\xi}^{\beta_1} \partial_{\eta}^{\gamma_1} \sigma_{j_1, j_2}(x, \xi, \eta) \big|
\big| \partial_{\xi}^{\beta_2} [(2\pi i \xi)^{\beta}] \big| 
\big| \partial_{\eta}^{\gamma_2} [(2\pi i \eta)^{\gamma}] \big|
\\ \nonumber
& \lesssim \sum_{\substack{\beta' \le \beta_0 \\ \gamma' \le \gamma_0}} 
P_{\alpha, \beta', \gamma'}^{m, \rho, \delta}(\sigma) 
\prod_{i=1}^2 2^{j_i[\delta_i |\alpha_i| - (|\beta_{0, i}| + |\gamma_{0, i}|) + |\beta_i| + |\gamma_i|]}.
\end{align}
Observe that
\begin{align*}
& (2\pi i y)^{\beta_0} \cdot (2\pi i z)^{\gamma_0} \partial_x^{\alpha} \partial_y^{\beta} \partial_z^{\gamma} \widetilde{K}_{\sigma_{j_1, j_2}}(x, y, z) 
\\
& = \int \int \partial_x^{\alpha} \sigma_{j_1, j_2}(x, \xi, \eta) \, (2\pi i \xi)^{\beta} (2\pi i \eta)^{\gamma} \,
\partial_{\xi}^{\beta_0} \partial_{\eta}^{\gamma_0} \big(e^{2\pi i (y \cdot \xi + z \cdot \eta)} \big)\, d\xi \, d\eta
\\
& = (-1)^{|\beta_0| + |\gamma_0|} \int \int 
\partial_{\xi}^{\beta_0} \partial_{\eta}^{\gamma_0} \big[\partial_x^{\alpha} \sigma_{j_1, j_2}(x, \xi, \eta) (2\pi i \xi)^{\beta} (2\pi i \eta)^{\gamma} \big]\,
e^{2\pi i (y \cdot \xi + z \cdot \eta)} \, d\xi \, d\eta,
\end{align*}
and 
\begin{align*}
\prod_{i=1}^2 (|y_i| + |z_i|)^{N_i} 
\simeq \prod_{i=1}^2 |(y_i, z_i)|^{N_i} 
\lesssim \sum_{\substack{|\beta_{0, 1}| + |\gamma_{0, 1}| = N_1 \\ |\beta_{0, 2}| + |\gamma_{0, 2}| = N_2}} |y^{\beta_0} \cdot z^{\gamma_0}|,
\end{align*}
which together with \eqref{psij-3} and \eqref{eq:abgk} give
\begin{align*}
& |\partial_x^{\alpha} \partial_y^{\beta} \partial_z^{\gamma} \widetilde{K}_{\sigma}(x, y, z)|
\le \sum_{j_1, j_2 \ge 0} |\partial_x^{\alpha} \partial_y^{\beta} \partial_z^{\gamma} \widetilde{K}_{\sigma_{j_1, j_2}}(x, y, z)|
\\
& \lesssim \sum_{|\beta'|, |\gamma'| \le N_1 + N_2} 
P_{\alpha, \beta', \gamma'}^{m, \rho, \delta}(\sigma) 
\prod_{i=1}^2 \sum_{j_i \ge 0} \frac{(|y_i| + |z_i|)^{-N_i}}{2^{j_i[N_i - 2n_i - \delta_i |\alpha_i| - |\beta_i| - |\gamma_i|]}}.
\end{align*}
In the case $|y_i| + |z_i| \ge 1$, we choose $N_i > 2n_i + |\alpha_i| + |\beta_i| + |\gamma_i|$ to deduce 
\begin{align}\label{Nxyz-1}
\sum_{j_i \ge 0} \frac{(|y_i| + |z_i|)^{-N_i}}{2^{j_i[N_i - 2n_i - \delta_i |\alpha_i| - |\beta_i| - |\gamma_i|]}}
\lesssim (|y_i| + |z_i|)^{-N_i} 
\le (|y_i| + |z_i|)^{-(2n_i + |\alpha_i| + |\beta_i| + |\gamma_i|)}.
\end{align}
In the case $|y_i| + |z_i| < 1$, there exists $j_i^0 \ge 0$ such that $|y_i| + |z_i| \simeq 2^{-j_i^0}$. Picking $N_i = 0$, we have
\begin{align}\label{Nxyz-2}
& \sum_{0 \le j_i \le j_i^0} \frac{(|y_i| + |z_i|)^{-N_i}}{2^{j_i[N_i - 2n_i - \delta_i |\alpha_i| - |\beta_i| - |\gamma_i|]}}
\lesssim 2^{j_i^0 (2n_i + \delta_i |\alpha_i| + |\beta_i| + |\gamma_i|)}
\\ \nonumber
& \le 2^{j_i^0 (2n_i + |\alpha_i| + |\beta_i| + |\gamma_i|)}
\simeq (|y_i| + |z_i|)^{-(2n_i + |\alpha_i| + |\beta_i| + |\gamma_i|)}.
\end{align}
If we take $N_i > 2n_i + |\alpha_i| + |\beta_i| + |\gamma_i|$, then 
\begin{align}\label{Nxyz-3}
& \sum_{j_i \ge j_i^0} \frac{(|y_i| + |z_i|)^{-N_i}}{2^{j_i[N_i - 2n_i - \delta_i |\alpha_i| - |\beta_i| - |\gamma_i|]}}
\lesssim \frac{(|y_i| + |z_i|)^{-N_i}}{2^{j_i^0[N_i - 2n_i - \delta_i |\alpha_i| - |\beta_i| - |\gamma_i|]}}
\\ \nonumber
& \le \frac{(|y_i| + |z_i|)^{-N_i}}{2^{j_i^0[N_i - 2n_i - |\alpha_i| - |\beta_i| - |\gamma_i|]}}
\simeq (|y_i| + |z_i|)^{-(2n_i + |\alpha_i| + |\beta_i| + |\gamma_i|)}.
\end{align}
Gathering all estimates \eqref{Nxyz-1}--\eqref{Nxyz-3}, we conclude the proof.
\end{proof}

\begin{lemma}\label{lem:sxyz}
Let $\sigma \in \S_{\rho, \delta}^m(\Rnn)$ with $m = (0, 0)$, $\rho = (1, 1)$, and $\delta = (\delta_1, \delta_2) \in [0, 1)^2$. If we define 
\begin{align*}
\widetilde{\sigma}_{x_1, y_1, z_1}(x_2, \xi_2, \eta_2)
:= \int_{\R^{n_1}} \int_{\R^{n_1}} \sigma(x, \xi, \eta) e^{2\pi i (y_1 \cdot \xi_1 + z_1 \cdot \eta_1)} \, d\xi_1 \, d\eta_1,
\end{align*}
then
$\widetilde{\sigma}_{x_1, y_1, z_1} \in \mathcal{S}_{1, \delta_2}^0(\R^{n_2})$ with 
\begin{align*}
& |\partial_{x_1}^{\alpha_1} \partial_{x_2}^{\alpha_2} \partial_{y_1}^{\beta_1} \partial_{\xi_2}^{\beta_2}
\partial_{z_1}^{\gamma_1} \partial_{\eta_2}^{\gamma_2} \widetilde{\sigma}_{x_1, y_1, z_1}(x_2, \xi_2, \eta_2)|
\\
& \lesssim \sum_{|\beta'_1|, |\gamma'_1| \le N_0} P_{\alpha, (\beta'_1, \beta_2), (\gamma'_1, \gamma_2)}^{m, \rho, \delta}(\sigma) 
\frac{(1+ |\xi_2| + |\eta_2|)^{\delta_2 |\alpha_2| - (|\beta_2| + |\gamma_2|)}}{(|y_1| + |z_1|)^{2n_1 + |\alpha_1| + |\beta_1| + |\gamma_1|}}
\end{align*}
whenever $y_1 \neq 0$ or $z_1 \neq 0$, where $N_0$ is an integer such that $N_0 \ge 2n_1 + |\alpha_1| + |\beta_1| + |\gamma_1| + 1$. 
\end{lemma}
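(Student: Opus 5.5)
The plan is to run the proof of Lemma \ref{lem:KP} again, but only in the first parameter, with the second-parameter variables $(x_2,\xi_2,\eta_2)$ frozen. Take the Littlewood--Paley partition $\{\psi_j^1\}_{j\ge0}$ on $\R^{2n_1}$ from \eqref{psiij}--\eqref{psij-3} and set $\sigma_{j}(x,\xi,\eta):=\sigma(x,\xi,\eta)\psi_j^1(\xi_1,\eta_1)$. Define $\widetilde{\sigma}^{j}_{x_1,y_1,z_1}$ by the same partial inverse Fourier integral with $\sigma$ replaced by $\sigma_j$. Each such integral converges absolutely, because $\psi_j^1$ has compact support. Then $\widetilde{\sigma}_{x_1,y_1,z_1}=\sum_j \widetilde{\sigma}^{j}_{x_1,y_1,z_1}$, and it suffices to bound each piece and sum the geometric series.

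For a fixed $j$, differentiation under the integral gives the following. The derivatives $\partial_{x_1}^{\alpha_1}\partial_{x_2}^{\alpha_2}\partial_{\xi_2}^{\beta_2}\partial_{\eta_2}^{\gamma_2}$ fall on $\sigma_j$. The derivatives $\partial_{y_1}^{\beta_1}\partial_{z_1}^{\gamma_1}$ produce the factor $(2\pi i\xi_1)^{\beta_1}(2\pi i\eta_1)^{\gamma_1}$. Multiply by $y_1^{\beta_0}z_1^{\gamma_0}$ with $|\beta_0|+|\gamma_0|=N_1$ and integrate by parts in $(\xi_1,\eta_1)$, exactly as in the proof of Lemma \ref{lem:KP}. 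Using the product rule estimate \eqref{eq:abgk}, now restricted to the first parameter and keeping the second-parameter factor intact, we get
\begin{align*}
|y_1^{\beta_0}z_1^{\gamma_0}\,\partial^{\cdots}\widetilde{\sigma}^{j}_{x_1,y_1,z_1}|
\lesssim \sum_{\beta_1'\le\beta_0,\,\gamma_1'\le\gamma_0} P^{m,\rho,\delta}_{\alpha,(\beta_1',\beta_2),(\gamma_1',\gamma_2)}(\sigma)\,
(1+|\xi_2|+|\eta_2|)^{\delta_2|\alpha_2|-(|\beta_2|+|\gamma_2|)}\,2^{j[2n_1+\delta_1|\alpha_1|+|\beta_1|+|\gamma_1|-N_1]}.
\end{align*}
Here the factor $2^{2n_1 j}$ is the volume of the support of $\psi_j^1$. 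The point to check is that the weight in $(\xi_2,\eta_2)$ factors out. It does, because the symbol estimate is a product over $i=1,2$ and $\psi_j^1$ does not depend on $(\xi_2,\eta_2)$.

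Summing over $j$ and over the choices of $(\beta_0,\gamma_0)$ gives the bound by $(|y_1|+|z_1|)^{-N_1}\sum_j 2^{j[2n_1+\delta_1|\alpha_1|+|\beta_1|+|\gamma_1|-N_1]}$. We then split into the same cases as in \eqref{Nxyz-1}--\eqref{Nxyz-3}.
\begin{itemize}
\item If $|y_1|+|z_1|\ge1$, take $N_1=N_0>2n_1+|\alpha_1|+|\beta_1|+|\gamma_1|$.
\item If $|y_1|+|z_1|\simeq 2^{-j^0}<1$, use $N_1=0$ for $j\le j^0$ and $N_1=N_0$ for $j>j^0$.
\end{itemize}
In every case, $\delta_1\le1$ gives the bound $(|y_1|+|z_1|)^{-(2n_1+|\alpha_1|+|\beta_1|+|\gamma_1|)}$. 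Only finitely many seminorms appear, all with $|\beta_1'|,|\gamma_1'|\le N_0$, which is the stated estimate.

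Specializing to $\alpha_1=\beta_1=\gamma_1=0$ shows that $\widetilde{\sigma}_{x_1,y_1,z_1}$ satisfies the $\mathcal{S}^0_{1,\delta_2}(\R^{n_2})$ symbol estimates for each fixed $(x_1,y_1,z_1)$ with $y_1\ne0$ or $z_1\ne0$. The main obstacle is justifying the interchange of the sum over $j$ with differentiation and with the inverse Fourier transform, since $\sigma$ itself is not integrable in $(\xi_1,\eta_1)$. The plan is to interpret the defining integral as the limit of the partial sums. The estimates above show that the series of derivatives converges absolutely and locally uniformly away from $y_1=z_1=0$, so the limit is smooth there and term-by-term differentiation is legitimate. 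This matches how $\widetilde{K}_\sigma$ is treated in Lemma \ref{lem:KP}.
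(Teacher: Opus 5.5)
Your proposal is correct and follows essentially the same route as the paper. Both arguments use the Littlewood--Paley pieces $\sigma\psi_{j_1}^1$ in the first-parameter frequencies, integrate by parts against $y_1^{\beta_0} z_1^{\gamma_0}$ using the product-rule bound \eqref{eq:bgk} (with the $(\xi_2,\eta_2)$ weight factoring out), and sum over $j_1$ via the case split \eqref{Nxyz-1}--\eqref{Nxyz-3}; your remark justifying term-by-term differentiation of the series away from $y_1=z_1=0$ supplies a step the paper leaves implicit.
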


\begin{proof}
Using the function $\psi_{j_1}^1$ in \eqref{psiij} and setting $\sigma^{j_1}(x, \xi, \eta) := \sigma(x, \xi, \eta) \psi_{j_1}^1(\xi_1, \eta_1)$, we have
\begin{align*}
& \big| \partial_x^{\alpha} \partial_{\xi}^{\beta} \partial_{\eta}^{\gamma} \sigma^{j_1}(x, \xi, \eta) \big|
= \big| \partial_{x_1}^{\alpha_1} \partial_{x_2}^{\alpha_2} 
\partial_{\xi_1}^{\beta_1} \partial_{\xi_2}^{\beta_2} 
\partial_{\eta_1}^{\gamma_1} \partial_{\eta_2}^{\gamma_2} \sigma^{j_1}(x, \xi, \eta) \big|
\\
& \lesssim \sum_{\substack{\beta'_1 + \beta''_1 = \beta_1 \\ \gamma'_1 + \gamma''_1 = \gamma_1}} 
\big| \partial_{x_1}^{\alpha_1} \partial_{x_2}^{\alpha_2} \partial_{\xi_1}^{\beta'_1} \partial_{\xi_2}^{\beta_2} \partial_{\eta_1}^{\gamma'_1} \partial_{\eta_2}^{\gamma_2} \sigma(x, \xi, \eta) \big|
\big| \partial_{\xi_1}^{\beta''_1} \partial_{\eta_1}^{\gamma''_1} \psi_{j_1}^1(\xi_1, \eta_1) \big|
\\
& \lesssim \sum_{\substack{\beta'_1 + \beta''_1 = \beta_1 \\ \gamma'_1 + \gamma''_1 = \gamma_1}}  
P_{\alpha, (\beta'_1, \beta_2), (\gamma'_1, \gamma_2)}^{m, \rho, \delta}(\sigma) 
(1 + |\xi_1| + |\eta_1|)^{\delta_1 |\alpha_1| - (|\beta'_1| + |\gamma'_1|)} 
2^{-j_1(|\beta''_1| + |\gamma''_1|)} 
\\
&\qquad\qquad \times (1 + |\xi_2| + |\eta_2|)^{\delta_2 |\alpha_2| - (|\beta_2| + |\gamma_2|)}
\\
& \lesssim \sum_{\substack{\beta'_1 \le \beta_1 \\ \gamma'_1 \le \gamma_1}} 
P_{\alpha, (\beta'_1, \beta_2), (\gamma'_1, \gamma_2)}^{m, \rho, \delta}(\sigma) 
2^{j_1[\delta_1 |\alpha_1| - (|\beta_1| + |\gamma_1|)]}
(1 + |\xi_2| + |\eta_2|)^{\delta_2 |\alpha_2| - (|\beta_2| + |\gamma_2|)},
\end{align*}
which gives
\begin{align}\label{eq:bgk}
& \big| \partial_{\xi_1}^{\beta'_1} \partial_{\eta_1}^{\gamma'_1} 
\big[\partial_x^{\alpha} \partial_{\xi_2}^{\beta_2} \partial_{\eta_2}^{\gamma_2} 
\sigma^{j_1}(x, \xi, \eta) (2\pi i \xi_1)^{\beta_1} (2\pi i \eta_1)^{\gamma_1} \big] \big|
\\ \nonumber
& \lesssim \sum_{\substack{\beta''_1 \le \beta'_1 \\ \gamma''_1 \le \gamma'_1}} 
\big| \partial_x^{\alpha} \partial_{\xi_1}^{\beta''_1} \partial_{\xi_2}^{\beta_2} 
\partial_{\eta_1}^{\gamma''_1} \partial_{\eta_2}^{\gamma_2} \sigma^{j_1}(x, \xi, \eta) \big| 
\big| \partial_{\xi_1}^{\beta'_1 - \beta''_1} (2\pi i \xi_1)^{\beta_1} \big| 
\big| \partial_{\eta_1}^{\gamma'_1 - \gamma''_1} (2\pi i \eta_1)^{\gamma_1} \big| 
\\ \nonumber
& \lesssim \sum_{\substack{\beta''_1 \le \beta'_1 \\ \gamma''_1 \le \gamma'_1}} 
P_{\alpha, (\beta''_1, \beta_2), (\gamma''_1, \gamma_2)}^{m, \rho, \delta}(\sigma) 
\frac{(1 + |\xi_2| + |\eta_2|)^{\delta_2 |\alpha_2| - (|\beta_2| + |\gamma_2|)}}{2^{j_1(|\beta'_1| + |\gamma'_1| - \delta_1 |\alpha_1| - |\beta_1| - |\gamma_1|)}}.
\end{align}
Note that
\begin{align*}
& (2\pi i y_1)^{\beta'_1} (2\pi i z_1)^{\gamma'_1} 
\partial_{x_1}^{\alpha_1} \partial_{x_2}^{\alpha_2} 
\partial_{y_1}^{\beta_1} \partial_{\xi_2}^{\beta_2} 
\partial_{z_1}^{\gamma_1} \partial_{\eta_2}^{\gamma_2} 
\widetilde{\sigma}^{j_1}_{x_1, y_1, z_1}(x_2, \xi_2, \eta_2) 
\\
& = \iint \partial_{x_1}^{\alpha_1} \partial_{x_2}^{\alpha_2} 
\partial_{\xi_2}^{\beta_2} \partial_{\eta_2}^{\gamma_2} \sigma_{j_1}(x, \xi, \eta)
(2\pi i \xi_1)^{\beta_1} (2\pi i \eta_1)^{\gamma_1} 
\partial_{\xi_1}^{\beta'_1} \partial_{\eta_1}^{\gamma'_1} 
\big(e^{2\pi i (y_1 \cdot \xi_1 + z_1 \cdot \eta_1)} \big)\, d\xi_1 \, d\eta_1
\\
& = (-1)^{|\beta'_1| + |\gamma'_1|} \iint
\partial_{\xi_1}^{\beta'_1} \partial_{\eta_1}^{\gamma'_1} 
\big[\partial_x^{\alpha} \partial_{\xi_2}^{\beta_2} \partial_{\eta_2}^{\gamma_2} 
\sigma_{j_1}(x, \xi, \eta) (2\pi i \xi_1)^{\beta_1} (2\pi i \eta_1)^{\gamma_1} \big]\,
e^{2\pi i (y_1 \cdot \xi_1 + z_1 \cdot \eta_1)} \, d\xi_1 \, d\eta_1,
\end{align*}
which along with \eqref{psij-3} and \eqref{eq:bgk} implies
\begin{align*}
& |\partial_{x_1}^{\alpha_1} \partial_{x_2}^{\alpha_2} 
\partial_{y_1}^{\beta_1} \partial_{\xi_2}^{\beta_2} 
\partial_{z_1}^{\gamma_1} \partial_{\eta_2}^{\gamma_2} 
\widetilde{\sigma}_{x_1, y_1, z_1}(x_2, \xi_2, \eta_2)|
\le \sum_{j_1 \ge 0} |\partial_{x_1}^{\alpha_1} \partial_{x_2}^{\alpha_2} 
\partial_{y_1}^{\beta_1} \partial_{\xi_2}^{\beta_2} 
\partial_{z_1}^{\gamma_1} \partial_{\eta_2}^{\gamma_2} 
\widetilde{\sigma}^{j_1}_{x_1, y_1, z_1}(x_2, \xi_2, \eta_2)|
\\
& \lesssim \sum_{\substack{\beta''_1 \le \beta'_1 \\ \gamma''_1 \le \gamma'_1}} 
P_{\alpha, (\beta''_1, \beta_2), (\gamma''_1, \gamma_2)}^{m, \rho, \delta}(\sigma) 
\sum_{j_1 \ge 0}
\frac{(|y_1| + |z_1|)^{-|\beta'_1| - |\gamma'_1|} (1 + |\xi_2| + |\eta_2|)^{\delta_2 |\alpha_2| - (|\beta_2| + |\gamma_2|)}}{2^{j_1(\beta'_1| + |\gamma'_1| - 2n_1 - \delta_1 |\alpha_1| - |\beta_1| - |\gamma_1|)}}.
\end{align*}
Then invoking \eqref{Nxyz-1}--\eqref{Nxyz-3}, we obtain the estimate as desired.
\end{proof}

\begin{lemma}\label{lem:suppxieta}
Let $a \in \S_{\rho, \delta}^m(\Rnn)$ with $m = (0, 0)$, $\rho = (1, 1)$, and $\delta = (\delta_1, \delta_2) \in [0, 1)^2$. Assume that $a(x, \xi, \eta)$ has compact support in $(\xi, \eta)$ uniformly with respect to $x$. Then $T_a$ is bounded from $L^{p_1}(\Rnn) \times L^{p_2}(\Rnn)$ to $L^p(\Rnn)$ for all $\frac1p = \frac{1}{p_1} + \frac{1}{p_2}$ with $p_1, p_2 \in [1, \infty]$. Moreover,
\begin{align*}
\|T_a\|_{L^{p_1} \times L^{p_2} \to L^p} 
\lesssim \sup_{\substack{x, \xi, \eta \in \Rnn \\ |\beta|, |\gamma| \le n_1 + n_2 +1}}
|\partial_{\xi}^{\beta} \partial_{\eta}^{\gamma} a(x, \xi, \eta)| 
\lesssim \sum_{|\beta|, |\gamma| \le n_1 + n_2 +1} P_{0, \beta, \gamma}^{m, \rho, \delta}(a).
\end{align*}
\end{lemma}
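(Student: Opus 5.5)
We will show that the kernel $\widetilde{K}_a(x,y,z)=\iint a(x,\xi,\eta)e^{2\pi i(y\cdot\xi+z\cdot\eta)}\,d\xi\,d\eta$ is, uniformly in $x$, dominated by an integrable product-type function. Then $T_a$ is controlled pointwise by a product of convolutions, and the $L^{p_1}\times L^{p_2}\to L^p$ bound follows from Young's and H\"older's inequalities.

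Write $T_a(f,g)(x)=\iint \widetilde{K}_a(x,x-y,x-z) f(y)g(z)\,dy\,dz$, which is justified for Schwartz $f,g$ by Fubini. Suppose $\supp a(x,\cdot,\cdot)\subset\{|\xi|+|\eta|\le R\}$ for all $x$. Integrating by parts in $(\xi,\eta)$ with the identity $(1+4\pi^2|y|^2+4\pi^2|z|^2)^{L}e^{2\pi i(y\cdot\xi+z\cdot\eta)}=(1-\Delta_\xi-\Delta_\eta)^{L}e^{2\pi i(y\cdot\xi+z\cdot\eta)}$ produces no boundary terms, thanks to the compact support. Choosing $2L\ge n_1+n_2+1$ (in the statement this is encoded as $|\beta|,|\gamma|\le n_1+n_2+1$), we get
\begin{align*}
|\widetilde{K}_a(x,y,z)|\lesssim R^{2(n_1+n_2)}\,\mathcal{A}\,(1+|y|+|z|)^{-2L},
\qquad \mathcal{A}:=\sup_{\substack{x,\xi,\eta\\ |\beta|,|\gamma|\le n_1+n_2+1}}|\partial_\xi^\beta\partial_\eta^\gamma a(x,\xi,\eta)|.
\end{align*}
Since $(1+|y|+|z|)^{-2L}\le (1+|y|)^{-L}(1+|z|)^{-L}$ and $L>n_1+n_2$, the function $h(y):=(1+|y|)^{-L}$ lies in $L^1(\Rnn)$. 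Hence
\begin{align*}
|T_a(f,g)(x)|\lesssim \mathcal{A}\,(h*|f|)(x)\,(h*|g|)(x).
\end{align*}

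This pointwise bound holds uniformly in $x$, because the dependence on $x$ enters only through $\mathcal{A}$. H\"older's inequality with $\frac1p=\frac1{p_1}+\frac1{p_2}$ and Young's inequality $\|h*|f|\|_{L^{p_1}}\le\|h\|_{L^1}\|f\|_{L^{p_1}}$ then give the bound for all $p_1,p_2\in[1,\infty]$, including $p<1$, since H\"older's inequality for products is valid for all such exponents. Passing from Schwartz functions to general $f,g$ is done by density when $p_j<\infty$. When $p_j=\infty$ we instead use the pointwise bound directly, since the kernel is integrable.

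The second inequality, $\mathcal{A}\lesssim\sum P^{m,\rho,\delta}_{0,\beta,\gamma}(a)$, is immediate: $m=(0,0)$, $\alpha=0$, and on the compact frequency support the weights $(1+|\xi_i|+|\eta_i|)^{-(|\beta_i|+|\gamma_i|)}$ are bounded by $1$.

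The main obstacle is the constant's dependence on the support radius $R$. It enters as $R^{2(n_1+n_2)}$ (the volume of the support), which is harmless here because the lemma implicitly allows constants depending on the support. In the later application to $\sigma_1$ with cutoffs $a_1,a_2$, the support lies in the unit balls, so $R$ is fixed. We also need care with $p_1=\infty$ or $p_2=\infty$, where density fails; there we rely on the absolutely convergent kernel representation in place of a density argument.
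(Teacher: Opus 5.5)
Your overall strategy is the paper's: integrate by parts in $(\xi,\eta)$ using the compact frequency support to get kernel decay uniform in $x$, dominate $|T_a(f,g)|$ by $\mathcal{A}\,(h*|f|)(h*|g|)$, then apply H\"older and Young. The flaw is in the derivative count, and that count is the quantitative content of the lemma. You need $h=(1+|\cdot|)^{-L}\in L^1(\R^{n_1+n_2})$, i.e.\ $L\ge n_1+n_2+1$, and you get $h(y)h(z)$ by splitting $(1+|y|+|z|)^{-2L}$. But $(1-\Delta_\xi-\Delta_\eta)^L$ contains the term $\Delta_\xi^L$. So the integration by parts uses $\partial_\xi^\beta a$ with $|\beta|$ up to $2L\ge 2(n_1+n_2+1)$, which your $\mathcal{A}$ (restricted to $|\beta|,|\gamma|\le n_1+n_2+1$) does not control. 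Your earlier choice ``$2L\ge n_1+n_2+1$'' is inconsistent with $L>n_1+n_2$. With only $2L=n_1+n_2+1$, the decay $(1+|y|+|z|)^{-2L}$ is not integrable on $\R^{2(n_1+n_2)}$, and neither factor after splitting is integrable on $\R^{n_1+n_2}$. As written, you therefore prove boundedness with roughly twice as many seminorms, not the displayed estimate. This is qualitatively harmless, since all derivatives of $a\in\mathcal{S}^m_{\rho,\delta}$ are bounded.

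The fix is to use a tensorized weight rather than an isotropic one. With $N=n_1+n_2+1$,
\[
(1+|y|)^N(1+|z|)^N \lesssim \sum_{|\beta|,|\gamma|\le N}|y^\beta z^\gamma|,
\qquad
y^\beta z^\gamma e^{2\pi i(y\cdot\xi+z\cdot\eta)} = (2\pi i)^{-|\beta|-|\gamma|}\,\partial_\xi^\beta\partial_\eta^\gamma e^{2\pi i(y\cdot\xi+z\cdot\eta)},
\]
so each integration by parts puts at most $N$ derivatives on $\xi$ and at most $N$ on $\eta$, counted separately. This gives $|\widetilde{K}_a(x,y,z)|\lesssim \mathcal{A}\,(1+|y|)^{-N}(1+|z|)^{-N}$ directly, with the support volume absorbed into the implicit constant. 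This is exactly what the paper does. The rest of your argument then goes through unchanged: H\"older for $p<1$, Young, the comparison $\mathcal{A}\lesssim\sum P^{m,\rho,\delta}_{0,\beta,\gamma}(a)$, and the treatment of $p_j=\infty$.
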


\begin{proof}
Note that for any $N \ge 1$,
\begin{align*}
(1 + |x|)^N 
= \sum_{0 \le k \le N} c_k |x|^k 
\lesssim \sum_{|\beta| \le N} |x^{\beta}|,
\end{align*}
which gives
\begin{align*}
& (1 + |x-y|)^N (1 + |x-z|)^N |K_a(x, y, z)| 
\\
& \lesssim \sum_{|\beta|, |\gamma| \le N}
\bigg|\iint a(x, \xi, \eta) (x-y)^{\beta} (x-z)^{\gamma} e^{2\pi i [(x-y) \cdot \xi + (x-z) \cdot \eta]}  d\xi \, d\eta\bigg|
\\
& \simeq \sum_{|\beta|, |\gamma| \le N}
\bigg|\iint a(x, \xi, \eta) \partial_{\xi}^{\beta} \partial_{\eta}^{\gamma} \big(e^{2\pi i [(x-y) \cdot \xi + (x-z) \cdot \eta]} \big)  d\xi \, d\eta\bigg|
\\
& = \sum_{|\beta|, |\gamma| \le N}
\bigg|\iint \partial_{\xi}^{\beta} \partial_{\eta}^{\gamma} a(x, \xi, \eta) e^{2\pi i [(x-y) \cdot \xi + (x-z) \cdot \eta]} \,  d\xi \, d\eta\bigg|
\\
& \lesssim \sup_{\substack{x, \xi, \eta \\ |\beta|, |\gamma| \le N}}
|\partial_{\xi}^{\beta} \partial_{\eta}^{\gamma} a(x, \xi, \eta)| 
=: A_N.
\end{align*}
Set $\varphi(x) := (1 + |x|)^{-N}$. By H\"{o}lder's and Young's inequalities, there holds
\begin{align*}
\|T_a(f_1, f_2)\|_{L^p} 
& \lesssim A_N \|(\varphi * f_1) (\varphi * f_2)\|_{L^p}
\le A_N \|\varphi * f_1\|_{L^{p_1}} \|\varphi * f_2\|_{L^{p_2}}
\\
& \le A_N \|\varphi\|_{L^1}^2 \|f_1\|_{L^{p_1}} \|f_2\|_{L^{p_2}}
\lesssim A_N \|f_1\|_{L^{p_1}} \|f_2\|_{L^{p_2}},
\end{align*}
provided $N > n_1 + n_2$.
\end{proof}

\begin{lemma}\label{lem:suppxieta1}
Let $a \in \S_{\rho, \delta}^m(\Rnn)$ with $m = (0, 0)$, $\rho = (1, 1)$, and $\delta = (\delta_1, \delta_2) \in [0, 1)^2$. Assume that $a(x, \xi, \eta)$ has compact support in $(\xi_1, \eta_1)$ uniformly with respect to $(x, \xi_2, \eta_2)$. Then $T_a$ is bounded from $L^4(\Rnn) \times L^4(\Rnn)$ to $L^2(\Rnn)$, and 
\begin{align*}
\|T_a\|_{L^4 \times L^4 \to L^2} 
\lesssim \sum_{|\alpha|, |\beta|, |\gamma| \le n_1 + n_2 +1} P_{\alpha, \beta, \gamma}^{m, \rho, \delta}(a).
\end{align*}
\end{lemma}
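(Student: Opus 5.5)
We plan to treat the first parameter by the same kernel decay argument as in Lemma \ref{lem:suppxieta} and the second parameter by a Littlewood--Paley (Coifman--Meyer type) argument in $(\xi_2, \eta_2)$. Let $a$ be supported in $\{|(\xi_1,\eta_1)| \le R\}$. Using the partition $\{\psi_{j}^2\}_{j \ge 0}$ from \eqref{psiij} on $\R^{2n_2}$, we split
\begin{align*}
a = \sum_{j \ge 0} a_j, \qquad a_j(x,\xi,\eta) := a(x,\xi,\eta)\, \psi_j^2(\xi_2,\eta_2).
\end{align*}
The piece $a_0$ has compact support in all of $(\xi,\eta)$, uniformly in $x$, so Lemma \ref{lem:suppxieta} bounds $T_{a_0}$ from $L^4 \times L^4$ to $L^2$ by $\sum_{|\beta|,|\gamma| \le n_1+n_2+1} P^{m,\rho,\delta}_{0,\beta,\gamma}(a)$.

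For $j \ge 1$, the support of $a_j$ in $(\xi_2,\eta_2)$ lies in the annulus $|(\xi_2,\eta_2)| \simeq 2^j$. We further split each $a_j$ according to whether $|\xi_2| \simeq 2^j \gtrsim |\eta_2|$ or $|\eta_2| \simeq 2^j \gtrsim |\xi_2|$; by symmetry we only treat the first case. Freezing $x$, we expand $a_j(x,\cdot,\cdot)$ in a Fourier series on a box of side $\simeq 2^j$ in $(\xi_2,\eta_2)$ (and of side $\simeq R$ in $(\xi_1,\eta_1)$), following the standard Coifman--Meyer reduction to paraproducts. This writes
\begin{align*}
T_{a_j}(f,g)(x) = \sum_{k,l} c^{j}_{k,l}(x)\, \big(\Psi^{k}_j f\big)(x)\, \big(\Phi^{l}_j g\big)(x),
\end{align*}
where $\Psi^k_j$ is a frequency localization to $|\xi_2| \simeq 2^j$ (a modulated $Q$-type operator in $x_2$, convolved with a fixed bump in $x_1$) and $\Phi^l_j$ is a localization to $|\eta_2| \lesssim 2^j$ (a $P$-type operator). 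The $\xi$- and $\eta$-derivative bounds of $a$ of order up to $n_1+n_2+1$ give $|c^j_{k,l}(x)| \lesssim (1+|k|+|l|)^{-(n_1+n_2+1)} \sum P^{m,\rho,\delta}_{0,\beta,\gamma}(a)$, which is summable in $(k,l)$.

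The main obstacle is the $x$-dependence of the coefficients $c^j_{k,l}$, since the symbol is not smooth in $x$ at a scale compatible with a pure paraproduct. We plan to handle it as in the classical $S^0_{1,\delta}$ theory. Because $\delta_2 < 1$, we first localize $x_2$ to cubes of side $2^{-j\delta_2}$, or alternatively expand $c^j_{k,l}$ in $x$ using the $\partial_x^\alpha$ bounds with $|\alpha| \le n_1+n_2+1$; this is why $\alpha$ appears in the stated seminorm. Then
\begin{align*}
\Big\| \sum_j c^j_{k,l}\, \Psi^k_j f\, \Phi^l_j g \Big\|_{L^2}
\lesssim \Big\| \Big(\sum_j |\Psi^k_j f|^2\Big)^{1/2} \sup_j |\Phi^l_j g| \Big\|_{L^2}.
\end{align*}
We bound the right-hand side by Hölder's inequality, the $L^4$ square function bound for $\{\Psi^k_j\}$ (with at most polynomial growth in $k$), and $\sup_j |\Phi^l_j g| \lesssim (1+|l|)^{n} M_{\mathcal{R}} g$ together with \eqref{eq:Mh}. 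The case where the frequency supports of $\Psi$ and $\Phi$ are comparable produces no orthogonality in the output. Since the target is $L^2$, we handle it by Cauchy--Schwarz in $j$, using square functions for both $f$ and $g$ and the $L^4$ bounds of both.

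Summing over $k$ and $l$ using the decay of $c^j_{k,l}$, and adding the bound for $a_0$, yields the stated estimate.
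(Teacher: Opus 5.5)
Your route differs from the paper's and can be made to work. However, the one step where $\delta_2<1$ has to enter is not actually supplied.

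\textbf{The missing step.} The displayed inequality $\|\sum_j c^j_{k,l}\,\Psi^k_jf\,\Phi^l_jg\|_{L^2}\lesssim\|(\sum_j|\Psi^k_jf|^2)^{1/2}\sup_j|\Phi^l_jg|\|_{L^2}$ is an almost-orthogonality statement in $j$. It needs each summand to have $x_2$-Fourier support in an annulus $|\zeta_2|\simeq 2^j$, and an $x$-dependent coefficient destroys this in general. A coefficient behaving like $e^{-2\pi i 2^j e\cdot x_2}$, with $e\in\R^{n_2}$ a unit vector, moves every piece to frequency $0$, so that $J$ pieces add up to norm $\simeq J$ rather than $\simeq\sqrt{J}$. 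Ruling this out is exactly the job of $|\partial_{x_2}c^j_{k,l}|\lesssim 2^{j\delta_2}$ with $\delta_2<1$.

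Localizing $x_2$ to cubes of side $2^{-j\delta_2}$ does not do it. The mean value theorem only bounds the oscillation of $c^j_{k,l}$ on such a cube by $2^{-j\delta_2}\cdot 2^{j\delta_2}=1$, so freezing the coefficient gains nothing. A $j$-dependent spatial localization also does not by itself give orthogonality between different $j$. Your second alternative is too vague to check.

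What is needed is a frequency truncation in $x_2$, applied to $c^j_{k,l}$ or equivalently to $a_j$ before expanding.
\begin{itemize}
\item Take $\varphi$ Schwartz on $\R^{n_2}$ with $\int\varphi=1$ and $\widehat\varphi$ supported in the unit ball, and set $t_j=(\epsilon 2^j)^{-1}$.
\item Split $c^j_{k,l}=c^j_{k,l}*_{x_2}\varphi_{t_j}+(c^j_{k,l}-c^j_{k,l}*_{x_2}\varphi_{t_j})$.
\item The first part has $x_2$-frequency at most $\epsilon 2^j$. So in the region $|\eta_2|\le c\,2^j$ the products stay in annuli, and your inequality holds.
\item The second part has sup norm $\lesssim t_j\|\nabla_{x_2}c^j_{k,l}\|_{L^\infty}\lesssim \epsilon^{-1}2^{-j(1-\delta_2)}$, times the $(k,l)$-decay. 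It is summed in $j$ crudely by H\"older and the uniform $L^4$ bounds of $\Psi^k_j$ and $\Phi^l_j$.
\end{itemize}
This is precisely the paper's splitting $a_2^j=p_j+q_j$, where $p_j$ is the $x_2$-mollification of $a_2^j$ at scale $2^{-j}$ and $T_{q_j}$ gains $2^{-j(1-\delta_2)}$.

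\textbf{Comparison with the paper.} Once that step is in place, the two routes genuinely diverge. The paper never expands into paraproducts. It rescales $x_2\mapsto 2^{-j}x_2$, $(\xi_2,\eta_2)\mapsto 2^{j}(\xi_2,\eta_2)$, so that Lemma \ref{lem:suppxieta} bounds every $T_{p_j}$ uniformly and $T_{q_j}$ by $2^{-j(1-\delta_2)}$. It then sums $\sum_j T_{p_j}$ by Parseval in $x_2$ to get $L^2\times L^\infty\to L^2$ and $L^\infty\times L^2\to L^2$, and interpolates.

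That Parseval step uses two facts: that $\widehat{T_{p_j}(f,g)}(x_1,\cdot)$ lives in $\{2^{j-3}\le|\xi_2'|<2^{j+1}\}$, and that only $\widehat f$ on $|\xi_2|\simeq 2^j$ contributes. This holds where $|\xi_2|$ dominates $|\eta_2|$, but not elsewhere. In the high--high region $|\xi_2|\simeq|\eta_2|\simeq 2^j$ the sum $\xi_2+\eta_2$ can be near $0$, and at the endpoint $L^2\times L^\infty$ that region would need a Carleson-measure argument.

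Your plan isolates the comparable-frequency case and works directly at $L^4\times L^4\to L^2$. There that case is disposed of by pointwise Cauchy--Schwarz in $j$ and two $L^4$ square function bounds. This is a real advantage, and it also avoids interpolation. The price is the Fourier-series expansion with shifted square and maximal functions.

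\textbf{Bookkeeping in the expansion.} The expansion needs tighter accounting to reach the stated number of derivatives.
\begin{itemize}
\item The bound $(1+|k|+|l|)^{-(n_1+n_2+1)}$ is not summable over $(k,l)\in\mathbb{Z}^{2(n_1+n_2)}$, let alone after multiplying by $(1+|l|)^{n}$.
\item Integrate by parts separately in $\xi$ and in $\eta$ to get $(1+|k|)^{-(n_1+n_2+1)}(1+|l|)^{-(n_1+n_2+1)}$. The seminorms with $|\beta|,|\gamma|\le n_1+n_2+1$ allow this.
\item The $x_1$-shifts occur at a fixed scale, so they are pure translations and free in $L^4$.
\item The $x_2$-shifts $2^{-j}k_2$, $2^{-j}l_2$ cost at most logarithmically in the shifted square and maximal function bounds.
\end{itemize}
With polynomial losses you would need more derivatives than the statement permits.
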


\begin{proof}
We may assume that $\supp a(x, (\cdot, \xi_2), (\cdot, \eta_2)) \subset \{ |(\xi_1, \eta_1)| \le 1\}$. Let $a'_2 \in \mathscr{C}_c^{\infty}(\R^{2n_2})$ be such that $\mathbf{1}_{B(0, 1/2)} \le a'_2 \le \mathbf{1}_{B(0, 1)}$. Write $a = a_1 + a_2$, where
\begin{align*}
a_1(x, \xi, \eta) := a(x, \xi, \eta) a'_2(\xi_2, \eta_2) \quad \text{and} \quad
a_2(x, \xi, \eta) := a(x, \xi, \eta) (1-a'_2(\xi_2, \eta_2)).
\end{align*}
It suffices to bound $T_{a_2}$ because of Lemma \ref{lem:suppxieta} and $a_1 \in \mathcal{S}_{\rho, \delta}^m(\Rnn)$ with
\begin{align*}
\sum_{|\beta|, |\gamma| \le n_1 + n_2 +1} P_{0, \beta, \gamma}^{m, \rho, \delta}(a_1)
\lesssim \sum_{|\beta|, |\gamma| \le n_1 + n_2 +1} P_{0, \beta, \gamma}^{m, \rho, \delta}(a).
\end{align*}
Take $\phi \in \mathscr{C}_c^{\infty}(\R^{2n_2})$ such that
\begin{align*}
\supp(\phi) \subset \{1/3 \le |(\xi_2, \eta_2)| \le 1\} 
\quad \text{and} \quad
\sum_{j \ge 0} \phi(2^{-j} \xi_2, 2^{-j} \eta_2) = 1, \forall |(\xi_2, \eta_2)| \ge 1/2.
\end{align*}
Then we split
\begin{align}\label{sspee-1}
a_2(x, \xi, \eta) 
= \sum_{j \ge 0} a_2(x, \xi, \eta) \phi(2^{-j} \xi_2, 2^{-j} \eta_2) 
=: \sum_{j \ge 0} a_2^j(x, \xi, \eta).
\end{align}
Pick a function $\psi \in \mathscr{C}^{\infty}(\R^{n_2})$ so that $\mathbf{1}_{B(0, \frac{1}{16})} \le \widehat{\psi} \le \mathbf{1}_{B(0, \frac18)}$, and 
\begin{align}\label{sspee-2}
a_2^j(x, \xi, \eta) = p_j(x, \xi, \eta) + q_j(x, \xi, \eta),
\end{align}
where
\begin{align*}
p_j(x, \xi, \eta) & := \int_{\R^{n_2}} a_2^j((x_1, x_2 - y_2), \xi, \eta) \, 2^{j n_2} \psi(2^j y_2)  \, dy_2,
\\
q_j(x, \xi, \eta) & := \int_{\R^{n_2}} \big[a_2^j(x, \xi, \eta) - a_2^j((x_1, x_2 - y_2), \xi, \eta) \big] \, 2^{j n_2} \psi(2^j y_2) \, dy_2.
\\
\end{align*}
Define 
\begin{align*}
\widetilde{p}_j(x, \xi, \eta) & := p_j((x_1, 2^{-j} x_2), (\xi_1, 2^j \xi_2), (\eta_1, 2^j \eta_2)),
\\
\widetilde{q}_j(x, \xi, \eta) & := q_j((x_1, 2^{-j} x_2), (\xi_1, 2^j \xi_2), (\eta_1, 2^j \eta_2)).
\end{align*}
It is easy to verify that 
\begin{align}
\label{suppp-1}
& \supp a_2^j(x, \cdot, \cdot) \subset \{ |(\xi_1, \eta_1)| \le 1, \, 2^j/3 \le |(\xi_2, \eta_2)| \le 2^j \},
\\
\label{suppp-2}
& \supp p_j(x, \cdot, \cdot) \subset \{ |(\xi_1, \eta_1)| \le 1, \, 2^j/3 \le |(\xi_2, \eta_2)| \le 2^j \},
\\ 
\label{suppp-3}
& \supp q_j(x, \cdot, \cdot) \subset \{ |(\xi_1, \eta_1)| \le 1, \, 2^j/3 \le |(\xi_2, \eta_2)| \le 2^j \},
\\
\label{suppp-4}
& \supp \widetilde{p}_j(x, \cdot, \cdot)
\subset \{ |(\xi_1, \eta_1)| \le 1, \, 2^j/3 \le |(\xi_2, \eta_2)| \le 2^j \}, 
\\
\label{suppp-5}
& \supp \widetilde{q}_j(x, \cdot, \cdot) 
\subset \{ |(\xi_1, \eta_1)| \le 1, \, 2^j/3 \le |(\xi_2, \eta_2)| \le 2^j \},
\\
\label{suppp-6}
& \partial_x^{\alpha} \partial_{\xi}^{\beta} \partial_{\eta}^{\gamma} a_2^j(x, \xi, \eta) 
\lesssim 2^{j(\delta_2 |\alpha_2| - |\beta_2| - |\gamma_2|)} A_0,
\\
\label{suppp-7}
& \partial_x^{\alpha} \partial_{\xi}^{\beta} \partial_{\eta}^{\gamma} p_j(x, \xi, \eta)
\lesssim 2^{j(\delta_2 |\alpha_2| - |\beta_2| - |\gamma_2|)} A_0,
\\
\label{suppp-8}
& \partial_x^{\alpha} \partial_{\xi}^{\beta} \partial_{\eta}^{\gamma} q_j(x, \xi, \eta)
\lesssim 2^{j[\delta_2 (|\alpha_2| +1) - |\beta_2| - |\gamma_2| - 1]} A_0,
\\
\label{suppp-9}
& \partial_x^{\alpha} \partial_{\xi}^{\beta} \partial_{\eta}^{\gamma} \widetilde{p}_j(x, \xi, \eta)
\lesssim 2^{-j (1 - \delta_2) |\alpha_2|} A_0 
\le A_0,
\\
\label{suppp-10}
& \partial_x^{\alpha} \partial_{\xi}^{\beta} \partial_{\eta}^{\gamma} \widetilde{q}_j(x, \xi, \eta)
\lesssim 2^{- j (1 - \delta_2)(1 + |\alpha_2|)} A_0 
\le 2^{- j (1 - \delta_2)} A_0,
\end{align}
where
\begin{align*}
A_0 := \sum_{\substack{|\alpha'| \le |\alpha| +1 \\ |\beta'| \le |\beta| \\ |\gamma'| \le |\gamma}} 
P_{\alpha', \beta', \gamma'}^{m, \rho, \delta}(a)
\quad \text{and} \quad 
A_1 := \sum_{|\alpha|, |\beta|, |\gamma| \le n_1 + n_2 +1} 
P_{\alpha, \beta, \gamma}^{m, \rho, \delta}(a).
\end{align*}
Thus, it follows from \eqref{suppp-4}--\eqref{suppp-5}, \eqref{suppp-9}--\eqref{suppp-10}, and Lemma \ref{lem:suppxieta} that
\begin{align}
\label{TpTq-1}
\|T_{p_j}\|_{L^{p_1} \times L^{p_2} \to L^p} 
& = \|T_{\widetilde{p}_j}\|_{L^{p_1} \times L^{p_2} \to L^p} 
\lesssim A_1,
\\
\label{TpTq-2}
\|T_{q_j}\|_{L^{p_1} \times L^{p_2} \to L^p} 
& = \|T_{\widetilde{q}_j}\|_{L^{p_1} \times L^{p_2} \to L^p}
\lesssim 2^{-j(1-\delta_2)} A_1,
\end{align}
which gives 
\begin{align}\label{sspee-3}
\sum_{j \ge 0} \|T_{q_j}\|_{L^{p_1} \times L^{p_2} \to L^p} 
\lesssim A_1,
\end{align}
for all $\frac1p = \frac{1}{p_1} + \frac{1}{p_2}$ with $p_1, p_2 \in [1, \infty]$.

Observe that the estimate \eqref{TpTq-1} is too rough to guarantee the series $\sum_{j \ge 0} \|T_{p_j}\|_{L^4 \times L^4 \to L^2}$ converges. To overcome this problem, we need some delicate calculations. Let $\widehat{h}$ denote the Fourier transform of $h$ with respect to $x_2$. Then
\begin{align}\label{Fourier}
\widehat{T_{p_j} (f, g)}(x_1, \xi'_2) 
& = \iint \widehat{p}_j ((x_1, \xi'_2 - \xi_2 - \eta_2), \xi, \eta) 
\widehat{f}(\xi) \widehat{g}(\eta) e^{2\pi i x_1 \cdot (\xi_1 + \eta_1)} \, d\xi \, d\eta
\\ \nonumber
& = \iint \Phi(x, \xi, \eta, \xi'_2) \phi(2^{-j} \xi_2, 2^{-j} \eta_2) 
\widehat{f}(\xi) \widehat{g}(\eta) e^{2\pi i x_1 \cdot (\xi_1 + \eta_1)} \, d\xi \, d\eta,
\end{align}
where $\Phi(x, \xi, \eta, \xi'_2) := \widehat{a}_2 ((x_1, \xi'_2 - \xi_2 - \eta_2), \xi, \eta) \widehat{\psi}(2^{-j} (\xi'_2 - \xi_2 - \eta_2))$. Note that $\supp \widehat{T_{p_j} (f, g)}(x_1, \cdot) \subset \{2^{j-3} \le |\xi'_2| < 2^{j+1}\}$, which along with Parseval's formula implies 
\begin{align*}
\bigg\| \sum_{j \ge 0} T_{p_{4j+m}} f \bigg\|_{L^2}^2 
= \sum_{j \ge 0} \| T_{p_{4j+m}} f \|_{L^2}^2, \quad m=0, 1, 2, 3.
\end{align*}
To handle the case $m=0$, setting 
\begin{align*}
E_j := \{|\xi_2| \ge 1/2, 2^{j-3} \le |\xi_2| \le 2^{j+1}\} 
\quad \text{and} \quad
\widehat{f}_j(\xi) := \widehat{f}(\xi) \mathbf{1}_{E_j}(\xi_2),
\end{align*}
we have
\begin{align*}
\widehat{f}(\xi) \mathbf{1}_{\{|\xi_2| \ge 1/2\}} 
= \sum_{j \ge 0} \widehat{f}_{4j}(\xi),
\end{align*}
and by \eqref{TpTq-1},
\begin{align*}
\sum_{j \ge 0} \|T_{p_{4j}} (f, g)\|_{L^2}^2 
& = \sum_{j \ge 0} \|\widehat{T_{p_{4j}} (f, g)}\|_{L^2}^2 
= \sum_{j \ge 0} \bigg\| \sum_{k \ge 0} \widehat{T_{p_{4j}} (f_{4k}, g)} \bigg\|_{L^2}^2 
\\
& = \sum_{j \ge 0} \|\widehat{T_{p_{4j}} (f_{4j}, g)} \|_{L^2}^2 
= \sum_{j \ge 0} \|T_{p_{4j}} (f_{4j}, g) \|_{L^2}^2 
\\
& \lesssim A_1^2 \sum_{j \ge 0} \|f_{4j} \|_{L^2}^2 \|g\|_{L^{\infty}}^2 
= A_1^2 \sum_{j \ge 0} \|\widehat{f_{4j}} \|_{L^2}^2 \|g\|_{L^{\infty}}^2  
\\
& \le A_1^2 \|\widehat{f} \|_{L^2}^2 \|g\|_{L^{\infty}}^2
= A_1^2 \|f\|_{L^2}^2 \|g\|_{L^{\infty}}^2,
\end{align*}
provided that $\langle \widehat{f}_{4k}, \widehat{f}_{4j} \rangle = 0$ for any $k \ne j$. Hence, 
\begin{align}\label{sspee-4}
\bigg\| \sum_{j \ge 0} T_{p_{4j}} \bigg\|_{L^2 \times L^{\infty} \to L^2}
\lesssim A_1.
\end{align}
Symmetrically, 
\begin{align}\label{sspee-5}
\bigg\| \sum_{j \ge 0} T_{p_{4j}} \bigg\|_{L^{\infty} \times L^2 \to L^2}
\lesssim A_1.
\end{align}
Then interpolating between \eqref{sspee-4} and \eqref{sspee-5} gives
\begin{align}\label{sspee-6}
\bigg\| \sum_{j \ge 0} T_{p_{4j}} \bigg\|_{L^4 \times L^4 \to L^2}
\lesssim A_1.
\end{align}
Similarly, 
\begin{align}\label{sspee-7}
\bigg\| \sum_{j \ge 0} T_{p_{4j+m}} \bigg\|_{L^4 \times L^4 \to L^2}
\lesssim A_1, \quad m=1, 2, 3.
\end{align}
As a consequence of \eqref{sspee-1}, \eqref{sspee-2}, and \eqref{sspee-6}--\eqref{sspee-7}, we conclude
\begin{align*}
\|T_{a_2}\|_{L^4 \times L^4 \to L^2}
\le \sum_{m=0}^3 \bigg\| \sum_{j \ge 0} T_{p_{4j+m}} \bigg\|_{L^4 \times L^4 \to L^2}
+ \sum_{j \ge 0} \|T_{q_j}\|_{L^4 \times L^4 \to L^2}
\lesssim A_1.
\end{align*}
The proof is complete. 
\end{proof}

\begin{lemma}\label{lem:suppxieta2}
Let $\sigma_4$ be given in \eqref{def:sgsg}. Then $T_{\sigma_4}$ is bounded from $L^4(\Rnn) \times L^4(\Rnn)$ to $L^2(\Rnn)$, and 
\begin{align*}
\|T_{\sigma_4}\|_{L^4 \times L^4 \to L^2} 
\lesssim \sum_{|\alpha|, |\beta|, |\gamma| \le n_1 + n_2 +1} P_{\alpha, \beta, \gamma}^{m, \rho, \delta}(\sigma).
\end{align*}
\end{lemma}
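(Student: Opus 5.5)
The plan is to reduce $\sigma_4$, which is supported where $|(\xi_1,\eta_1)|\ge 1/2$ and $|(\xi_2,\eta_2)|\ge 1/2$, to the structure already exploited in Lemma \ref{lem:suppxieta1}, now carried out in both parameters at once. First I would perform a dyadic Littlewood--Paley decomposition in each parameter. Take $\phi^{(i)}\in\mathscr{C}_c^\infty(\R^{2n_i})$ supported in $\{1/3\le|(\xi_i,\eta_i)|\le 1\}$ with $\sum_{j\ge0}\phi^{(i)}(2^{-j}\cdot)=1$ on $\{|(\xi_i,\eta_i)|\ge1/2\}$. Then write
\[
\sigma_4=\sum_{j_1,j_2\ge0}\sigma_4^{j_1,j_2},\qquad \sigma_4^{j_1,j_2}:=\sigma_4\,\phi^{(1)}(2^{-j_1}(\xi_1,\eta_1))\,\phi^{(2)}(2^{-j_2}(\xi_2,\eta_2)).
\]
Next, mimicking \eqref{sspee-2}, mollify in $x$ at the matching scales. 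With $\psi$ as in Lemma \ref{lem:suppxieta1} in each variable, set $p_{j_1,j_2}:=\sigma_4^{j_1,j_2}*_x(\psi_{2^{-j_1}}\otimes\psi_{2^{-j_2}})$, where $\psi_{2^{-j}}=2^{jn}\psi(2^j\cdot)$. The remainder $q_{j_1,j_2}=\sigma_4^{j_1,j_2}-p_{j_1,j_2}$ splits as (mollify in $x_1$ only) plus (mollify in $x_2$ only), each piece gaining a factor $2^{-j_i(1-\delta_i)}$ by the mean value theorem.

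For the estimates I would rescale $(x_i,\xi_i,\eta_i)\mapsto(2^{-j_i}x_i,2^{j_i}\xi_i,2^{j_i}\eta_i)$ in both parameters. As in \eqref{suppp-9}--\eqref{suppp-10}, the rescaled symbols have $\xi,\eta$-derivatives uniformly bounded by $\sum P^{m,\rho,\delta}_{\alpha,\beta,\gamma}(\sigma)$ with $|\alpha|\le 2$, and compact frequency support. Lemma \ref{lem:suppxieta} and dilation invariance of $L^{p_1}\times L^{p_2}\to L^p$ norms then give $\|T_{p_{j_1,j_2}}\|\lesssim A_1$. The fully mollified-minus-unmollified differences decay like $2^{-j_1(1-\delta_1)}$, $2^{-j_2(1-\delta_2)}$, or their product. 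The mixed terms (mollified in one parameter, not in the other) form one-parameter-summable families: summing over the decaying index is absolute, and for the other index I would run the orthogonality argument below.

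The main step is summing $T_{p_{j_1,j_2}}$ over both indices. The output of $T_{p_{j_1,j_2}}(f,g)$ has Fourier support in $x$ contained in $\{2^{j_1-3}\le|\xi_1'|\le 2^{j_1+1}\}\times\{2^{j_2-3}\le|\xi_2'|\le2^{j_2+1}\}$, as in \eqref{Fourier}: the mollification localises the $x$-frequency to $|\cdot|\le 2^{j_i}/8$, while $|\xi_i+\eta_i|$ is comparable to $2^{j_i}$. There is a caveat here, since $|\xi_i+\eta_i|$ can be small when $\xi_i\approx-\eta_i$. Lemma \ref{lem:suppxieta1} handles this implicitly by restricting $f$ to $\widehat f_j$. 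Following that proof, I would split into residue classes $(j_1,j_2)\equiv(m_1,m_2)\bmod 4$ and use Parseval in both variables to get almost orthogonality. I would then bound by $L^2\times L^\infty\to L^2$ via localising $\widehat f$ to $E_{j_1}\times E_{j_2}$, and symmetrically $L^\infty\times L^2\to L^2$. Interpolating as in \eqref{sspee-6} (bilinear complex interpolation) gives $L^4\times L^4\to L^2$ with constant $A_1$.

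The obstacle I expect is exactly this frequency localisation when one of $f,g$ carries the high frequency while the output lands at a lower scale. That is, the region $|\xi_i|\sim|\eta_i|\sim2^{j_i}$ with $|\xi_i+\eta_i|\ll 2^{j_i}$ needs care. If the direct Parseval bookkeeping fails there, I would split each $\phi^{(i)}$ further into the parts where $|\xi_i|\gtrsim|\eta_i|$ and $|\eta_i|\gtrsim|\xi_i|$. On each part I would apply the orthogonality on the dominant input, namely $f$ restricted to the annulus $|\xi_i|\sim2^{j_i}$ via $\widehat f_j$. Disjointness of these annuli across residue classes then replaces output orthogonality, via Cauchy--Schwarz in $(j_1,j_2)$ together with the uniform $L^2\times L^\infty$ bound from Lemma \ref{lem:suppxieta}.
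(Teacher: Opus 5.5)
The overall architecture of your plan is the paper's. For this lemma the paper only lists modifications of the proof of Lemma \ref{lem:suppxieta1} and leaves the rest to the reader, and it uses the same ingredients you do:
\begin{itemize}
\item a bi-parameter dyadic decomposition of $\sigma_4$;
\item mollification in $x$ at scales $2^{-j_1},2^{-j_2}$;
\item rescaling together with Lemma \ref{lem:suppxieta} for uniform bounds;
\item Parseval;
\item interpolation between $L^2\times L^\infty$ and $L^\infty\times L^2$.
\end{itemize}
The step you flagged, however, is a genuine gap and not a bookkeeping issue. On the annulus $|(\xi_i,\eta_i)|\sim 2^{j_i}$ nothing prevents $\xi_i+\eta_i\approx 0$; take $\widehat f$ near $\xi_2=2^{j-1}$ and $\widehat g$ near $\eta_2=-2^{j-1}$. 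Then the output of $T_{p_{j_1,j_2}}(f,g)$ can sit near frequency $0$ for every $(j_1,j_2)$. So the support claim stated after \eqref{Fourier}, which the paper relies on and which your plan inherits, is false.

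Your fallback does not repair this. Splitting into $|\xi_i|\gtrsim|\eta_i|$ and $|\eta_i|\gtrsim|\xi_i|$ lets you insert $\widehat f_j$. But each half still contains the comparable region $|\xi_i|\sim|\eta_i|$, where outputs from different scales overlap. Disjointness of the \emph{input} annuli plus the uniform $L^2\times L^\infty$ bound then gives only $\|\sum_j T_{p_j}(f_j,g)\|_{L^2}\lesssim A_1\|g\|_{L^\infty}\sum_j\|f_j\|_{L^2}$. That is an $\ell^1$ sum, not controlled by $(\sum_j\|f_j\|_{L^2}^2)^{1/2}\lesssim\|f\|_{L^2}$. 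Cauchy--Schwarz in $(j_1,j_2)$ needs a second square-summable factor, and a fixed $g\in L^\infty$ supplies none. Dualizing only moves the problem to $T_{p_j}^{1*}$, whose input $h$ again cannot be localized.

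The missing idea is the standard three-way split in each parameter. Using degree-zero homogeneous cutoffs in $(\xi_i,\eta_i)$, separate the regions $|\xi_i|\ge 8|\eta_i|$, $|\eta_i|\ge 8|\xi_i|$, and the comparable region.
\begin{itemize}
\item In the two separated regions, $|\xi_i+\eta_i|$ is comparable to $2^{j_i}$ and dominates the $x_i$-frequencies $\le 2^{j_i}/8$ introduced by mollification. Your Parseval argument is valid there.
\item In the comparable region, both inputs can be replaced by Littlewood--Paley pieces $\widetilde{\Delta}_j f$ and $\widetilde{\Delta}_j g$ at scales $(2^{j_1},2^{j_2})$, and no mollification is needed. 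Run the kernel estimate from the proof of Lemma \ref{lem:suppxieta} in each parameter separately after rescaling. This gives, for these pieces, the pointwise bound $|T(f,g)|\lesssim A_1\,M_{\mathcal{R}}(\widetilde{\Delta}_j f)\,M_{\mathcal{R}}(\widetilde{\Delta}_j g)$. Then apply pointwise Cauchy--Schwarz in $(j_1,j_2)$, H\"older, the Fefferman--Stein inequality for $M_{\mathcal{R}}$ (iterate the one-parameter one), and bi-parameter square-function bounds in $L^4$. This yields $L^4\times L^4\to L^2$ directly.
\item Mixed regions, separated in one parameter and comparable in the other, combine orthogonality in one index with pointwise Cauchy--Schwarz in the other. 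There the mollification remainders decay only in the separated index, as your treatment of the mixed $q$-terms already anticipates.
\end{itemize}
The comparable-region terms cannot be routed through $L^2\times L^\infty\to L^2$ by your method, because $(\sum_j|\Delta_j g|^2)^{1/2}$ need not be bounded for $g\in L^\infty$. Drop the interpolation step for those terms.
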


\begin{proof}
Since the proof is similar to that of $T_{a_2}$ in Lemma \ref{lem:suppxieta1}, we only present the main modifications.  For each $i=1, 2$, pick $\phi_i \in \mathscr{C}_c^{\infty}(\R^{2n_i})$ so that
\begin{align*}
\supp(\phi_i) \subset \{1/3 \le |(\xi_i, \eta_i)| \le 1\} 
\quad \text{and} \quad
\sum_{j \ge 0} \phi_i(2^{-j} \xi_i, 2^{-j} \eta_i) = 1, \forall |(\xi_i, \eta_i)| \ge 1/2,
\end{align*}
which allows us to write
\begin{align*}
\sigma_4(x, \xi, \eta) 
= \sum_{j_1, j_2 \ge 0} \sigma_4^{j_1, j_2}(x, \xi, \eta)
:= \sum_{j_1, j_2 \ge 0} \sigma_4(x, \xi, \eta) \phi_1(2^{-j_1} \xi_1, 2^{-j_1} \eta_1) \phi_2(2^{-j_2} \xi_2, 2^{-j_2} \eta_2).
\end{align*}
Let $\psi_i \in \mathscr{C}^{\infty}(\R^{n_i})$ so that $\mathbf{1}_{B(0, \frac{1}{16})} \le \widehat{\psi}_i \le \mathbf{1}_{B(0, \frac18)}$, and $\sigma_4^{j_1, j_2} = p_{j_1, j_2} + q_{j_1, j_2}$,
where
\begin{align*}
p_{j_1, j_2}(x, \xi, \eta) & := \int_{\Rnn} \sigma_4^{j_1, j_2}(x-y, \xi, \eta) 
2^{j n_1} \psi_1(2^j y_1) \, 2^{j n_2} \psi_2(2^j y_2) \, dy,
\\
q_{j_1, j_2}(x, \xi, \eta) & := \int_{\Rnn} \big[\sigma_4^{j_1, j_2}(x, \xi, \eta) - \sigma_4^{j_1, j_2}(x - y, \xi, \eta) \big] \, 
2^{j n_1} \psi_1(2^j y_1) \, 2^{j n_2} \psi_2(2^j y_2) \, dy.
\end{align*}
Define 
\begin{align*}
\widetilde{p}_j(x, \xi, \eta) := p_j(2^{-j} x, 2^j \xi, 2^j \eta)
\quad \text{and} \quad
\widetilde{q}_j(x, \xi, \eta) := q_j(2^{-j} x, 2^j \xi, 2^j \eta).
\end{align*}
Unlike to \eqref{Fourier}, in the current situation, $\widehat{h}$ denotes the Fourier transform of $h$ with respect to $x_1$ and $x_2$. Then
\begin{align*}
\widehat{T_{p_j} (f, g)}(\xi') 
& = \iint \widehat{p}_j (\xi' - \xi - \eta, \xi, \eta) 
\widehat{f}(\xi) \widehat{g}(\eta) e^{2\pi i x \cdot (\xi + \eta)} \, d\xi \, d\eta
\\
& = \iint \Phi(\xi, \eta, \xi'_2) \phi_1(2^{-j} \xi_1, 2^{-j} \eta_1) \phi_2(2^{-j} \xi_2, 2^{-j} \eta_2) 
\widehat{f}(\xi) \widehat{g}(\eta) e^{2\pi i x \cdot (\xi + \eta)} \, d\xi \, d\eta,
\end{align*}
where 
\begin{align*}
\Phi(\xi, \eta, \xi'_2) 
:= \widehat{\sigma}_4 (\xi' - \xi - \eta, \xi, \eta) \widehat{\psi}_1(2^{-j} (\xi'_1 - \xi_1 - \eta_1)) \widehat{\psi}_2(2^{-j} (\xi'_2 - \xi_2 - \eta_2)).
\end{align*} 
More details are left to the reader. 
\end{proof}

\begin{lemma}\label{lem:Pab}
Let $m = (m_1, m_2) \in \R^2$, $\rho = (\rho_1, \rho_2) \in (0, 1]^2$, $\delta = (\delta_1, \delta_2) \in [0, 1]$. Let $\sigma \in \mathcal{K}_{\rho, \delta}^m(\Rnn)$ and define $\sigma_j$ as in \eqref{def:sig}. Then for all multi-indices $\alpha$, $\beta$, and $\gamma$, 
\begin{align*}
\lim_{j \to \infty} P_{\alpha, \beta, \gamma}^{m, \rho, \delta}(\sigma_j - \sigma) = 0.  
\end{align*}
\end{lemma}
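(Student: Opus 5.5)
The idea is to write the seminorm of $\sigma_j-\sigma$ as the seminorm of $\sigma(\phi_j-1)$, expand by the Leibniz rule, and exploit two facts: $\phi_j - 1$ vanishes on the box $\{|x_i|,|\xi_i|,|\eta_i|\le 2^j\}$, while $C_{\alpha,\beta,\gamma}(x,\xi,\eta)\to 0$ as $|x|+|\xi|+|\eta|\to\infty$.

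Concretely, $\sigma_j - \sigma = \sigma(\phi_j - 1)$, and the Leibniz rule gives
\begin{align*}
\partial_x^{\alpha}\partial_\xi^{\beta}\partial_\eta^{\gamma}(\sigma_j-\sigma)
= \sum_{\alpha'\le\alpha,\,\beta'\le\beta,\,\gamma'\le\gamma} c\,
\partial_x^{\alpha'}\partial_\xi^{\beta'}\partial_\eta^{\gamma'}\sigma \cdot
\partial_x^{\alpha-\alpha'}\partial_\xi^{\beta-\beta'}\partial_\eta^{\gamma-\gamma'}(\phi_j-1).
\end{align*}
The two kinds of terms are handled separately.

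\emph{Undifferentiated cutoff} ($\alpha'=\alpha,\beta'=\beta,\gamma'=\gamma$). Since $\phi_j-1$ vanishes unless $\max_i\{|x_i|,|\xi_i|,|\eta_i|\}\ge 2^j$ and $|\phi_j-1|\le 1$, dividing by the weight $\prod_i(1+|\xi_i|+|\eta_i|)^{m_i+\delta_i|\alpha_i|-\rho_i(|\beta_i|+|\gamma_i|)}$ leaves at most
\begin{align*}
\sup_{|x|+|\xi|+|\eta|\gtrsim 2^j} C_{\alpha,\beta,\gamma}(x,\xi,\eta),
\end{align*}
which tends to $0$ by the defining property of $\mathcal{K}^m_{\rho,\delta}$.

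\emph{Derivatives falling on $\phi_j$.} Each derivative of $\phi_j$ contributes a factor $2^{-j}$ and localizes to an annulus where the relevant variable has size $\simeq 2^j$. I split by the type of derivative.
\begin{itemize}
\item A $\xi_i$- or $\eta_i$-derivative of order $k$ gives a factor $2^{-jk}$ supported where $|\xi_i|$ or $|\eta_i|$ is $\simeq 2^j$, so $2^{-jk}\lesssim (1+|\xi_i|+|\eta_i|)^{-k}\le (1+|\xi_i|+|\eta_i|)^{-\rho_i k}$ because $\rho_i\le 1$. This exactly compensates the lost decay $\rho_i k$ in the weight.
\item An $x$-derivative gives $2^{-j}$ with no weight to compensate. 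Here $\delta_i\ge 0$ helps: the weight required for $\partial^{\alpha'}_x\sigma$ is smaller than the one we divide by, so these terms are bounded by $2^{-j|\alpha-\alpha'|}\sup C_{\alpha',\beta',\gamma'}$.
\end{itemize}
In both cases the coefficient $C_{\alpha',\beta',\gamma'}$ is bounded, and moreover evaluated where $|x|+|\xi|+|\eta|\gtrsim 2^j$, so it tends to $0$ anyway. Summing the finitely many Leibniz terms gives $P^{m,\rho,\delta}_{\alpha,\beta,\gamma}(\sigma_j-\sigma)\to 0$.

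The main obstacle is checking that the weight exponents match after Leibniz, in particular the direction of the inequality for the $x$-derivatives when $\delta_i>0$, since the remaining factor $(1+|\xi_i|+|\eta_i|)^{-\delta_i|\alpha_i-\alpha'_i|}\le 1$ must go the right way. This works because every derivative of $\phi_j$ is supported in the region where $C\to 0$, so boundedness of the ratio suffices.
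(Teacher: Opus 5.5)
Your proposal is correct and follows the paper's proof essentially step by step: you expand $\sigma(\phi_j-1)$ by the Leibniz rule and control the undifferentiated term by $C_{\alpha,\beta,\gamma}$ on $\{|x|+|\xi|+|\eta|\gtrsim 2^j\}$. You absorb frequency derivatives of the cutoff via $2^{-j}\lesssim(1+|\xi_i|+|\eta_i|)^{-1}\le(1+|\xi_i|+|\eta_i|)^{-\rho_i}$, handle spatial derivatives with $\delta_i\ge 0$, and conclude from the vanishing of $C_{\alpha',\beta',\gamma'}$ at infinity, which is exactly the paper's argument. The one point to state precisely is that $1+|\xi_i|+|\eta_i|\lesssim 2^j$ comes from $\phi_j$ being supported in $\{|x_i|,|\xi_i|,|\eta_i|\le 2^{j+1}\}$ in every variable, not only from the differentiated variable being $\simeq 2^j$; the paper uses the same bound in the form $1+|\xi_i|+|\eta_i|<2^{j+4}$.
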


\begin{proof}
Fix multi-indices $\alpha$, $\beta$, and $\gamma$. Leibniz's rule implies
\begin{align*}
&|\partial_x^{\alpha} \partial_{\xi}^{\beta} \partial_{\eta}^{\gamma} (\sigma - \sigma_j)(x, \xi, \eta)|
\lesssim \sum_{\substack{\alpha' + \alpha'' = \alpha \\ \beta' + \beta'' = \beta \\ 
\gamma' + \gamma'' = \gamma}} 
S_{\alpha', \beta', \gamma'}^{\alpha'', \beta'', \gamma''}
\\ 
&=: \sum_{\substack{\alpha' + \alpha'' = \alpha \\ \beta' + \beta'' = \beta \\ 
\gamma' + \gamma'' = \gamma}} 
|\partial_x^{\alpha'} \partial_{\xi}^{\beta'} \partial_{\eta}^{\gamma'} \sigma(x, \xi, \eta)| 
|\partial_x^{\alpha''} \partial_{\xi}^{\beta''} \partial_{\eta}^{\gamma''} (1-\phi_j)(x, \xi, \eta)|.   
\end{align*}
If $|\alpha''| = |\beta''| = |\gamma''| = 0$, then by the fact that $\supp(1-\phi_j) \subset \{|x| + |\xi| + |\eta| \ge 2^j\}$, 
\begin{align*}
S_{\alpha', \beta', \gamma'}^{\alpha'', \beta'', \gamma''}
\lesssim C_{\alpha, \beta, \gamma}(x, \xi, \eta) 
\prod_{i=1}^2 (1+ |\xi_i| + |\eta_i|)^{m_i + \delta_i |\alpha_i| - \rho_i (|\beta_i| + |\gamma_i|)} 
\mathbf{1}_{\{|x| + |\xi| + |\eta| \ge 2^j\}}. 
\end{align*}
If $|\alpha''| \neq 0$ and $|\beta''| = |\gamma''| = 0$, then 
\begin{align*}
|\partial_x^{\alpha''} (1-\phi_j)(x, \xi, \eta)| 
\le |\partial_{x_1}^{\alpha''_1} \psi_j^1(x_1)| |\partial_{x_2}^{\alpha''_2} \psi_j^2(x_2)|
\lesssim 2^{-j |\alpha''|} \mathbf{1}_{\{2^j \le |x_1|, |x_2| \le 2^{j+1}\}}, 
\end{align*}
which gives  
\begin{align*}
S_{\alpha', \beta', \gamma'}^{\alpha'', \beta'', \gamma''}
& \lesssim C_{\alpha', \beta, \gamma}(x, \xi, \eta) 
\prod_{i=1}^2 (1+ |\xi_i| + |\eta_i|)^{m_i + \delta_i |\alpha'_i| - \rho_i (|\beta_i| + |\gamma_i|)} 
\mathbf{1}_{\{2^j \le |x_i| \le 2^{j+1}\}}
\\
&\le C_{\alpha', \beta, \gamma}(x, \xi, \eta) 
\prod_{i=1}^2 (1+ |\xi_i| + |\eta_i|)^{m_i + \delta_i |\alpha_i| - \rho_i (|\beta_i| + |\gamma_i|)} 
\mathbf{1}_{\{|x| + |\xi| + |\eta| \ge 2^j\}}. 
\end{align*}
If $|\beta''| \neq 0$ or $|\gamma''| \neq 0$, then we may assume that $|\beta''| \neq 0$ and 
\begin{align*}
&|\partial_x^{\alpha''} \partial_{\xi}^{\beta''} \partial_{\eta}^{\gamma''} (1-\phi_j)(x, \xi, \eta)| 
\\
&\le |\partial_x^{\alpha''} \psi_j^1 \otimes \psi_j^2(x)| 
|\partial_{\xi}^{\beta''} \psi_j^1 \otimes \psi_j^2(\xi)| 
|\partial_{\eta}^{\gamma''} \psi_j^1 \otimes \psi_j^2(\eta)| 
\\
& \lesssim 2^{-j (|\alpha''| + |\beta''| + |\gamma''|)} 
\mathbf{1}_{\{2^j \le |\xi| \le 2^{j+2}\}} 
\mathbf{1}_{\{|\eta| \le 2^{j+2}\}} 
\\
& \lesssim \prod_{i=1}^2 (1+|\xi_i| + |\eta_i|)^{-(|\beta''_i| + |\gamma''_i|)} 
\, \mathbf{1}_{\{|x| + |\xi| + |\eta| \ge 2^j\}},
\end{align*}
where we have used that $1 + |\xi_i| + |\eta_i| \le 1 + 2^{j+2} + 2^{j+2} < 2^{j+4}$. Hence, 
\begin{align*}
S_{\alpha', \beta', \gamma'}^{\alpha'', \beta'', \gamma''}
& \lesssim C_{\alpha', \beta', \gamma'}(x, \xi, \eta) 
\prod_{i=1}^2 (1+|\xi_i| + |\eta_i|)^{m_i + \delta_i |\alpha'_i| - \rho_i (|\beta'_i| + |\gamma'_i|)} 
\\
&\qquad\times \prod_{i=1}^2 (1+|\xi_i| + |\eta_i|)^{-\rho (|\beta''_i| + |\gamma''_i|)} 
\, \mathbf{1}_{\{|x| + |\xi| + |\eta| \ge 2^j\}} 
\\
&\le C_{\alpha', \beta', \gamma'}(x, \xi, \eta) 
\prod_{i=1}^2 (1+|\xi_i| + |\eta_i|)^{m_i + \delta_i |\alpha_i| - \rho_i (|\beta_i| + |\gamma_i|)} 
\, \mathbf{1}_{\{|x| + |\xi| + |\eta| \ge 2^j\}}. 
\end{align*}
Consequently, we conclude
\begin{align*}
P_{\alpha, \beta, \gamma}^{m, \rho, \delta}(\sigma_j - \sigma)
\lesssim \sum_{\substack{\alpha' \le \alpha \\ \beta' \le \beta \\ \gamma' \le \gamma}} 
C_{\alpha', \beta', \gamma'}(x, \xi, \eta) \, \mathbf{1}_{\{|x| + |\xi| + |\eta| \ge 2^j\}},
\end{align*} 
where the implicit constant is independent of $j$. By the fact $\lim_{|x| + |\xi| + |\eta| \to \infty} C_{\alpha', \beta', \gamma'}(x, \xi, \eta) = 0$ for all multi-indices $\alpha'$, $\beta'$, and $\gamma'$, we assert that $\lim_{j \to \infty} P_{\alpha, \beta, \gamma}^{m, \rho, \delta}(\sigma_j - \sigma) = 0$. 
\end{proof}

\end{document}